\setlist[itemize]{leftmargin=*}
\newcommand{\arxiv}[1]{{\tt
		\href{http://www.arXiv.org/abs/#1}{arXiv:#1}}}
\newcommand\varpm{\mathbin{\vcenter{\hbox{%
  \oalign{\hfil$\scriptstyle+$\hfil\cr
          \noalign{\kern-.3ex}
          $\scriptscriptstyle({-})$\cr}%
}}}}		
\newcommand\varmp{\mathbin{\vcenter{\hbox{%
  \oalign{\hfil$\scriptstyle-$\hfil\cr
          \noalign{\kern-.3ex}
          $\scriptscriptstyle({+})$\cr}%
}}}}
\theoremstyle{plain}
\newtheorem{thm}{Theorem}[section]
\newtheorem{prop}[thm]{Proposition}
\newtheorem{lemma}[thm]{Lemma}
\theoremstyle{definition}
\newtheorem{definition}[thm]{Definition}
\theoremstyle{remark}
\newtheorem{remark}[thm]{Remark}
\newtheorem{ep}[thm]{Example}
\newcommand{\Sym}{\mathrm{Sym}}
\newcommand{\End}{\mathrm{End}}
\newcommand{\g}{{\rm g}}
\newcommand{\eqdef}{\stackrel{{\rm def.}}{=}}
\newcommand{\Conf}{\mathrm{Conf}}
\newcommand{\Sol}{\mathrm{Sol}}
\DeclareFontFamily{U}{rsf}{}
\DeclareFontShape{U}{rsf}{m}{n}{<5> <6> rsfs5 <7> <8> <9> rsfs7 <10-> rsfs10}{}
\DeclareMathAlphabet\Scr{U}{rsf}{m}{n}
\def\s{\mathbb{s}}
\def\H{{\rm H}}
\def\s{\mathbb{s}}
\def\Sl{\mathrm{Sl}}
\def\dd{\mathrm{d}}
\newcommand{\be}{\begin{equation*}}
\newcommand{\ee}{\end{equation*}}
\newcommand{\ben}{\begin{equation}}
\newcommand{\een}{\end{equation}}
\newcommand{\beqa}{\begin{eqnarray*}}
	\newcommand{\eeqa}{\end{eqnarray*}}
\newcommand{\beqan}{\begin{eqnarray}}
\newcommand{\eeqan}{\end{eqnarray}}
\newcommand{\Tr}{\mathrm{Tr}}
\newcommand{\tr}{\mathrm{tr}}
\def\cK{\mathcal{K}}
\def\U{\mathrm{U}}
\def\cD{\mathcal{D}}
\def\cN{\mathcal{N}}
\def\SU{\mathrm{SU}}
\def\G_2{\mathrm{G_2}}
\def\cL{\mathcal{L}}
\def\s{\mathfrak{s}}
\def\Im{\mathrm{Im}}
\def\G{\mathrm{G}}
\def\ric{\text{Ric}}
\newcolumntype{P}[1]{>{\centering\arraybackslash}p{#1}}
\setlist[itemize]{leftmargin=*}
\setlist[enumerate]{leftmargin=*}
\begin{document}

\title[Contact metric three manifolds in six-dimensional supergravity]{Contact metric three manifolds and Lorentzian geometry with torsion in six-dimensional supergravity}

\author[\'Angel Murcia]{\'Angel Murcia}\address{Instituto de F\'isica Te\'orica, CSIC, Madrid, Reino de Espa\~na.}\email{angel.murcia@csic.es}

\author[C. S. Shahbazi]{C. S. Shahbazi} \address{Fachbereich Mathematik, Hamburg Universit\"at, Deutschland.}
\email{carlos.shahbazi@uni-hamburg.de}

\thanks{2010 MSC. Primary:  53C50. Secondary: 53C25.}
\keywords{Lorentzian geometry with torsion, contact metric manifolds, $\eta\,$-Einstein manifolds, supergravity}

\begin{abstract}
We introduce the notion of $\varepsilon\eta\,$-Einstein $\varepsilon\,$-contact metric three-manifold, which includes as particular cases $\eta\,$-Einstein Riemannian and Lorentzian (para) contact metric three-manifolds, but which in addition allows for the Reeb vector field to be null. We prove that the product of an $\varepsilon\eta\,$-Einstein Lorentzian $\varepsilon\,$-contact metric three-manifold with an $\varepsilon\eta\,$-Einstein Riemannian contact metric three-manifold carries a bi-parametric family of Ricci-flat Lorentzian metric-compatible connections with isotropic, totally skew-symmetric, closed and co-closed torsion, which in turn yields a bi-parametric family of solutions of six-dimensional minimal supergravity coupled to a tensor multiplet. This result allows for the systematic construction of families of Lorentzian solutions of six-dimensional supergravity from pairs of $\varepsilon\eta\,$-Einstein contact metric three-manifolds. We classify all left-invariant $\varepsilon\eta\,$-Einstein structures on simply connected Lie groups, paying special attention to the case in which the Reeb vector field is null. In particular, we show that the Sasaki and K-contact notions extend to $\varepsilon\,$-contact structures with null Reeb vector field but are however not equivalent conditions, in contrast to the situation occurring when the Reeb vector field is not light-like. Furthermore, we pose the Cauchy initial-value problem of an  $\varepsilon\,$-contact $\varepsilon\eta\,$-Einstein structure, briefly studying the associated constraint equations in a particularly simple decoupling limit. Altogether, we use these results to obtain novel  families of six-dimensional supergravity solutions, some of which can be interpreted as continuous deformations of the maximally supersymmetric solution on $\widetilde{\Sl}(2,\mathbb{R})\times S^3$.
\end{abstract}

\maketitle

\setcounter{tocdepth}{1} 
\tableofcontents


\section{Introduction}


The ongoing mathematical study and development of supergravity \cite{Cecotti,FreedmanProeyen,Ortin} poses novel and occasionally striking mathematical problems in diverse areas of differential geometry and topology. Some of these mathematical problems are specifically concerned with the classification of supergravity solutions and the study of the associated moduli spaces of solutions. In this context, the classification of all simply connected Lorentzian manifolds admitting bosonic solutions of supergravity in the considered dimension remains as an outstanding open problem in the mathematical theory of supergravity. The purpose of this note is to propose a contribution to this problem in the specific case of minimal supergravity in six Lorentzian dimensions \cite{Nishino:1984gk,Nishino:1986dc} coupled to a tensor multiplet with constant dilaton, a theory that can be neatly rephrased as a natural geometric problem in the realm of six-dimensional Lorentzian geometry with torsion. More concretely, the bosonic configuration space $\Conf(M)$ of this six-dimensional supergravity (with constant dilaton) on a six-manifold $M$ consists on pairs $(\g,\H)$, where $\g$ is a Lorentzian metric on $M$ and $\H$ is a three-form. The bosonic theory is then geometrically defined through the following system of partial differential equations \cite{Nishino:1984gk}:
\begin{equation}
\label{eq:sugraeqintro}
\mathrm{Ric}(\nabla^{\H}) = 0\, , \qquad \dd \H = 0 \, , \qquad \dd\ast_\g \H = 0\, , \qquad \vert \mathrm{H}\vert^2_\g = 0\, ,
\end{equation}

\noindent
for pairs $(\g,\H)\in \Conf(M)$, where $\nabla^\H$ is the unique metric-compatible (with respect to $g$) connection with totally skew-symmetric torsion $\H$. Therefore, solutions of minimal supergravity consist on six-dimensional Lorentzian manifolds equipped with a Ricci-flat metric-compatible connection with isotropic, skew-symmetric, closed and co-closed torsion. Recall that in six Lorentzian dimensions the Hodge operator on three-forms squares to the identity, and the metric induced by $\g$ on the bundle of three-forms is of split signature. In particular, if $\mathrm{H}$ is closed and (anti) self-dual then it is automatically isotropic and co-closed, providing a natural candidate to solve Equations \eqref{eq:sugraeqintro}. (Anti) self-duality of $\mathrm{H}$ is in this context a clear reminiscent of the self-duality condition on the curvature of a connection on a principal $\U(1)$ bundle over a four-dimensional Riemannian manifold. This analogy can be made rigorous by using the theory of abelian gerbes \cite{Brylinski}, although we shall not pursue this point of view here. To the best of our knowledge the geometric problem posed by Equations \eqref{eq:sugraeqintro} has not been studied in the mathematical literature.

A solution $(\g,\H)$ of Equations \eqref{eq:sugraeqintro} is said to be \emph{supersymmetric} if there exists an irreducible real chiral spinor $\epsilon$ parallel with respect to the lift of $\nabla^\H$ to the spinor bundle (assuming the latter exists and is associated to a spin structure on $(M,\g)$) and sitting in the kernel of the endomorphism defined by $\mathrm{H}$ in the spinor bundle through Clifford multiplication. Supersymmetric solutions of six-dimensional supergravity have been extensively studied in the literature, see for instance  \cite{Akyol:2010iz,Akyol:2011mh,Bena:2011dd,Chamseddine:2003yy,Cano:2019gqm,deMedeiros:2018ooy,Figueroa-OFarrill:2013qzh,Gutowski:2003rg,Lam:2018jln} and references therein. However, due to the complexity of the equations involved, examples of explicit solutions are scarce even in the restricted supersymmetric case, with some celebrated exceptions, see \cite{Bena:2011dd,Lin,Liu:2004hy,Martelli:2004xq} and their citations. Rather than studying the problem of classifying solutions to minimal supergravity coupled to a tensor multiplet in its full generality, which seems to be currently out of reach, in this article we make the following simplifying assumptions, with the aim of developing a method to construct families of solutions as a first step to understand the general classification problem:

\begin{itemize}
	\item We assume that $M = N\times X$ is a direct product of three-dimensional oriented manifolds.
	
	\item We consider configurations $(\g,\H)\in \Conf(M)$ with a direct product metric $\g = \chi \oplus h $, where $\chi$ is a Lorentzian metric on $N$ and $h$ is a Riemannian metric on $X$.
\end{itemize}

\noindent
With these provisos in mind, we obtain the following result, see Theorem \ref{thm:pcontactsugrasolution} for more details.

\begin{thm}
\label{thm:intro6dsugra}
Let $(N,\chi,\alpha_N,\varepsilon_N)$ be a Lorentzian $\varepsilon\eta\,$-Einstein $\varepsilon\,$-contact metric three-manifold and let $(X,h,\alpha_X)$ be a Riemannian $\varepsilon\eta\,$-Einstein contact metric three-manifold. Then, for appropriate choices of parameters in the $\varepsilon\eta\,$-Einstein condition (specified in Theorem \ref{thm:pcontactsugrasolution}), the oriented Cartesian product manifold
\begin{equation*}
M = N\times X\, ,
\end{equation*}
\noindent
carries a family of solutions of six-dimensional minimal supergravity coupled to a tensor multiplet given by
\begin{equation*}
\g = \chi \oplus h\, , \qquad \H_{\lambda, l} = \lambda\, \nu_{\chi}  + \frac{l}{3}\,(\ast_\chi \alpha_N)\wedge \alpha_X +  \frac{l}{3}\,\alpha_N\wedge (\ast_h \alpha_X)  + \lambda\, \nu_h\, 
\end{equation*}
	
\noindent
and parametrized by $(\lambda,l)\in \mathbb{R}^2$. Equivalently, $(M = N\times X , \g = \chi \, \oplus\,  h)$ admits a bi-parametric family of metric-compatible Ricci-flat connections $\nabla^{\H_{\lambda , l}}$ with totally skew-symmetric, isotropic, closed and co-closed torsion $\H_{\lambda , l}$.
\end{thm}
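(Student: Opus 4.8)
The plan is to verify the four equations of \eqref{eq:sugraeqintro} for the pair $(\g=\chi\oplus h,\ \H_{\lambda,l})$ directly, dispatching the three ``torsion'' conditions $\dd\H_{\lambda,l}=0$, $\dd\ast_\g\H_{\lambda,l}=0$ and $\vert\H_{\lambda,l}\vert^{2}_\g=0$ through an anti-self-duality observation, and isolating Ricci-flatness of $\nabla^{\H_{\lambda,l}}$ as the step that pins down the admissible $\varepsilon\eta$-Einstein parameters --- this being what ``appropriate choices of parameters'' refers to.

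For the three torsion conditions I would argue as follows. A short computation with the Hodge operator of the product metric on $M=N^{3}\times X^{3}$, using the product rule $\ast_\g(\beta\wedge\gamma)=\pm(\ast_\chi\beta)\wedge(\ast_h\gamma)$ together with $\ast_\chi\ast_\chi=-\mathrm{id}$ on one-forms of the Lorentzian factor versus $\ast_h\ast_h=+\mathrm{id}$ on one-forms of the Riemannian factor, shows that $\ast_\g$ permutes the four summands of $\H_{\lambda,l}$ up to sign and, more precisely, that $\ast_\g\H_{\lambda,l}=-\H_{\lambda,l}$ for every $(\lambda,l)$. This has three consequences. First, isotropy is automatic: $\vert\H_{\lambda,l}\vert^{2}_\g\,\vol_\g=\H_{\lambda,l}\wedge\ast_\g\H_{\lambda,l}=-\H_{\lambda,l}\wedge\H_{\lambda,l}=0$ since $\H_{\lambda,l}$ has odd degree. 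Second, co-closedness reduces to closedness, because $\dd\ast_\g\H_{\lambda,l}=-\dd\H_{\lambda,l}$; in particular $\delta_\g\H_{\lambda,l}=0$ once closedness is known. Closedness itself I would check by hand: the two volume-form summands are of top degree on their respective factors, hence closed, and in the two mixed summands the exterior derivatives along $N$ and along $X$ cancel after invoking the $\varepsilon$-contact structure equations, which express $\dd\alpha$ as a constant multiple of $\ast\alpha$ (so $\ast\alpha$ is itself closed) and supply this constant with opposite signs for the Lorentzian factor $N$ and the Riemannian factor $X$.

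The core of the argument is Ricci-flatness of $\nabla^{\H_{\lambda,l}}$. I would invoke the standard identity for the Ricci tensor of the metric connection with totally skew-symmetric torsion $\H$, namely $\mathrm{Ric}(\nabla^{\H})=\mathrm{Ric}(\g)-\tfrac14 S_{\H}-\tfrac12\,\delta_\g\H$ in a pseudo-orthonormal frame, where $S_{\H}(Y,Z)=\sum_{a,b}\H(Y,E_a,E_b)\,\H(Z,E_a,E_b)$; by the previous paragraph the $\delta_\g\H_{\lambda,l}$ term vanishes, so Ricci-flatness is equivalent to $\mathrm{Ric}(\g)=\tfrac14 S_{\H_{\lambda,l}}$. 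The left-hand side is block diagonal, $\mathrm{Ric}(\g)=\mathrm{Ric}(\chi)\oplus\mathrm{Ric}(h)$. For the right-hand side I would work in adapted pseudo-orthonormal coframes $(\alpha_N,e^{1},e^{2})$ on $N$ --- of the causal type dictated by $\varepsilon_N$, the light-like case $\varepsilon_N=0$ calling for a null coframe and a separate treatment --- and $(\alpha_X,f^{1},f^{2})$ on $X$, write $\H_{\lambda,l}$ out explicitly and contract. Two features should emerge: the off-diagonal $(N,X)$ block of $S_{\H_{\lambda,l}}$ vanishes identically, since each term there pairs a form pulled back from $N$ with one pulled back from $X$ and hence has a degenerate Gram matrix, matching the block-diagonality of $\mathrm{Ric}(\g)$; and rotational symmetry in the $(e^{1},e^{2})$ and $(f^{1},f^{2})$ planes forces the two diagonal blocks into the $\eta$-Einstein shapes $p\,\chi+q\,\alpha_N\otimes\alpha_N$ and $p'\,h+q'\,\alpha_X\otimes\alpha_X$, for explicit quadratic polynomials $p,q,p',q'$ in $(\lambda,l)$ whose coefficients depend on $\varepsilon_N$. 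Comparing with the hypotheses $\mathrm{Ric}(\chi)=a_N\chi+b_N\,\alpha_N\otimes\alpha_N$ and $\mathrm{Ric}(h)=a_X h+b_X\,\alpha_X\otimes\alpha_X$ then forces $a_N=\tfrac14 p$, $b_N=\tfrac14 q$, $a_X=\tfrac14 p'$, $b_X=\tfrac14 q'$, which are exactly the parameter relations recorded in Theorem~\ref{thm:pcontactsugrasolution}. The concluding ``equivalently'' sentence is just the reformulation of \eqref{eq:sugraeqintro} explained in the Introduction and needs no separate argument.

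The main obstacle will be this last contraction --- computing $S_{\H_{\lambda,l}}$ while tracking correctly the signs produced by the Lorentzian signature of $\chi$, verifying in particular that the off-diagonal $(N,X)$ block truly cancels rather than leaving a spurious symmetric term, and dealing with the degenerate null-Reeb case $\varepsilon_N=0$, where $\alpha_N$ is light-like, $\ast_\chi\alpha_N$ and $\alpha_N\otimes\alpha_N$ take a different form in the null coframe, and one must verify directly that the diagonal block is still $\varepsilon\eta$-Einstein-shaped and that the matching equations admit solutions. One should also confirm, using the $\varepsilon$-contact structure equations, that the values $p,q,p',q'$ are compatible with whatever constraints the $\varepsilon\eta$-Einstein condition already imposes on a three-manifold.
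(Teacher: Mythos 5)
Your architecture is reasonable and your treatment of the Ricci equation is essentially the paper's: Theorem \ref{thm:pcontactsugrasolution} is likewise proved by computing $\tfrac14\,\H_{\lambda,l}\circ\H_{\lambda,l}$ block by block, checking that the mixed $(TN,TX)$ block vanishes, and matching the diagonal blocks to the $\varepsilon\eta\,$-Einstein shapes, which fixes $\kappa_N=l^2$ and $\kappa_X=\vert\alpha_N\vert^2_\chi\,l^2$. The genuine gap is the device on which you hang the other three equations: $\H_{\lambda,l}$ is \emph{not} anti-self-dual. With the product rule $\ast_\g(\rho\wedge\sigma)=(-1)^{r(3-q)}\ast_\chi\rho\wedge\ast_h\sigma$ one gets $\ast_\g\nu_\chi=-\nu_h$ and $\ast_\g\nu_h=-\nu_\chi$, so the $\lambda$-part is anti-self-dual; but for the mixed terms the Koszul sign $(-1)^{r(3-q)}=-1$ (here $q=2$, $r=1$) cancels the minus coming from $\ast_\chi\ast_\chi=-\mathrm{id}$ on one-forms, so that $\ast_\g\bigl((\ast_\chi\alpha_N)\wedge\alpha_X\bigr)=+\,\alpha_N\wedge(\ast_h\alpha_X)$ and vice versa: the $l$-part is \emph{self}-dual. (The paper's own computation of $\dd\ast\H_{\lambda,l}$ displays exactly these signs.) Hence $\ast_\g\H_{\lambda,l}=-\lambda(\nu_\chi+\nu_h)+\tfrac{l}{3}\bigl((\ast_\chi\alpha_N)\wedge\alpha_X+\alpha_N\wedge\ast_h\alpha_X\bigr)$, which is neither $\H_{\lambda,l}$ nor $-\H_{\lambda,l}$ for generic $(\lambda,l)$.

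Consequently your derivations of isotropy and co-closedness fail as stated, even though both conclusions are true. Writing $\H_{\lambda,l}=A+B$ with $A$ the anti-self-dual volume part and $B$ the self-dual mixed part, one has $\dd\ast_\g\H_{\lambda,l}=-\dd A+\dd B=\dd B=\dd\H_{\lambda,l}$ (not $-\dd\H_{\lambda,l}$), because $A$ is closed for degree reasons; so co-closedness still reduces to closedness, but by the accident $\dd A=0$ rather than by anti-self-duality. For isotropy, the identity $\H\wedge\ast_\g\H=-\H\wedge\H$ is unavailable; instead $\vert A\vert^2_\g=\vert B\vert^2_\g=0$ because each eigenspace of $\ast_\g$ on three-forms is isotropic for the split-signature pairing, while the cross term $A\wedge\ast_\g B=A\wedge B$ vanishes by bidegree counting --- or one computes the four norms directly, as the paper does, and watches the cancellations $-6\lambda^2+6\lambda^2$ and $\mp\tfrac{2l^2}{3}\,\varepsilon_N\vert\alpha_X\vert^2_h$. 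A further caution on the Ricci step: the vanishing of the off-diagonal block of $S_{\H_{\lambda,l}}$ is not a term-by-term degeneracy, since contracting $Y\in TN$ and $Z\in TX$ into $\H_{\lambda,l}$ produces several individually non-zero contributions proportional to $\alpha_N(Y)\,\alpha_X(Z)$ that cancel only in the sum, so that verification cannot be waved away by a Gram-matrix argument and must be carried out.
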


\noindent
Note that, as explained in Section \ref{sec:6dsugrasolutions}, although the Levi-Civita connection $\nabla$ is a product connection, the torsionful connection $\nabla^{\H_{\lambda, l}}$ does not split in general. The notion of $\varepsilon\,$-contact structure is defined in Section \ref{sec:cmg}, and contains as a particular case the standard notions of Riemannian contact structure when $g$ is Riemannian and the standard notion of Lorentzian (para) contact structure when $(\varepsilon = 1)$ $\varepsilon = -1$. The novelty in the definition of $\varepsilon\,$-contact structure occurs in the case $\varepsilon = 0$, namely the case in which the Reeb vector field is null. To the best of our knowledge this case has not been considered in the literature, and hence we study it in more detail. Consequently, we develop the notions of K-contact and Sasakian null contact metric structure, in analogy with the corresponding definitions occurring in the cases $\varepsilon\neq 0$. We show that the Sasakian condition is equivalent to the integrability of certain nilpotent endomorphism on $\mathbb{R}\times N$, which allows to characterize such structures similarly to the case $\varepsilon \neq 0$. On the other hand, we explicitly show that, in contrast to the three-dimensional $\varepsilon\neq 0$ case, the Sasakian and K-contact conditions are not equivalent if $\varepsilon = 0$. Furthermore, we classify all left-invariant Lorentzian three-dimensional simply connected and connected Lie groups carrying left-invariant null contact structures, elucidating which of these structures are Sasakian or K-contact and finding the structure explicitly. 

The notion of $\varepsilon\eta\,$-Einstein $\varepsilon\,$-contact structure is, for $\varepsilon \neq 0$, a restricted version of the standard $\eta\,$-Einstein condition considered in the literature of contact structures, see for example \cite{Blair,Calvaruso}, which is adapted to make Theorem \ref{thm:intro6dsugra} hold. In order to apply Theorem \ref{thm:intro6dsugra} to the construction of explicit solutions of minimal supergravity coupled to a tensor multiplet with constant dilaton, we classify all left-invariant $\varepsilon\eta\,$-Einstein $\varepsilon\,$-contact structures by exploiting the classification of simply connected three-dimensional Lorentzian Lie groups, developed in \cite{CorderoParker,Rahmani}. We tabulate our results in the following form.

\begin{thm}
\label{thm:introetatime}
A three-dimensional connected and simply connected Lie group $\G$ admits a left- invariant $\varepsilon\eta\,$-Einstein contact structure $(g,\alpha)$ with time-like Reeb vector field if and only if $(\mathrm{G},g,\alpha)$ is isomorphic, through a possibly orientation-reversing isometry, to one of the items listed in the following table in terms of the orthonormal frame $\left\{ e_0 , e_1 , e_2\right\}$ appearing in Theorem \ref{thm:LorentzGroups}: 
\vspace*{-\baselineskip}
 \vspace{0.45cm}
\renewcommand{\arraystretch}{1.5}
\begin{center}
\begin{tabular}{ |P{0.5cm}|P{4.5cm}|P{1.3cm}|P{4cm}|P{1cm} | P{1.5cm}|}
\hline
		
$\mathfrak{g}$  & \emph{Structure constants } & $\alpha$ & $\eta\,$\emph{-Einstein constants} & $\G$ & \emph{Sasakian} \\
\hline
	
\multirow{4}*{$\mathfrak{g}_3$} & $\frac{1}{2}>a=1-b > 0\, , c=1$ & $\alpha=e^0$ &  $\lambda^2= \kappa=2b(1-b)$ & $\widetilde{\mathrm{Sl}}(2,\mathbb{R})$ & \emph{No} \\ \cline{2-6} &   $a=c=1\, , b=0$ & $\alpha=-e^0$ & $\lambda^2=\kappa=0$ & $\widetilde{\mathrm{E}}(1,1)$ & \emph{No}  \\ \cline{2-6} & $1 \geq a=b>0\, , c=1$ & $\alpha=e^0$ & $\lambda^2=1-\kappa=a\,$ & $\widetilde{\mathrm{Sl}}(2,\mathbb{R})$ & \emph{Yes} \\ \cline{2-6} & $ a=b=0\, , c=-1$ & $\alpha=-e^0$ & $\lambda^2=0\, , \kappa=1\,$ & $\mathrm{H}_3$ & \emph{Yes} \\ \hline $\mathfrak{g}_6$ & $b=1\, , c=d=0\, , 1 \geq a^2 > 0\,$ & $\alpha=e^0$ & $\lambda^2=1-\kappa=a^2\, ,$ & $\mathfrak{G}_6$  & \emph{Yes} \\ \hline
\end{tabular}
\end{center}

\renewcommand{\arraystretch}{1}
 
\end{thm}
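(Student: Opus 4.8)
The plan is to test the claimed classification directly against the list of three-dimensional connected, simply connected Lorentzian Lie groups $(\G,g)$ supplied by Theorem \ref{thm:LorentzGroups}, imposing the contact-metric and $\varepsilon\eta$-Einstein conditions family by family and reading off the Sasakian property at the end; the ``if'' direction is then immediate, since for each tabulated item the displayed frame, contact form and constants satisfy by construction the equations derived below. By left-invariance a contact structure $(g,\alpha)$ on $\G$ is encoded by a covector $\alpha\in\mathfrak{g}^*$, and ``time-like Reeb vector field'' means the metric dual $\xi$ of $\alpha$ is time-like with the normalization fixed in Section \ref{sec:cmg}. Either by carrying a general time-like unit Reeb covector through each family, or—more economically—by using a pseudo-orthonormal automorphism of $(\mathfrak{g}_i,g)$ to bring it to $\alpha=\epsilon_0\, e^0$ with $\epsilon_0\in\{+1,-1\}$ (the sign $\epsilon_0=-1$ and orientation conventions being absorbed into the ``possibly orientation-reversing isometry'' allowed in the statement), the problem becomes a finite, parameter-by-parameter computation inside each $\mathfrak{g}_1,\dots,\mathfrak{g}_7$.

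Next I would expand $\dd e^0 = A\, e^0\!\wedge e^1 + B\, e^0\!\wedge e^2 + C\, e^1\!\wedge e^2$ in terms of the structure constants of the family at hand. The contact condition $\alpha\wedge\dd\alpha\neq 0$ forces $C\neq 0$; the requirement that $e_0$ actually be the Reeb field of $\alpha=e^0$ (i.e. $\iota_{e_0}\dd\alpha=0$) forces $A=B=0$; and the remaining contact-metric compatibility of Section \ref{sec:cmg}—that $\dd\alpha$ reproduce $g(\cdot,\phi\cdot)$ in the standard normalization—fixes $C$, the compatible endomorphism $\phi$ being the rotation by $\pm\pi/2$ of the positive-definite plane $\mathrm{span}(e_1,e_2)$. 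These purely algebraic constraints already eliminate several of the families and cut out proper sub-loci of the structure-constant space in those that survive.

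For each surviving case I would compute the Levi-Civita connection from the Koszul formula and, from it, the curvature and Ricci tensor $\mathrm{Ric}$ in the frame $\{e_0,e_1,e_2\}$. Imposing the $\varepsilon\eta$-Einstein equation of Section \ref{sec:cmg}—$\mathrm{Ric}=c_1\, g+c_2\,\alpha\otimes\alpha$ subject to the extra relation between $c_1$ and $c_2$ tailored to make Theorem \ref{thm:intro6dsugra} apply—turns this into a polynomial system in the remaining structure constants. Solving it case by case, most branches are inconsistent (typically forcing $C=0$), while the consistent ones carve out exactly the one-parameter families tabulated, with $(\lambda^2,\kappa)$ read off from $(c_1,c_2)$; matching each surviving metric Lie algebra, together with its sub-family of structure constants, to its global simply connected group via the standard dictionary for three-dimensional Lie groups identifies $\G$ as $\widetilde{\mathrm{Sl}}(2,\R)$, $\widetilde{\mathrm{E}}(1,1)$, $\mathrm{H}_3$, or the remaining non-unimodular group in $\mathfrak{g}_6$. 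Finally, on each surviving structure I would evaluate the Sasakian criterion of Section \ref{sec:cmg}—vanishing of the relevant torsion tensor, equivalently the identity $(\nabla_X\phi)Y=g(X,Y)\xi-\alpha(Y)X$ up to signs—which is again algebraic in the surviving parameter and yields the ``Yes/No'' column.

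I expect the principal obstacle to be the organization and bookkeeping of this analysis rather than any single step: the seven Lorentzian families each carry several real parameters, and one must simultaneously exploit the residual automorphism freedom—both to justify the normalization $\alpha=\pm e^0$ and to avoid over- or under-counting structures—while grinding through curvature equations that are individually routine but lengthy. A secondary subtlety is the correct identification of each surviving metric Lie algebra with its global simply connected Lie group, since inequivalent left-invariant Lorentzian metrics may sit on the same group, and the careful tracking of when an orientation-reversing isometry is needed to bring $\alpha$ into the tabulated sign $\epsilon_0\, e^0$.
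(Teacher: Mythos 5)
Your proposal is correct in outline but takes a genuinely different route from the paper's for this particular theorem. You propose the brute-force method: run through the seven metric Lie algebras of Theorem \ref{thm:LorentzGroups}, impose the $\varepsilon\,$-contact equation $\ast\alpha=-s\,\dd\alpha$ together with $\vert\alpha\vert_g^2=-1$, compute $\mathrm{Ric}^g$ from the structure constants, and solve the resulting polynomial system family by family. That is exactly the strategy the paper uses for the para-contact and null cases (Theorems \ref{thm:pcetae} and \ref{thm:etanullcontactstructures}), but for the time-like case the paper argues intrinsically instead: because $\mathfrak{s}_g\varepsilon=1$, the tensor $\mathfrak{h}$ is diagonalizable in an orthonormal frame $\left\{\xi,X,\phi(X)\right\}$ with eigenvalues $\pm\mu$, $\mu=\sqrt{1-(\lambda^2+\kappa)}$ (Lemma \ref{lemma:phihbasisb}); the covariant-derivative formulas of Propositions \ref{prop:eqfh} and \ref{prop:equationscontact} together with the curvature identity of Lemma \ref{lemmalexpression} then force the structure functions of that frame to be constants (Proposition \ref{prop:cdnslc} in the non-Sasakian case, Lemma \ref{lemma:claslsas} via the Jacobi identity in the Sasakian case), and the table is obtained by matching the resulting brackets to Theorem \ref{thm:LorentzGroups} at the very end. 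The intrinsic route is computationally much lighter and yields as a by-product the rigidity statement (Proposition \ref{prop:claslns}) that a complete, simply connected, non-Sasakian example is automatically a Lie group with left-invariant structure; your route is uniform across all values of $\varepsilon$ but only ever sees structures assumed left-invariant from the outset, so it proves the quoted statement but not that stronger addendum.

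One step of your plan needs care: you cannot in general normalize $\alpha=\pm e^0$ a priori by a ``pseudo-orthonormal automorphism of $(\mathfrak{g}_i,g)$'', since for generic structure constants the isometric automorphisms of a left-invariant Lorentzian metric form a discrete group and do not act transitively on unit time-like covectors. That every solution can be brought to $\alpha=\pm e^0$ is a conclusion of the analysis, not a permissible normalization; for instance, on $\mathfrak{g}_3$ with $a=b=c=s$ the contact equation admits every unit time-like left-invariant $\alpha$, and these are mutually isometric only because that particular metric is Einstein, hence of constant curvature, with a large isometry group. Your fallback of carrying a general covector $\alpha=\alpha_0 e^0+\alpha_1 e^1+\alpha_2 e^2$ through each family is the safe version and is what should actually be executed; with that choice the argument goes through.
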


\begin{thm}
\label{thm:introetapara}
A three-dimensional connected and simply connected Lie group $G$ admits a left- invariant $\varepsilon\eta\,$-Einstein para-contact structure $(g,\alpha)$ if and only if $(\mathrm{G},g,\alpha)$ is isomorphic, through a possibly orientation-reversing isometry, to one of the items listed in the following table in terms of the orthonormal frame $\left\{ e_0 , e_1 , e_2\right\}$ appearing in Theorem \ref{thm:LorentzGroups}:
 
\renewcommand{\arraystretch}{1.5}
 \begin{center}
\begin{tabular}{ |P{0.3cm}|P{4.4cm}|P{2.9cm}|P{3.2cm}|P{1cm} | P{1.2cm}|}
\hline
$\mathfrak{g}$  & \emph{Structure constants} $(s\in \mathbb{Z}_2)$  & $\alpha$ & $\eta\,$\emph{-Einstein constants} & $\mathrm{G}$ & \emph{Sasakian} \\
\hline
\multirow{6}*{$\mathfrak{g}_3$} & $a=s\, , \, b =c\, , \, sc\geq 1 $ & \multirow{1}*{$ \alpha=\pm e^1$} & $\lambda^2= s c\, , \, \kappa=  s c-1$& $\widetilde{\mathrm{Sl}}(2, \mathbb{R})$ & \emph{Yes} \\ \cline{2-6} & $b=s\, , \, a=c \, , \, \, sc\geq 1$ & \multirow{1}*{$ \alpha=\pm e^2$} & $\lambda^2=sc\, , \, \kappa=  s c-1$ & $\widetilde{\mathrm{Sl}}(2, \mathbb{R})$ & \emph{Yes} \\   \cline{2-6}& $b=0\, , a=c=s$ & $-\alpha_0^2+\alpha_1^2=1$ &  $\lambda^2=0\, , \kappa=0$ & $\widetilde{\mathrm{E}}(1,1)$ & \emph{No} \\ \cline{2-6}& $c= 0\, ,\, a=b=s$ & $\alpha_1^2+\alpha_2^2=1$ & $\lambda^2= 0\, , \kappa=0$  & $\widetilde{\mathrm{E}}(2)$ & \emph{No }\\ \cline{2-6}& $a=b=c=s$ & $-\alpha_0^2+\alpha_1^2+\alpha_2^2=1$ & $\lambda^2=1\, , \kappa=0$ & $\widetilde{\mathrm{Sl}}(2, \mathbb{R})$ & \emph{Yes} \\ \hline
\multirow{3}*{$\mathfrak{g}_6$} & $a=b=0\, ,$ $d^2\geq 1\, , c=-s$ & $\alpha_2^2=1$ & $\lambda^2=d^2\, , \kappa=d^2-1$ &\multirow{3}*{$\mathfrak{G}_6$} &  	\emph{Yes} \\ \cline{2-4} \cline{6-6}
& $b=-\mu a \neq 0 \, , \, d=-a+\mu s \, , \,$ & $a \alpha_2=(-s+\mu a ) \alpha_0$ & \multirow{2}*{$\lambda^2=1\, ,\, \kappa=0\, $} & & \multirow{2}*{\emph{Yes}} \\  & $c=-s+\mu a \, , \, \mu \in \mathbb{Z}_2\,$ & $ \alpha_2^2=1+\alpha_0^2\, $ &  & & \\ \hline
\end{tabular}
\end{center}
\renewcommand{\arraystretch}{1}
\end{thm}

\begin{thm}
\label{thm:introetanull}
A three-dimensional connected and simply connected Lie group $\mathrm{G}$ admits a left-invariant $\varepsilon\eta\,$-Einstein null contact structure $(g,\alpha)$ if and only if $(\mathrm{G},g,\alpha)$ is isomorphic, through a possibly orientation-reversing isometry, to one of the items listed in the following table in terms of the orthonormal frame $\left\{ e_0 , e_1 , e_2\right\}$ appearing in Theorem \ref{thm:LorentzGroups}:
	
\renewcommand{\arraystretch}{1.5}
 \begin{center}
\begin{tabular}{ |P{0.5cm}|P{4.4cm}|P{2cm}|P{3.7cm}|P{1cm} |P{1.4cm}| }
\hline
$\mathfrak{g}$  & \emph{Structure constants} $(s\in \mathbb{Z}_2)$  & $\alpha$ & $\eta\,$\emph{-Einstein constants} & $\mathrm{G}$ & \emph{Sasakian}\\
\hline
\multirow{3}*{$\mathfrak{g}_3$} & $a=b=c=s$ & $ \alpha_0^2=\alpha_1^2+\alpha_2^2$ & $\lambda^2=1\, , \, \kappa=0$ &  $\widetilde{\mathrm{Sl}}(2,\mathbb{R})$  & \emph{Yes} \\ \cline{2-6} &  \multirow{2}*{$a=c=s\, , \, b=0$} &  $\alpha_2=0\, ,$   & \multirow{2}*{$\lambda^2=0\, , \, \kappa=0$} &  \multirow{2}*{$\widetilde{\mathrm{E}}(1,1)$}  & \multirow{2}*{\emph{No}}   \\ & &  $\alpha_0^2=\alpha_1^2$ & & & \\  \hline
\multirow{2}*{$\mathfrak{g}_4$}    & $b=0 \, , \, a=s\, $ &$\alpha_1=0\, , \,$ & $\lambda^2=1\, , \, \alpha_0^2 \kappa=1$ & $\widetilde{\mathrm{Sl}}(2,\mathbb{R})$ &  \emph{Yes}\\ \cline{2-2} \cline{4-6} & $b=0\, , a=0\,  , \, $  & $\alpha_0=\mu  \alpha_2$  &$\lambda^2=0\, , \,\alpha_0^2 \kappa=2 $ & $\widetilde{\mathrm{E}}(1,1)$ & \emph{No} \\	 
\hline
\multirow{2}*{$\mathfrak{g}_6$}  & $a=d\neq 0\, , b=c$  &$\alpha_1=0\, ,$    & $\lambda^2=4a^2\, ,$ & \multirow{2}*{$\mathfrak{G}_6$} & \multirow{2}*{\emph{If} $a=\frac{\mu s}{2}$} \\ & $a=\mu(b+s)\, , \mu \in \mathbb{Z}_2 \,$ & $\alpha_0=-\mu \alpha_2 $ & $\kappa=0$ & & \\ \hline
\end{tabular}
\end{center}
			
			 \renewcommand{\arraystretch}{1}
	
\end{thm}

\noindent
Here $\widetilde{\mathrm{Sl}}(2,\mathbb{R})$ denotes the universal cover of $\mathrm{Sl}(2,\mathbb{R})$, $\widetilde{\mathrm{E}}(2)$ denotes the universal cover of the group of rigid motions of the
Euclidean plane, whereas $\widetilde{\mathrm{E}}(1,1)$ denotes the universal cover of the group of rigid motions of the Minkowski plane and $\mathrm{H}_3$ denotes the three-dimensional Heisenberg group. The classification of complete and simply connected $\varepsilon\eta\,$-Einstein Riemannian three-manifolds was completed in \cite{BlairKoufo,BlairKoufoII,Koufo}. The classification of complete $\varepsilon\eta\,$-Einstein Lorentzian three-manifolds with time-like Reeb vector field, presented in Theorem \ref{thm:introetatime}, proceeds along similar lines to the Riemannian case and therefore may be known to experts, although we have not been able to find it explicitly stated in the literature. By appropriately combining  $\varepsilon\eta\,$-Einstein Riemannian three manifolds with the $\varepsilon\eta\,$-Einstein $\varepsilon\,$-contact manifolds listed in the previous theorems, as prescribed in Section \ref{sec:6dsugrasolutions}, we can obtain simply-connected six-dimensional Lorentzian manifolds equipped with a continuous family of Ricci-flat metric-compatible connections with totally skew symmetric, isotropic, closed and co-closed torsion. These and all their smooth discrete quotients carry solutions of minimal supergravity coupled to a tensor multiplet with constant dilaton.

The $\varepsilon\eta\,$-Einstein condition on a globally hyperbolic Lorentzian $\varepsilon\,$-contact manifold can be interpreted as a dynamical evolution problem for a metric, a one-form and a pair of functions on a Riemann surface. As such, it admits an initial value formulation. We develop such formulation in Section \ref{sec:Einsteinpcontact}, with the goal of obtaining the associated constraint equations as a first step towards a future study of the Cauchy problem for these structures. This point of view and approach to Lorentzian $\varepsilon\eta\,$-Einstein structures seems to be largely missing in the mathematical literature, with some notable exceptions, see for instance \cite{Duggal}.  

The outline of this article is as follows. In Section \ref{sec:cmg} we introduce the notion of $\varepsilon\,$-contact structure and explore some of its generic properties, including its formulation on a globally hyperbolic Lorentzian three-manifold. Section \ref{sec:nullcontact} is devoted to $\varepsilon\,$-contact structures with light-like Reeb vector field, introducing the concepts of Sasakian and null K-contact contact structures and classifying those on a simply connected three-dimensional Lie group which are left-invariant. In Section \ref{sec:Einsteinpcontact} we introduce the notion of $\varepsilon\eta\,$-Einstein $\varepsilon\,$-contact structure and we classify all left-invariant $\varepsilon\eta\,$-Einstein $\varepsilon\,$-contact structures on simply connected three-dimensional Lie groups. In Section \ref{sec:6dsugrasolutions} we present Theorem \ref{thm:intro6dsugra} and we establish the link between $\varepsilon\eta\,$-Einstein $\varepsilon\,$-contact structures and solutions of supergravity in six dimensions. Finally, in Section \ref{sec:RicciFlatTorsion} we illustrate the type of solutions of six-dimensional supergravity that are obtained through Theorem \ref{thm:intro6dsugra} by using the $\varepsilon\eta\,$-Einstein $\varepsilon\,$-contact manifolds obtained in Theorems \ref{thm:introetatime}, \ref{thm:introetapara} and \ref{thm:introetanull}.


\subsection*{Acknowledgements}


We thank Vicente Cort\'es, Tom\'as Ort\'in, Francisco Presas and Marco Zagermann for very useful discussions. Furthermore we thank an anonymous referee of JGP for several comments which have helped improving the manuscript. The work of C.S.S. is supported by the Humboldt foundation through the Humboldt grant ESP 1186058 HFST-P and the German Science Foundation (DFG) under the Research Training Group 1670 \emph{Mathematics inspired by String Theory}. The work of AM is funded by the Spanish FPU Grant No. FPU17/04964. AM was further supported by the MCIU/AEI/FEDER UE grant PGC2018-095205-B-I00, and by the ``Centro de Excelencia Severo Ochoa'' Program grant SEV-2016-0597.


\section{$\varepsilon\,$-Contact metric three-manifolds}
\label{sec:cmg}


In this section we introduce the notion of \emph{$\varepsilon\,$-contact metric structure}, which encompasses as particular cases the standard definition of contact Riemannian metric structure, contact Lorentzian structure and para-contact metric structure in three dimensions, but which also allows for the \emph{Reeb vector field} to be null. 
 
\begin{definition}
Let $M$ be an oriented three-manifold. An {\bf $\varepsilon\,$-contact metric structure} (or $\varepsilon\,$-contact structure, in short) on $M$ consists of a triple $(g,\alpha,\varepsilon)$, with $\varepsilon \in \{-1,0,1 \}$, $g$ a Riemmanian or pseudo-Riemannian metric on $M$ and $\alpha$ a one-form $\alpha \in \Omega^1(M)$, satisfying:
\begin{equation}
\label{eq:contactcondition}
\alpha=\ast\dd \alpha\, , \qquad \vert \alpha \vert_g^2=\varepsilon,
\end{equation}

\noindent
where $\ast\colon \Omega^r(M)\to \Omega^{3-r}(M)$ ($r=0,1,2,3$) denotes the Hodge-dual with respect to $g$ and the fixed orientation on $M$, which is then said to be an $\varepsilon\,$-contact metric three-manifold (or $\varepsilon\,$-contact three-manifold, in short). When $g$ is Lorentzian, we will assume that $(M,g)$ is oriented and time-oriented. 
\end{definition}

\begin{remark}
Note that equation $\alpha=\ast\dd \alpha$ is equivalent to
\begin{equation*}
\ast\alpha = \s_g\, \dd\alpha\, ,
\end{equation*}

\noindent
where $\s_g = +1$ if $g$ is Riemannian and $\s_g = -1$ if $g$ is Lorentzian.
\end{remark}

\begin{remark}
The motivation to introduce the previous definition will be apparent in Section \ref{sec:6dsugrasolutions}, see Theorem \ref{thm:pcontactsugrasolution}.
\end{remark}

\noindent
Let $(g,\alpha,\varepsilon)$ be an $\varepsilon\,$-contact metric structure on $M$ and denote by $\nu_g$ the pseudo-Riemannian volume form associated to $g$ and the fixed orientation on $M$. If $g$ is Riemannian then necessarily $\varepsilon = 1$ (whence it can be omitted) and $(g,\alpha)$ defines in this case a standard contact metric structure on $M$ \cite{Blair}. To see this, note that the kernel of $\alpha$ defines an oriented non-integrable rank-two distribution,
\begin{equation*}
\cD \eqdef \ker(\alpha) \subset TM\, .
\end{equation*} 

\noindent
Denote by $g_{\cD}$ the restriction of $g$ to $\cD$. Being oriented, the Riemannian volume form of $(\cD,g_{\cD})$ defines a canonical almost complex structure $J_{\cD}\colon \cD \to \cD$ with respect to which $\nu_{\cD}$ is of type $(1,1)$. Furthermore, the vector field $\xi = \alpha^{\sharp}$ dual to $\alpha$ satisfies by definition $\alpha(\xi) = 1$. Equation \eqref{eq:contactcondition} implies
\begin{equation*}
\dd\alpha(v_1,v_2) = \ast\alpha(v_1,v_2) = \nu_{\cD}(v_1,v_2) = g_{\cD}(v_1,J_{\cD} v_2)\, , \qquad v_1 , v_2 \in \cD
\end{equation*}

\noindent
Therefore, the tuple $(\cD,g,\xi,\phi)$, where $\phi\vert_{\cD} = J_{\cD}$ and $\phi(\xi) = 0$, defines a standard contact metric structure on $M$. Conversely, any such Riemannian contact metric structure gives rise to a canonical Riemannian $\varepsilon\,$-contact metric structure on $M$. Similar remarks apply when $g$ is Lorentzian and $\varepsilon = -1$, in which case we recover the usual notion of Lorentzian contact metric structure, and when $g$ is Lorentzian and $\varepsilon = 1$, in which case we recover the usual notion of para-contact metric structure. Hence the following holds.

\begin{prop}
Let $M$ be an oriented three-manifold. An $\varepsilon\,$-contact metric structure $(g,\alpha,\varepsilon)$ on $M$ defines a canonical Riemannian contact metric structure if $g$ is Riemannian, a canonical Lorentzian contact metric structure if $g$ is Lorentzian and $\varepsilon = -1$, and a canonical para-contact metric structure if $g$ is Lorentzian and $\varepsilon = 1$. The converse also holds for the three previous cases.
\label{prop:equivdefis}
\end{prop}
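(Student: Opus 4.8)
The plan is to establish the three stated equivalences in parallel, taking the Riemannian computation already displayed above as the template and observing that each step is reversible. Throughout, write $\xi \eqdef \alpha^\sharp$, $\cD \eqdef \ker\alpha$, and let $g_\cD$ be the restriction of $g$ to $\cD$. Two facts hold uniformly and follow directly from \eqref{eq:contactcondition}. First, $\alpha(\xi) = g(\alpha^\sharp,\alpha^\sharp) = \vert\alpha\vert_g^2 = \varepsilon$, so in the cases of interest ($\varepsilon = \pm 1$) the vector field $\xi$ is, up to the sign $\varepsilon$, a unit Reeb vector field with $\alpha(\xi) = \varepsilon$. Second, contracting $\dd\alpha = \s_g\ast\alpha$ with $\xi$ and using $\iota_\xi\ast\alpha = \pm\ast(\alpha\wedge\alpha) = 0$ (since $\xi^\flat = \alpha$) gives $\iota_\xi\dd\alpha = 0$, so $\xi$ spans the characteristic line field of $\alpha$; moreover, the kernel of the $2$-form $\ast\alpha$ is exactly $\R\xi$ and $\xi \notin \cD$ because $\varepsilon \neq 0$, so $\ast\alpha\vert_\cD$ is a non-vanishing area form, $\dd\alpha\vert_\cD$ is non-degenerate, and $\alpha\wedge\dd\alpha \neq 0$; that is, $\alpha$ is a contact form in the classical sense. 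Finally, the causal type of $\xi$ is fixed by $\varepsilon$: if $g$ is Riemannian then necessarily $\varepsilon = 1$; if $g$ is Lorentzian then $\varepsilon = -1$ forces $\xi$ time-like and $g_\cD$ positive definite, while $\varepsilon = 1$ forces $\xi$ space-like and $g_\cD$ of signature $(1,1)$.

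Next I would build the transverse structure case by case. When $g_\cD$ is positive definite --- that is, $g$ Riemannian, or $g$ Lorentzian with $\varepsilon = -1$ --- the orientation of $M$ coorients $\cD$ by $\alpha$, so $(\cD,g_\cD)$ is an oriented Riemannian plane bundle and rotation by $\pi/2$ yields an almost complex structure $J_\cD$ with $J_\cD^2 = -\id_\cD$ and $\nu_\cD(v_1,v_2) = g_\cD(v_1,J_\cD v_2)$; exactly as in the displayed Riemannian computation one obtains $\dd\alpha\vert_\cD = \s_g\,\nu_\cD$ under the canonical identification of $\ast\alpha\vert_\cD$ with $\nu_\cD$, so the tuple $(\cD,g,\xi,\phi)$ with $\phi\vert_\cD = \s_g\,J_\cD$ and $\phi(\xi)=0$ satisfies the axioms of a contact metric structure: Riemannian when $g$ is Riemannian, and Lorentzian --- with time-like Reeb field and $g$ of signature $(-,+,+)$ --- when $g$ is Lorentzian and $\varepsilon=-1$. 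When instead $g$ is Lorentzian with $\varepsilon=1$, $g_\cD$ has signature $(1,1)$, and the oriented area form $\nu_\cD$ of $(\cD,g_\cD)$ is related to the metric by $\nu_\cD(v_1,v_2) = g_\cD(v_1,J_\cD v_2)$ with $J_\cD^2 = +\id_\cD$ and $\tr J_\cD = 0$; thus $J_\cD$ is a transverse para-complex structure and $(\cD,g,\xi,\phi)$ with $\phi\vert_\cD = \s_g\,J_\cD$, so that on $\cD$ one has $\phi^2 = \id$ and $g(\phi\cdot,\phi\cdot) = -g(\cdot,\cdot)$, is by definition a para-contact metric structure.

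For the converse one runs the same computation backwards. Given a classical contact metric (resp. Lorentzian contact metric, resp. para-contact metric) structure $(g,\xi,\phi)$ with Reeb vector field $\xi$ and contact form $\eta$, which in all three cases satisfies $\vert\eta\vert_g^2 = g(\xi,\xi) = \varepsilon$ with $\varepsilon$ equal to $1$, $-1$, $1$ respectively, set $\alpha \eqdef \eta$. Then the second condition in \eqref{eq:contactcondition} holds by construction, while the structural identity $\dd\eta(X,Y) = g(X,\phi Y)$ together with the fact that $\phi$ restricts to $\cD = \ker\eta$ as a $(\pm)$-complex structure compatible with $g_\cD$ identifies $\dd\alpha$ with $\s_g\,\ast\alpha$, which by the Remark above equating $\ast\alpha = \s_g\,\dd\alpha$ with $\alpha = \ast\dd\alpha$ means that $(g,\alpha,\varepsilon)$ is an $\varepsilon\,$-contact metric structure. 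The orientation of $M$ --- and, in the Lorentzian cases, the time-orientation, with $\xi$ future-directed when $\varepsilon=-1$ --- is the one making these sign identifications consistent. I expect the only genuinely delicate point to be the bookkeeping of orientation and sign conventions, in particular matching the factor $\s_g$ of the Remark and the placement of $\varepsilon$ in $\eta = \varepsilon\,g(\xi,\cdot)$ against the several conventions for the fundamental two-form used in the Lorentzian contact and para-contact literature; modulo this, the correspondence in both directions is a straightforward translation.
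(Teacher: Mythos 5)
Your proposal is correct and follows essentially the same route as the paper: the paper's proof is the discussion immediately preceding the proposition, which carries out the construction of $J_{\cD}$ from the transverse area form of $(\cD = \ker\alpha,\, g_{\cD})$ in the Riemannian case and then asserts that ``similar remarks apply'' in the two Lorentzian cases and for the converse. You simply make explicit what the paper leaves implicit --- the case distinction between positive-definite and split-signature $g_{\cD}$ (yielding $J_{\cD}^2=-\id$ versus the para-complex $J_{\cD}^2=+\id$), the verification that $\alpha$ is a genuine contact form, and the reversal of the computation --- all of which is consistent with the paper's conventions.
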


\noindent 
\begin{remark}
By the previous argument, if  $(g,\alpha,\varepsilon)$ is an $\varepsilon\,$-contact metric structure on a Riemannian manifold $(M,g)$, we shall just denote it as a Riemannian contact metric structure in order to keep the usual nomenclature in the literature. 
\end{remark}

\noindent
Given an $\varepsilon\,$-contact structure $(g,\alpha,\varepsilon)$, we will refer to $\xi = \alpha^{\sharp}$ (the metric dual of $\alpha)$ as the Reeb vector field of $(g,\alpha,\varepsilon)$. The notion of morphism of $\varepsilon\,$-contact manifolds we consider is the expected one. 

\begin{definition}
\label{def:morphpcont}
Let $(M_a,g_a,\alpha_a,\varepsilon_a)$, $a=1,2$, be $\varepsilon\,$-contact three-manifolds. A {\bf morphism} $F$ from $(M_1,g_1,\alpha_1,\varepsilon_1)$ to $(M_2,g_2,\alpha_2,\varepsilon_2)$ with $\varepsilon_1 = \varepsilon_2$ is an orientation preserving smooth map $F\colon M_1 \to M_2$ such that:
\begin{equation*}
g_1 = F^{\ast}g_2\, , \qquad \alpha_1 = F^{\ast}\alpha_2\, .
\end{equation*} 

\noindent
If such $F$ is not orientation preserving, then we will say that it is an {\bf orientation-reversing morphism}.
\end{definition}

\begin{remark}
We denote by $\mathrm{PCont}$ the category whose objects are $\varepsilon\,$-contact three-manifolds and whose morphisms are defined as above. Relevant subcategories of $\mathrm{PCont}$ are the subcategory of contact Riemannian three-manifolds $\mathrm{PCont}_{R}$ and the category $\mathrm{PCont}_{L}(\varepsilon)$ of $\varepsilon\,$-contact Lorentzian three-manifolds with Reeb vector field of norm $\varepsilon\in \left\{-1,0,1\right\}$. 
\end{remark}

\noindent
In analogy with the standard theory of Riemannian contact structures, we introduce, associated to every $\varepsilon\,$-contact metric structure $(g,\alpha,\varepsilon)$, two endomorphisms $\phi\colon TM\to TM$ and $\mathfrak{h}\colon TM\to TM$:
\begin{equation}
\phi(v) = - \s_g \, (\iota_v\ast\alpha)^{\sharp}\, , \qquad \mathfrak{h}(v) = (\cL_{\xi}\phi)(v) \qquad \forall\,\, v\in TM\, ,
\label{eq:definitionphi}
\end{equation}

\noindent 
where $\xi = \alpha^{\sharp}$ is the Reeb vector field of $(g,\alpha,\varepsilon)$ and the symbol $\cL$ denotes Lie derivative. We will refer to $\phi\in \Gamma(TM\otimes T^{\ast}M)$ as the \emph{characteristic endomorphism} of $(g,\alpha,\varepsilon)$. Furthermore, from $\phi$ and $\mathfrak{h}$ we define:
\begin{equation*}
\tau \eqdef \mathfrak{h}\circ \phi \colon TM \to TM\, .
\end{equation*}

\noindent
These endomorphisms will play an important role later on. The following lemma summarizes some of the properties enjoyed by $\phi$.

\begin{lemma}
\label{lemma:phiproperties}
The characteristic endomorphism $\phi$ of an $\varepsilon\,$-contact metric manifold $(M,g,\alpha,\varepsilon)\in \mathrm{PCont}$ satisfies:
\begin{eqnarray*}
g(\mathrm{Id}\otimes \phi)= \dd\alpha \, , \quad \phi(\xi) = 0\, , \quad \alpha\circ \phi = 0\, , \quad \phi^2 = \s_g\,(-\varepsilon\,\mathrm{Id} + \xi \otimes \alpha)\, , \quad g\circ \phi\otimes\phi = \s_g (\varepsilon\, g  - \alpha\otimes \alpha)\, .
\end{eqnarray*}
	
\noindent
where $\xi\eqdef\alpha^\sharp$ denotes the Reeb vector field of $(g,\alpha,\varepsilon)$.
\end{lemma}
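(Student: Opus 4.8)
The plan is to verify each of the five identities in Lemma \ref{lemma:phiproperties} by unwinding the definition $\phi(v) = -\s_g\,(\iota_v\ast\alpha)^{\sharp}$ together with the defining relations \eqref{eq:contactcondition}, namely $\ast\alpha = \s_g\,\dd\alpha$ and $|\alpha|_g^2 = \varepsilon$. First I would record the tautology that for any vectors $v,w$ one has $g(v,\phi(w)) = -\s_g\,(\iota_w\ast\alpha)(v) = -\s_g\,(\ast\alpha)(w,v) = \s_g\,(\ast\alpha)(v,w)$, and then substitute $\ast\alpha = \s_g\,\dd\alpha$ to get $g(v,\phi(w)) = \s_g^2\,\dd\alpha(v,w) = \dd\alpha(v,w)$, which is precisely the first claim $g(\mathrm{Id}\otimes\phi) = \dd\alpha$. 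From this relation the next two are immediate: $\alpha\circ\phi = 0$ follows because $\alpha(\phi(w)) = g(\xi,\phi(w)) = \dd\alpha(\xi,w)$ and $\iota_\xi\dd\alpha = \iota_\xi\s_g\ast\alpha = \s_g\ast(\alpha\wedge\alpha) = 0$ (using $\iota_\xi\ast\alpha = \ast(\alpha\wedge\alpha)$ up to sign, or directly $\iota_\xi\dd\alpha = \s_g\,\iota_{\alpha^\sharp}\ast\alpha$ which contracts $\ast\alpha$ along its own dual and vanishes by antisymmetry); and $\phi(\xi) = 0$ follows because $g(v,\phi(\xi)) = \dd\alpha(v,\xi) = -\dd\alpha(\xi,v) = 0$ for all $v$, using nondegeneracy of $g$.

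Next I would establish the quadratic identity $g\circ\phi\otimes\phi = \s_g(\varepsilon\,g - \alpha\otimes\alpha)$. The cleanest route is to work pointwise in an adapted frame. Using $|\alpha|^2 = \varepsilon$ and the Hodge-star on a three-manifold, the two-form $\dd\alpha = \s_g\ast\alpha$ has, relative to the orthogonal splitting $TM = \R\xi \oplus \cD$ where $\cD = \ker\alpha$, the shape of a "rotation" in the plane $\cD$; more precisely $\ast\alpha$ restricted to $\cD$ is (a multiple of) the area form of $(\cD, g|_{\cD})$ determined by the orientation. Concretely one computes $g(\phi(v),\phi(w)) = \sum$ over a frame of the components, and the contraction of two copies of $\ast\alpha$ reduces, via the standard identity $\ast(\beta)\wedge\gamma$-type contractions (equivalently $g(\iota_v\ast\alpha,\iota_w\ast\alpha)$ expanded by the formula for the inner product of contracted forms), to $\s_g(\varepsilon\,g(v,w) - \alpha(v)\alpha(w))$. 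I would carry this out by evaluating separately on $v = w = \xi$ (both sides give $\s_g(\varepsilon\cdot\varepsilon - \varepsilon\cdot\varepsilon)$... wait, one must be careful: $g(\phi\xi,\phi\xi) = 0$ and the RHS is $\s_g(\varepsilon\cdot\varepsilon - \varepsilon^2) = 0$, consistent), on $v\in\cD, w = \xi$ (both sides zero, using $\alpha\circ\phi = 0$), and on $v,w\in\cD$ (the main case, where $|\alpha|^2 = \varepsilon$ enters through the normalization of the area form). Finally, the operator identity $\phi^2 = \s_g(-\varepsilon\,\mathrm{Id} + \xi\otimes\alpha)$ follows from the quadratic one by raising an index: $g(\phi^2 v, w) = -g(\phi v,\phi w) = -\s_g(\varepsilon\,g(v,w) - \alpha(v)\alpha(w)) = g(\s_g(-\varepsilon\,v + \alpha(v)\xi), w)$ for all $w$, hence $\phi^2 v = \s_g(-\varepsilon v + \alpha(v)\xi)$ by nondegeneracy; here I use $g(\phi v,\phi w) = -g(\phi^2 v, w)$, which holds because $\phi$ is skew with respect to $g$, itself a consequence of $g(v,\phi w) = \dd\alpha(v,w)$ being antisymmetric.

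The main obstacle I anticipate is purely bookkeeping rather than conceptual: getting the sign factor $\s_g$ right everywhere, since $\ast$ depends on the signature and $\ast^2$ on three-forms versus one-forms behaves differently (indeed the remark in the paper already flags $\ast\alpha = \s_g\,\dd\alpha$ precisely to handle this). The subtlety concentrates in the contraction $g(\iota_v\ast\alpha,\iota_w\ast\alpha)$ when $\xi$ is null, i.e. $\varepsilon = 0$: there $\alpha^\sharp = \xi\in\cD$ lies inside its own kernel-distribution, the induced "metric" $g|_{\cD}$ is degenerate, and the naive frame argument breaks. To cover $\varepsilon = 0$ uniformly I would avoid choosing an orthonormal frame adapted to $\cD$ and instead argue algebraically: the identity $g(\phi v,\phi w) = \s_g(\varepsilon g(v,w) - \alpha(v)\alpha(w))$ is polynomial in the entries of $g$, $\alpha$, and $\dd\alpha = \s_g\ast\alpha$, and can be checked by a single coordinate-free manipulation using the general formula $\iota_v\ast\beta = \ast(\beta\wedge v^\flat)\cdot(\pm1)$ and $\ast\ast = \s_g$ on the relevant degree, so that $g(\iota_v\ast\alpha,\iota_w\ast\alpha) = \pm\ast\big((\alpha\wedge v^\flat)\wedge\ast(\alpha\wedge w^\flat)\big)$ which expands by the standard Gram-determinant formula into $\varepsilon g(v,w) - \alpha(v)\alpha(w)$ times the appropriate sign. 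Since all these steps are "direct computation", as the paper says, the proof is short once the sign conventions are pinned down once and for all at the start.
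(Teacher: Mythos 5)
Your proposal is correct: each identity is verified exactly as you describe, and the Gram-determinant computation $\langle\iota_v\ast\alpha,\iota_w\ast\alpha\rangle_g=\s_g\bigl(\varepsilon\, g(v,w)-\alpha(v)\alpha(w)\bigr)$ together with the skew-symmetry of $\phi$ cleanly covers the degenerate case $\varepsilon=0$. The paper gives no argument beyond the remark that the lemma ``follows by direct computation,'' and your writeup is precisely that computation, so the approaches coincide.
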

\begin{proof}
Using the definition of $\phi$ given in Equation \eqref{eq:definitionphi}, we compute:
\begin{equation*}
g(v_1, \phi(v_2))=-\mathfrak{s}_g g(v_1,(\iota_{v_2} \ast \alpha)^\sharp )=-\mathfrak{s}_g \ast \alpha (v_2,v_1)=\dd \alpha(v_1,v_2)\, , \qquad v_1 , v_2 \in \mathfrak{X}(M)\, ,
\end{equation*}

\noindent
whence the first equation of the lemma holds. The second and third equations of the lemma follow directly from the definition of $\phi$. On the other hand, the square of the endomorphism $\phi$ can be computed to be:
\begin{equation*}
\begin{split}
\phi (\phi (v))=[\iota_{(\iota_{v} \ast \alpha)^\sharp} (\ast \alpha)]^\sharp&=[(\ast \alpha)((\iota_{v} \ast \alpha)^\sharp)]^\sharp=[\ast(\alpha \wedge (\iota_{v} \ast \alpha) )]^\sharp\\&=[-\ast (\iota_{v} (\alpha \wedge \ast \alpha))+\alpha(v) (\ast \ast \alpha)]^\sharp=-\mathfrak{s}_g \varepsilon \, v+\mathfrak{s}_g \alpha (v) \xi\, , \qquad v\in \mathfrak{X}(M)\, ,
\end{split}
\end{equation*}

\noindent
which implies the fourth equation of the lemma. Finally, this last equation follows from:
\begin{equation*}
g(\phi(v_1), \phi(v_2))=\dd \alpha(\phi(v_1),v_2)=-\dd \alpha(v_2,\phi(v_1))=-g(v_2,\phi^2(v_1))=\mathfrak{s}_g \varepsilon g(v_1,v_2)-\mathfrak{s}_g \alpha(v_1)\alpha(v_2)\, .
\end{equation*}
\end{proof}

\begin{remark}
Lemma \ref{lemma:phiproperties} recovers key identities satisfied by $\varepsilon$-contact structures which in classical references are taken as part of the definition of Riemannian contact structures \cite{Blair} ($\varepsilon=\mathfrak{s_g}=1$), Lorentzian contact structures \cite{Calvaruso} ($\varepsilon=\mathfrak{s_g}=-1$) or para-contact structures \cite{CalvarusoII} ($\varepsilon=-\mathfrak{s_g}=1$).
\end{remark}

\begin{remark}
Note that the characteristic endomorphism of an $\varepsilon\,$-contact metric structure $(g,\alpha,\varepsilon)$ is always skew-symmetric with respect to $g$, that is,
\begin{equation*}
g(\phi(v_1),v_2) + g(v_1,\phi(v_2)) = 0\, , \qquad\forall \,\, v_1 , v_2\in TM\, .
\end{equation*}
\end{remark}

\noindent
Given an $\varepsilon\,$-contact structure $(g,\alpha,\varepsilon)$ we define yet another endomorphism, denoted by $\mathfrak{l}\in \End(TM)$, as follows:
\begin{equation*}
\mathfrak{l}(v) = \mathrm{R}^g(v,\xi) \xi\, , \qquad \forall\,\, v\in \mathfrak{X}(M)\, ,
\end{equation*}

\noindent
where $\mathrm{R}^g$ denotes the Riemann curvature tensor of $g$. The endomorphism $\mathfrak{l}\colon TM\to TM$ should not be confused with the endomorphism defined by the Ricci curvature, which we denote by $\mathrm{Q}^g$. This makes five the endomorphisms $(\phi,\mathfrak{h},\tau,\mathfrak{l}, \mathrm{Q}^g)$ canonically associated to every $\varepsilon\,$-contact structure $(g,\alpha,\varepsilon)$. Manifolds equipped with an $\varepsilon\,$-contact structure $(g,\alpha,\varepsilon)$ admit a special frame which is very convenient for computations.

\begin{definition}
\label{def:lcf}
Let $(g,\alpha,\varepsilon)$ be an $\varepsilon\,$-contact metric structure on $M$ with Reeb vector field $\xi\in \mathfrak{X}(M)$. An {\bf $\varepsilon\,$-contact frame} is a local frame $\left\{\xi,u,\phi(u)\right\}$, where $u\in \mathfrak{X}(M)$ is a nowhere vanishing vector field satisfying:
\begin{equation*}
g(u,u) = \s_g \,\varepsilon\, , \qquad g(u,\xi) = 1 - \varepsilon^2\, .
\end{equation*}

\noindent
When $\varepsilon = 0$ we will refer to $\left\{\xi,u,\phi(u)\right\}$ as a {\bf light-cone frame} for $(g,\alpha,0)$, following standard usage in Lorentzian geometry.
\end{definition}

\begin{remark}
\label{remark:lcb}
It is a direct calculation to verify that $\left\{\xi,u,\phi(u)\right\}$ is indeed a local frame. We have:
\begin{equation*}
g(\xi,\xi)=\varepsilon\, , \qquad g(u,\phi(u)) = 0\, , \qquad g(\xi,\phi(u)) = 0\, , \qquad g(\phi(u),\phi(u)) = 1\, ,
\end{equation*}
	
\noindent
whence $\phi(u)$ is locally nowhere vanishing and point-wise orthogonal to the real span of $\xi$ and $u$.
\end{remark}

\begin{prop}
\label{prop:eqfh}
Let $(g,\alpha,\varepsilon)$ be an $\varepsilon\,$-contact metric structure. The following equations hold: 
\begin{eqnarray}
\label{eq:eqfh}
& \nabla_{\xi}\xi = 0\, , \qquad \nabla_{\xi} \phi = 0\, , \qquad \mathfrak{h}(\xi) = 0\, , \qquad \mathfrak{l}(\xi) = 0\, , \qquad \mathrm{Tr}(\mathfrak{h}) = 0\, , \\  & \mathcal{L}_\xi\alpha=0\, , \qquad \mathrm{Tr}(\tau) = 0\, , \qquad \mathfrak{h}\circ \phi + \phi\circ \mathfrak{h} = 0\nonumber\, ,
\end{eqnarray}

\noindent
where $\nabla$ denotes the Levi-Civita connection with respect to $g$. Furthermore, both $\mathfrak{h}$ and $\tau$ are symmetric with respect to $g$.
\end{prop}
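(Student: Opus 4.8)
The plan is to work in a local $\varepsilon\,$-contact frame $\{\xi,u,\phi(u)\}$ of Definition \ref{def:lcf} and to reduce every one of the claimed identities to the two defining equations $\alpha = \ast\dd\alpha$ and $\vert\alpha\vert_g^2 = \varepsilon$, together with the algebraic relations of Lemma \ref{lemma:phiproperties}. First I would establish $\nabla_\xi\xi = 0$. From Lemma \ref{lemma:phiproperties} we have $g(\mathrm{Id}\otimes\phi) = \dd\alpha$, i.e. $\dd\alpha(v_1,v_2) = g(v_1,\phi(v_2))$, so $\iota_\xi\dd\alpha = g(\xi,\phi(\cdot)) = -g(\phi(\xi),\cdot) = 0$ by skew-symmetry of $\phi$ and $\phi(\xi)=0$. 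Combined with $\cL_\xi g^{-1}(\alpha,\alpha) = 0$ (which follows since $\vert\alpha\vert_g^2=\varepsilon$ is constant) and the Cartan formula $\cL_\xi\alpha = \iota_\xi\dd\alpha + \dd\iota_\xi\alpha = \dd(\alpha(\xi)) = \dd\varepsilon = 0$, one gets $\cL_\xi\alpha = 0$. Then $g(\nabla_\xi\xi, v) = \tfrac12\big(\cL_\xi g\big)(\xi,v) + \dd\alpha(\xi,v)$ after flatting appropriately, and since $\cL_\xi\alpha=0$ forces $\nabla_\xi\xi$ to be $g$-orthogonal to everything via the standard manipulation $2g(\nabla_\xi\xi,v) = \xi g(\xi,v) + \xi g(\xi,v) - v g(\xi,\xi) + \ldots$, one concludes $\nabla_\xi\xi=0$; here the key input is that $\xi$ is geodesic because $\iota_\xi\dd\alpha=0$ and $\vert\xi\vert^2$ is constant.

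Next, $\nabla_\xi\phi = 0$ should come from differentiating the identity $\dd\alpha = g(\mathrm{Id}\otimes\phi)$ along $\xi$ and using $\cL_\xi\dd\alpha = \dd\cL_\xi\alpha = 0$ together with $\nabla_\xi g = 0$ and $\nabla_\xi\xi = 0$: since $\phi$ is determined by $\dd\alpha$ and $g$ on the distribution $\cD=\ker\alpha$, and $\phi(\xi)=0$ is preserved, the $\xi$-parallelism of both $g$ and $\dd\alpha$ (the latter being $\cL_\xi$-invariant and $\nabla_\xi\dd\alpha = \cL_\xi\dd\alpha$ since $\nabla_\xi\xi=0$ controls the difference) forces $\nabla_\xi\phi = 0$. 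Once $\nabla_\xi\phi=0$ is in hand, $\mathfrak{h}(\xi) = (\cL_\xi\phi)(\xi) = \cL_\xi(\phi\xi) - \phi(\cL_\xi\xi) = 0 - \phi([\xi,\xi]) = 0$, and $\mathfrak{l}(\xi) = \mathrm{R}^g(\xi,\xi)\xi = 0$ trivially by antisymmetry of the curvature. For $\mathfrak{h}\circ\phi + \phi\circ\mathfrak{h} = 0$: differentiate $\phi^2 = \s_g(-\varepsilon\,\mathrm{Id} + \xi\otimes\alpha)$ of Lemma \ref{lemma:phiproperties} along $\xi$; the right-hand side is $\xi$-invariant because $\cL_\xi\xi=0$ and $\cL_\xi\alpha=0$, so $(\cL_\xi\phi)\circ\phi + \phi\circ(\cL_\xi\phi) = 0$, which is exactly $\mathfrak{h}\circ\phi + \phi\circ\mathfrak{h}=0$. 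The trace identities $\mathrm{Tr}(\mathfrak{h})=0$ and $\mathrm{Tr}(\tau)=0$ then follow by evaluating in the frame $\{\xi,u,\phi(u)\}$: $\mathfrak{h}(\xi)=0$, and on the span of $u,\phi(u)$ the anticommutation with $\phi$ forces $\mathfrak{h}$ to be off-diagonal, hence traceless; similarly $\tau = \mathfrak{h}\circ\phi$ is traceless. Symmetry of $\mathfrak{h}$ with respect to $g$ is the standard computation: $\mathfrak{h} = \cL_\xi\phi$ and one expands $g(\mathfrak{h}(v_1),v_2) - g(v_1,\mathfrak{h}(v_2))$ using $\cL_\xi g$ (expressible via $\dd\alpha = g(\mathrm{Id}\otimes\phi)$ and $\nabla\xi$) and the skew-symmetry of $\phi$; the cross terms cancel because $\dd\alpha$ is a $2$-form and $g(\mathrm{Id}\otimes\phi)$ is antisymmetric. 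Finally, symmetry of $\tau = \mathfrak{h}\circ\phi$ follows from symmetry of $\mathfrak{h}$, skew-symmetry of $\phi$, and the anticommutation $\mathfrak{h}\phi = -\phi\mathfrak{h}$: indeed $g(\tau v_1, v_2) = g(\mathfrak{h}\phi v_1, v_2) = g(\phi v_1, \mathfrak{h} v_2) = -g(v_1, \phi\mathfrak{h} v_2) = g(v_1, \mathfrak{h}\phi v_2) = g(v_1, \tau v_2)$.

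The main obstacle I anticipate is the careful bookkeeping in the $\varepsilon=0$ case, where $\xi$ is null and $g(u,\xi)=1$, $g(u,u)=0$: many of the "raise/lower index" steps used above in the Riemannian or $\varepsilon=\pm1$ setting have to be redone in a light-cone frame where $\xi^\flat$ is not proportional to $u^\flat$, and one must be sure that no identity covertly divides by $\vert\xi\vert^2$. In particular, establishing $\nabla_\xi\xi=0$ requires knowing $\xi$ is geodesic without appealing to $\vert\xi\vert^2\neq 0$; this is fine since $\iota_\xi\dd\alpha = 0$ plus $\iota_\xi\dd(\iota_\xi\alpha)=0$ gives $\cL_\xi\xi^\flat$ a form from which $g(\nabla_\xi\xi,\cdot)=0$ follows directly, but the argument must be phrased connection-theoretically rather than via normalization. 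I would therefore carry out all computations abstractly in terms of $\dd\alpha = g(\mathrm{Id}\otimes\phi)$ and the Lemma \ref{lemma:phiproperties} relations — all of which hold uniformly in $\varepsilon$ — and only invoke the frame of Definition \ref{def:lcf} for the trace statements, where the frame is genuinely convenient and the $\varepsilon=0$ normalization $g(u,\xi)=1$ causes no trouble.
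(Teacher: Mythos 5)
Your overall strategy---coordinate-free reductions to $\alpha = \ast\dd\alpha$ and Lemma \ref{lemma:phiproperties}, with the frame of Definition \ref{def:lcf} reserved for the trace identities---is sound and close in spirit to the paper's own proof, which disposes of the eight displayed identities ``by direct computation on an $\varepsilon\,$-contact frame'' and then only writes out the symmetry of $\mathfrak{h}$ and $\tau$. Your derivations of $\cL_\xi\alpha=0$, of $\nabla_\xi\xi=0$ (via $\iota_\xi\dd\alpha = -g(\phi(\xi),\cdot)=0$ and constancy of $\vert\xi\vert_g^2$, which indeed works uniformly in $\varepsilon$, including the null case), of $\mathfrak{h}(\xi)=0$ and $\mathfrak{l}(\xi)=0$, of the anticommutation $\mathfrak{h}\circ\phi+\phi\circ\mathfrak{h} = \cL_\xi(\phi^2)=0$, and of the symmetry of $\tau$ (which coincides verbatim with the paper's computation) are all correct. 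Two steps, however, are not adequately justified as written.

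First, your argument for $\nabla_\xi\phi=0$ rests on the claim that $\nabla_\xi\dd\alpha=\cL_\xi\dd\alpha$ ``since $\nabla_\xi\xi=0$ controls the difference''. It does not: for a $2$-form $\omega$ one has $(\cL_\xi\omega)(v_1,v_2)=(\nabla_\xi\omega)(v_1,v_2)+\omega(\nabla_{v_1}\xi,v_2)+\omega(v_1,\nabla_{v_2}\xi)$, and the correction involves the full tensor $\nabla\xi$, which by Proposition \ref{prop:equationscontact} is nonzero in general. The conclusion is true for a shorter reason: $\nabla_\xi\alpha=(\nabla_\xi\xi)^\flat=0$, the Hodge star and the musical isomorphisms are $\nabla$-parallel, and $\phi=-\s_g(\iota_{(\cdot)}\ast\alpha)^\sharp$, so $\nabla_\xi\phi$ is built entirely from $\nabla_\xi(\ast\alpha)=\ast\nabla_\xi\alpha=0$. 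Second, the symmetry of $\mathfrak{h}$---the only genuinely nontrivial item---is dispatched with ``the cross terms cancel because $\dd\alpha$ is a $2$-form''. The actual cancellation in the paper's proof hinges on the specific identity $\alpha([\phi(v_1),v_2])+\alpha([v_1,\phi(v_2)])=0$ for $v_1,v_2$ in the span of $u$ and $\phi(u)$, which follows from the symmetry of $(v_1,v_2)\mapsto\dd\alpha(\phi(v_1),v_2)=g(\phi(v_1),\phi(v_2))$; without isolating this step the plan does not close. Relatedly, for $\mathrm{Tr}(\mathfrak{h})=0$ the slogan ``anticommutation with $\phi$ forces $\mathfrak{h}$ to be off-diagonal'' only works verbatim when $\phi$ is invertible on an $\mathfrak{h}$-invariant complement of $\xi$, which fails at $\varepsilon=0$ (there $\phi^2(u)=-\xi$ leaves the span of $u,\phi(u)$); the frame computation you propose does go through, but it uses the symmetry of $\mathfrak{h}$ and $\phi^2=\s_g(-\varepsilon\,\mathrm{Id}+\xi\otimes\alpha)$ explicitly, so the arguments must be ordered so that symmetry is established first.
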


\begin{proof}
The proof of Equations \eqref{eq:eqfh} follows by direct computation on an $\varepsilon\,$-contact frame. Thus, we prove only the symmetry properties of $\mathfrak{h}$ and $\tau$. We compute:
\begin{eqnarray*}
g(\mathfrak{h}(v_1),v_2) = g((\cL_{\xi}\phi)(v_1),v_2) = g(\cL_{\xi}\phi(v_1) - \phi(\cL_{\xi}v_1),v_2) = g(  - \nabla_{\phi(v_1)}\xi + \phi(\nabla_{v_1} \xi) , v_2)\, .
\end{eqnarray*}

\noindent
This expression vanishes whenever $v_1$ or $v_2$ are equal to $\xi$. Given an $\varepsilon\,$-contact frame $\left\{ \xi,u,\phi(u)\right\}$, assume now that both $v_1$ and $v_2$ belong to the span of of $u$ and $\phi(u)$. We obtain:
\begin{equation*}
g(\mathfrak{h}(v_1),v_2) = \alpha(\nabla_{\phi(v_1)} v_2 + \nabla_{v_1}\phi(v_2)) = \alpha(\nabla_{\phi(v_2)} v_1 + \nabla_{v_2}\phi(v_1)) = g(v_1,\mathfrak{h}(v_2)) \, ,
\end{equation*}

\noindent
where we have used that $\alpha([\phi(v_1),v_2]) + \alpha([v_1,\phi(v_2)]) = 0$, since
\begin{equation*}
\begin{split}
0&=g(\phi(v_1), \phi(v_2) )=\dd \alpha(\phi(v_1),v_2)-\dd \alpha(\phi(v_2),v_1)=(\mathcal{L}_{\phi(v_1)} \alpha) (v_2)-(\mathcal{L}_{\phi(v_2)} \alpha) (v_1)\\&=\alpha([\phi(v_1),v_2]) + \alpha([v_1,\phi(v_2)])\,.
\end{split}
\end{equation*}
The symmetry of $\tau$ follows now directly:
\begin{equation*}
g(\tau(v_1),v_2) = g(\mathfrak{h}(\phi(v_1)),v_2) = g(\phi(v_1),\mathfrak{h}(v_2)) = -g(v_1,\phi(\mathfrak{h}(v_2))) = g(v_1, \tau(v_2))\, ,  
\end{equation*}

\noindent
and we conclude.
\end{proof}

\noindent
An $\varepsilon\,$-contact structure $(g,\alpha,\varepsilon)$ also satisfies the following identities, which play a key role in the classification of $\varepsilon\eta\,$-Einstein contact structures.

\begin{prop}
\label{prop:equationscontact}
Let $(g,\alpha,\varepsilon)$ be an $\varepsilon\,$-contact metric structure. The following equation holds: 
\begin{eqnarray*}
2\phi(\nabla \xi)  = \mathfrak{h} + \s_g ( \varepsilon - \xi\otimes\alpha )\, ,\\
\end{eqnarray*}
\end{prop}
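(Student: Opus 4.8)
The plan is to establish the identity $2\phi(\nabla\xi) = \mathfrak{h} + \s_g(\varepsilon\,\mathrm{Id} - \xi\otimes\alpha)$ by evaluating both sides on an $\varepsilon\,$-contact frame $\{\xi, u, \phi(u)\}$, which by Definition \ref{def:lcf} and Remark \ref{remark:lcb} is a well-defined local frame. Here the left-hand side is understood as the endomorphism $v \mapsto \phi(\nabla_v\xi)$, i.e. $\phi$ precomposed with the covariant-derivative tensor $\nabla\xi \in \Gamma(TM\otimes T^*M)$. First I would dispose of the $\xi$-argument: applying Proposition \ref{prop:eqfh} we have $\nabla_\xi\xi = 0$, so the left-hand side annihilates $\xi$; on the right-hand side, $\mathfrak{h}(\xi) = 0$ (again Proposition \ref{prop:eqfh}) and $\s_g(\varepsilon\,\xi - \alpha(\xi)\,\xi) = \s_g(\varepsilon - \alpha(\xi))\xi = 0$ since $\alpha(\xi) = g(\xi,\xi) = \varepsilon$. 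So both sides agree on $\xi$, and it remains to check them on vectors in the span of $u$ and $\phi(u)$.

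The key step is a computation relating $\phi(\nabla_v\xi)$ to $\mathfrak{h}$ and $\nabla\xi$ itself on $\cD' \eqdef \mathrm{span}(u,\phi(u))$. The starting point is the definition $\mathfrak{h}(v) = (\cL_\xi\phi)(v) = \cL_\xi(\phi(v)) - \phi(\cL_\xi v) = -\nabla_{\phi(v)}\xi + \phi(\nabla_v\xi)$, exactly as used in the proof of Proposition \ref{prop:eqfh}; thus
\[
\phi(\nabla_v\xi) = \mathfrak{h}(v) + \nabla_{\phi(v)}\xi\, .
\]
Applying this identity again with $v$ replaced by $\phi(v)$ and using $\phi^2 = \s_g(-\varepsilon\,\mathrm{Id} + \xi\otimes\alpha)$ from Lemma \ref{lemma:phiproperties}, I would obtain a second relation for $\phi(\nabla_{\phi(v)}\xi)$; combining the two and using $\mathfrak{h}\circ\phi + \phi\circ\mathfrak{h} = 0$ should let me solve for $\phi(\nabla_v\xi)$ in terms of $\mathfrak{h}(v)$ and lower-order terms. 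The term producing $\s_g(\varepsilon\,\mathrm{Id} - \xi\otimes\alpha)$ on $\cD'$ will come from $\nabla_v\xi$ having a component along $\xi$: indeed, differentiating $g(\xi,\xi) = \varepsilon$ gives $g(\nabla_v\xi,\xi) = 0$ when $\varepsilon$ is constant, but on $\cD'$ the relevant contribution is $g(\nabla_v\xi, u)$ paired against the dual frame, and one must track how $\alpha(\nabla_v\xi)$ and the $\phi$-image interact — this is where $\alpha\circ\phi = 0$ and $g\circ(\phi\otimes\phi) = \s_g(\varepsilon g - \alpha\otimes\alpha)$ from Lemma \ref{lemma:phiproperties} enter. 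An alternative, perhaps cleaner route is to compute $2\,g(\phi(\nabla_v\xi), w)$ directly for $v,w \in \cD'$ using $g(\mathrm{Id}\otimes\phi) = \dd\alpha$ and the relation $\dd\alpha(v,w) = \tfrac12(g(\nabla_v\xi - \nabla_w\xi,\cdot))$ coming from $\xi$ being $\dd\alpha$-dual, symmetrizing and antisymmetrizing in $v,w$ to separate the symmetric part (which feeds $\mathfrak{h}$, symmetric by Proposition \ref{prop:eqfh}) from the antisymmetric part.

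The main obstacle I anticipate is bookkeeping the $\xi$-components of $\nabla_u\xi$ and $\nabla_{\phi(u)}\xi$ correctly in the null case $\varepsilon = 0$, where the frame is a light-cone frame and $\xi$ itself lies in $\cD' {}^{\perp}$-adjacent directions, so the usual "shape operator" intuition ($\nabla\xi$ tangent to the contact distribution) partially breaks down; one must use $g(u,\xi) = 1$ rather than $g(u,\xi)=0$ and carry the cross terms. Once the frame computation is organized — evaluating $2\phi(\nabla\xi)$, $\mathfrak{h}$, and $\s_g(\varepsilon\,\mathrm{Id} - \xi\otimes\alpha)$ on each of $u$ and $\phi(u)$ and matching coefficients in the dual coframe $\{\alpha, u^\flat\!-\!\dots, \phi(u)^\flat\}$ — the identity should follow, the three values of $\varepsilon \in \{-1,0,1\}$ being handled uniformly by keeping $\varepsilon$ and $\s_g$ symbolic throughout. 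I would end with "and we conclude."
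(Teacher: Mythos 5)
Your primary route does not go through. The identity $\phi(\nabla_v\xi)=\mathfrak{h}(v)+\nabla_{\phi(v)}\xi$ is correct (it is the first step of the paper's proof of Proposition \ref{prop:eqfh} and uses $\nabla_\xi\phi=0$), but the proposed iteration is circular: applying $\phi$ to it and substituting the same identity at $\phi(v)$ gives
\begin{equation*}
\phi^2(\nabla_v\xi)=\phi(\mathfrak{h}(v))+\mathfrak{h}(\phi(v))+\nabla_{\phi^2(v)}\xi=\nabla_{\phi^2(v)}\xi\, ,
\end{equation*}
and since $\phi^2=\s_g(-\varepsilon\,\mathrm{Id}+\xi\otimes\alpha)$ and $\nabla_\xi\xi=0$, both sides equal $-\s_g\varepsilon\,\nabla_v\xi$ up to the term $\s_g\,\alpha(\nabla_v\xi)\,\xi$. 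The combination therefore collapses to $\alpha(\nabla_v\xi)=0$, i.e.\ to $\tfrac12\, v\cdot\vert\xi\vert_g^2=0$, which is already known and determines nothing about $\phi(\nabla_v\xi)$. This is exactly the phenomenon recorded in Remark \ref{remark:covxi}: post-composing the desired identity with $\phi$ loses information because $\phi$ is not invertible, so no algebra in $\phi$ and $\mathfrak{h}$ alone can produce the proposition. The indispensable input is the contact condition itself, in the form $\dd\alpha(v_1,\phi(v_2))=g(v_1,\phi^2(v_2))=\s_g\,g(-\varepsilon v_1+\alpha(v_1)\xi,v_2)$ together with the intrinsic formula $\dd\alpha(v_1,\phi(v_2))=-\phi(v_2)\cdot\alpha(v_1)-\alpha([v_1,\phi(v_2)])$; this is precisely how the paper's Koszul-formula computation manufactures the term $\s_g(\varepsilon\,\mathrm{Id}-\xi\otimes\alpha)$, which does not come from ``$\nabla_v\xi$ having a component along $\xi$'' as you suggest.

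Your alternative route can be made to work (and is the paper's argument repackaged), but the step you leave implicit is the critical one. With the paper's conventions the correct decomposition is $g(\nabla_v\xi,w)=\tfrac12(\cL_\xi g)(v,w)+\tfrac12\,\dd\alpha(v,w)$, where $\dd\alpha(v,w)=g(\nabla_v\xi,w)-g(\nabla_w\xi,v)$ (your displayed relation between $\dd\alpha$ and $\nabla\xi$ does not typecheck as written). The antisymmetric part is handled by $\dd\alpha(v,\phi(w))=g(v,\phi^2(w))$ and $g(\phi(v),\phi(w))=\s_g(\varepsilon\, g(v,w)-\alpha(v)\alpha(w))$, but to see that the symmetric part ``feeds $\mathfrak{h}$'' you must still prove $(\cL_\xi g)(v,\phi(w))=-g(\mathfrak{h}(v),w)$, which follows from $\cL_\xi\dd\alpha=\dd(\cL_\xi\alpha)=0$ and the Leibniz rule applied to $g(\mathrm{Id}\otimes\phi)=\dd\alpha$, plus the symmetry of $\mathfrak{h}$ from Proposition \ref{prop:eqfh}. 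Without that link the proposal does not close; with it, the identity follows for all $v,w$ at once and the case split over $\varepsilon$ and the frame bookkeeping you worry about become unnecessary.
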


\begin{remark}
\label{remark:covxi}
We can apply $\phi$ to the second equation in the previous proposition, obtaining:
\begin{equation*}
2 \phi^2(\nabla \xi) = - 2\s_g \varepsilon \, \nabla^g\xi = \s_g \varepsilon\,\phi   + \phi\circ\mathfrak{h}\, .
\end{equation*}

\noindent
For $\varepsilon\neq 0$, that is, for $\varepsilon\,$-contact structures with non-null Reeb vector field, this equation gives the well-known formula for the covariant derivative of $\xi$ \cite{Blair,CalvarusoPC}. For $\varepsilon = 0$ this equation reduces to $\tau = 0$, which we will prove independently in Lemma \ref{lemma:hphi0}. Therefore, for $\varepsilon\,$-contact structures with null Reeb vector field, the covariant derivative of the Reeb vector is not prescribed in terms of $\phi$ and $\mathfrak{h}$. This has important consequences in the classification of $\varepsilon\eta\,$-Einstein $\varepsilon\,$-contact structures with null Reeb vector field.
\end{remark}
\begin{proof} We use Koszul's formula for the Levi-Civita connection:
\begin{eqnarray*}
&-2 g(\phi(\nabla_{v_1}\xi),v_2) = 2 g(\nabla_{v_1}\xi,\phi(v_2)) = \xi\cdot g(v_1 , \phi(v_2)) - \phi(v_2)\cdot \alpha(v_1) - g(\cL_{\xi}v_1,\phi(v_2))\\ 
&+ g([\phi(v_2),v_1],\xi) - g(\cL_{\xi}(\phi(v_2)),v_1) = -  g(\mathfrak{h}(v_1), v_2) - \phi(v_2)\cdot \alpha(v_1) - \alpha([v_1 , \phi(v_2)]) \\
&= -  g(\mathfrak{h}(v_1), v_2) + g(\phi^2(v_1),v_2) = -  g(\mathfrak{h}(v_1),v_2) + g({\s_g}( -\varepsilon v_1 + \alpha(v_1) \xi ) , v_2) \, ,
\end{eqnarray*}

\noindent
for every $v_1 , v_2 \in\mathfrak{X}(M)$, where we have used $\phi^2 = \s_g\,(-\varepsilon\,\mathrm{Id} + \xi \otimes \alpha)$ and the equation
\begin{equation*}
\dd\alpha(v_1,\phi(v_2)) = v_1\cdot \alpha(\phi(v_2)) - \phi(v_2)\cdot \alpha(v_1) - \alpha([v_1 , \phi(v_2)]) =  - \phi(v_2)\cdot \alpha(v_1) - \alpha([v_1 , \phi(v_2)])\, .
\end{equation*}
\end{proof}

\begin{remark}
Note that the equation for $\nabla \xi$ given by Proposition \ref{prop:equationscontact} differs from the one usually found in the literature by a factor of $\frac{1}{2}$. This discrepancy is due to the different convention we are using for the exterior derivative. Given any $p$-form $\omega $, we define its exterior derivative $\dd \omega$ as the $(p+1)$-form given by:
\begin{eqnarray*}
\dd \omega(X_0,\dots, X_p)=\sum_{i} (-1)^i X_i(\omega(X_0,\dots, \hat{X}_i, \dots, X_i))\\
+ \sum_{i<j} (-1)^{i+j} \omega([X_i,X_j],X_0, \dots, \hat{X}_i, \dots, \hat{X}_j, \dots, X_p)\,.
\end{eqnarray*}
Much of the literature on contact geometry, see for example \cite{Blair}, uses the conventions of Kobayashi and Nomizu \cite{KN}, in which the formula of the exterior derivative differs by a factor of $\frac{1}{p+1}$ from the one stated above. 
\end{remark}

\noindent
Proceeding by analogy with the theory of Riemannian contact metric structures we introduce the notions of Sasakian and K-contact $\varepsilon\,$-contact metric structures.

\begin{definition}
An  $\varepsilon\,$-contact metric structure $(g,\alpha,\varepsilon)$ is said to be {\bf Sasakian} if $\mathfrak{h} = 0$. It is said to be {\bf K-contact} if the Reeb vector field is Killing, that is, if $\cL_{\xi}g=0$.
\end{definition}

\begin{remark}
\label{rem:sasepsn0}
The Sasakian and K-contact conditions are well-known to be equivalent for $\varepsilon\,$-contact structures with $\varepsilon \neq 0$ in three dimensions, see \cite{Blair,Calvaruso,CalvarusoII}. However, as we will see in Section \ref{sec:nullcontact}, this fails to be the case for null contact metric structures. Indeed, Example \ref{ep:SasakiannoK} shows that the theory of Sasakian null contact metric structures is strictly richer that the theory of null K-contact structures.
\end{remark}

\noindent
Sasakian $\varepsilon\,$-contact metric structures with $\varepsilon \neq 0$ have been extensively studied in the literature, see for instance \cite{Blair}, \cite{Calvaruso} or \cite{Zamkoyov} for more details and an exhaustive list of references. In particular, it is well-known that the Sasakian condition can sometimes be equivalently formulated as a curvature condition involving the Ricci curvature tensor  $\mathrm{Ric}^g$. Since this will be of use later, we briefly review this result using our conventions.

\begin{prop}
\label{prop:ricxien0}
Let $(g,\alpha,\varepsilon)$ be an $\varepsilon\,$-contact structure with $\varepsilon \neq 0$. Then 
\begin{equation*}
\emph{Ric}^g(\xi,\xi)=\varepsilon\,
 \mathfrak{s_g} \left( \frac{1}{2}-\frac{1}{4}\text{\emph{Tr}}(\mathfrak{h}^2) \right)\,.
\end{equation*}
where $\emph{Ric}^g$ is the Ricci curvature of $g$.
\end{prop}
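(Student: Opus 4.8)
The plan is to compute $\mathrm{Ric}^g(\xi,\xi)$ directly in an $\varepsilon\,$-contact frame $\{\xi, u, \phi(u)\}$, exploiting the formula for $\nabla\xi$ obtained in Proposition \ref{prop:equationscontact}. Recall that, since $\varepsilon\neq 0$, Remark \ref{remark:covxi} gives the explicit expression $\nabla^g\xi = -\frac{\s_g}{2\varepsilon}(\s_g\varepsilon\,\phi + \phi\circ\mathfrak{h}) = -\frac{1}{2}(\phi + \s_g\varepsilon^{-1}\phi\circ\mathfrak{h})$ (using $\varepsilon^2=1$ for $\varepsilon\neq 0$, so $\varepsilon^{-1}=\varepsilon$). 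The key identity I would use is the standard one expressing the Ricci curvature in the direction of a vector field in terms of its covariant derivative: for any vector field $\xi$ one has, after tracing the Ricci identity, $\mathrm{Ric}^g(\xi,\xi) = \mathrm{div}(\nabla_\xi\xi) - \xi\cdot(\mathrm{div}\,\xi) - \mathrm{Tr}\big((\nabla\xi)^2\big) + (\text{a term involving }\mathrm{Tr}(\nabla\xi)\text{ paired with }\mathrm{div}\,\xi)$; more cleanly, since here $\nabla_\xi\xi = 0$ and $\mathrm{Tr}(\nabla\xi) = \mathrm{div}\,\xi = -\frac12\mathrm{Tr}(\phi) - \frac{\s_g\varepsilon}{2}\mathrm{Tr}(\phi\circ\mathfrak{h}) = 0$ (because $\mathrm{Tr}(\phi)=0$ and $\mathrm{Tr}(\phi\circ\mathfrak{h}) = \mathrm{Tr}(\mathfrak{h}\circ\phi) = \mathrm{Tr}(\tau) = 0$ by Proposition \ref{prop:eqfh}), the formula collapses to $\mathrm{Ric}^g(\xi,\xi) = -\mathrm{Tr}\big((\nabla\xi)^2\big)$.

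The second step is then purely algebraic: compute $\mathrm{Tr}\big((\nabla^g\xi)^2\big)$. Writing $A \eqdef \nabla^g\xi = -\frac12(\phi + \s_g\varepsilon\,\phi\circ\mathfrak{h})$, we get
\begin{equation*}
\mathrm{Tr}(A^2) = \tfrac14\,\mathrm{Tr}\big(\phi^2 + 2\s_g\varepsilon\,\phi^2\circ\mathfrak{h} + \phi\circ\mathfrak{h}\circ\phi\circ\mathfrak{h}\big)\, .
\end{equation*}
Using Lemma \ref{lemma:phiproperties}, $\phi^2 = \s_g(-\varepsilon\,\mathrm{Id} + \xi\otimes\alpha)$, so $\mathrm{Tr}(\phi^2) = \s_g(-3\varepsilon + 1)$; wait — more carefully $\mathrm{Tr}(\xi\otimes\alpha) = \alpha(\xi) = \varepsilon$, giving $\mathrm{Tr}(\phi^2) = \s_g(-3\varepsilon + \varepsilon) = -2\s_g\varepsilon$. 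Next $\mathrm{Tr}(\phi^2\circ\mathfrak{h}) = \s_g\big(-\varepsilon\,\mathrm{Tr}(\mathfrak{h}) + \alpha(\mathfrak{h}(\cdot))\text{-trace}\big) = \s_g\big(-\varepsilon\cdot 0 + \alpha(\mathfrak{h}(\xi))\big) = 0$, since $\mathrm{Tr}(\mathfrak{h}) = 0$ and $\mathfrak{h}(\xi)=0$ by Proposition \ref{prop:eqfh}. For the last term, the anticommutation $\mathfrak{h}\circ\phi = -\phi\circ\mathfrak{h}$ (Proposition \ref{prop:eqfh}) gives $\phi\circ\mathfrak{h}\circ\phi\circ\mathfrak{h} = -\phi^2\circ\mathfrak{h}^2 = -\s_g(-\varepsilon\,\mathrm{Id} + \xi\otimes\alpha)\circ\mathfrak{h}^2$, whose trace is $-\s_g(-\varepsilon\,\mathrm{Tr}(\mathfrak{h}^2) + \alpha(\mathfrak{h}^2(\xi))) = \s_g\varepsilon\,\mathrm{Tr}(\mathfrak{h}^2)$, again using $\mathfrak{h}(\xi)=0$. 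Assembling, $\mathrm{Tr}(A^2) = \tfrac14\big(-2\s_g\varepsilon + 0 + \s_g\varepsilon\,\mathrm{Tr}(\mathfrak{h}^2)\big) = \s_g\varepsilon\big(-\tfrac12 + \tfrac14\mathrm{Tr}(\mathfrak{h}^2)\big)$, hence $\mathrm{Ric}^g(\xi,\xi) = -\mathrm{Tr}(A^2) = \varepsilon\,\s_g\big(\tfrac12 - \tfrac14\mathrm{Tr}(\mathfrak{h}^2)\big)$, which is the claimed formula.

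The one place requiring genuine care — the main obstacle — is establishing the reduction $\mathrm{Ric}^g(\xi,\xi) = -\mathrm{Tr}\big((\nabla^g\xi)^2\big)$ rigorously, keeping track of sign conventions for the curvature tensor (the excerpt fixes $\mathfrak{l}(v) = \mathrm{R}^g(v,\xi)\xi$, so I must verify the Ricci trace is taken consistently with that). Concretely, I would derive it from the Bochner-type identity $\mathrm{div}(\nabla_\xi\xi) - \nabla_\xi(\mathrm{div}\,\xi) = \mathrm{Ric}^g(\xi,\xi) + \mathrm{Tr}\big((\nabla\xi)^2\big)$ — equivalently, evaluating $\sum_i g(\mathrm{R}^g(e_i,\xi)\xi, e_i)$ in a local orthonormal frame and commuting covariant derivatives — and then feeding in $\nabla_\xi\xi=0$ and $\mathrm{div}\,\xi=0$. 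Alternatively, and perhaps more in the spirit of the rest of the section, one computes $\mathrm{Ric}^g(\xi,\xi) = \sum_i \varepsilon_i\, g(\mathfrak{l}(e_i), e_i) = \mathrm{Tr}(\mathfrak{l})$ directly over the $\varepsilon\,$-contact frame, expanding each $\mathrm{R}^g(e_i,\xi)\xi = \nabla_{e_i}\nabla_\xi\xi - \nabla_\xi\nabla_{e_i}\xi - \nabla_{[e_i,\xi]}\xi$ using $\nabla_\xi\xi=0$, $\nabla_\xi\phi=0$, and the expression for $\nabla\xi$; this route avoids invoking an external identity but demands the frame-derivative bookkeeping that Proposition \ref{prop:eqfh} was set up to handle. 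Either way, the two non-trivial vanishing facts doing the work are $\mathrm{Tr}(\mathfrak{h})=0$ and $\mathfrak{h}(\xi)=0$ together with the $\phi$–$\mathfrak{h}$ anticommutation, all already in hand.
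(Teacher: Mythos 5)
Your proof is correct, but it follows a genuinely different route from the paper's. The paper only proves the para-contact case explicitly (citing Blair and Calvaruso for the Riemannian and Lorentzian $\varepsilon=-1$ cases), and does so by expanding $\mathrm{R}^g(\xi,v)\xi$ from the definition, applying $\phi$ to land on Equation \eqref{eq:Ricciphi}, and then combining $g(\mathrm{R}^g(\xi,v)\xi,v)$ with $g(\phi(\mathrm{R}^g(\xi,\phi(v))\xi),v)$ for a unit $v$ to extract the trace. You instead treat all $\varepsilon\neq 0$ cases and both signatures at once: the contracted Ricci identity together with $\nabla_\xi\xi=0$ and $\mathrm{div}\,\xi=\mathrm{Tr}(\nabla\xi)=0$ (the latter from $\mathrm{Tr}(\phi)=0$ and $\mathrm{Tr}(\tau)=0$ in Proposition \ref{prop:eqfh}) reduces everything to $\mathrm{Ric}^g(\xi,\xi)=-\mathrm{Tr}\big((\nabla\xi)^2\big)$, and the remaining trace computation is purely algebraic, resting only on $\phi^2=\s_g(-\varepsilon\,\mathrm{Id}+\xi\otimes\alpha)$, $\mathfrak{h}(\xi)=0$, $\mathrm{Tr}(\mathfrak{h})=0$ and the anticommutation $\mathfrak{h}\circ\phi=-\phi\circ\mathfrak{h}$. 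I checked your trace bookkeeping, and in the para-contact specialization ($\s_g=-1$, $\varepsilon=1$) your formula reproduces the paper's $\mathrm{Ric}^g(\xi,\xi)=-\tfrac12+\tfrac14\mathrm{Tr}(\mathfrak{h}^2)$, so the signs are consistent with the curvature conventions fixed in Appendix \ref{app:CurvatureLorentzgroups}. What your approach buys is uniformity and self-containedness — no case split, no external citation, and the $\s_g$, $\varepsilon$ dependence is transparent; the cost is that you must import (or rederive) the standard divergence/Bochner identity, which you correctly flag as the one step needing care, whereas the paper's argument stays entirely within the frame-calculus toolkit it has already set up.
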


\begin{proof}
The proofs of the proposition in the Riemannian and Lorentzian $\varepsilon = -1$ cases are presented in detail in \cite{Blair} and \cite{Calvaruso}, respectively. Hence we focus on the para-contact case.

Proposition \ref{prop:equationscontact} adapted to the para-contact case yields $\nabla \xi=-\frac{1}{2}\phi+\frac{1}{2}\phi \circ \mathfrak{h}$. Hence, denoting by $\mathrm{R}^g$ the Riemann curvature tensor, we have:
\begin{equation*}
\mathrm{R}^g(\xi, v)\xi=\nabla_\xi \nabla_v \xi-\nabla_v \nabla_\xi \xi-\nabla_{[\xi,v]} \xi=-\frac{1}{2}\nabla_\xi (\phi(v))+\frac{1}{2}\nabla_\xi (\phi(\mathfrak{h}(v)) )+\frac{1}{2}\phi([\xi,v])-\frac{1}{2}\phi(\mathfrak{h}([\xi,v]))\, ,
\end{equation*}

\noindent
for $v \in \ker(\alpha)$ and where we used that $\nabla_\xi \xi=0$. Applying $\phi$ at the previous equation, taking into account that $\phi^2=\text{Id}-\xi \otimes \alpha$ and that $\nabla_\xi \phi=0$, we obtain:
\begin{equation*}
\phi (\mathrm{R}^g(\xi, v)\xi)=\frac{1}{2}\nabla_\xi (-v+\mathfrak{h}(v))+\frac{1}{2}[\xi,v]-\frac{1}{2}\mathfrak{h}([\xi,v]))\, ,
\end{equation*}

\noindent
where we have used that $\alpha \circ \phi=\alpha \circ \mathfrak{h}=0$ and that $\alpha([\xi, v])=0$ (the latter equation follows from the para-contact condition $\dd \alpha=-\ast \alpha$). Applying now Proposition \ref{prop:equationscontact}, we conclude:
\begin{equation}
\label{eq:Ricciphi}
\phi (\mathrm{R}^g(\xi, v)\xi)=\frac{1}{2}(\nabla_\xi \mathfrak{h}) (v)+\frac{1}{4}\phi(v)-\frac{1}{4}\mathfrak{h}^2(\phi(v))\, .
\end{equation}

\noindent
On the other hand, $\phi^2(\mathrm{R}^g(\xi, v)\xi)=\mathrm{R}^g(\xi, v)\xi$ since $\alpha( \mathrm{R}^g(\xi, v)\xi) = 0$ vanishes identically. Consequently,
\begin{equation*}
\mathrm{R}^g(\xi, v)\xi = \frac{1}{2}\phi(\nabla_\xi \mathfrak{h})(v)+\frac{1}{4}\phi^2(v)-\frac{1}{4}\mathfrak{h}^2 (v)\, .
\end{equation*}

\noindent
Equation \eqref{eq:Ricciphi} implies that
\begin{equation*}
\phi (\mathrm{R}^g(\xi, \phi(v))\xi)=-\frac{1}{2}\phi (\nabla_\xi \mathfrak{h}) (v)+\frac{1}{4}\phi^2(v)-\frac{1}{4}\mathfrak{h}^2(v)\, .
\end{equation*}

\noindent
Using now the previous formulae for $v$ of unit norm we obtain:
\begin{equation*}
g(\mathrm{R}^g (\xi,v) \xi,v)+g(\phi (\mathrm{R}^g (\xi, \phi(v)) \xi),v) =\frac{1}{2}-\frac{1}{4}\text{Tr}(\mathfrak{h}^2)=-\ric^g(\xi,\xi)\, ,
\end{equation*}

\noindent
where we used the fact that $g(\mathfrak{h}^2\phi (v), v)=-g(\mathfrak{h}^2(v),v)$, so $g(\mathfrak{h}^2(v),v)=\frac{1}{2}\text{Tr}(\mathfrak{h}^2)$. Hence
\begin{equation*}
\ric^g(\xi,\xi)=-\frac{1}{2}+\frac{1}{4}\text{Tr}(\mathfrak{h}^2)\, ,
\end{equation*}

\noindent
and we conclude.
\end{proof}

\begin{remark}
The equation proved in Proposition \ref{prop:ricxien0} differs from the one found usually in the literature by a factor of $\frac{1}{4}$. This difference can be traced back to the different convention used in this work for the exterior derivative and $\mathfrak{h}$ with respect to the conventions used in references \cite{Blair} and \cite{Calvaruso}.  
\end{remark}

\begin{prop}
\label{prop:sasakireeb}
An $\varepsilon\,$-contact structure $(g,\alpha,\varepsilon)$ with $\varepsilon\, \mathfrak{s}_g = 1$ is Sasakian if and only if: 
\begin{equation*}
\emph{Ric}^{g}(\xi,\xi)=\mathfrak{s}_g  \frac{\varepsilon}{2}\,.
\end{equation*}
\end{prop}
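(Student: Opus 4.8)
The plan is to combine Proposition~\ref{prop:ricxien0} with the definition of the Sasakian condition, using the standard fact that $\mathrm{Tr}(\mathfrak{h}^2)$ controls whether $\mathfrak{h}$ vanishes. First I would invoke Proposition~\ref{prop:ricxien0}, which gives
\begin{equation*}
\mathrm{Ric}^g(\xi,\xi)=\varepsilon\,\mathfrak{s}_g\left(\frac{1}{2}-\frac{1}{4}\mathrm{Tr}(\mathfrak{h}^2)\right)\, ,
\end{equation*}
valid for any $\varepsilon\,$-contact structure with $\varepsilon\neq 0$. Under the hypothesis $\varepsilon\,\mathfrak{s}_g=1$ this reads $\mathrm{Ric}^g(\xi,\xi)=\frac{1}{2}-\frac{1}{4}\mathrm{Tr}(\mathfrak{h}^2)$, so the asserted curvature identity $\mathrm{Ric}^g(\xi,\xi)=\mathfrak{s}_g\frac{\varepsilon}{2}=\frac{1}{2}$ is equivalent to $\mathrm{Tr}(\mathfrak{h}^2)=0$. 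Hence the proposition reduces to the claim that $\mathfrak{h}=0$ if and only if $\mathrm{Tr}(\mathfrak{h}^2)=0$.

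The forward direction ($\mathfrak{h}=0\Rightarrow\mathrm{Tr}(\mathfrak{h}^2)=0$) is immediate. For the converse I would work in an $\varepsilon\,$-contact frame $\{\xi,u,\phi(u)\}$. By Proposition~\ref{prop:eqfh}, $\mathfrak{h}$ is symmetric with respect to $g$, annihilates $\xi$, and anticommutes with $\phi$; moreover $\mathrm{Tr}(\mathfrak{h})=0$. On the two-plane spanned by $u$ and $\phi(u)$ — on which $g$ restricts to a definite or split form depending on $\varepsilon$ and $\mathfrak{s}_g$, but in any case a \emph{nondegenerate} form — the anticommutation $\mathfrak{h}\phi+\phi\mathfrak{h}=0$ together with tracelessness forces $\mathfrak{h}$ to have the shape of a symmetric traceless operator exchanging the eigenspaces of $\phi$-related structure. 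The key computation, already carried out inside the proof of Proposition~\ref{prop:ricxien0}, is that $g(\mathfrak{h}^2(v),v)=\tfrac12\mathrm{Tr}(\mathfrak{h}^2)$ for $v$ of unit norm in this plane, so $\mathrm{Tr}(\mathfrak{h}^2)=0$ implies $g(\mathfrak{h}^2(v),v)=0$. Since $\mathfrak{h}^2=(\mathfrak{h})(\mathfrak{h})$ with $\mathfrak{h}$ $g$-symmetric, one gets $g(\mathfrak{h}(v),\mathfrak{h}(v))=\pm g(\mathfrak{h}^2(v),v)=0$, but on a \emph{Riemannian} restriction this would conclude immediately, while on a Lorentzian restriction one must rule out $\mathfrak{h}(v)$ being null. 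This is handled by using that $\mathfrak{h}$ preserves the relevant plane and is symmetric: $\mathrm{Tr}(\mathfrak{h}^2)$ equals the sum of squares of its matrix entries up to signs dictated by the signature, and one checks case by case (using $\mathrm{Tr}(\mathfrak{h})=0$ and $\mathfrak{h}\phi+\phi\mathfrak{h}=0$, which pins $\mathfrak{h}$ to a one- or two-parameter family) that $\mathrm{Tr}(\mathfrak{h}^2)=0$ forces all entries to vanish.

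The main obstacle is precisely the Lorentzian subtlety: unlike in the Riemannian case, a symmetric endomorphism with $\mathrm{Tr}(\mathfrak{h}^2)=0$ need not vanish a priori, since one could imagine nilpotent Jordan blocks with light-like invariant directions. What saves the argument is the combination of constraints $\mathfrak{h}\circ\phi+\phi\circ\mathfrak{h}=0$, $\mathfrak{h}(\xi)=0$, $\mathrm{Tr}(\mathfrak{h})=0$, and $g$-symmetry, which together are rigid enough that the quadratic form $v\mapsto g(\mathfrak{h}^2(v),v)$, and hence $\mathrm{Tr}(\mathfrak{h}^2)$, is actually a (possibly indefinite but) anisotropic expression in the entries of $\mathfrak{h}$ — so its vanishing does force $\mathfrak{h}=0$. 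I would present this by explicitly parametrizing $\mathfrak{h}$ on $\{\xi,u,\phi(u)\}$ subject to these relations, computing $\mathrm{Tr}(\mathfrak{h}^2)$ in terms of the one or two free entries, observing it equals their squared norm up to a nonzero overall constant, and concluding. The hypothesis $\varepsilon\neq 0$ is used throughout — both to have Proposition~\ref{prop:ricxien0} available and to ensure the $\{u,\phi(u)\}$-plane is complementary to $\xi$ with nondegenerate induced metric; for $\varepsilon=0$ none of this applies, which is exactly why the null case (treated separately in Section~\ref{sec:nullcontact}) behaves differently.
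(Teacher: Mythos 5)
Your reduction is the same as the paper's: invoke Proposition~\ref{prop:ricxien0} to translate the curvature identity into $\mathrm{Tr}(\mathfrak{h}^2)=0$, then argue that this forces $\mathfrak{h}=0$ using the $g$-symmetry of $\mathfrak{h}$, $\mathfrak{h}(\xi)=0$ and the structure of the plane $\ker(\alpha)=\mathrm{span}\{u,\phi(u)\}$. The problem is in how you close the last implication. You allow $g|_{\ker(\alpha)}$ to be "definite or split" and then assert that, even in the split case, the constraints ($g$-symmetry, $\mathrm{Tr}(\mathfrak{h})=0$, $\mathfrak{h}\circ\phi+\phi\circ\mathfrak{h}=0$) make $\mathrm{Tr}(\mathfrak{h}^2)$ an anisotropic expression in the entries of $\mathfrak{h}$. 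That assertion is false. If $g|_{\ker(\alpha)}$ has signature $(-,+)$, set $E_1=u$, $E_2=\phi(u)$ with $g(E_1,E_1)=-1$, $g(E_2,E_2)=1$, $\phi(E_1)=E_2$, $\phi(E_2)=E_1$; the constraints then force
\begin{equation*}
\mathfrak{h}=\begin{pmatrix} a & -b\\ b & -a\end{pmatrix}\, ,\qquad \mathfrak{h}^2=(a^2-b^2)\,\mathrm{Id}\, ,\qquad \mathrm{Tr}(\mathfrak{h}^2)=2(a^2-b^2)\, ,
\end{equation*}
which vanishes along the whole line $a=\pm b$ while $\mathfrak{h}$ is a nonzero nilpotent operator. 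So the case-by-case check you propose would not go through in split signature; this is exactly the para-contact phenomenon ($\varepsilon\,\mathfrak{s}_g=-1$) for which the equivalence genuinely fails, and it is why the proposition carries the hypothesis $\varepsilon\,\mathfrak{s}_g=1$.

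What you are missing is that this hypothesis removes the split case altogether, which is the paper's entire (one-line) resolution: $\varepsilon\,\mathfrak{s}_g=1$ means either $g$ is Riemannian, or $g$ is Lorentzian with time-like Reeb vector field, and in both cases $g$ restricted to $\ker(\alpha)$ is positive definite (indeed $g(u,u)=\mathfrak{s}_g\,\varepsilon=1=g(\phi(u),\phi(u))$ and $g(u,\phi(u))=0$). Since $\mathfrak{h}$ is $g$-symmetric and annihilates $\xi$, it preserves $\ker(\alpha)$, so $\mathrm{Tr}(\mathfrak{h}^2)=g(\mathfrak{h}(u),\mathfrak{h}(u))+g(\mathfrak{h}(\phi(u)),\mathfrak{h}(\phi(u)))\geq 0$ with equality if and only if $\mathfrak{h}=0$; there is no null vector to rule out. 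Your overall strategy is correct and coincides with the paper's, but the step disposing of the indefinite case is wrong as stated, and the repair is to observe that the hypothesis makes that case vacuous rather than to try to prove anisotropy there.
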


\begin{proof}
The \emph{only if} direction follows by setting $\mathfrak{h} = 0$ in Proposition \ref{prop:ricxien0}. Conversely, if $\ric^{g}(\xi,\xi)=\mathfrak{s}_g  \frac{\varepsilon}{2}$ then Proposition \ref{prop:ricxien0} implies that $\text{Tr}(\mathfrak{h}^2)=0$. Taking into account that $\phi$ is skew-adjoint with respect to $g$ and that $\mathfrak{h}$ is self-adjoint with respect to $g$, we obtain $g(\mathfrak{h}^2(X), \phi(X))=0$ for every vector field $X$. Since $\mathfrak{h}(\xi) = 0$ and $g$ is positive definite when restricted to $\ker(\alpha)$ (assuming $\varepsilon \s_g = 1$), then condition $\text{Tr}(\mathfrak{h}^2)=0$ implies that $\mathfrak{h}^2 = 0$. Since $\mathfrak{h}$ is self-adjoint, this yields $\mathfrak{h}=0$. 
\end{proof}



\subsection{Globally hyperbolic $\varepsilon\,$-contact metric three-manifolds}
\label{sec:globhyperL}


In this section we describe $\varepsilon\,$-contact metric structures on globally hyperoblic Lorentzian manifolds. This class of $\varepsilon\,$-contact metric three-manifolds is specially relevant for our purposes, since they can be used to construct globally hyperbolic Lorentzian six-manifolds equipped with a Ricci-flat metric connection with totally skew-symmetric and closed torsion, as described in Section \ref{sec:6dsugrasolutions} and Theorem \ref{thm:pcontactsugrasolution}. At the same time, globally hyperbolic solutions of six-dimensional supergravity play a prominent role in the celebrated fuzzball proposal to describe the microscopic entropy of a black hole, see \cite{Bena:2007kg,Mathur:2009hf} and references therein for more details.

Let $(M,g)$ be a globally hyperbolic Lorentzian three-manifold. A celebrated theorem of A. Bernal and M. S\'anchez \cite{Bernal:2003jb} states that $(M,g)$ admits the following presentation:
\begin{equation*}
(M,g) = (\mathbb{R}\times \mathrm{X} , \,-\beta^2_t\,\dd t\otimes \dd t + q_t)\, ,
\end{equation*}

\noindent
where $t$ is a coordinate on $\mathbb{R}$, $\left\{ \beta_t \right\}_{t\in\mathbb{R}}$ is a family of nowhere vanishing functions on $\mathrm{X} \eqdef \left\{ 0\right\}\times\mathrm{X}$ and $\left\{ q_t\right\}$ is a family of complete Riemannian metrics on $X$. In this presentation, $(M,g)$ is oriented and time-oriented, which immediately fixes an orientation on $\mathrm{X}$. We denote by $\nu_{q_t}$ the Riemannian volume form associated to $q_t$. Let $\alpha$ be a one form on $M$. Set $e^0_t = \beta_t\, \dd t$. We write:
\begin{equation*}
\alpha = F_t\, e^0 + \alpha^{\perp}_t\, ,
\end{equation*}

\noindent
where $\left\{ F_t \right\}_{t\in\mathbb{R}}$ is a unique family of nowhere vanishing functions on $\mathrm{X}$ and $\left\{ \alpha_t \right\}_{t\in \mathbb{R}}$ is a unique family of one-forms on $X$. With these provisos in mind, the dual of $\alpha$ can be computed to be:
\begin{equation*}
\ast \alpha = -F_t\, \nu_{q_t} - e^0 \wedge \ast_{q_t}\alpha^{\perp}_t\, ,
\end{equation*}

\noindent
where $\ast_{q_t}$ denotes the Hodge-dual of $(X,q_t)$. On the other hand, the exterior derivative of $\alpha$ reads:
\begin{equation*}
\dd \alpha = \dd_{\mathrm{X}} F_t\wedge e^0_t  + \frac{F_t}{\beta_t} \dd_{\mathrm{X}}\beta_t\wedge e^0_t + \frac{1}{\beta_t} e^0_t \wedge \partial_t \alpha^{\perp}_t + \dd_{\mathrm{X}}\alpha^{\perp}_t\, ,
\end{equation*}

\noindent
where $\dd_{\mathrm{X}}$ is the exterior derivative on $\mathrm{X}$. Define $n_t \eqdef \frac{1}{\beta_t}\partial_t$. Altogether, the previous discussion implies the following characterization of $\varepsilon\,$-contact structures on $(M,g)$.

\begin{prop}
\label{prop:globhypervarepsilon}
Let $(M,g) = (\mathbb{R}\times \mathrm{X} , - \beta^2_t\,\dd t\otimes \dd t + q_t)$ be a globally hyperbolic Lorentzian three-manifold. A one-form $\alpha\in \Omega^1(M)$ defines an $\varepsilon\,$-contact metric structure on $(M,g)$ if and only if:
\begin{equation}
\label{eq:globhypervarepsilon}
\dd_{\mathrm{X}}\alpha^{\perp}_t = F_t\,\nu_{q_t}\, , \qquad \ast_{q_t}\alpha^{\perp}_t + \frac{1}{\beta_t}\dd_{\mathrm{X}}(\beta_t F_t) = \cL_{n_t}\alpha_t^{\perp}\, , \qquad \vert \alpha_t^{\perp}\vert^2_{q_t} = \varepsilon + F^2_t\, ,
\end{equation}

\noindent
where $\left\{ F_t\right\}_{t\in \mathbb{R}}$, $\left\{ \beta_t\right\}$ and $\left\{ \alpha^{\perp}_t\right\}_{t\in \mathbb{R}}$ are defined above and $\alpha = F_t\, e^0 + \alpha^{\perp}_t$.
\end{prop}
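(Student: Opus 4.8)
The plan is to insert the explicit expressions for $\ast\alpha$ and $\dd\alpha$ obtained just above the statement into the two defining conditions of an $\varepsilon\,$-contact metric structure. Recall that $\alpha = \ast\dd\alpha$ is equivalent to $\ast\alpha = \s_g\,\dd\alpha$, which since $g$ is Lorentzian reads $\ast\alpha = -\dd\alpha$; this is an identity between $2\,$-forms on $M = \mathbb{R}\times X$ which I will analyze componentwise, and then separately translate the pointwise condition $\vert\alpha\vert_g^2 = \varepsilon$.

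The key observation is that, at each value of $t$, the bundle $\Lambda^2 T^{\ast}M$ splits as $\Lambda^2 T^{\ast}X \,\oplus\, \bigl(e^0\wedge T^{\ast}X\bigr)$, and $e^0\wedge(\cdot)\colon T^{\ast}X \to e^0\wedge T^{\ast}X$ is an isomorphism (since $e^0 = \beta_t\,\dd t$ is nowhere horizontal). Hence $\ast\alpha = -\dd\alpha$ is equivalent to matching separately the \emph{spatial} component, lying in $\Lambda^2 T^{\ast}X$, and the \emph{mixed} component, lying in $e^0\wedge T^{\ast}X$. Using $\ast\alpha = -F_t\,\nu_{q_t} - e^0\wedge\ast_{q_t}\alpha^{\perp}_t$ and the displayed expansion of $\dd\alpha$, the spatial parts give $-F_t\,\nu_{q_t} = -\dd_X\alpha^{\perp}_t$, i.e.\ the first equation $\dd_X\alpha^{\perp}_t = F_t\,\nu_{q_t}$. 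The mixed parts give $-\ast_{q_t}\alpha^{\perp}_t = \dd_X F_t + \tfrac{F_t}{\beta_t}\dd_X\beta_t - \tfrac{1}{\beta_t}\partial_t\alpha^{\perp}_t$; rewriting $\dd_X F_t + \tfrac{F_t}{\beta_t}\dd_X\beta_t = \tfrac{1}{\beta_t}\dd_X(\beta_t F_t)$ by the Leibniz rule and rearranging produces the second equation, once one identifies $\tfrac{1}{\beta_t}\partial_t\alpha^{\perp}_t = \cL_{n_t}\alpha^{\perp}_t$.

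That identification is the only non-formal point. Since $\alpha^{\perp}_t$ is horizontal one has $\iota_{n_t}\alpha^{\perp}_t = 0$, so by Cartan's formula $\cL_{n_t}\alpha^{\perp}_t = \iota_{n_t}\dd\alpha^{\perp}_t = \iota_{n_t}\bigl(\dd t\wedge\partial_t\alpha^{\perp}_t + \dd_X\alpha^{\perp}_t\bigr) = \tfrac{1}{\beta_t}\partial_t\alpha^{\perp}_t$, where I used $n_t = \tfrac{1}{\beta_t}\partial_t$ and the horizontality of $\partial_t\alpha^{\perp}_t$. For the norm, $e^0$ is unit time-like and, in the induced metric on one-forms, $g\,$-orthogonal to the horizontal distribution while $\alpha^{\perp}_t$ is horizontal, so $\vert\alpha\vert_g^2 = F_t^2\,\vert e^0\vert_g^2 + \vert\alpha^{\perp}_t\vert_{q_t}^2 = -F_t^2 + \vert\alpha^{\perp}_t\vert_{q_t}^2$, whence $\vert\alpha\vert_g^2 = \varepsilon$ if and only if $\vert\alpha^{\perp}_t\vert_{q_t}^2 = \varepsilon + F_t^2$, the third equation. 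All the steps are reversible, so conversely the three equations in \eqref{eq:globhypervarepsilon} force $\ast\alpha = -\dd\alpha$ and $\vert\alpha\vert_g^2 = \varepsilon$, and we conclude. I expect no genuine obstacle: the argument is essentially bookkeeping within the $1+2$ splitting of $\Omega^{\bullet}(M)$, requiring care only with the Hodge-dual sign conventions — already fixed by the displayed formula for $\ast\alpha$ — and with the identification of $\cL_{n_t}$ just explained.
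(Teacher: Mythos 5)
Your proposal is correct and follows essentially the same route as the paper: the paper derives the proposition directly from the displayed formulae for $\ast\alpha$ and $\dd\alpha$ by comparing the $\Lambda^2 T^{\ast}\mathrm{X}$ and $e^0\wedge T^{\ast}\mathrm{X}$ components of $\ast\alpha=-\dd\alpha$ together with the norm condition, exactly as you do. Your explicit Cartan-formula justification of $\cL_{n_t}\alpha^{\perp}_t=\tfrac{1}{\beta_t}\partial_t\alpha^{\perp}_t$ is a detail the paper leaves implicit, and it is correct.
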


\begin{remark}
\label{remark:hypertuple}
The previous proposition yields a system of flow equations for a pair of families of functions $\left\{ F_t\right\}_{t\in\mathbb{R}}$, $\left\{ \beta_t\right\}_{t\in\mathbb{R}}$, a family of one-forms $\left\{ \alpha^{\perp}_t\right\}_{t\in\mathbb{R}}$ and a family of complete Riemannian metrics $\left\{ q_t\right\}_{t\in\mathbb{R}}$ on an oriented two-manifold $\mathrm{X}$. To the best of our knowledge, this system has not been studied in the literature. We hope to study it in more detail elsewhere. 
\end{remark}

\begin{definition}
Given a three-manifold $M = \mathbb{R}\times X$, a {\bf globally hyperbolic $\varepsilon\,$-contact structure} on $M$ is a tuple
\begin{equation*}
(\left\{ \beta_t \right\}_{t\in\mathbb{R}},\left\{ F_t \right\}_{t\in\mathbb{R}}, \left\{ \alpha^{\perp}_t \right\}_{t\in\mathbb{R}}, \left\{ q_t \right\}_{t\in\mathbb{R}})\, ,
\end{equation*}

\noindent
defined as specified in Remark \ref{remark:hypertuple}, which satisfies Equations \ref{eq:globhypervarepsilon}.
\end{definition}

\noindent
Given a globally hyperbolic $\varepsilon\,$-contact structure $(\left\{ \beta_t \right\}_{t\in\mathbb{R}},\left\{ F_t \right\}_{t\in\mathbb{R}}, \left\{ \alpha^{\perp}_t \right\}_{t\in\mathbb{R}}, \left\{ q_t \right\}_{t\in\mathbb{R}})$, it is clear from the previous discussion how to reconstruct $(g,\alpha, \varepsilon)$. We consider now a particular example in the case $\varepsilon = 1$.  

\begin{ep}
In the conditions of Proposition \ref{prop:globhypervarepsilon} set
\begin{equation*}
F_t = 0\, , \qquad \beta_t = 1\, .
\end{equation*}

\noindent
With these choices for $\left\{ F_t\right\}_{t\in \mathbb{R}}$ and $\left\{ \beta_t\right\}_{t\in \mathbb{R}}$, Equations \eqref{eq:globhypervarepsilon} read:
\begin{equation}
\label{eq:globhypervarepsilonII}
\dd_{\mathrm{X}}\alpha^{\perp}_t = 0\, , \qquad  \ast_{q_t}\alpha^{\perp}_t = \partial_t\alpha_t^{\perp}\, , \qquad \vert \alpha_t^{\perp}\vert^2_{q_t} = \varepsilon\, ,
\end{equation}

\noindent
which immediately implies $\varepsilon = 1$, corresponding to the case in which the Reeb vector field is space-like and the associated pair $(g,\alpha,\varepsilon = 1)$ defines a para-contact metric structure. Consider now
\begin{equation*}
(\mathrm{X},q_t) = (\mathbb{R}^2 , e^{2 U_t} (\dd x\otimes \dd x + \dd y\otimes\dd y))\, ,
\end{equation*}

\noindent
where $\left\{ U_t\right\}_{t\in \mathbb{R}}$ is a family of constant functions and $(x,y)$ are coordinates on $\mathbb{R}^2$. Let us write $\alpha^\perp=e^{U_t}( \alpha_1^\perp \dd x + \alpha_2^\perp \dd y)$. With these assumptions, Equations \eqref{eq:globhypervarepsilonII} are equivalent to:
\begin{equation*}
 \partial_t (\alpha^{\perp}_1 e^{U_t}) = - \alpha^{\perp}_2 e^{U_t}\, , \qquad  \partial_t (\alpha^{\perp}_2 e^{U_t}) = \alpha^{\perp}_1 e^{U_t}\, , \qquad (\alpha^{\perp}_1)^2  + (\alpha^{\perp}_2)^2 = 1\, .
\end{equation*}

\noindent
These equations are solved by:
\begin{eqnarray*}
\alpha^{\perp}_1 e^{U_t} = l_2 \cos(t) - l_1 \sin(t)\, , \qquad \alpha^{\perp}_2 e^{U_t} = l_1 \cos(t) + l_2 \sin(t)\, , \qquad  e^{2 U_t} = l_1^2 + l_2^2\, ,
\end{eqnarray*}

\noindent
for real constants $l_1$ and $l_2$ such that $l_1^2 + l_2^2 \neq 0$.
\end{ep}


\section{Null contact metric structures}
\label{sec:nullcontact}


The case of $\varepsilon\,$-contact metric structures with null Reeb vector field ($\varepsilon = 0$) seems to be new in the literature and thus deserves further attention. For simplicity in the exposition, we will refer to Lorentzian $\varepsilon\,$-contact metric structures with $\varepsilon = 0$ simply as null contact structures. The definition of null contact metric structure allows for the Reeb vector field to be identically zero. We will refer to this case as being \emph{trivial}. Unless otherwise specified, we will always consider non-trivial null contact metric structures.

\begin{remark}
Let $(g,\alpha)$ be a null contact structure. We have:
\begin{equation*}
\alpha \wedge \dd \alpha=-\alpha \wedge \ast \alpha = 0\, ,
\end{equation*}

\noindent
where we have used that $\vert \alpha \vert_g^2=0$. Therefore, the previous computation implies that the one-form $\alpha$ of a null contact structure is not a \emph{contact form} since it does not satisfy $\alpha\wedge \dd\alpha \neq 0$. Nevertheless, given that the definition of $\varepsilon\,$-contact structure encompasses Riemannian contact ($\varepsilon = 1$), Lorentzian contact ($\varepsilon = -1$) and para-contact ($\varepsilon = 1$) metric structures, we interpret the remaining case $\varepsilon = 0$ as a natural generalization of the formers but in which the Reeb vector field is null. Hence we will continue referring to the case $\varepsilon = 0$ as a null contact structure. Moreover, we will show later in this section that null contact structures admit meaningful notions of Sasakianity and K-contactness, analogously to the $\varepsilon\neq 0$ cases.
\end{remark}

\noindent
Having shed some light into the nature of null contact structures, we proceed to investigate their most relevant properties. We recall that Lemma \ref{lemma:phiproperties} applied to a null contact structure $(g,\alpha)$ implies:
\begin{equation*}
\phi^2 = -\xi\otimes \alpha\, ,
\end{equation*}

\noindent
from which the following lemma follows.

\begin{lemma}
\label{lemma:phicubiczero}
The characteristic endomorphism $\phi$ of a null contact metric structure $(g,\alpha)$ satisfies
\begin{equation*}
\phi^3 = 0\, .
\end{equation*}
	
\noindent
Hence $\phi$ is nilpotent.
\end{lemma}

\noindent
In the null contact case, the tensor field $\mathfrak{h}\eqdef\mathcal{L}_\xi \phi$ satisfies additional properties not listed in Proposition \ref{prop:equationscontact} (compare also with Remark \ref{remark:covxi}).

\begin{lemma}
\label{lemma:hphi0}
Let $(g,\alpha)$ be a null contact structure. Then $\phi \circ\mathfrak{h}=\mathfrak{h}\circ\phi=0$.
\end{lemma}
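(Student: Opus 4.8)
The plan is to exploit the already-established identities $\phi \circ \mathfrak{h} + \mathfrak{h}\circ \phi = 0$ (Proposition \ref{prop:eqfh}) together with the nilpotency structure coming from $\phi^2 = -\xi\otimes\alpha$ and the relations $\mathfrak{h}(\xi) = 0$, $\alpha\circ\phi = 0$, $\alpha\circ\mathfrak{h} = 0$. Since we only need to show $\phi\circ\mathfrak{h} = 0$ (and then $\mathfrak{h}\circ\phi = 0$ follows from anticommutativity), it suffices to check the vanishing on a light-cone frame $\{\xi, u, \phi(u)\}$ as in Definition \ref{def:lcf}, where $g(u,u) = 0$, $g(u,\xi) = 1$, $g(\phi(u),\phi(u)) = 1$, and $\xi$, $\phi(u)$ are null and mutually orthogonal, while $g(u,\phi(u)) = 0$.

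First I would record that everything vanishes trivially on $\xi$: $\phi(\mathfrak{h}(\xi)) = \phi(0) = 0$. Next, on $\phi(u)$: since $\phi^2(u) = -\xi\,\alpha(u) = -\xi$ (using $\alpha(u) = g(\xi,u) = 1$), we get $\phi(\phi(u)) = -\xi$, so $\mathfrak{h}(\phi(u)) = -\phi(\mathfrak{h}(u))$ by anticommutativity, and hence $\phi(\mathfrak{h}(\phi(u))) = -\phi^2(\mathfrak{h}(u)) = \xi\,\alpha(\mathfrak{h}(u)) = 0$ because $\alpha\circ\mathfrak{h} = 0$. So the only content is the vanishing of $\phi(\mathfrak{h}(u))$. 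Write $\mathfrak{h}(u) = a\,\xi + b\,u + c\,\phi(u)$ in the frame. Applying $\phi$ gives $\phi(\mathfrak{h}(u)) = b\,\phi(u) + c\,\phi^2(u) = b\,\phi(u) - c\,\xi$. Thus I must show $b = c = 0$, i.e. that $\mathfrak{h}(u)$ is proportional to $\xi$; equivalently $g(\mathfrak{h}(u),\phi(u)) = 0$ and $g(\mathfrak{h}(u),u) = 0$.

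The first of these, $b = g(\mathfrak{h}(u),\phi(u))$: I would use symmetry of $\mathfrak{h}$ (Proposition \ref{prop:eqfh}) together with anticommutativity: $g(\mathfrak{h}(u),\phi(u)) = g(u,\mathfrak{h}(\phi(u))) = -g(u,\phi(\mathfrak{h}(u)))$. But $g(u,\phi(w)) = -g(\phi(u),w)$ by skew-symmetry of $\phi$, so this equals $g(\phi(u),\mathfrak{h}(u)) = b$; i.e. $b = -(-b)$? Let me instead pair differently: $g(\mathfrak{h}(u),\phi(u)) = g(\tau(u), \cdot)$-type manipulation, or better, use that $\tau = \mathfrak{h}\circ\phi$ is symmetric \emph{and} that $\tau(u) = \mathfrak{h}(\phi(u)) = -\phi(\mathfrak{h}(u))$. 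Symmetry of $\tau$ gives $g(\tau(u),u) = g(u,\tau(u))$ trivially, but $g(\tau(u), \phi(u)) = g(u, \tau(\phi(u)))$; since $\phi^2(u) = -\xi$, $\tau(\phi(u)) = \mathfrak{h}(\phi^2(u)) = -\mathfrak{h}(\xi) = 0$, forcing $g(\tau(u),\phi(u)) = 0$, that is $g(\phi(\mathfrak{h}(u)),\phi(u)) = 0$, hence $g(\mathfrak{h}(u), \phi^2(u)) = g(\mathfrak{h}(u),-\xi) = 0$ after using $g\circ\phi\otimes\phi = \s_g(\varepsilon g - \alpha\otimes\alpha) = -\alpha\otimes\alpha$ when $\varepsilon = 0$ (so $g(\phi v_1,\phi v_2) = -\alpha(v_1)\alpha(v_2)$), applied to $v_1 = \mathfrak{h}(u)$, $v_2 = u$: $g(\phi\mathfrak{h}(u),\phi(u)) = -\alpha(\mathfrak{h}(u))\alpha(u) = 0$ automatically, which is consistent. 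For the two scalar conditions I would invoke $\mathrm{Tr}(\mathfrak{h}) = 0$ and compute $\mathfrak{h}$ in the frame directly from $\mathfrak{h} = \cL_\xi\phi$ using $\nabla_\xi\xi = 0$, $\nabla_\xi\phi = 0$. I expect the main obstacle to be precisely pinning down $b$ and $c$: one needs to combine the $\varepsilon = 0$ degeneracy of the metric on $\ker\alpha$ (so $\mathrm{Tr}(\mathfrak{h}^2)$ arguments of the $\varepsilon\neq 0$ case break down) with the extra relation $\phi^2 = -\xi\otimes\alpha$ and the Koszul-formula computation of Proposition \ref{prop:equationscontact} specialized to $\varepsilon = 0$, which as noted in Remark \ref{remark:covxi} yields $\tau = \mathfrak{h}\circ\phi = 0$ — and $\mathfrak{h}\circ\phi = 0$ combined with $\phi\circ\mathfrak{h} + \mathfrak{h}\circ\phi = 0$ immediately gives $\phi\circ\mathfrak{h} = 0$ as well. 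So the cleanest route is: take the Koszul computation from Proposition \ref{prop:equationscontact}, specialize $\varepsilon = 0$ to read off $2\phi(\nabla\xi) = \mathfrak{h} - \s_g\,\xi\otimes\alpha$, apply $\phi$ once more using $\phi^2 = -\xi\otimes\alpha$ and $\alpha\circ\mathfrak{h} = \alpha\circ\phi = 0$ to obtain $0 = \phi\circ\mathfrak{h}$, then transport to $\mathfrak{h}\circ\phi = 0$ via anticommutativity, and conclude.

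\begin{proof}
By Remark \ref{remark:covxi}, applying $\phi$ to the equation of Proposition \ref{prop:equationscontact} and using $\phi^2 = \s_g(-\varepsilon\,\mathrm{Id} + \xi\otimes\alpha)$ with $\varepsilon = 0$, together with $\alpha\circ\phi = 0$ and $\alpha\circ\mathfrak{h} = 0$ from Lemma \ref{lemma:phiproperties} and Proposition \ref{prop:eqfh}, we obtain
\begin{equation*}
0 = 2\phi^2(\nabla\xi) = \phi\circ\mathfrak{h}\, .
\end{equation*}
On the other hand, $\phi\circ\mathfrak{h} + \mathfrak{h}\circ\phi = 0$ by Proposition \ref{prop:eqfh}, so $\mathfrak{h}\circ\phi = 0$ as well.
\end{proof}
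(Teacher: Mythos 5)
Your final proof is correct and non-circular, but it takes a genuinely different route from the paper's. You obtain $\phi\circ\mathfrak{h}=0$ by composing the covariant-derivative formula $2\phi(\nabla\xi)=\mathfrak{h}+\s_g(\varepsilon-\xi\otimes\alpha)$ of Proposition \ref{prop:equationscontact} with $\phi$ and setting $\varepsilon=0$ — which is precisely the observation already recorded in Remark \ref{remark:covxi} — and then transport to $\mathfrak{h}\circ\phi=0$ via the anticommutation relation of Proposition \ref{prop:eqfh}. The paper instead Lie-differentiates the pointwise identity $\dd\alpha(v_1,\phi(v_2))=-\alpha(v_1)\alpha(v_2)$ along $\xi$, using $\cL_\xi\alpha=0$ and $\cL_\xi\dd\alpha=0$ to get $\dd\alpha(v_1,\mathfrak{h}(v_2))=0$, i.e.\ $g(v_1,\phi(\mathfrak{h}(v_2)))=0$; this avoids the Levi-Civita connection and the Koszul computation entirely, which is presumably why the authors announce in Remark \ref{remark:covxi} that they will prove the statement ``independently'' in this Lemma. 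Your argument is shorter but leans on Proposition \ref{prop:equationscontact}; since that proposition is established before and without Lemma \ref{lemma:hphi0}, there is no circularity, only a loss of the independence the authors wanted. One small imprecision: the vanishing of $2\phi^2(\nabla\xi)$ when $\varepsilon=0$ comes from $\alpha(\nabla_v\xi)=g(\xi,\nabla_v\xi)=\tfrac{1}{2}\,v\bigl(\vert\alpha\vert_g^2\bigr)=0$ (constancy of the norm of the Reeb vector field under a metric connection), not from $\alpha\circ\phi=0$ or $\alpha\circ\mathfrak{h}=0$ as you cite; Remark \ref{remark:covxi} already uses exactly this identity, so your appeal to the remark covers it, but the stated justification should be corrected. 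The exploratory light-cone-frame computation preceding your proof goes in circles on the coefficient $b=g(\mathfrak{h}(u),\phi(u))$ and is superseded by the clean argument you end with, so it can simply be dropped.
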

 
\begin{proof}
Let $v_1 , v_2 \in TM$. Combining the first and fifth equations in Lemma \ref{lemma:phiproperties}, we have
\begin{equation*}
\dd \alpha(v_1,\phi(v_2)) = -\alpha(v_1) \alpha(v_2)\,.
\end{equation*}
Applying the Lie derivative along the Reeb vector field $\xi$ to both sides of the previous equation, we obtain:
\begin{equation*}
\dd \alpha( \cL_{\xi} v_1, \phi(v_2))+\dd \alpha(v_1,\mathfrak{h}(v_2))+\dd \alpha(v_1, \phi(\cL_{\xi}v_2))=-\alpha(\cL_{\xi}v_1)\alpha(v_2)-\alpha(v_1)\alpha(\cL_{\xi}v_2)\, ,
\end{equation*}

\noindent
whence
\begin{equation*}
\dd \alpha(v_1,\mathfrak{h}(v_2))=0\, ,
\end{equation*}
which by the first equation in Lemma \ref{lemma:phiproperties} is equivalent to
\begin{equation*}
g(v_1,\phi\circ\mathfrak{h}(v_2))=0\, ,
\end{equation*}

\noindent
for every $v_1 , v_2 \in TM$. Hence $\phi\circ \mathfrak{h}=0$, which, combined with Proposition \ref{prop:eqfh} implies  $\mathfrak{h}\circ\phi=0$ and we conclude.
\end{proof}

\begin{prop}
\label{prop:hnulo}
The tensor field $\mathfrak{h}$ associated to a null contact structure $(g,\alpha)$ over $M$ can always be written as:
\begin{equation*}
\mathfrak{h}=\mu\, \xi\otimes \alpha \, ,
\end{equation*}

\noindent
for a function $\mu \in C^{\infty}(M)$. 
\end{prop}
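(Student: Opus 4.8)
The plan is to exploit the nilpotency of $\phi$ together with the structural constraints already extracted, namely $\phi^2 = -\xi\otimes\alpha$ (from Lemma \ref{lemma:phiproperties} in the null case), $\phi\circ\mathfrak{h} = \mathfrak{h}\circ\phi = 0$ (Lemma \ref{lemma:hphi0}), $\mathfrak{h}(\xi) = 0$, and $\mathrm{Tr}(\mathfrak{h}) = 0$ (Proposition \ref{prop:eqfh}). The essential idea is that in a light-cone frame $\left\{\xi, u, \phi(u)\right\}$ the relations $\phi\circ\mathfrak{h} = \mathfrak{h}\circ\phi = 0$ pin down the image and kernel of $\mathfrak{h}$ so tightly that only a rank-one piece supported on $\xi\otimes\alpha$ can survive.

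First I would fix a light-cone frame $\left\{\xi, u, \phi(u)\right\}$ as in Definition \ref{def:lcf}, so that $g(\xi,\xi) = 0$, $g(u,u) = 0$, $g(u,\xi) = 1$, $g(\phi(u),\phi(u)) = 1$, and $g(\xi,\phi(u)) = g(u,\phi(u)) = 0$. Note that $\phi(\phi(u)) = \phi^2(u) = -\alpha(u)\,\xi = -\xi$, since $\alpha(u) = g(\xi,u) = 1$. Next, from $\phi\circ\mathfrak{h} = 0$ we get that the image of $\mathfrak{h}$ lies in $\ker(\phi)$; but $\ker(\phi)$ is spanned by $\xi$ (indeed $\phi(u) \neq 0$ and $\phi(\phi(u)) = -\xi \neq 0$, so the only frame vector killed by $\phi$ is $\xi$, and by rank-nullity $\ker(\phi) = \R\xi$). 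Hence $\mathfrak{h}(v) \in \R\xi$ for every $v$, so we may write $\mathfrak{h} = \xi\otimes\beta$ for some one-form $\beta$. From $\mathfrak{h}\circ\phi = 0$ we get $\beta(\phi(u)) = 0$ and $\beta(\phi(\phi(u))) = -\beta(\xi) = 0$, hence $\beta(\xi) = 0$; and $\mathfrak{h}(\xi) = 0$ gives $\beta(\xi) = 0$ again (consistent). It remains to determine $\beta$ on $u$. Since $\mathrm{Tr}(\mathfrak{h}) = \beta(\xi) = 0$ is automatic and gives nothing new, I would instead use that $\mathfrak{h}$ is symmetric with respect to $g$ (Proposition \ref{prop:eqfh}): $g(\mathfrak{h}(u), \phi(u)) = g(u, \mathfrak{h}(\phi(u))) = 0$ holds automatically, but $g(\mathfrak{h}(u), u) = \beta(u)\,g(\xi,u) = \beta(u)$, and symmetry forces nothing extra here either, so the remaining content is just that $\beta(u) =: \mu$ is an unconstrained smooth function. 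Then $\mathfrak{h} = \xi\otimes\beta$ with $\beta = \mu\,g(u,\cdot)$ restricted appropriately; but $g(u,\cdot)$ restricted to the frame sends $\xi \mapsto 1$, $u\mapsto 0$, $\phi(u)\mapsto 0$, which is \emph{not} $\alpha$. Here I must be careful: $\alpha$ itself sends $\xi\mapsto 0$, $u\mapsto 1$, $\phi(u)\mapsto 0$ (since $\alpha(u) = g(\xi,u) = 1$ and $\alpha(\phi(u)) = 0$). So $\beta$, which vanishes on $\xi$ and $\phi(u)$ and equals $\mu$ on $u$, is precisely $\mu\,\alpha$. Therefore $\mathfrak{h} = \mu\,\xi\otimes\alpha$, as claimed, and $\mu \in C^\infty(M)$ because $\mathfrak{h}$, $\xi$, $\alpha$ are all smooth and $\alpha$ is nowhere zero (being dual to the nowhere-zero $\xi$).

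The main obstacle I anticipate is the bookkeeping of which functionals are which in the light-cone frame — in a null frame the metric pairing is off-diagonal, so the naive identifications (e.g. "$\mathfrak{h}$ has image in $\R\xi$, hence $\mathfrak{h} = \xi\otimes\alpha$ up to scale") can go wrong by swapping $u$ and $\xi$; one genuinely needs $\mathfrak{h}\circ\phi = 0$ to kill the $u$-direction in the covector slot and land on $\alpha$ rather than on $g(u,\cdot)$. A cleaner route that sidesteps some of this is: from $\phi\circ\mathfrak{h} = \mathfrak{h}\circ\phi = 0$ and $\phi^2 = -\xi\otimes\alpha$, one has $\phi^2\circ\mathfrak{h} = 0$ trivially, but more usefully $\mathfrak{h}$ and $\phi$ have a common invariant flag $\R\xi \subset \ker\alpha$, and on the two-dimensional quotient $\ker\alpha/\R\xi$ both descend; since $\phi$ induces an isomorphism there ($\phi(u) \mapsto$ nonzero mod $\xi$) while $\mathfrak{h}$ induces zero (as $\mathrm{im}\,\mathfrak{h}\subset\R\xi$), one concludes $\mathfrak{h}$ factors through $\ker\alpha/\R\xi$ in the source, i.e. $\mathfrak{h}$ annihilates $\xi$ and has image in $\R\xi$, and the surviving component is the single function $\mu$. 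Either way the proof is a short frame computation; I would present it in the light-cone frame for concreteness and simply verify the three identities $\phi\mathfrak{h} = \mathfrak{h}\phi = 0$, $\mathfrak{h}(\xi) = 0$, $\mathfrak{h}^{\mathrm{T}} = \mathfrak{h}$ against the ansatz $\mathfrak{h} = \mu\,\xi\otimes\alpha$ to confirm consistency.
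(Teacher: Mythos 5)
Your proof is correct and follows essentially the same route as the paper's: both work in a light-cone frame, identify $\ker(\phi)=\mathrm{Span}_{C^\infty}\{\xi\}$, use $\phi\circ\mathfrak{h}=0$ to force $\mathrm{im}(\mathfrak{h})\subseteq\mathbb{R}\xi$, and then use $\mathfrak{h}(\xi)=0$ together with $\mathfrak{h}\circ\phi=0$ to pin the covector factor down to $\mu\,\alpha$. The only difference is cosmetic bookkeeping of the dual frame, which you handle correctly.
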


\begin{remark}
\label{remark:hnull}
The previous Proposition implies that $\mathfrak{h}$ cannot have non-zero eigenvalues, in notable contrast with the situation occurring when $\varepsilon \neq 0$ \cite{Blair,Calvaruso,Zankoyov}, where certain eigenbundles of $\mathfrak{h}$ with non-zero eigenvalue play a crucial role in the classification of $\varepsilon\eta\,$-Einstein and $(\kappa,\mu)$ contact metric three-manifolds \cite{Blair,Calvaruso3d}.  
\end{remark}

\begin{proof}
Choose a light-cone frame $\{\xi, u, \phi(u)\}$. By Lemma \ref{prop:equationscontact}, $\phi(\xi)=0$ hence $\xi \in \ker(\phi)$. By Remark \ref{remark:lcb} we have $g(\phi(u), \phi(u) )=1$, whence $u\notin \ker(\phi)$. Furthermore, $\phi^2 (u)= -\xi$, so $\phi (u)\notin \ker(\phi)$ and we conclude that $\ker(\phi)=\text{Span}_{C^\infty}\left\{ \xi\right\}$. By Lemma \ref{lemma:hphi0} we have $\phi\circ\mathfrak{h}=0$, and thus $\Im(\mathfrak{h}) \subseteq \ker(\phi)$, whence
\begin{equation*}
\mathfrak{h} = \gamma\otimes \xi\, 
\end{equation*}

\noindent
for a certain one-form $\gamma$. Since $\mathfrak{h}(\xi) = 0$ (see Proposition \ref{prop:equationscontact}) we conclude that $\gamma = \mu\, \alpha + c \,\phi(u)^{\flat}$ for some functions $\mu$ and $c$. On the other hand, by Lemma \ref{lemma:hphi0} we have $\mathfrak{h}\circ \phi = 0$, which is equivalent to $\gamma \circ \phi = 0$. This implies that $c = 0$ after evaluating at $u$, that is, imposing $\gamma(\phi(u)) = 0$.
\end{proof}

\begin{remark}
Expressed in a light-cone frame $\left\{\xi,u,\phi(u)\right\}$, the endomorphism $\mathfrak{h}$ has the following point-wise matrix form:
	\[
\mathfrak{h} =
\left[ {\begin{array}{ccc}
	0 & \mu & 0 \\
	0 & 0 & 0  \\
	0 & 0 & 0  \\
	\end{array} } \right],
\]

\noindent
whence it has a unique type as an endomorphism of the tangent space. This shall be compared with the results of \cite{CalvarusoPC} where, in the para-contact case, the possible types of $\mathfrak{h}$ in an orthonormal special basis were classified.
\end{remark}

\noindent 
The additional properties satisfied by the tensor field $\mathfrak{h}$ allows us to obtain an explicit expression for the Lie brackets of a light-cone frame $\{\xi, u ,\phi (u) \}$.

\begin{lemma}
Let $(g,\alpha)$ be a null contact structure over $M$. For a light-cone frame $\{\xi, u ,\phi (u) \}$ we have:
\begin{equation*}
[\xi, u]=b\,\xi+c\,\phi(u)\, , \quad [\xi,\phi (u)]=(\mu-c)\,\xi \, , \quad [u,\phi(u)]=e\,\xi+ u  +f\,\phi(u),
\end{equation*}
where $\mu=g(u,\mathfrak{h}(u))$ and $b,c,e$ and $f$ are local functions.
\label{lemma:lblcf}
\end{lemma}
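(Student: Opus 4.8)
The plan is to extract all the bracket relations from the tensors $\phi$ and $\mathfrak{h}$ together with the defining contact condition $\alpha = \ast \dd\alpha$, working pointwise in the light-cone frame $\{\xi, u, \phi(u)\}$. First I would record the values of $\dd\alpha$ on pairs of frame vectors: from $g(\mathrm{Id}\otimes\phi) = \dd\alpha$ (Lemma \ref{lemma:phiproperties}) one gets $\dd\alpha(\xi, \cdot) = g(\xi, \phi(\cdot)) = 0$ since $\phi$ is skew and $\phi(\xi)=0$; also $\dd\alpha(u,\phi(u)) = g(u,\phi^2(u)) = g(u,-\xi) = -1$ (using Remark \ref{remark:lcb}), and $\dd\alpha(u,u) = \dd\alpha(\phi(u),\phi(u)) = 0$. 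Dually, $\alpha(\xi) = g(\xi,\xi) = 0$, $\alpha(u) = g(u,\xi) = 1$, $\alpha(\phi(u)) = 0$. These completely pin down both $\alpha$ and $\dd\alpha$ on the frame.

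Next I would invoke the identity $\dd\alpha(X,Y) = X(\alpha(Y)) - Y(\alpha(X)) - \alpha([X,Y])$ for each of the three pairs of frame vectors. For $(\xi, u)$: since $\alpha(\xi)=0$ and $\alpha(u)=1$ are constants, the derivative terms vanish and we get $\alpha([\xi,u]) = -\dd\alpha(\xi,u) = 0$; similarly $\alpha([\xi,\phi(u)]) = 0$ and $\alpha([u,\phi(u)]) = -\dd\alpha(u,\phi(u)) = 1$. Writing each bracket in the frame, $[\xi,u] = a\,\xi + b\,u + c\,\phi(u)$, etc., the conditions $\alpha(\text{bracket}) = a\alpha(\xi) + b\alpha(u) + c\alpha(\phi(u)) = b$ immediately force the coefficient of $u$ to vanish in $[\xi,u]$ and $[\xi,\phi(u)]$, and to equal $1$ in $[u,\phi(u)]$. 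This already produces the shape $[\xi,u] = b\,\xi + c\,\phi(u)$, $[\xi,\phi(u)] = d\,\xi + e\,\phi(u)$, $[u,\phi(u)] = e'\,\xi + u + f\,\phi(u)$ for local functions; I should be careful to relabel so the names match the statement.

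Then I would use $\mathfrak{h} = \cL_\xi\phi$ to nail down the remaining coefficients in the $\xi$-brackets. By definition $\mathfrak{h}(X) = \cL_\xi(\phi(X)) - \phi(\cL_\xi X) = [\xi,\phi(X)] - \phi([\xi,X])$. Evaluating at $X = u$: $\mathfrak{h}(u) = [\xi,\phi(u)] - \phi([\xi,u])$. Now $\phi([\xi,u]) = \phi(b\xi + c\phi(u)) = c\,\phi^2(u) = -c\,\xi$, so $\mathfrak{h}(u) = [\xi,\phi(u)] + c\,\xi$. On the other hand, by Proposition \ref{prop:hnulo}, $\mathfrak{h} = \mu\,\xi\otimes\alpha$, so $\mathfrak{h}(u) = \mu\,\alpha(u)\,\xi = \mu\,\xi$; moreover $\mu = g(u,\mathfrak{h}(u)) = g(u,\mu\xi) = \mu$, consistent with the naming in the statement. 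Hence $[\xi,\phi(u)] = (\mu - c)\,\xi$, which kills its $\phi(u)$-component and fixes the $\xi$-component. Evaluating $\mathfrak{h}(\phi(u)) = [\xi,\phi^2(u)] - \phi([\xi,\phi(u)]) = [\xi,-\xi] - \phi((\mu-c)\xi) = 0$, which is automatically consistent with $\mathfrak{h}(\phi(u)) = \mu\,\alpha(\phi(u))\,\xi = 0$ and so gives no new information — I should note this as a consistency check rather than a source of a constraint. Collecting everything yields exactly $[\xi,u] = b\,\xi + c\,\phi(u)$, $[\xi,\phi(u)] = (\mu-c)\,\xi$, $[u,\phi(u)] = e\,\xi + u + f\,\phi(u)$. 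The only mild subtlety — the ``main obstacle,'' such as it is — is bookkeeping: making sure the skew-symmetry conventions for $\dd\alpha$ and the factor conventions from the Remark after Proposition \ref{prop:equationscontact} are applied consistently so that the signs and the coefficient $\mu - c$ (as opposed to $c - \mu$ or $\mu + c$) come out matching the stated formula; everything else is a short direct computation.
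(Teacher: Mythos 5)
Your proposal is correct and follows essentially the same route as the paper: the paper obtains $\alpha([\xi,u])=0$ from $\mathcal{L}_\xi\alpha=0$ (equivalently, your computation of $\iota_\xi\dd\alpha$ via $\dd\alpha=g(\mathrm{Id}\otimes\phi)$ and $\phi(\xi)=0$), gets $\alpha([u,\phi(u)])=1$ from the contact condition, and derives $[\xi,\phi(u)]=\phi([\xi,u])+\mathfrak{h}(u)=(\mu-c)\xi$ using $\mathfrak{h}=\mu\,\xi\otimes\alpha$ from Proposition \ref{prop:hnulo}, exactly as you do. The only cosmetic difference is that you unpack $\mathcal{L}_\xi\alpha=0$ into the explicit exterior-derivative identity and add the (correct but unneeded) consistency check on $\mathfrak{h}(\phi(u))$.
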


\begin{proof}
Equation  $\mathcal{L}_\xi \alpha=0$ implies that $\alpha([\xi, u])=0$, whence $[\xi, u]= b\,\xi+c\,\phi(u)$. Therefore:
\begin{equation*}
[\xi, \phi(u)]=\phi ([\xi, u])+\mathfrak{h}(u)=-c\,\xi+\mu\,\xi=(\mu-c)\,\xi\,.
\end{equation*}
Furthermore, by the $\varepsilon\,$-contact structure condition $\ast \alpha=-\dd \alpha$, we deduce that $\alpha([u, \phi (u)])=1$, so necessarily $ [u,\phi(u)]=e\,\xi+u+ f\,\phi(u)$. 
\end{proof}
 
\noindent
The following examples show that null contact metric structures are abundant.

\begin{ep}
	\label{ep:examplenull1}
	Take $M = \mathbb{R}^3$ and fix $g = \delta$ to be the standard Minkowski metric of signature $(-,+,+)$ in $\mathbb{R}^{3}$ with orthonormal coordinates $(t,x,y)$, $t$ being time-like. Then: 
	\begin{equation*}
	\delta = -\dd t\otimes \dd t + \dd x\otimes \dd x + \dd y\otimes \dd y\, , \qquad \alpha = e^{y}\,(\dd t - \dd x)\, 
	\end{equation*}
	is a null contact metric structure. The characteristic endomorphism of this null contact structure can be found to be
	\begin{equation*}
	\phi =  e^{y}\,\partial_y\otimes \dd t - e^{y}\,\partial_y\otimes \dd x + e^{y}\,(\partial_t + \partial_x)\otimes \dd y\, .
	\end{equation*}
	
	\noindent
	A direct computation shows that:
	\begin{equation*}
	\phi^2 = e^{2y} (\partial_t + \partial_x) \otimes \dd t - e^{2y} (\partial_t + \partial_x) \otimes \dd x = - \xi\otimes \alpha \, , \qquad \phi^3 = 0\, ,
	\end{equation*}
	
	\noindent
	as expected. The Reeb vector field is given by $\xi = \alpha^{\sharp} = - e^y\, (\partial_t + \partial_x)$ and we have
	\begin{equation*}
	\cL_{\xi} \delta = e^{y}\, (\dd t\odot \dd y - \dd x \odot\dd y )\, .
	\end{equation*}
	
	\noindent
	Hence $(\delta,\alpha)$ is not K-contact, and by Proposition \ref{prop:KSasakian} below, it cannot be Sasakian either. On the other hand, the endomorphism $\mathfrak{h}$ can be found to be:
	\begin{equation*}
	\mathfrak{h} \eqdef \cL_{\xi}\phi = -\xi \otimes \alpha\, ,
	\end{equation*}
	
	\noindent
	in agreement with Proposition \ref{prop:hnulo} with $\mu = - 1$.
\end{ep}

\begin{ep}
	\label{ep:Sl(2,R)}
	Consider the $M=\widetilde{\text{Sl}}(2,\mathbb{R})$, where $\widetilde{\text{Sl}}(2,\mathbb{R})$ denotes the universal cover group of $\text{Sl}(2,\mathbb{R})$. There exists a Lorentzian metric $g$ with left-invariant orthonormal global frame $\left\{ e^{0}, e^{1}, e^2\right\}$ on $\widetilde{\text{Sl}}(2,\mathbb{R})$ satisfying \cite{Rahmani}:
	\begin{equation*}
	\dd e^{0} = e^{1}\wedge e^2\, , \qquad \dd e^{1} =  e^{0}\wedge e^2 \, , \qquad \dd e^2 = - e^{0}\wedge e^{1}\, 
	\end{equation*}
	
	\noindent
	and whose associated Lorentzian metric $g$ is given by:
	\begin{equation*}
	g = - e^{0}\otimes e^{0} + e^{1}\otimes e^{1} + e^2\otimes e^2\, .
	\end{equation*}
	
	\noindent
	Hence $\ast e^{0} = - e^{1}\wedge e^2$, $\ast e^{1} = - e^{0}\wedge e^{2}$ and $\ast e^{2} = e^{0}\wedge e^{1}$. We take $\alpha$ to be a null and left-invariant one-form on $M$ and we expand it the basis $\left\{ e^{0}, e^{1}, e^2\right\}$,
	\begin{equation*}
	\alpha = \alpha_{0} e^{0} + \alpha_{1} e^{1} +\alpha_{2} e^{2}\, , 
	\end{equation*}
	
	\noindent
	for some real constant coefficients $\left\{\alpha_a\right\}$, $a = 0,1,2$, satisfying $\alpha_0^2 = \alpha_1^2 + \alpha_2^2$. We compute:
	\begin{equation*}
	\dd\alpha = \alpha_{0} e^{1}\wedge e^2 + \alpha_{1}  e^{0}\wedge e^2 - \alpha_{2}  e^{0}\wedge e^1  = -\ast\alpha\, .
	\end{equation*}
	
	\noindent
	Therefore, every null left-invariant one-form on $\widetilde{\text{Sl}}(2,\mathbb{R})$ equipped with the Lorentzian metric $g$ defines a null contact structure $(g,\alpha)$. We obtain now the characteristic endomorphism $\phi$ of $(g,\alpha)$ as well as its square. A direct computation yields the following matrix representation for $\phi$ in the basis $\left\{ e^0, e^1, e^2 \right\}$:
	
	\[
	\phi =
	\left[ {\begin{array}{ccc}
		0 & \alpha_2 & - \alpha_1 \\
		\alpha_2 & 0 & \alpha_0  \\
		-\alpha_1 & -\alpha_0 & 0  \\
		\end{array} } \right]
	\]
	
	\noindent
	For the square and cubic powers of $\phi$ we obtain:
	\[
	\phi^2 =
	\left[ {\begin{array}{ccc}
		\alpha_0^2 & \alpha_0\alpha_1 & \alpha_0 \alpha_2 \\
		-\alpha_0 \alpha_1 & -\alpha_1^2 & -\alpha_1 \alpha_2  \\
		-\alpha_0 \alpha_2 & -\alpha_1\alpha_2 & -\alpha_2^2  \\
		\end{array} } \right] = -\left[\xi\otimes \alpha\right]\, , \qquad \phi^3 = 0\, ,
	\]
	
	\noindent
	as expected from Lemma \ref{lemma:phicubiczero}.
\end{ep}

\begin{remark}
By the results of \cite{DumitrescuZeghib,Frances}, see in particular Theorem A in \cite{Frances}, Examples \ref{ep:examplenull1} and \ref{ep:Sl(2,R)} show that all closed Lorentzian three-manifolds admitting a non-compact isometry group carry null contact structures. 
\end{remark}

\noindent
We generalize now Example \ref{ep:Sl(2,R)} by classifying all simply-connected, Lorentzian, three-dimensional Lie groups admitting left-invariant null contact structures. For this, we will exploit the classification of this type of groups available in the literature \cite{Calvaruso3d,CorderoParker,Rahmani}, which we summarize for completeness in Appendix \ref{app:Lorentzgroups}.

\begin{prop}
\label{prop:nullcontactstructures}
A three-dimensional connected and simply connected Lie group $\mathrm{G}$ admits a left-invariant null contact structure $(g,\alpha)$ if and only if $(\mathrm{G},g,\alpha)$ is isomorphic, through a possibly orientation-reversing isometry, to one of the items listed in the following table in terms of the orthonormal frame $\left\{ e_0 , e_1 , e_2\right\}$ appearing in Theorem \ref{thm:LorentzGroups}:

\vspace{0.25cm}
 \renewcommand{\arraystretch}{1.5}
\begin{tabular}{ |P{2cm}|P{4.5cm}|P{4.5cm}|P{2cm}|  }
				\hline
			
				$\mathfrak{g}$  & \emph{Structure constants} $(s\in \mathbb{Z}_2)$  & $\alpha$ &  $\mathrm{G}$  \\
				\hline
				  $\mathfrak{g}_1$& $a \neq 0\, , \, b= s $ & $\alpha_1=0\, , \, \alpha_0=-\alpha_2$  & $\widetilde{\mathrm{Sl}}(2,\mathbb{R})$ \\ \hline

				 \multirow{4}*{$\mathfrak{g}_3$} & $a\neq 0\, ,\, b=c=s$ & \multirow{1}*{$\alpha_1=0\, ,\, \alpha_0^2=\alpha_2^2$} & $\widetilde{\mathrm{Sl}}(2,\mathbb{R})$   \\ \cline{2-2} \cline{3-3} \cline{4-4}&  $a=c=s\, ,b\neq 0$ &  \multirow{2}*{$\alpha_2=0\, , \,\alpha_0^2=\alpha_1^2$}  &  $\widetilde{\mathrm{Sl}}(2,\mathbb{R})$ \\ \cline{2-2}   \cline{4-4} &  $a=s\, , \, b =0\, ,\, c=s$  &  & $\widetilde{\mathrm{E}}(1,1)$ \\  \cline{2-2} \cline{3-3}  \cline{4-4}&  $a=b=c=s$ & $\alpha_0^2=\alpha_1^2+\alpha_2^2\, , \alpha_1, \alpha_2 \neq 0$ &  $\widetilde{\mathrm{Sl}}(2,\mathbb{R})$\\ \hline 
			 \multirow{2}*{$\mathfrak{g}_4$}    & $b=s + \mu \, ,\, a \neq 0\,  , \,\mu \in \mathbb{Z}_2$ & \multirow{2}*{$\alpha_1=0\, , \,\alpha_0=\mu\alpha_2$} & $\widetilde{\mathrm{Sl}}(2,\mathbb{R})$ \\ \cline{2-2} \cline{4-4} & $b=s + \mu\, , a=0\,  , \mu \in \mathbb{Z}_2$  & & $\widetilde{\mathrm{E}}(1,1)$\\	 
			 \hline
				\multirow{2}*{$\mathfrak{g}_6$}  & $\mu a = (b+s)\, , \, \mu d = (c+s)$  & \multirow{2}*{$\alpha_1=0\, , \, \alpha_0= -\mu \alpha_2 $}   & \multirow{2}*{$\mathfrak{G}_6$}\\  &  $b=c\, , \, a=d\neq 0$ & & \\  \hline
\end{tabular}
\renewcommand{\arraystretch}{1} 	
\end{prop}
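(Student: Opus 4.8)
The plan is to reduce the classification to a finite, frame-by-frame computation using the list of three-dimensional simply connected Lorentzian Lie groups recorded in Theorem \ref{thm:LorentzGroups} (Appendix \ref{app:Lorentzgroups}). Concretely, for each Lie algebra $\mathfrak{g}_i$ ($i=1,\dots,6$) in that list, one writes the generic left-invariant one-form $\alpha = \alpha_0\, e^0 + \alpha_1\, e^1 + \alpha_2\, e^2$ with constant coefficients in the distinguished orthonormal coframe $\{e^0,e^1,e^2\}$, and imposes the two defining conditions of a null contact structure: the algebraic constraint $\vert\alpha\vert_g^2 = 0$, i.e. $-\alpha_0^2 + \alpha_1^2 + \alpha_2^2 = 0$, and the differential constraint $\alpha = \ast\, \dd\alpha$, equivalently $\ast\alpha = -\dd\alpha$ by the remark after the definition since $g$ is Lorentzian. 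Using the structure equations $\dd e^a = \tfrac12 c^a_{\ bc}\, e^b\wedge e^c$ given in Theorem \ref{thm:LorentzGroups}, the quantity $\dd\alpha$ is a left-invariant two-form whose coefficients are explicit linear combinations of the $\alpha_a$'s and the structure constants; dualizing, $\ast\,\dd\alpha$ is again a left-invariant one-form, and the equation $\alpha = \ast\,\dd\alpha$ becomes three polynomial equations in $(\alpha_0,\alpha_1,\alpha_2)$ and the structure parameters $(a,b,c,d,\dots)$.

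Next I would, for each of the six algebras, solve this coupled polynomial system together with the null condition. This is where Example \ref{ep:Sl(2,R)} serves as the template: there one sees that for $\mathfrak{g}_3$ with $a=b=c=s$ every null left-invariant one-form automatically satisfies $\alpha=\ast\,\dd\alpha$, producing the last row of the $\mathfrak{g}_3$ block. In general the three equations $\alpha = \ast\,\dd\alpha$ will force relations among the structure constants (pinning down which member of each $\mathfrak{g}_i$-family can occur) and among the $\alpha_a$'s (pinning down the admissible null one-forms); I expect typical outcomes of the shape ``one structure constant $=s$, another free and nonzero, and $\alpha$ lying in a two-plane such as $\alpha_1=0$, $\alpha_0=\pm\alpha_2$,'' exactly as tabulated. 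I would also keep track of the orientation: since the statement allows orientation-reversing isometries, I may normalize signs (e.g.\ replace $e^0$ by $-e^0$) to collapse cases, and I should record the resulting simply connected Lie group $\mathrm{G}$ by matching the constrained structure constants against the standard identifications ($\widetilde{\Sl}(2,\mathbb{R})$, $\widetilde{\mathrm{E}}(1,1)$, etc.) already used in Theorem \ref{thm:LorentzGroups}.

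Two bookkeeping points deserve care. First, different Lie algebras in the Milnor-type list can be isomorphic as Lie algebras while carrying metrics that are not isometric, and conversely the same group can appear for several parameter ranges; to state the classification ``up to (possibly orientation-reversing) isometry'' I must eliminate redundant entries by exhibiting explicit frame changes, mirroring how the cited references \cite{Calvaruso3d,CorderoParker,Rahmani} handle the analogous problem. Second, I should exclude the trivial null contact structure (identically vanishing Reeb field), which here simply means discarding the solution $\alpha=0$; every surviving entry has $\xi=\alpha^\sharp\neq 0$ automatically because $\alpha\neq 0$ and the coefficients are constant. Conversely, I would verify the ``if'' direction by checking that each tabulated triple $(\mathfrak{g}_i,\text{structure constants},\alpha)$ genuinely satisfies both $\vert\alpha\vert_g^2=0$ and $\alpha=\ast\,\dd\alpha$ — a direct substitution into the structure equations, of the same type as in Example \ref{ep:Sl(2,R)}.

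The main obstacle is not conceptual but organizational: it is the sheer case analysis across all six algebras and all their parameter subfamilies, where sign choices, degenerations (some structure constant vanishing), and the null-cone constraint interact. In particular, handling $\mathfrak{g}_6$ — which carries the largest number of structure parameters $(a,b,c,d)$ and, in this excerpt, shows up with relations like $\mu a=(b+s)$, $\mu d=(c+s)$, $b=c$, $a=d\neq 0$ — will require the most careful algebra to disentangle which constraints come from $\alpha=\ast\,\dd\alpha$ versus from the null condition, and to identify the resulting group. I would organize the write-up as a uniform lemma reducing the problem to polynomial equations, followed by a short paragraph per algebra discharging the computation, so that the proof is transparent even though it is computation-heavy.
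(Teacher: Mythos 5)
Your proposal follows essentially the same route as the paper: a case-by-case reduction to the polynomial system $\ast\alpha=-s\,\dd\alpha$ (with $s\in\mathbb{Z}_2$ absorbing orientation reversal) together with $\alpha_0^2=\alpha_1^2+\alpha_2^2$, run over the frames of Theorem \ref{thm:LorentzGroups}. Just note that the list there contains seven algebras $\mathfrak{g}_1,\dots,\mathfrak{g}_7$, not six, and the cases $\mathfrak{g}_5$ and $\mathfrak{g}_7$ must also be worked through (both force $\alpha_0=0$ and hence admit no non-trivial null contact structure).
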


\begin{proof}
Every connected and simply-connected three-dimensional Lorentzian Lie group is isometric to one of the items listed in Theorem \ref{thm:LorentzGroups} through an isometry which may be orientation-reversing. Therefore we proceed on a case by case basis by solving the equation
\begin{equation*}
\ast \alpha = -s \,\dd\alpha\, , \qquad s\in\left\{ - 1 , + 1\right\}
\end{equation*}

\noindent
in each of the cases listed in the table appearing in Theorem \ref{thm:LorentzGroups}. The sign $s$ is introduced because the $\varepsilon\,$-contact condition is not invariant under orientation reversing morphisms. If a solution is found with $s = -1$ then an orientation reversing isometry yields an isomorphic Lorentzian Lie algebra admitting a null-contact structure satisfying equation $\ast \alpha = - \dd\alpha$. Let $\{e_0, e_1, e_2\}$ be the frame appearing in Theorem \ref{thm:LorentzGroups} and let $\{e^0, e^1, e^2\}$ be its dual frame. We fix the volume form to be $\nu = e^0\wedge e^1 \wedge e^2$. For the rest of the proof we will write:
\begin{equation*}
\alpha = \alpha_0 \, e^0 + \alpha_1 \, e^1 + \alpha_2 \, e^2\, ,
\end{equation*}

\noindent
where $\alpha_0$, $\alpha_1$ and $\alpha_2$ are real coefficients satisfying $\alpha_0^2 = \alpha_1^2 + \alpha_2^2$ (hence $\alpha_0 \neq 0$ in order for $\alpha$ to be non-zero). If $\{e^0, e^1, e^2\}$ denotes the dual co-frame, we compute:
\begin{equation*}
\ast \alpha=-\alpha_0 e^1 \wedge e^2 - \alpha_1 e^0\wedge e^2 +\alpha_2 e^0 \wedge e^1 \,. 
\end{equation*}

\begin{itemize}
	\item {\bf Case} $\mathfrak{g}_1$. In the orthonormal frame for $\mathfrak{g}_1$ given in Appendix \ref{app:Lorentzgroups}, Equation $\ast \alpha = -s\,\dd\alpha$ is equivalent to
	
	\begin{eqnarray*}
    &(-a \alpha_1 - b \alpha_2)\, e^0\wedge e^1 + (a \alpha_0 +b \alpha_1 + a \alpha_2)\, e^0\wedge e^2 + (b \alpha_0 - a \alpha_1)\, e^1\wedge e^2 \\
    & = s\, ( \alpha_0\, e^1\wedge e^2 + \alpha_1\, e^0\wedge e^2  - \alpha_2\, e^0\wedge e^1)\, ,
    \end{eqnarray*}
  \noindent  
    A solution $(a,b)$ exists for a non-zero $\alpha$ if and only if $b=s$. Using that $a \neq 0$ by Theorem \ref{app:Lorentzgroups}, if $b=s$ we obtain $\alpha_0=-\alpha_2$, so $\alpha_1=0$.

	\item {\bf Case} $\mathfrak{g}_2$. In the orthonormal frame for $\mathfrak{g}_2$ given in Appendix \ref{app:Lorentzgroups}, Equation $\ast \alpha = -s\,\dd\alpha$ is equivalent to
	\begin{eqnarray*}
	& a \alpha_1 e^0 \wedge e^2+(c \alpha_0-b \alpha_2)\,  e^0 \wedge e^1 + (c \alpha_2 +b \alpha_0)\,  e^1 \wedge e^2\\
	& = s \alpha_0 \, e^1 \wedge e^2 +s \alpha_1 \, e^0 \wedge e^2-s \alpha_2\, e^0 \wedge e^1\, .
	\end{eqnarray*}
	\noindent
	The previous equations imply the condition $c^2+(b-s)^2=0$, whose unique solution is $c=0$ and $b=s$. However the value $c=0$ is forbidden for $\mathfrak{g}_2$ algebras, so no null contact structures exist on this type of algebras. 
	
	\item {\bf Case} $\mathfrak{g}_3$. In the orthonormal frame for $\mathfrak{g}_3$ given in Appendix \ref{app:Lorentzgroups}, Equation $\ast \alpha = -s\,\dd\alpha$ is equivalent to
	\begin{equation*}
	c \alpha_0\,  e^1\wedge e^2+a \alpha_1 \, e^0 \wedge e^2- b \alpha_2 \, e^0 \wedge e^1=s\alpha_0 e^1 \wedge e^2+ s\alpha_1 e^0 \wedge e^2 -s\alpha_2 e^0 \wedge e^1\,. 
	\end{equation*}
	\noindent
	Since $\alpha_0 \neq 0$ if $\alpha$ is non-vanishing, we have $c = s$. If $\alpha_1=0$, then $\alpha_0^2= \alpha_2^2$ and $b= s$. If $\alpha_2=0$, then $\alpha_0^2= \alpha_1^2$ and $a=s$. If $\alpha_1, \alpha_2 \neq 0$, then $a=b=s$.

	\item {\bf Case} $\mathfrak{g}_4$. In the orthonormal frame for $\mathfrak{g}_4$ given in Appendix \ref{app:Lorentzgroups}, Equation $\ast \alpha = -s\,\dd\alpha$ is equivalent to
	\begin{eqnarray*}
     & a \alpha_1 \, e^0 \wedge e^2 +(-(2\mu-b)\alpha_0+\alpha_2) e^1 \wedge e^2+(\alpha_0-b \alpha_2 ) \, e^0 \wedge e^1\\
     & =s \alpha_0 \, e^1 \wedge e^2+s \alpha_1\, e^0 \wedge e^2-s \alpha_2\, e^0 \wedge e^1 \, .
	\end{eqnarray*}
	
\noindent 	
This equation admits non-trivial solutions only if $\alpha_2 \neq 0$, $b\neq s$ (which is required in order to have $\alpha_0\neq 0$) and
\begin{equation*}
b^2-2b(s+\mu)+2(s\mu+ 1)=0\, .
\end{equation*}

\noindent
The previous equation has the unique solution $b=\mu + s$, which satisfies $b\neq s$. Setting $b=\mu + s$ we obtain $\alpha_0 = \mu \alpha_2$, whence $\alpha_1=0$. 
	
\item  {\bf Case} $\mathfrak{g}_5$. In the orthonormal frame for $\mathfrak{g}_5$ given in Appendix \ref{app:Lorentzgroups}, Equation $\ast \alpha = -s\,\dd\alpha$ is equivalent to
\begin{equation*}
(a \alpha_1 + b \alpha_2 )\, e^0\wedge e^1 + (c\alpha_1  + d \alpha_2 )\, e^0\wedge e^2 = s\,(\alpha_0\, e^1\wedge e^2 + \alpha_1\, e^0\wedge e^2  - \alpha_2\, e^0\wedge e^1)\, .
\end{equation*}

\noindent	 
This equation implies $\alpha_0=0$, which in turn yields $\alpha=0$. Therefore, $\mathfrak{g}_5$ does not admit null contact structures. 
	
\item {\bf Case} $\mathfrak{g}_6$. In the orthonormal frame for $\mathfrak{g}_6$ given in Appendix \ref{app:Lorentzgroups}, Equation $\ast \alpha = -s\,\dd\alpha$ is equivalent to
\begin{equation*}
(d\alpha_0+c\alpha_2) \, e^0 \wedge e^1 +(-b \alpha_0 -a\alpha_2) \, e^1 \wedge e^2 = s\,(\alpha_0 \, e^1 \wedge e^2 +\alpha_1 \, e^0 \wedge e^2-\alpha_2 \, e^0 \wedge e^1)\, .
\end{equation*}

\noindent	
We obtain $\alpha_1=0$, so that $\alpha_0= - \mu \alpha_2$ for $\mu \in \mathbb{Z}_2$. Consequently, $\mu a = s+b$ and $\mu d = s+c$. The remaining conditions follow from equations $ac = bd$ and $a+d \neq 0$, which must hold by Theorem \ref{thm:LorentzGroups}.
	
\item {\bf Case} $\mathfrak{g}_7$. In the orthonormal frame for $\mathfrak{g}_7$ given in Appendix \ref{app:Lorentzgroups}, Equation $\ast \alpha = -s\,\dd\alpha$ is equivalent to
\begin{equation*}
\begin{split}
(b \alpha_0+a \alpha_1+b \alpha_2) \, e^0 \wedge e^1&+ (d\alpha_0+c \alpha_1 +d \alpha_2) \, e^0 \wedge e^2 +(b \alpha_0+a \alpha_1+b \alpha_2) \, e^1 \wedge e^2\\&= s(\alpha_0 e^1 \wedge e^2+\alpha_1 e^0 \wedge e^2 -\alpha_2 e^0 \wedge e^1) \, .
\end{split}
\end{equation*}

\noindent
with $a+d\neq 0$ and $ac = 0$. Imposing $ac = 0$ implies $\alpha_0=0$, whence a Lorentzian Lie group with Lie algebra isomorphic to $\mathfrak{g}_7$ does not admit non-trivial null contact structures.
\end{itemize}
\end{proof}

\subsection{Sasakian null contact structures}


The Sasakian condition of a Riemannian or Lorentzian (para) contact-metric structure with non-null Reeb vector field can be defined in terms of the integrability of certain endomorphism defined from $\phi$ on $T(M\times \mathbb{R})$ \cite{Blair,Calvaruso3d,ACGLT}. In the null case $\varepsilon = 0$, we proceed in analogous way and we introduce the following endomorphism, which mimics the definition occurring in the case $\varepsilon \neq 0$:

\begin{equation*}
J: T(M \times \mathbb{R}) \rightarrow T(M\times \mathbb{R})\, ,  \qquad (v, c\,\partial_q) \mapsto (\phi(v) + c\, \xi, \alpha(v) \partial_q)\, ,
\end{equation*}

\noindent
where $q$ is the fixed canonical coordinate on $\mathbb{R}$ and $c\in \mathbb{R}$. A direct computation shows that $J^2=0$.  We clarify first the relevant notion of \emph{integrability} of a field of endomorphisms.


\begin{definition}
Let $E\in \Gamma(TN\otimes T^{\ast}N)$ be a field of endomorphisms on a manifold $N$. Then, $E$ is said to be {\bf integrable} if around every point $n\in N$ there exists a coordinate system on which the local matrix representation of $E$ has constant coefficients.
\end{definition}

\noindent
For almost complex structures the previous definition is equivalent to the standard definition of integrability in terms of existence of holomorphic charts. Necessary and sufficient conditions for a nilpotent endomorphism (such as $J$) to be integrable have been studied in the literature, see \cite{Kobayashi,Lejeune,Turiel,Thompson,Boubel} and \cite{BKM} for a thorough exposition of this and related topics. By a result of G. Thomson \cite[Theorem 2]{Thompson}, we have that $J\in \Gamma(\End(T(M\times\mathbb{R})))$ is integrable if and only if the following three conditions hold simultaneously:
\begin{itemize}
	\item The Nijenhuis torsion tensor of $J$ vanishes.
	
	\item $J$ is a zero-deformable field of endomorphisms. 
	
	\item The distribution $\mathrm{Ker}(J)\subset T(M\times \mathbb{R})$ is involutive.
\end{itemize}

\noindent
Recall that the Nijenhuis torsion tensor associated to $J$ is defined as
\begin{equation*}
\cN_J(v_1,v_2) = [J(v_1),J(v_2)] - J[v_1,J(v_2)] - J[J(v_1),v_2] + J^2[v_1,v_2]\, .
\end{equation*}

\noindent
Note that since $J^2 = 0$, the last term in the right hand side identically vanishes. Also, we remind the reader that a certain field of endomorphisms  is said to be zero-deformable if around every point of $M$ there exists a frame relative to which the Jordan form of this endomorphism field is constant.  

\begin{lemma}
\label{lemma:constantfactors}
The endomorphism $J\in \Gamma(\End(T(M\times \mathbb{R})))$ is zero-deformable.
\end{lemma}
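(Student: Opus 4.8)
The plan is to show that the Jordan normal form of $J$ is locally constant, which, since $J^2 = 0$, amounts to showing that the rank of $J$ is constant on $M \times \mathbb{R}$ (equivalently, that $\dim \ker J$ is constant). Because $J^2 = 0$, the only possible Jordan blocks are $1 \times 1$ zero blocks and $2 \times 2$ nilpotent blocks, and the number of $2\times 2$ blocks equals $\operatorname{rk}(J)$; so constancy of the rank is exactly zero-deformability here.

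First I would compute $\operatorname{rk}(J)$ directly using a light-cone frame. Pick a light-cone frame $\{\xi, u, \phi(u)\}$ for $(g,\alpha,0)$ on an open set $U \subseteq M$ as in Definition \ref{def:lcf}, so that $\{\xi, u, \phi(u), \partial_q\}$ is a frame for $T(M\times\mathbb{R})$ over $U \times \mathbb{R}$. Using $\phi(\xi) = 0$, $\alpha(\xi) = 0$, $\alpha(u) = 1$, $\alpha(\phi(u)) = 0$, and $\phi^2(u) = -\xi$ (Lemma \ref{lemma:phiproperties} and Remark \ref{remark:lcb}), I compute the action of $J$ on this frame: $J(\xi, 0) = (\phi(\xi), \alpha(\xi)\partial_q) = (0,0)$, $J(u,0) = (\phi(u), \partial_q)$, $J(\phi(u), 0) = (\phi^2(u), 0) = (-\xi, 0)$, and $J(0, \partial_q) = (\xi, 0)$. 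Hence the image of $J$ over $U\times\mathbb{R}$ is spanned by the nowhere-vanishing sections $(\phi(u),\partial_q)$ and $(\xi, 0)$, while the kernel is spanned by $(\xi, 0)$ and $(\phi(u), 0) + (0,\partial_q)$. In particular $\operatorname{rk}(J) = 2$ at every point, so $\dim\ker J = 2$ everywhere.

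Since $\operatorname{rk}(J)$ is constant (equal to $2$) and $J^2 = 0$, the Jordan form of $J$ at every point consists of exactly one $2\times 2$ nilpotent block and two $1\times 1$ zero blocks; in particular it is the same at every point of $M\times\mathbb{R}$. Therefore one can locally choose a frame in which the matrix of $J$ is in this constant Jordan form — for instance, the frame $\big((\phi(u),\partial_q),\, (\xi,0),\, (\phi(u),0)+(0,\partial_q),\, (\xi,0)\big)$ is not independent, so I would instead take $\big(e_1, e_2, e_3, e_4\big) = \big((\xi,0),\, (0,\partial_q),\, (\phi(u),0),\, (u,0)\big)$ or a suitable reordering realizing $J e_4 = e_3 + \cdots$; concretely, since $\operatorname{rk} J = 2$ is locally constant, the standard normal-form lemma for endomorphisms with locally constant Jordan type applies and produces such a frame. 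This establishes that $J$ is zero-deformable.

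The only mild subtlety — the main point to be careful about — is that the frame $\{\xi,u,\phi(u)\}$ is only local and requires a choice of the auxiliary vector field $u$; but zero-deformability is a local condition, so this is harmless. The computation of $J$ on the frame is the routine core and presents no real obstacle once the identities of Lemma \ref{lemma:phiproperties} are in hand; the genuinely conceptual step is simply the observation that for a square-zero endomorphism constancy of the rank is equivalent to constancy of the Jordan type.
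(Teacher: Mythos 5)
Your approach is essentially the paper's: both compute the action of $J$ on the frame $\{\xi,u,\phi(u),\partial_q\}$ and conclude from the constancy of the resulting matrix (equivalently, of $\mathrm{rk}\,J$) that the Jordan type cannot vary from point to point; your remark that for a square-zero endomorphism the Jordan type is determined by the rank is a clean way to package the same computation. There is, however, a concrete slip in your identification of the Jordan form: since $J^2=0$ on a rank-four bundle and $\mathrm{rk}\,J=2$, your own counting rule (number of $2\times 2$ nilpotent blocks equals $\mathrm{rk}\,J$) gives \emph{two} $2\times 2$ nilpotent blocks and no $1\times 1$ blocks --- indeed your computation exhibits $\mathrm{im}\,J=\ker J=\mathrm{Span}\{(\xi,0),\,(\phi(u),0)+(0,\partial_q)\}$, which forces exactly this form. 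The configuration you assert (one $2\times 2$ block and two $1\times 1$ blocks) has rank $1$, contradicting your rank computation. This error does not invalidate the proof, since zero-deformability only requires constancy of the Jordan type, which your rank argument already establishes; but the misidentified normal form (and the tentative frame built around it) should be corrected.
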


\begin{proof}
Fix a point in $M\times \mathbb{R}$ and consider the tangent-space basis $\left\{\xi,u,\phi(u),\partial_q\right\}$, where $\left\{\xi,u,\phi(u)\right\}$ is a light-cone basis. In this basis $J$ has the following matrix representation:
 \[
J =
\left[ {\begin{array}{cccc}
	0 & 0 & -1 & 1 \\
	0 & 0 & 0 & 0 \\
	0 & 1 & 0 & 0 \\
	0 & 1 & 0 & 0 \\
	\end{array} } \right].
\]

\noindent
Since the basis $(\xi,u,\phi(u),\partial_q)$ exists at every point in $M\times \mathbb{R}$, we conclude that $J$ is always locally conjugate to the same constant Jordan form.
\end{proof}
  
\begin{lemma}
\label{lemma:kerJinvolutive}
The distribution $\mathrm{Ker}(J)\subset T(M\times \mathbb{R})$ is involutive.
\end{lemma}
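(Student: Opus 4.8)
The plan is to determine the kernel of $J$ explicitly in a light-cone frame, identify it as the span of two specific vector fields, and then verify directly that their Lie bracket stays inside the kernel. From the matrix representation of $J$ computed in Lemma \ref{lemma:constantfactors}, in the basis $\left\{\xi, u, \phi(u), \partial_q\right\}$, the kernel is two-dimensional: it is spanned by $\xi$ (the first column is zero) and by the combination $u + \phi(u)^{\flat\text{-dual adjustment}}$—more precisely, reading off the matrix, $J(\xi) = 0$, $J(u) = \phi(u) + \partial_q$ (wait, from columns: $J(u) = \phi(u)$-component plus $\partial_q$-component), $J(\phi(u)) = -\xi$, $J(\partial_q) = \xi$. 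So $\mathrm{Ker}(J) = \mathrm{Span}_{C^\infty}\{\xi,\ \phi(u) + \partial_q\}$. Thus I must show $[\xi,\ \phi(u)+\partial_q]$ is a combination of $\xi$ and $\phi(u)+\partial_q$.

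**First I would** note that $\partial_q$ is a coordinate vector field on the $\mathbb{R}$ factor, so it commutes with everything pulled back from $M$; in particular $[\xi,\partial_q] = 0$ and the bracket reduces to $[\xi, \phi(u)]$, a bracket computed entirely on $M$. **Then I would** invoke Lemma \ref{lemma:lblcf}, which gives $[\xi,\phi(u)] = (\mu - c)\,\xi$ for local functions $\mu, c$. Hence $[\xi,\ \phi(u)+\partial_q] = (\mu-c)\,\xi \in \mathrm{Ker}(J)$, which establishes involutivity. The only genuine content is correctly reading off $\mathrm{Ker}(J)$ from the Jordan form and matching it to the light-cone frame description; once that is done, the closure is immediate from the already-established bracket relations.

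**The main obstacle**—really a bookkeeping point rather than a difficulty—will be to present the kernel description cleanly: one should double-check that $\mathrm{Ker}(J)$ is exactly $\mathrm{Span}\{\xi, \phi(u)+\partial_q\}$ and not some other $2$-plane, since the $\partial_q$ row of the matrix equals the $u$-row of the $-\xi$ entry, etc. Concretely, $J(a\xi + bu + c\phi(u) + d\partial_q) = (-c+d)\xi + b\phi(u) + b\partial_q$, which vanishes iff $b = 0$ and $c = d$, giving $\mathrm{Ker}(J) = \{a\xi + c(\phi(u)+\partial_q)\}$ as claimed. After that, everything follows. I would write the proof as: identify the kernel, reduce the bracket to $[\xi,\phi(u)]$ using that $\partial_q$ commutes with $M$-vector fields, and close using Lemma \ref{lemma:lblcf}.

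\begin{proof}
Fix a point of $M\times\mathbb{R}$ and work in the basis $\left\{\xi,u,\phi(u),\partial_q\right\}$ of Lemma \ref{lemma:constantfactors}, where $\left\{\xi,u,\phi(u)\right\}$ is a light-cone frame. From the matrix representation of $J$ given there, a vector $a\,\xi + b\,u + c\,\phi(u) + d\,\partial_q$ lies in $\mathrm{Ker}(J)$ if and only if $b = 0$ and $c = d$. Hence
\begin{equation*}
\mathrm{Ker}(J) = \mathrm{Span}_{C^{\infty}}\left\{ \xi,\ \phi(u) + \partial_q\right\}\, .
\end{equation*}
Since $\partial_q$ is a coordinate vector field on the $\mathbb{R}$ factor, it commutes with every vector field pulled back from $M$; in particular $[\xi,\partial_q] = 0$. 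Therefore
\begin{equation*}
[\xi,\ \phi(u) + \partial_q] = [\xi,\phi(u)] = (\mu - c)\,\xi\, ,
\end{equation*}
where the last equality is Lemma \ref{lemma:lblcf}. Thus $[\xi,\ \phi(u)+\partial_q]\in \mathrm{Ker}(J)$, and since $\mathrm{Ker}(J)$ is spanned by $\xi$ and $\phi(u)+\partial_q$, this shows that $\mathrm{Ker}(J)$ is involutive.
\end{proof}
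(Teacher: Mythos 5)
Your proof is correct and follows essentially the same route as the paper: identify $\mathrm{Ker}(J)=\mathrm{Span}_{C^\infty}\{\xi,\,\phi(u)+\partial_q\}$ in a light-cone frame and close the bracket using $[\xi,\phi(u)]=(\mu-c)\,\xi$ from Lemma \ref{lemma:lblcf}; the extra verification of the kernel from the matrix of $J$ is a welcome addition. One cosmetic point: you reuse the letter $c$ both as a coefficient of $\phi(u)$ in a general kernel element and as the structure function appearing in $(\mu-c)\,\xi$, so you should rename one of them to avoid confusion.
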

\begin{proof}
Fix a local frame $\left\{ \xi,u,\phi(u),\partial_q\right\}$ on $M\times \mathbb{R}$, where $\{ \xi,u,\phi(u)\}$ is a light-cone frame for $M$. The kernel of $J$ is locally spanned by
\begin{equation*}
\mathrm{Ker}(J) = \mathrm{Span}_{C^{\infty}}(\xi,\phi(u) + \partial_q)\, .
\end{equation*}

\noindent
Lemma \ref{lemma:lblcf} implies now:
\begin{equation*}
[\xi,\phi(u) + \partial_q] = [\xi,\phi(u)] = (\mu-c)\, \xi\, ,
\end{equation*} 

\noindent
and we conclude.
\end{proof}
 
\begin{prop}
A null contact metric structure $(g,\alpha)$ is Sasakian if and only if its associated endomorphism $J\colon T(M\times\mathbb{R})\to T(M\times \mathbb{R})$ is integrable. 
\end{prop}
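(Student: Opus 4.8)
The plan is to invoke Thompson's criterion (three conditions: vanishing Nijenhuis torsion, zero-deformability, involutivity of $\ker J$) and to observe that Lemmas \ref{lemma:constantfactors} and \ref{lemma:kerJinvolutive} already dispose of the second and third conditions \emph{unconditionally} — that is, irrespective of whether $(g,\alpha)$ is Sasakian. Hence integrability of $J$ is equivalent to the vanishing of the Nijenhuis torsion $\cN_J$ alone, and the entire content of the proposition reduces to the identity
\begin{equation*}
\cN_J = 0 \iff \mathfrak{h} = 0\, .
\end{equation*}
So the real task is a direct computation of $\cN_J$ on the light-cone frame $\left\{\xi, u, \phi(u), \partial_q\right\}$ of $M\times\mathbb{R}$.

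First I would recall that $\cN_J$ is tensorial, so it suffices to evaluate it on all pairs drawn from that frame; since $\cN_J$ is skew-symmetric there are six pairs, but $J(\xi) = 0$ and $J(\phi(u)+\partial_q)=0$ (indeed $\ker J = \mathrm{Span}(\xi, \phi(u)+\partial_q)$, and moreover $J(\partial_q) = \xi$ while $J(u) = \phi(u)$), which kills or simplifies most of them. The two potentially nonzero brackets to control are the $M$-brackets from Lemma \ref{lemma:lblcf}, namely $[\xi,u] = b\,\xi + c\,\phi(u)$, $[\xi,\phi(u)] = (\mu-c)\,\xi$, $[u,\phi(u)] = e\,\xi + u + f\,\phi(u)$, together with the fact that $\partial_q$ commutes with all of $\xi, u, \phi(u)$ (the splitting $M\times\mathbb{R}$). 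Feeding these into $\cN_J(v_1,v_2) = [Jv_1,Jv_2] - J[v_1,Jv_2] - J[Jv_1,v_2]$ (the $J^2$ term dropping out), I expect the pair $(u,\partial_q)$ or $(u,\phi(u))$ to produce a term proportional to $\mu\,\xi$ — this is where the function $\mu = g(u,\mathfrak{h}(u))$ from Proposition \ref{prop:hnulo} enters, since $\mathfrak{h} = \mu\,\xi\otimes\alpha$. All other components should cancel identically using $\phi^2 = -\xi\otimes\alpha$, $\phi(\xi)=0$, $\alpha\circ\phi = 0$, and $J\xi = 0$.

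The computation should show that $\cN_J$ vanishes on every pair \emph{except} possibly those involving $u$ and one of $\phi(u), \partial_q$, and on those the obstruction is exactly a multiple of $\mu$. Thus $\cN_J = 0$ iff $\mu \equiv 0$ iff $\mathfrak{h} = 0$ (by Proposition \ref{prop:hnulo}), which is the definition of Sasakian. Combined with Lemmas \ref{lemma:constantfactors} and \ref{lemma:kerJinvolutive} and Thompson's theorem, this gives the equivalence. I would present the Nijenhuis evaluation compactly: compute $\cN_J(u, \partial_q)$ and $\cN_J(u, \phi(u))$ in detail (these carry the $\mu$-dependence), then remark that $\cN_J(\xi, \cdot) = 0$ because $J\xi = 0$ and $\cL_\xi\phi = \mathfrak{h} = \mu\,\xi\otimes\alpha$ has image in $\ker J$, and that $\cN_J(\phi(u), \partial_q) = 0$ by a similar short check, so no further pairs contribute.

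The main obstacle is purely bookkeeping: keeping straight how $J$ acts on each frame vector (note $J$ does \emph{not} simply act as $\phi$ on the $M$-directions — it sends $\partial_q \mapsto \xi$ and has the cross-term $\alpha(v)\partial_q$), and making sure the structure functions $b, c, e, f$ truly drop out so that $\mu$ is the sole survivor. A secondary subtlety is that the "if and only if" uses the full strength of Thompson's characterization, so I would state explicitly that zero-deformability and involutivity hold for \emph{all} null contact structures (already proved), making clear that the Sasakian hypothesis is doing work only through the Nijenhuis term. Once that is in place the proof is two short displayed computations plus a one-line appeal to the earlier lemmas.
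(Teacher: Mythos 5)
Your proposal is correct and follows essentially the same route as the paper: Thompson's criterion, the two lemmas disposing of zero-deformability and involutivity of $\ker J$ unconditionally, and a Nijenhuis computation on the light-cone frame in which the only surviving obstruction is $\cN_J(u,\partial_q)=-\mathfrak{h}(u)$, so that $\cN_J=0$ iff $\mathfrak{h}=0$. The only slip is the parenthetical claim $J(u)=\phi(u)$ (in fact $J(u)=\phi(u)+\partial_q$ since $\alpha(u)=1$), but you flag the cross-term yourself later and it is harmless here because $\partial_q$ commutes with all the frame fields.
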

 
\begin{proof}
Assume first that $(g,\alpha)$ is Sasakian, that is, $\mathfrak{h} = 0$. By Lemma \ref{lemma:constantfactors} the endomorphism $J$ is zero-deformable and by Lemma \ref{lemma:kerJinvolutive} its kernel $\mathrm{Ker}(J)$ is involutive. Therefore by Theorem \cite[Theorem 2]{Thompson} we only need to show that $\cN_J$ vanishes to prove that $J$ is integrable.  Since $\cN_J$ is a tensor, it is enough to prove that it vanishes on a light cone frame $\left\{ \xi,u,\phi(u),\partial_q\right\}$. We compute:
\begin{eqnarray*}
&\cN_J(\xi,u) = - J[\xi,\phi(u)] = 0\, , \quad \cN_J(\xi,\phi(u)) = - J[\xi,J(\phi(u))] = 0\, , \\ 
&\cN_J(u,\phi(u)) = - J[J(\phi(u)),J(\phi(u))] = 0\, , \quad  \cN_J(\xi,\partial_q) = -J[\xi,J(\partial_q)] = 0\, ,\\
&\cN_J(u,\partial_q) =  [\phi(u),\xi] - J[u,\xi] = - \cL_{\xi}(\phi(u)) + J(\cL_{\xi}u) = - \cL_{\xi}(\phi(u)) + \phi(\cL_{\xi}u) = -\mathfrak{h}(u) = 0\, , \\
&\cN_J(\phi(u),\partial_q) = [J(\phi(u)), J(\partial_q)] - J[\phi(u), J(\partial_q)] = [\phi^2(u), \xi] - J[\phi(u), \xi] = \phi^2(\cL_{\xi} u) = 0\, ,
\end{eqnarray*}

\noindent
whence $J$ is integrable. The converse follows now directly by applying the previous formulae, upon use of Lemmas \ref{lemma:constantfactors} and \ref{lemma:kerJinvolutive} and the fact that $\mathfrak{h}(u) = 0$ if and only if $\mathfrak{h}=0$.
\end{proof}

\noindent

We proceed now to classify all left-invariant Sasakian null contact structures on simply-connected, Lorenztian, three-dimensional Lie groups by exploiting Proposition \ref{prop:nullcontactstructures}. 

\begin{prop}
\label{prop:liegroupsasakinc}
A three-dimensional connected and simply connected Lie group $\mathrm{G}$ admits a left-invariant Sasakian null contact structure $(g, \alpha)$ if and only if $(\mathrm{G},g,\alpha)$ is isomorphic, through a possibly orientation-reversing isometry, to one of the items listed in the following table in terms of the orthonormal frame $\left\{ e_0 , e_1 , e_2\right\}$ appearing in Theorem \ref{thm:LorentzGroups}:

\vspace{0.25cm}
 \renewcommand{\arraystretch}{1.5}
\begin{tabular}{ |P{2cm}|P{4.8cm}|P{4.5cm}|P{2cm}|  }
				\hline
			
				$\mathfrak{g}$  & \emph{Structure constants} $(s\in \mathbb{Z}_2)$  &  $\alpha$ &  $\mathrm{G}$  \\
				\hline
				  $\mathfrak{g}_1$& $a \neq 0\, , \, b= s $ & $\alpha_1=0\, , \,\alpha_0=-\alpha_2$  & $\widetilde{\mathrm{Sl}}(2,\mathbb{R})$ \\ \hline

				 $\mathfrak{g}_3$ & $a=b=c=s$ & $ \alpha_0^2=\alpha_1^2+\alpha_2^2$ & $\widetilde{\mathrm{Sl}}(2,\mathbb{R})$  \\  \hline 
			$\mathfrak{g}_4$    & $b= s + \mu \, , \, a = s\,  , \, \mu \in \mathbb{Z}_2$ & $\alpha_1=0\, , \, \alpha_0=\mu\alpha_2$ & $\widetilde{\mathrm{Sl}}(2,\mathbb{R})$ \\ 
			 \hline
				
				$\mathfrak{g}_6$  & $b=c=-\frac{s}{2}\, , a=d= \frac{\mu s}{2}\, ,$  & $\alpha_1=0\, ,$ $\alpha_0= - \mu \alpha_2 $   & $\mathfrak{G}_6$   \\  \hline
				
				
			\end{tabular}
			
			 \renewcommand{\arraystretch}{1}
\end{prop}

\begin{proof}
We proceed by verifying which cases in Proposition \ref{prop:nullcontactstructures} satisfy $\mathfrak{h} = 0$. By Proposition \ref{prop:hnulo}, it is enough to prove that $\mathfrak{h}(u)=0$, where $u\in \left\{\xi,u,\phi(u)\right\}$ belongs to a light-cone frame. In the following computations it will be very convenient to use that the matrix expression of the characteristic endomorphism $\phi$ in the orthonormal basis $\{e_0,e_1,e_2\}$ used at Theorem \ref{thm:LorentzGroups} is given by
\begin{equation*}
\phi =
	\left[ {\begin{array}{ccc}
		0 & \alpha_2 & - \alpha_1 \\
		\alpha_2 & 0 & \alpha_0  \\
		-\alpha_1 & -\alpha_0 & 0  \\
		\end{array} } \right].
\end{equation*}

\noindent
Furthermore, the Reeb vector field $\xi$ is given by $\xi=-\alpha_0 e_0+\alpha_1 e_1+\alpha_2 e_2$.

\begin{itemize}
\item {\bf Case} $\mathfrak{g}_1$. Since $\alpha_1=0$ by Proposition  \ref{prop:nullcontactstructures}, then the Reeb vector field reads $\xi=-\alpha_0 (e_0+e_2)$. A direct computation shows that $(\xi, u = \frac{1}{2 \alpha_0} (e_0-e_2), -e_1)$ is a light-cone frame (where $\phi(u)=-e_1$). Hence
\begin{equation*}
\mathfrak{h}(u)= \alpha_0 [e_0,e_1] + \alpha_0 [e_2,e_1]+ \phi ([e_2,e_0])=0\, ,
\end{equation*}

\noindent
implying that every null contact structure on $\mathfrak{g}_1$ is Sasakian.

\item {\bf Case} $\mathfrak{g}_3$. According to Proposition \ref{prop:nullcontactstructures} we distinguish between the cases $\alpha_1=0$ with $\alpha_2 \neq 0$, $\alpha_2 = 0$ with $\alpha_1\neq 0$ and $\alpha_1, \alpha_2 \neq 0$.  If $\alpha_1=0$, then $\xi = \alpha_0 (-e_0 + \mu e_2)$ and $(\xi, u =\frac{1}{2 \alpha_0} (e_0 + \mu e_2), \mu e_1)$ is a light-cone frame. We obtain, after using that $b=c=s$:
\begin{equation*}
\mathfrak{h}(u)=\alpha_0 (s-a)e_0 + \mu \alpha_0 (a-s )e_2.
\end{equation*}

\noindent
Hence, $\mathfrak{h}=0$ if and only if $a=s$. Similarly, if $\alpha_2=0$,  then  $\xi=\alpha_0 (-e_0 + \mu e_1)$ and  $\{\xi, u=\frac{1}{2\alpha_0} (e_0 + \mu e_1), \mu e_2\}$ is a light-cone frame (with $\phi(u)=\mu e_2$). Since $a=c=s$, we compute:
\begin{equation*}
\mathfrak{h}(u)= \alpha_0 (s-b)e_0 + \mu\alpha_0 (b-s) e_1\, .
\end{equation*}

\noindent
Hence $\mathfrak{h} = 0$ if and only if $b=s$. The case $\alpha_1, \alpha_2 \neq 0$ follows along similar lines.

\item {\bf Case} $\mathfrak{g}_4$. In this case we have $\xi=\alpha_0 (-e_0 + \mu e_2)$ and get a light-cone frame $(\xi, u=\frac{1}{2 \alpha_0}(e_0+\mu e_2), \mu e_1)$. We obtain:
\begin{equation*}
\mathfrak{h}(u)=\alpha_0 (-(2\mu-b) + \mu-a)e_0+\alpha_0 (1 - \mu b + \mu a) e_2\, ,
\end{equation*}

\noindent
which vanishes if and only if
\begin{equation*}
1 - \mu b + \mu a =0\, .
\end{equation*}

\noindent
This is in turn equivalent to the constraint $a=s$.

\item  {\bf Case} $\mathfrak{g}_6$. In this case we have $\xi= - \alpha_0 (e_0 + \mu e_2)$ and $\{ \xi, u=\frac{1}{2\alpha_0}(e_0 - \mu e_2), -\mu e_1\}$ is a light-cone frame, where $\phi(u)= -\mu e_1$. We find:
\begin{equation*}
\mathfrak{h}(u)=-\alpha_0\mu (d+ \mu b) e_0 - \alpha_0\mu (c + \mu a) e_2\, ,
\end{equation*}

\noindent
whence $c= -\mu a$ and $d=-\mu b$. Taking into account now the constraints stated in Proposition \ref{prop:nullcontactstructures} we conclude.

\end{itemize}
\end{proof}


\subsection{Null K-contact structures}


For three-dimensional Riemannian contact structures, Lorentzian contact structures and para-contact structures, the K-contact and Sasakian conditions are equivalent. However, this fails to be the case for null contact structures, as the following Example shows.

\begin{ep}
\label{ep:SasakiannoK}
Take $M$ to be a connected and simply connected Lie group admitting a left-invariant global co-frame $\{e^{+},e^{-},e^2\}$ satisfying:
\begin{eqnarray*}
\dd e^{+} =-a\, e^{+} \wedge e^{-}  -e^{+}\wedge e^2 \, , \quad \dd e^{-}=e^{-} \wedge e^2\, , \quad \dd e^2=e^{+}\wedge e^{-} - a\, e^{-}\wedge e^2\, ,  
\end{eqnarray*}

\noindent
where $a \in \mathbb{R}\backslash\left\{ 0\right\}$. We denote by $\{e_{+},e_{-},e_2\}$ the frame dual to $\{e^{+},e^{-},e^2\}$. We equip $M$ with the Lorentzian metric
\begin{equation*}
g=e^{+} \odot e^{-} +e^2 \otimes e^2\, 
\end{equation*}

\noindent
and we fix the volume form to be $\nu =  e^{-}\wedge e^{+}\wedge e^2$. Set $\alpha \eqdef  e^{-}$, whence $\xi = e_{+}$. Then, $(g,\alpha)$ defines a null contact structure on $M$ since $g(\xi, \xi)=0$ and
\begin{equation*}
\ast \alpha=- \alpha \wedge e^2 = - \dd \alpha\, .
\end{equation*}

\noindent
The characteristic endomorphism $\phi$ is given by:
\begin{equation*}
\phi(\xi)=0\, , \quad \phi(e_{-})=(\iota_{e_{-}} \ast \alpha)^\sharp=-e_2\, , \quad \phi(e_2)=(\iota_{e_{2}} \ast \alpha)^\sharp=\xi\, .
\end{equation*}

\noindent
Therefore, $\{\xi,e_-,-e_2\}$ yields a light-cone frame. Similarly, $\phi^2$ can be shown to satisfy: 
\begin{equation*}
\phi^2(\xi)=0\, , \quad \phi^2 (e_{-})=-\phi (e_2)=-\xi\, , \quad
 \phi^2(e_2)=0\, .
\end{equation*}

\noindent
Hence we obtain $\phi^2=-\xi\otimes\alpha$ and $\phi^3=0$, as required. The fact that $(g,\alpha)$ is Sasakian follows now from the following computation:
\begin{equation*}
\mathfrak{h}(e_{-})=[\xi, \phi(e_{-})]-\phi([\xi, e_{-}])=- \xi +  \xi=0\, .
\end{equation*}

\noindent
which, together with Proposition \ref{prop:hnulo} implies that $\mathfrak{h}$ vanishes identically. On the other hand, $\xi$ cannot be a Killing vector field, because:
\begin{equation*}
(\mathcal{L}_{\xi} g)(e_{-},e_{-})=-2\, g([\xi,e_{-}],e_{-})=-2a\,g(\xi,e_{-})=-2a \neq 0\, 
\end{equation*}

\noindent
and $a\neq 0$ by assumption. Therefore $(g,\alpha)$ is a Sasakian null contact structure which fails to be K-contact.
\end{ep}

\noindent
Hence, the Sasakian condition does not imply the K-contact condition.  
 
\begin{prop}
\label{prop:KSasakian}
Every three-dimensional null K-contact structure $(g,\alpha)$ is Sasakian.
\end{prop}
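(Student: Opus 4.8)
The plan is to exploit the expression for $\mathfrak{h}$ in terms of the covariant derivative of the Reeb vector field that already appears inside the proof of Proposition \ref{prop:eqfh}, namely
\begin{equation*}
\mathfrak{h}(v) = -\,\nabla_{\phi(v)}\xi + \phi(\nabla_v \xi)\, , \qquad \forall\, v\in \mathfrak{X}(M)\, ,
\end{equation*}
which follows from $\nabla_\xi\phi = 0$ together with the torsion-freeness of $\nabla$. The K-contact hypothesis $\mathcal{L}_\xi g = 0$ is equivalent to $\nabla\xi$ being skew-symmetric with respect to $g$, that is $g(\nabla_{v_1}\xi,v_2) = -\,g(v_1,\nabla_{v_2}\xi)$ for all $v_1,v_2$; and $\phi$ is itself skew-symmetric with respect to $g$.

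Granting these, I would compute, for an arbitrary $v\in\mathfrak{X}(M)$,
\begin{equation*}
g(\mathfrak{h}(v),v) = -\,g(\nabla_{\phi(v)}\xi,v) + g(\phi(\nabla_v\xi),v) = g(\nabla_v\xi,\phi(v)) - g(\nabla_v\xi,\phi(v)) = 0\, ,
\end{equation*}
using the skew-symmetry of $\nabla\xi$ in the first term and of $\phi$ in the second. Since $\mathfrak{h}$ is $g$-symmetric (Proposition \ref{prop:eqfh}; in the present null case this is also immediate from Proposition \ref{prop:hnulo}, which gives $\mathfrak{h}=\mu\,\xi\otimes\alpha$), the bilinear form $(v_1,v_2)\mapsto g(\mathfrak{h}(v_1),v_2)$ is symmetric and vanishes on the diagonal, hence vanishes identically by polarization; as $g$ is non-degenerate this forces $\mathfrak{h}=0$, i.e. $(g,\alpha)$ is Sasakian.

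Equivalently, one may phrase the last step with a light-cone frame $\{\xi,u,\phi(u)\}$: by Proposition \ref{prop:hnulo} one has $g(\mathfrak{h}(u),u)=\mu\,\alpha(u)^2=\mu$, while the displayed computation gives $g(\mathfrak{h}(u),u)=0$, so $\mu=0$. In either presentation the argument is short and I do not expect a genuine obstacle; the only point requiring a little care is that the $g$-symmetry of $\mathfrak{h}$ be invoked in a form valid for $\varepsilon=0$, which is supplied by Proposition \ref{prop:hnulo}. Note finally that nothing in this reasoning is special to the null case, in agreement with the fact that K-contact implies Sasakian for every value of $\varepsilon$; what genuinely breaks down when $\varepsilon=0$ is only the converse implication, as Example \ref{ep:SasakiannoK} demonstrates.
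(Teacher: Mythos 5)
Your proof is correct. It reaches the same bottom line as the paper's argument but by a slightly different route: the paper evaluates $(\cL_{\xi}g)(u,\phi(u))$ directly on a light-cone frame and uses the skew-adjointness of $\phi$ to isolate $g(u,\mathfrak{h}(u))$, whereas you work covariantly, combining the identity $\mathfrak{h}(v)=-\nabla_{\phi(v)}\xi+\phi(\nabla_v\xi)$ (valid since $\nabla_\xi\phi=0$) with the characterization of the Killing condition as skew-symmetry of $\nabla\xi$, to get $g(\mathfrak{h}(v),v)=0$ for \emph{all} $v$; polarization against the $g$-symmetry of $\mathfrak{h}$ (or, equivalently, the evaluation $g(\mathfrak{h}(u),u)=\mu$ supplied by Proposition \ref{prop:hnulo}) then forces $\mathfrak{h}=0$. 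Both proofs ultimately hinge on the same three ingredients — the formula for $\mathfrak{h}$, skew-adjointness of $\phi$, and the K-contact hypothesis — but your version is frame-free and makes transparent that the implication K-contact $\Rightarrow$ Sasakian holds for every value of $\varepsilon$, not just $\varepsilon=0$; the paper's version is marginally shorter because the light-cone frame lets it read off the single relevant component at once. No gaps.
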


\begin{proof}
Let $\xi$ denote the Reeb vector field associated to $(g,\alpha)$ and choose a light-cone frame $\{\xi,u,\phi (u)\}$. From the K-contact condition we have:
\begin{equation*}
0 = - ( \mathcal{L}_\xi g)(u,\phi (u) )=  g(\cL_{\xi}u,\phi (u) ) + g(u,\phi(\cL_{\xi}u) ) + g(u, \mathfrak{h}(u) ) =  g(u, \mathfrak{h}(u) )\, ,
\end{equation*}
where we have used that $\phi$ is skew-adjoint with respect to $g$. On the other hand, Corollary \ref{prop:hnulo} implies that $\mathfrak{h}(u)=\mu \, \xi$ for some function $\mu \in C^\infty(M)$. Since $g(u,\xi) = 1$, equation $g(u, \mathfrak{h}(u)) = 0$ is equivalent to $\mu = 0$ and thus $\mathfrak{h}=0$.
\end{proof}
 
\begin{prop}
\label{prop:wsikc}
Let $\xi$ denote the Reeb vector field associated to a Sasakian null-contact structure $(g,\alpha)$, and let $\{\xi,u,\phi (u)\}$ be a light-cone frame. Then, $(g,\alpha)$ is K-contact if and only if:
\begin{equation*}
g(\cL_{\xi} u,u)=0\, ,
\end{equation*}
\end{prop}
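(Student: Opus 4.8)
The plan is to compute the tensor $\cL_\xi g$ component-by-component in the given light-cone frame $\{\xi,u,\phi(u)\}$ and to observe that, under the Sasakian hypothesis, at most one of its components can be nonzero — precisely the one controlled by $g(\cL_\xi u, u)$. The \emph{only if} direction needs nothing beyond the normalization in Remark~\ref{remark:lcb}: if $\cL_\xi g = 0$ then in particular $0 = (\cL_\xi g)(u,u) = \xi(g(u,u)) - 2\,g(\cL_\xi u, u) = -2\,g(\cL_\xi u, u)$, since $g(u,u)=0$. So the work is entirely in the converse.

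For the converse, the key input is Lemma~\ref{lemma:lblcf}, which expresses the brackets of the frame as $[\xi,u] = b\,\xi + c\,\phi(u)$ and $[\xi,\phi(u)] = (\mu-c)\,\xi$ for local functions $b,c$ and $\mu = g(u,\mathfrak{h}(u))$ (the third bracket $[u,\phi(u)]$ will not be needed); the Sasakian hypothesis $\mathfrak{h}=0$ forces $\mu=0$. I would then expand
\begin{equation*}
(\cL_\xi g)(X,Y) = \xi(g(X,Y)) - g([\xi,X],Y) - g(X,[\xi,Y])
\end{equation*}
on all pairs $X,Y \in \{\xi,u,\phi(u)\}$, using the inner-product values of Remark~\ref{remark:lcb}. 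One finds that $(\cL_\xi g)(\xi,\cdot)$ vanishes identically (this is nothing but $\cL_\xi\alpha=0$ from Proposition~\ref{prop:eqfh}), that $(\cL_\xi g)(\phi(u),\phi(u))=0$ because $[\xi,\phi(u)]$ is a multiple of $\xi$ while $g(\xi,\phi(u))=0$, and that $(\cL_\xi g)(u,\phi(u)) = -c - (\mu-c) = -\mu$, which vanishes precisely because $\mathfrak{h}=0$. The only surviving component is
\begin{equation*}
(\cL_\xi g)(u,u) = -2\,g([\xi,u],u) = -2b = -2\,g(\cL_\xi u, u)\,.
\end{equation*}
Hence, assuming $g(\cL_\xi u, u)=0$, every component of $\cL_\xi g$ vanishes, so $\cL_\xi g = 0$ and $(g,\alpha)$ is K-contact.

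The computation itself is routine; the one point that requires care — and the sole place where Sasakianness is genuinely used — is the cross-term $(\cL_\xi g)(u,\phi(u))$, which for an arbitrary null contact structure equals $-\mu = -g(u,\mathfrak{h}(u))$ and therefore obstructs the K-contact property independently of $b$. In passing, the same computation yields the sharper statement that a general null contact structure is K-contact exactly when $\mu = 0$ and $b = g(\cL_\xi u, u) = 0$, which is consistent with Proposition~\ref{prop:KSasakian} and with the strictness of the inclusion exhibited by Example~\ref{ep:SasakiannoK}.
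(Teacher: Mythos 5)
Your proof is correct and follows essentially the same route as the paper's: evaluate $\cL_\xi g$ on the light-cone frame using Lemma \ref{lemma:lblcf} and observe that the only possibly non-vanishing component is $(\cL_\xi g)(u,u)=-2\,g(\cL_\xi u,u)$. The paper states this computation in one line, whereas you spell out the components and correctly flag that the Sasakian hypothesis is needed precisely to kill the cross-term $(\cL_\xi g)(u,\phi(u))=-\mu$.
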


\begin{proof}
We evaluate $\cL_{\xi}g$ on a light-cone frame $\left\{ \xi, u ,\phi(u) \right\}$. A direct computation using Lemma \ref{lemma:lblcf} shows that the only non-trivial term is
\begin{equation*}
(\cL_{\xi} g)(u,u) = - 2 g(\cL_{\xi} u , u)\, 
\end{equation*}

\noindent
and we conclude.
\end{proof}

\begin{remark}
Indeed, not every Sasakian null contact structure satisfies that $g(\cL_{\xi} u,u)=0$. For instance, in Example \ref{ep:SasakiannoK}, we have that:
\begin{equation*}
g(\cL_{\xi}u,u) = g([e_{+}, e_{-}],e_{-})= a \neq 0\, ,
\end{equation*}

\noindent
as expected. Therefore we learn that the Sasakian condition for null contact structures is weaker than the K-contact condition. Interestingly enough, this is in sharp contrast to what occurs for non-null contact structures: for such structures these conditions are equivalent in three dimensions whereas in higher dimensions the Sasakian condition is stronger than the K-contact condition \cite{Blair,Calvaruso}. 
\end{remark}

\noindent
Using the classification of simply connected, Lorentzian, three-dimensional Lie groups admitting left-invariant Sasakian null contact structures presented in Proposition \ref{prop:liegroupsasakinc}, we obtain in the following an analogous classification for null K-contact structures. 

\begin{prop}
\label{prop:liegroupkcontact}
A three-dimensional connected and simply connected Lie group $\mathrm{G}$ admits a left-invariant null K-contact contact structure $(g, \alpha)$ if and only if $(\mathrm{G},g,\alpha)$ is isomorphic, through a possibly orientation-reversing isometry, to one of the items listed in the following table in terms of the orthonormal frame $\left\{ e_0 , e_1 , e_2\right\}$ appearing in Theorem \ref{thm:LorentzGroups}:

\vspace{0.25cm}
 \renewcommand{\arraystretch}{1.5}
\begin{tabular}{ |P{2cm}|P{4.8cm}|P{4.5cm}|P{2cm}|  }
				\hline
			
				$\mathfrak{g}$  & \emph{Structure constants} $(s\in \mathbb{Z}_2)$  &  $\alpha$ &  $\mathrm{G}$ \\
				\hline

				 $\mathfrak{g}_3$ & $a=b=c=s$ & $ \alpha_0^2=\alpha_1^2+\alpha_2^2$ & $\widetilde{\mathrm{Sl}}(2,\mathbb{R})$  \\  \hline 
			$\mathfrak{g}_4$    & $b = s+\mu \, , a = s\,  , \, \mu \in \mathbb{Z}_2$ & $\alpha_1=0\, , \alpha_0=\mu\alpha_2$ & $\widetilde{\mathrm{Sl}}(2,\mathbb{R})$ \\ 
			 \hline
				
				$\mathfrak{g}_6$  & $b=c=-\frac{s}{2}\, ,\, a=d= \frac{\mu s}{2}$  & $\alpha_1=0\, ,$ $\alpha_0= -\mu \alpha_2 $   & $\mathfrak{G}_6$   \\  \hline

				
			\end{tabular}
			
			 \renewcommand{\arraystretch}{1}
\end{prop}

\begin{proof}
\noindent 
Proposition \ref{prop:KSasakian} states that every null K-contact structure is Sasakian. Therefore, we proceed by checking which cases in Proposition \ref{prop:liegroupsasakinc} are in fact K-contact. By Proposition \ref{prop:wsikc}, we will test K-contactness by verifying if $g([\xi,u],u)=0$. We use the terminology introduced in Propositions \ref{prop:liegroupsasakinc} and \ref{prop:KSasakian}:

\begin{itemize}
\item {\bf Case} $\mathfrak{g}_1$. We obtain $[\xi, u]=[e_0,e_2]=-s e_1-ae_2-ae_0\neq 0$, where $a \neq 0$ by Theorem \ref{thm:LorentzGroups}. Hence:
\begin{equation*}
g([\xi, u],u) = \frac{a}{\alpha_{0}} \neq 0\, ,
\end{equation*}

\noindent
whence $\mathfrak{g}_1$ does not admit left-invariant null K-contact structures. In fact, Example \ref{ep:SasakiannoK} corresponds to a Lie group of type $\mathfrak{g}_1$.

\item {\bf Case} $\mathfrak{g}_3$. We distinguish between the cases $\alpha_1=0$ or $\alpha_2=0$ and $\alpha_1, \alpha_2 \neq 0$.  If $\alpha_1=0$, then $[\xi, u]=\mu [e_2,e_0]= \mu e_1=\phi(u)$. Hence $g([\xi, u],u) = 0$. Similarly, if $\alpha_2 =0$, then $[\xi, u]=\mu [e_1,e_0]=-\mu e_2=\phi(u)$ and $g([\xi, u],u) = 0$. For $\alpha_1, \alpha_2 \neq 0$ a direct computation shows that $(\mathcal{L}_\xi g)(e_i,e_j) = 0$ for every $i,j \in \{1,2,3\}$, whence $\mathcal{L}_\xi g=0$ and every left-invariant Sasakian null contact structure on $\mathfrak{g}_3$ is also K-contact.

\item Case $\mathfrak{g}_4$. We have $[\xi, u]= \mu [e_2,e_0]= \mu a e_1=a \phi (u)$, so $g([\xi, u],u) = 0$ and the Sasakian and K-contact conditions are equivalent on $\mathfrak{g}_4$.

\item  Case $\mathfrak{g}_6$. We have $[\xi, u]= -\mu  [e_2,e_0]=0$, so $g([\xi, u],u) = 0$. Hence the Sasakian and K-contact conditions are equivalent on  $\mathfrak{g}_6$.
\end{itemize}
\end{proof}

\noindent
The previous Proposition and Example \ref{ep:SasakiannoK} show that the theory of null K-contact structures and Sasakian null contact structures in three dimensions has the potential to be richer than its $\varepsilon\neq 0$ counterpart, where the Sasakian and K-contact conditions are equivalent. Further investigation of  this issue is beyond the scope of this manuscript.


\section{$\varepsilon\eta\,$-Einstein $\varepsilon\,$-contact metric manifolds}
\label{sec:Einsteinpcontact}


We introduce in this section the notion of $\varepsilon\eta\,$-Einstein $\varepsilon\,$-contact metric structure on an oriented three-manifold $N$, which is a particular case of the standard notion of $\eta\,$-Einstein Riemannian/Lorentzian contact metric structure when the Reeb vector field has non-vanishing norm. The definition is motivated by the structure of six-dimensional supergravity (coupled to a tensor multiplet) and its solutions, see Section \ref{sec:6dsugrasolutions} and Theorem \ref{thm:pcontactsugrasolution} for details and applications.

\begin{definition}
\label{def:Einsteinpcontact}
An $\varepsilon\,$-contact metric structure $(g,\alpha,\varepsilon)$ on $N$ is said to be {\bf $\varepsilon\eta\,$-Einstein} if the Ricci curvature tensor $\ric^g$ of $g$ satisfies
\begin{equation}
\label{eq:Einsteinpcontact}
\mathrm{Ric}^g = \frac{\mathrm{s}_g}{2}(\lambda^2 + \kappa\,\varepsilon)\,g - \mathrm{s}_g\,\kappa\, \alpha\otimes \alpha\, ,
\end{equation}
where $\mathrm{s}_g = 1$ if $g$ is Riemannian, $\mathrm{s}_g = -1$ if $g$ is Lorentzian and $\lambda,\kappa \in \mathbb{R}$ are real constants such that $\kappa \geq 0$ if $\mathrm{s}_g =-1$.
\end{definition}

\begin{remark}
Whenever there is no possible confusion, we may abbreviate notation and denote an $\varepsilon\eta\,$-Einstein contact metric structure $(g,\alpha,\varepsilon)$ just by $\varepsilon\eta\,$-Einstein contact structure.
\end{remark}

\begin{remark}
Recall that we denote by $\mathrm{Q}^g\in \Gamma(TM\otimes T^{\ast}M)$ the endomorphism associated to $\mathrm{Ric}^g$. Then, $(g,\alpha,\varepsilon)$ is $\varepsilon\eta\,$-Einstein if and only if
\begin{equation*}
\mathrm{Q}^g = \frac{\mathrm{s}_g}{2}(\lambda^2 + \kappa\,\varepsilon)\,\mathrm{Id} - \mathrm{s}_g\,\kappa\, \xi\otimes \alpha\, ,
\end{equation*}

\noindent
in which case
\begin{equation*}
\mathrm{Q}^g(\xi) = \frac{\mathrm{s}_g}{2} (\lambda^2 - \kappa\,\varepsilon) \, \xi\ \, .
\end{equation*}

\noindent
Therefore $\xi$ is an eigenvector of $\mathrm{Q}^g$ with eigenvalue $\rho_{\xi} = \frac{\mathrm{s}_g}{2} (\lambda^2 - \kappa\,\varepsilon)$.
\end{remark}

\begin{definition}
We denote by $\mathrm{PCont}^{\varepsilon\eta}(\varepsilon,\lambda^2,\kappa)$ the category of $\varepsilon\eta\,$-Einstein $\varepsilon\,$-contact structures with respect to $\lambda^2$ and $\kappa$ and whose Reeb vector field is of norm $\varepsilon$. Likewise, denote by $\mathrm{PCont}^{\varepsilon \eta}_L(\varepsilon,\lambda^2,\kappa)$ ($\mathrm{PCont}^{\varepsilon \eta}_R(\lambda^2,\kappa)$) the category of Lorentzian (Riemannian) $\varepsilon\eta\,$-Einstein $\varepsilon\,$-contact structures with respect to $\lambda^2$, $\kappa$ and whose Reeb vector field is of norm $\varepsilon$.
\end{definition}

\begin{remark}
\label{remark:etaysasaki}
Let $(g,\alpha,\varepsilon) \in \mathrm{PCont}^{\varepsilon \eta}(\varepsilon,\lambda^2,\kappa)$ with $\varepsilon\, \s_g = 1$. By Definition \ref{eq:Einsteinpcontact}, we have:
\begin{equation*}
\ric^g(\xi,\xi)= \frac{\mathfrak{s}_g}{2}(\lambda^2 \varepsilon  - \kappa)\, .
\end{equation*}
On the other hand, by Proposition \ref{prop:sasakireeb} $(g,\alpha,\varepsilon)$ is Sasakian if and only if $\ric^g(\xi,\xi)=\mathfrak{s}_g\frac{\varepsilon}{2}$. This is equivalent to
\begin{equation*}
\lambda^2= 1 + \kappa \varepsilon\, ,
\end{equation*}

\noindent
In other words, an $\varepsilon\eta\,$-Einstein $\varepsilon\,$-contact structure $(g,\alpha,\varepsilon)$ such that $\s_g \varepsilon = 1$ is Sasakian if and only if  $(g,\alpha,\varepsilon) \in \mathrm{PCont}^{\varepsilon \eta}(\varepsilon, 1+\varepsilon \kappa,\kappa)$. 
\end{remark}

\begin{lemma}
\label{lemmalexpression}
Let $(N,g,\alpha,\varepsilon)\in \mathrm{PCont}^{\varepsilon \eta}(\varepsilon,\lambda^2,\kappa)$. Then, the following equation holds:
\begin{equation*}
\mathrm{R}^g(v_1,v_2)(\xi) = \cK\, (\alpha(v_2)\, v_1 - \alpha(v_1)\, v_2)\, ,
\end{equation*}
	
\noindent
where
\begin{equation*}
\cK\eqdef \frac{\s_g (\lambda^2 - \varepsilon \kappa )}{4}\, , 
\end{equation*}
	
\noindent
and $\mathrm{R}^g$ denotes the Riemann tensor on $(N,g)$.
\end{lemma}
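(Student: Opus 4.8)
The plan is to establish the identity $\mathrm{R}^g(v_1,v_2)\xi = \cK\,(\alpha(v_2)v_1 - \alpha(v_1)v_2)$ by first computing the full curvature operator $v\mapsto \mathrm{R}^g(v,\xi)\xi$, i.e.\ the endomorphism $\mathfrak{l}$, and then leveraging the algebraic symmetries of the Riemann tensor in dimension three to bootstrap from $\mathfrak{l}$ to the general expression. The key tool will be Proposition \ref{prop:equationscontact}, which gives $2\phi(\nabla\xi) = \mathfrak{h} + \s_g(\varepsilon - \xi\otimes\alpha)$. First I would split into the two regimes $\varepsilon\neq 0$ and $\varepsilon = 0$. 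For $\varepsilon\neq 0$, Remark \ref{remark:covxi} yields $-2\s_g\varepsilon\,\nabla\xi = \s_g\varepsilon\,\phi + \phi\circ\mathfrak{h}$, so $\nabla\xi$ is fully determined; for $\varepsilon = 0$ one only knows $\tau = \mathfrak{h}\circ\phi = 0$ (Lemma \ref{lemma:hphi0}) and $\mathfrak{h} = \mu\,\xi\otimes\alpha$ (Proposition \ref{prop:hnulo}), so I would work directly in a light-cone frame $\{\xi,u,\phi(u)\}$ using the Lie-bracket relations of Lemma \ref{lemma:lblcf}.

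Next I would compute $\mathrm{Ric}^g(\xi,\xi)$ and, more generally, the action of the Ricci endomorphism on $\ker(\alpha)$, and compare it with the $\varepsilon\eta$-Einstein form $\mathrm{Q}^g = \tfrac{\s_g}{2}(\lambda^2 + \kappa\varepsilon)\mathrm{Id} - \s_g\kappa\,\xi\otimes\alpha$. In dimension three the Riemann tensor is determined by the Ricci tensor via
\begin{equation*}
\mathrm{R}^g(v_1,v_2)v_3 = g(v_2,v_3)\mathrm{Q}^g v_1 - g(v_1,v_3)\mathrm{Q}^g v_2 + \mathrm{Ric}^g(v_2,v_3)v_1 - \mathrm{Ric}^g(v_1,v_3)v_2 - \frac{\mathrm{scal}^g}{2}\big(g(v_2,v_3)v_1 - g(v_1,v_3)v_2\big)\, .
\end{equation*}
Substituting $v_3 = \xi$ and using $\mathrm{Q}^g\xi = \tfrac{\s_g}{2}(\lambda^2 - \kappa\varepsilon)\xi$, $\mathrm{Ric}^g(v,\xi) = \tfrac{\s_g}{2}(\lambda^2 - \kappa\varepsilon)\alpha(v)$ (both immediate from the $\varepsilon\eta$-Einstein condition), and $g(v,\xi) = \alpha(v)$, the terms reorganize: the $\mathrm{Q}^g v_i$ contributions combine with the scalar-curvature terms, and since $\mathrm{scal}^g = \tfrac{\s_g}{2}(3\lambda^2 + \kappa\varepsilon)\cdot\text{(something)}$— which I would pin down by tracing \eqref{eq:Einsteinpcontact}— everything collapses to $\cK(\alpha(v_2)v_1 - \alpha(v_1)v_2)$ with $\cK = \tfrac{\s_g(\lambda^2 - \varepsilon\kappa)}{4}$. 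This purely algebraic manipulation is the cleanest route and sidesteps having to verify the second Bianchi identity by hand.

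The main obstacle I anticipate is the bookkeeping in the $\varepsilon = 0$ case: there $\alpha\otimes\alpha$ is a degenerate (rank-one, null) tensor, $\xi$ lies in $\ker(\alpha)$, and the light-cone frame is not orthonormal, so traces and index manipulations must be done carefully with the pairing $g(u,\xi) = 1$, $g(\phi(u),\phi(u)) = 1$. One must check that the dimension-three curvature formula above is still being applied correctly when $g$ is Lorentzian and that no spurious terms survive; in particular $\mathrm{Ric}^g(\xi,\xi) = \tfrac{\s_g}{2}(\lambda^2\cdot 0 - \kappa) = -\tfrac{\s_g\kappa}{2}$, and consistency with $\cK = -\tfrac{\s_g\varepsilon\kappa}{4}\big|_{\varepsilon=0} = \tfrac{\s_g\lambda^2}{4}$ forces a relation that should follow automatically from $\tau = 0$ and $\phi^3 = 0$ (Lemma \ref{lemma:phicubiczero}). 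A secondary subtlety is ensuring the sign conventions for $\mathrm{R}^g$ and the factor of $\tfrac14$ (traceable, per the remark after Proposition \ref{prop:equationscontact}, to the exterior-derivative convention used here) are propagated consistently; I would fix these once at the outset by evaluating both sides on $(u,\phi(u))$ in a light-cone or $\varepsilon$-contact frame and matching.
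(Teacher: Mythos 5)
Your second paragraph is exactly the paper's proof: in dimension three the Riemann tensor is algebraically determined by the Ricci tensor, and substituting the $\varepsilon\eta\,$-Einstein condition with $v_3=\xi$ (using $g(v,\xi)=\alpha(v)$, $\alpha(\xi)=\varepsilon$, and $\mathrm{Scal}^g=\tfrac{\s_g}{2}(3\lambda^2+\kappa\varepsilon)$) collapses to $\cK\,(\alpha(v_2)v_1-\alpha(v_1)v_2)$ after the $\s_g\kappa\,\alpha(v_1)\alpha(v_2)\,\xi$ cross-terms cancel. The preliminary computation of $\mathfrak{l}$ and the case split between $\varepsilon=0$ and $\varepsilon\neq 0$ in your first paragraph are superfluous, as is the worry about an extra consistency relation in the null case: the algebraic identity holds uniformly in $\varepsilon$ and requires no contact-specific input beyond the $\varepsilon\eta\,$-Einstein form of $\mathrm{Ric}^g$.
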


\begin{proof}
The results follows directly by plugging the $\varepsilon\eta\,$-Einstein condition in the expression for the Riemann tensor of a (pseudo-)Riemannian three-manifold in terms of its Ricci curvature.
\end{proof}

\begin{remark}
The previous Proposition implies that a Riemannian $\varepsilon\eta\,$-Einstein contact manifold $(M,g,\alpha,\varepsilon)\in \mathrm{PCont}^{\varepsilon\eta}_R(\lambda^2,\kappa)$ is a $(\kappa,\mu)$ manifold or, alternatively, a Riemannian contact three-manifold whose Reeb vector field belongs to the nullity distribution \cite{BlairKoufoII}.
\end{remark}

\noindent
We consider now the $\varepsilon\eta\,$-Einstein condition on globally hyperbolic Lorentzian three-manifolds, making contact with Section \ref{sec:globhyperL}, where the $\varepsilon\,$-contact condition for a one-form on a globally hyperbolic Lorentzian three-manifold was considered. Therefore, using the notation of Section \ref{sec:globhyperL}, we set:
\begin{equation}
\label{eq:globhyfromtuple}
(N,g) = (\mathbb{R}\times \mathrm{X}_t , \,g = -\beta^2_t\,\dd t\otimes \dd t + q_t)\, , \qquad \alpha = F_t\, e^0 + \alpha^{\perp}_t\, ,
\end{equation}

\noindent
where $\left\{ \beta_t \right\}_{t\in\mathbb{R}}$, $\left\{ F_t \right\}_{t\in\mathbb{R}}$, $\left\{ \alpha_t \right\}_{t\in \mathbb{R}}$  and $\left\{ q_t\right\}_{t\in\mathbb{R}}$ are parametric families on $\mathrm{X}_t$, $e^0 = \beta_t\, \dd t$ is the normalized time-like one-form induced by the globally hyperbolic presentation of $(N,g)$ and where we have considered, for every $t\in \mathbb{R}$,
\begin{equation*}
\mathrm{X}_t \eqdef \left\{ t\right\}\times \mathrm{X}\subset N\, 
\end{equation*}

\noindent
as an embedded manifold. We introduce the familiar Weingarten tensor $W_t$ and second fundamental form $\Theta_t$:
\begin{equation*}
W_t = -\nabla n_t\vert_{T\mathrm{X}_t}\in \Omega^1(T\mathrm{X}_t)\, ,\qquad \Theta_t = - \frac{1}{2\beta_t} \cL_{\partial_t} g\vert_{T\mathrm{X}_t\times T\mathrm{X}_t}\in \Gamma( T^{\ast}\mathrm{X}_t \odot T^{\ast}\mathrm{X}_t)\, ,
\end{equation*}

\noindent
associated to the embedding $X_t\subset N$. The trace of $\Theta_t$ with respect to $q_t$, which we denote by $\mathrm{Tr}_{q_t}(\Theta_t)$, is the \emph{mean curvature} of the embedded surface $X_t\subset N$.

\begin{prop}
\label{prop:globhypervarepsilonetaEinstein}
A tuple $(\left\{\beta_t \right\}_{t\in\mathbb{R}}, \left\{ F_t \right\}_{t\in\mathbb{R}}, \left\{ \alpha^{\perp}_t \right\}_{t\in \mathbb{R}}, \left\{ q_t\right\}_{t\in\mathbb{R}})$ on $\mathrm{X}_t$ defines an $\varepsilon\eta\,$-Einstein $\varepsilon\,$-contact structure as prescribed in \eqref{eq:globhyfromtuple} if and only if:
\begin{eqnarray}
\label{eq:globhypervarepsilonetaEinstein}
&\dd_{\mathrm{X}_t}\alpha^{\perp}_t = F_t\,\nu_{q_t}\, , \qquad \ast_{q_t}\alpha^{\perp}_t + \frac{1}{\beta_t}\dd_{\mathrm{X}} (\beta_t F_t) = \cL_{n_t}\alpha_t^{\perp}\, , \qquad \vert \alpha_t^{\perp}\vert^2_{q_t} = \varepsilon + F^2_t\, ,\nonumber\\
& \mathrm{R}^{q_t} - \vert\Theta_t\vert^2_{q_t} +  (\Tr_{q_t} \Theta)^2 + \mathfrak{c} = 2 \kappa\, F^2_t\, , \qquad \dd_{\mathrm{X}_t} \mathrm{Tr}_{q_t}(\Theta_t) + \mathrm{div}_{q_t}(\Theta_t) = \kappa\, F_t\, \alpha^{\perp}_t\\
&\mathrm{Ric}^{q_t} + \Tr_{q_t}(\Theta_t)\, \Theta_t - 2\, \Theta_t(\mathrm{Id}\otimes W_t) - \frac{1}{\beta_t} (\dot{\Theta}_t + \nabla^{q_t} \dd_{\mathrm{X}_t} \beta_t) = \kappa\, \alpha^{\perp}_t \otimes \alpha^{\perp}_t - \frac{1}{2} (\lambda^2 + \kappa \varepsilon) q_t\, , \nonumber
\end{eqnarray}

\noindent
where $\cL$ denotes Lie derivative, $\dd_{\mathrm{X}_t}$ denotes the exterior derivative on $\mathrm{X}_t$, $\Tr_{q_t}$ denotes trace with respect to $q_t$, $\nabla^{q_t}$ represents the Levi-Civita connection of $q_t$  and $\mathfrak{c} = \frac{1}{2} (5\lambda^2 + 3\kappa \varepsilon)$ is a constant.
\end{prop}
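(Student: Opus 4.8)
The plan is to split Proposition~\ref{prop:globhypervarepsilonetaEinstein} into an $\varepsilon\,$-contact part and an $\varepsilon\eta\,$-Einstein part. The three equations on the first line of \eqref{eq:globhypervarepsilonetaEinstein} are, verbatim, the content of Proposition~\ref{prop:globhypervarepsilon}: they hold if and only if $\alpha = F_t\,e^0 + \alpha^{\perp}_t$ defines an $\varepsilon\,$-contact metric structure on the globally hyperbolic manifold $(N,g)$ of \eqref{eq:globhyfromtuple}. Hence it suffices to work under the standing assumption that $(g,\alpha,\varepsilon)$ is $\varepsilon\,$-contact and to show that, in the presentation \eqref{eq:globhyfromtuple}, the remaining three equations are equivalent to the $\varepsilon\eta\,$-Einstein condition $\mathrm{Ric}^g = \frac{\mathrm{s}_g}{2}(\lambda^2 + \kappa\varepsilon)\,g - \mathrm{s}_g\,\kappa\,\alpha\otimes\alpha$ of Definition~\ref{def:Einsteinpcontact}, with $\mathrm{s}_g = -1$ since $g$ is Lorentzian.

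Next I would carry out the standard $2+1$ (Gauss--Codazzi--Mainardi) decomposition of $\mathrm{Ric}^g$ adapted to the foliation $\{\mathrm{X}_t\}_{t\in\mathbb{R}}$, with future-directed time-like unit normal $n_t = \frac{1}{\beta_t}\partial_t$ (so $g(n_t,n_t) = -1$), lapse $\beta_t$, second fundamental form $\Theta_t$ and Weingarten operator $W_t$ as introduced above. A tensor on $N$ is determined by its three projections onto $T\mathrm{X}_t\otimes T\mathrm{X}_t$, $n_t\otimes T\mathrm{X}_t$ and $n_t\otimes n_t$, and for the Ricci tensor these projections are controlled respectively by: the tangential Gauss and radial (Riccati-type) equations, which give $\mathrm{Ric}^g|_{T\mathrm{X}_t\times T\mathrm{X}_t}$ in terms of $\mathrm{Ric}^{q_t}$, $\Tr_{q_t}(\Theta_t)\,\Theta_t$, $\Theta_t(\mathrm{Id}\otimes W_t)$ and $\tfrac{1}{\beta_t}(\dot{\Theta}_t + \nabla^{q_t}\dd_{\mathrm{X}_t}\beta_t)$; the Codazzi equation, which gives $\mathrm{Ric}^g(n_t,\cdot)|_{T\mathrm{X}_t}$ in terms of $\dd_{\mathrm{X}_t}\Tr_{q_t}(\Theta_t) + \mathrm{div}_{q_t}(\Theta_t)$; and the scalar Gauss relation, which gives $\mathrm{Ric}^g(n_t,n_t)$ (equivalently a combination with the ambient scalar curvature $\mathrm{R}^g$) in terms of $\mathrm{R}^{q_t}$, $|\Theta_t|^2_{q_t}$ and $(\Tr_{q_t}\Theta_t)^2$. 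Since $\dim N = 3$, the full Riemann tensor of $(N,g)$ is algebraically determined by $\mathrm{Ric}^g$, so these three projections carry all the curvature content and nothing is lost in the decomposition.

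The final step is to equate each projection of $\mathrm{Ric}^g$ with the corresponding projection of the $\varepsilon\eta\,$-Einstein right-hand side $\frac{\mathrm{s}_g}{2}(\lambda^2+\kappa\varepsilon)\,g - \mathrm{s}_g\,\kappa\,\alpha\otimes\alpha$, using that $e^0(n_t) = 1$ and $\alpha^{\perp}_t(n_t) = 0$, so that $\alpha(n_t) = F_t$ and $\alpha|_{T\mathrm{X}_t} = \alpha^{\perp}_t$, together with $|\alpha|^2_g = \varepsilon$ from the $\varepsilon\,$-contact condition. The tangential projection yields directly the last line of \eqref{eq:globhypervarepsilonetaEinstein}; the mixed projection, after using the first contact equation $\dd_{\mathrm{X}_t}\alpha^{\perp}_t = F_t\,\nu_{q_t}$ to rewrite the Codazzi side, yields the momentum-type constraint $\dd_{\mathrm{X}_t}\Tr_{q_t}(\Theta_t) + \mathrm{div}_{q_t}(\Theta_t) = \kappa\,F_t\,\alpha^{\perp}_t$; and the normal--normal projection, combined with the trace of the $\varepsilon\eta\,$-Einstein equation, gives the Hamiltonian-type constraint $\mathrm{R}^{q_t} - |\Theta_t|^2_{q_t} + (\Tr_{q_t}\Theta_t)^2 + \mathfrak{c} = 2\kappa F^2_t$, in which the universal constant $\mathfrak{c}$ is fixed by the curvature traces. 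Conversely, because the three equations reconstitute all three projections of $\mathrm{Ric}^g$, they imply the $\varepsilon\eta\,$-Einstein condition, giving the stated equivalence.

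The part I expect to require the most care is not conceptual but bookkeeping: tracking signs consistently through the Lorentzian Gauss--Codazzi formalism with a time-like normal and $\mathrm{s}_g = -1$, keeping the conventions for $\Theta_t$, $W_t$ and the exterior derivative (which, as noted in the excerpt, differs from the usual one) aligned throughout, and in particular isolating the scalar constraint cleanly so that the numerical constant $\mathfrak{c} = \tfrac12(5\lambda^2 + 3\kappa\varepsilon)$ and the precise coefficient of $\kappa F^2_t$ come out exactly as stated. Once the conventions are pinned down, each of the three equations follows by a direct substitution into the corresponding projection.
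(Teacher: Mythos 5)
Your proposal is correct and follows essentially the same route as the paper: the first line is delegated to Proposition \ref{prop:globhypervarepsilon}, and the remaining equations are obtained by the standard $2+1$ decomposition of $\mathrm{Ric}^g$ into its tangential, mixed and normal--normal projections relative to the foliation $\{\mathrm{X}_t\}$, matched against the corresponding projections of the $\varepsilon\eta\,$-Einstein right-hand side and combined with its trace to isolate the scalar constraint. The paper's proof is just a terser version of the same computation (citing Baum et al.\ for the decomposition formulas), so no further comparison is needed.
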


\begin{proof}
The first line of equations in \eqref{eq:globhypervarepsilonetaEinstein} is proven in Proposition \ref{prop:globhypervarepsilon}. A direct computation shows that:
\begin{eqnarray*}
&\mathrm{Ric}^g(n,n) = \vert\Theta_t\vert^2_{q_t} +  \frac{1}{\beta_t} ((\tr_{q_t}\dot{\Theta})^2 + \Delta_{q_t} \beta_t)\, , \qquad \mathrm{Ric}^g(n)\vert_{T\mathrm{X}_t} = \dd_{\mathrm{X}_t} \mathrm{Tr}_{q_t}(\Theta_t) + \mathrm{div}_{q_t}(\Theta_t)\, , \\
&\mathrm{Ric}^g\vert_{T\mathrm{X}_t\otimes T\mathrm{X}_t} = \mathrm{Ric}^{q_t} + \Tr(\Theta_t)\, \Theta_t - 2\, \Theta_t(\mathrm{Id}\otimes W_t) - \frac{1}{\beta_t} (\dot{\Theta}_t + \nabla^{q_t} \dd_{\mathrm{X}_t} \beta_t)\, ,
\end{eqnarray*}

\noindent
Plugging these equations in the $\varepsilon\eta\,$-Einstein condition \eqref{eq:Einsteinpcontact} and combining them with the trace of \eqref{eq:Einsteinpcontact} we obtain the second and third lines in  \eqref{eq:globhypervarepsilonetaEinstein}. For more details the reader is referred to \cite{Baum} and references therein. 
\end{proof}

\noindent
Therefore, the $\varepsilon\eta\,$-Einstein condition on a globally hyperbolic Lorentian three manifold becomes, as expected, a dynamical equation on the evolution of a pair of functions, a one-form and a metric on a two-dimensional oriented manifold. Note that the only equations in \eqref{eq:globhypervarepsilonetaEinstein} that contain \emph{time derivatives} correspond to the second equation in the first line and the equation in the third line. These become the evolution equations for the tuple $(\left\{\beta_t \right\}_{t\in\mathbb{R}}, \left\{ F_t \right\}_{t\in\mathbb{R}}, \left\{ \alpha^{\perp}_t \right\}_{t\in \mathbb{R}}, \left\{ q_t\right\}_{t\in\mathbb{R}})$, while the rest of equations can be considered as \emph{constraint equations} in order to formulate a Cauchy or initial value problem for Lorentzian $\varepsilon\eta\,$-Einstein $\varepsilon\,$-contact structures. Set:
\begin{equation*}
\mathrm{X} \eqdef \mathrm{X}_0\, , \qquad \Theta \eqdef \Theta_0\, ,  \qquad q \eqdef q_0\, , \qquad \alpha^{\perp} \eqdef \alpha^{\perp}_0\, , \qquad \beta \eqdef  \beta_0 \, , \qquad F \eqdef F_0\, ,
\end{equation*}

\noindent
and consider $\mathrm{X}$ as the Cauchy surface for the zero-time intial values $(q,\Theta,F,\alpha^{\perp})$ associated to a given tuple 
\begin{equation*}
(\left\{\beta_t \right\}_{t\in\mathbb{R}}, \left\{ F_t \right\}_{t\in\mathbb{R}}, \left\{ \alpha^{\perp}_t \right\}_{t\in \mathbb{R}}, \left\{ q_t\right\}_{t\in\mathbb{R}})\, 
\end{equation*}

\noindent
satisfying equations \eqref{eq:globhypervarepsilonetaEinstein}. The following result is a direct consequence Proposition \ref{prop:globhypervarepsilonetaEinstein}.

\begin{prop}
\label{prop:constraintequations}
If $(\left\{\beta_t \right\}_{t\in\mathbb{R}}, \left\{ F_t \right\}_{t\in\mathbb{R}}, \left\{ \alpha^{\perp}_t \right\}_{t\in \mathbb{R}}, \left\{ q_t\right\}_{t\in\mathbb{R}})$ is a solution of \eqref{eq:globhypervarepsilonetaEinstein} then $(q,\Theta,F,\alpha^{\perp})$ satisfies:
\begin{eqnarray}
\label{eq:constraintequations}
&\dd_{\mathrm{X}}\alpha^{\perp} = F\,\nu_{q}\, ,\qquad \vert \alpha^{\perp}\vert^2_{q} = \varepsilon + F^2\, , \\
& \mathrm{R}^{q} - \vert\Theta\vert^2_{q} +  (\Tr_{q} \Theta)^2 + \mathfrak{c} = 2\, \kappa\, F^2 \, , \qquad \dd_{\mathrm{X}} \mathrm{Tr}_{q}(\Theta) + \mathrm{div}_{q}(\Theta) = \kappa\, F\, \alpha^{\perp}\, , \nonumber
\end{eqnarray}

\noindent
where $\Theta$ is a symmetric bilinear form on $\mathrm{X}$.
\end{prop}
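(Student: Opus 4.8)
The plan is to observe that the four equations comprising \eqref{eq:constraintequations} are, up to the abbreviations $\mathrm{X}:=\mathrm{X}_0$, $q:=q_0$, $\Theta:=\Theta_0$, $F:=F_0$, $\alpha^{\perp}:=\alpha^{\perp}_0$ introduced just before the statement, exactly the equations appearing in \eqref{eq:globhypervarepsilonetaEinstein} that involve no differentiation in the $t$-direction, evaluated at $t=0$. So the proof amounts to separating the system \eqref{eq:globhypervarepsilonetaEinstein} into its \emph{constraint-type} part (the equations containing neither $\cL_{n_t}=\frac{1}{\beta_t}\partial_t$ acting on $\alpha^{\perp}_t$ nor the time derivative $\dot{\Theta}_t$) and its \emph{evolution-type} part (the second equation in the first line and the equation in the third line, which do contain such time derivatives), and then specializing the former to $t=0$.

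Concretely, by Proposition \ref{prop:globhypervarepsilonetaEinstein} the hypothesis that $(\left\{\beta_t\right\},\left\{F_t\right\},\left\{\alpha^{\perp}_t\right\},\left\{q_t\right\})$ solves \eqref{eq:globhypervarepsilonetaEinstein} means in particular that for every $t\in\mathbb{R}$ one has $\dd_{\mathrm{X}_t}\alpha^{\perp}_t = F_t\,\nu_{q_t}$ and $\vert\alpha^{\perp}_t\vert^2_{q_t} = \varepsilon + F^2_t$, together with the Gauss-type (Hamiltonian) constraint $\mathrm{R}^{q_t} - \vert\Theta_t\vert^2_{q_t} + (\Tr_{q_t}\Theta_t)^2 + \mathfrak{c} = 2\kappa F^2_t$ and the Codazzi-type (momentum) constraint $\dd_{\mathrm{X}_t}\Tr_{q_t}(\Theta_t) + \mathrm{div}_{q_t}(\Theta_t) = \kappa\, F_t\,\alpha^{\perp}_t$. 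Setting $t=0$ and inserting the abbreviations above turns these four identities into precisely the two lines of \eqref{eq:constraintequations}; this is the whole content of the proposition, and it follows by direct evaluation.

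The only further point to record is that $\Theta$ is a symmetric bilinear form on $\mathrm{X}$, as asserted. This is immediate from the definition $\Theta_t = -\frac{1}{2\beta_t}\cL_{\partial_t} g\vert_{T\mathrm{X}_t\times T\mathrm{X}_t}$: the Lie derivative of the symmetric tensor $g$ is symmetric and remains so upon restriction to the symmetric product of copies of $T\mathrm{X}_t$; moreover $\Theta_t$ is, at fixed $t$, the second fundamental form of the embedding $\mathrm{X}_t\hookrightarrow N$, hence a genuine tensor field on $\mathrm{X}_t$ depending only on first-order data transverse to $\mathrm{X}_t$, so that the equations of \eqref{eq:constraintequations} are well-posed as equations on the single Cauchy surface $\mathrm{X}$. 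There is thus no substantial obstacle here; the main (and essentially only) subtlety is the bookkeeping of identifying which equations of \eqref{eq:globhypervarepsilonetaEinstein} are constraints and which are evolution equations, after which the statement is a tautology by specialization.
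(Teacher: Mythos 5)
Your proposal is correct and follows exactly the route the paper takes: the paper presents this proposition as a direct consequence of Proposition \ref{prop:globhypervarepsilonetaEinstein}, obtained by singling out the equations of \eqref{eq:globhypervarepsilonetaEinstein} that contain no time derivatives (everything except the second equation of the first line and the third line) and evaluating them at $t=0$. Your additional remark on the symmetry of $\Theta$ is harmless bookkeeping that the paper leaves implicit.
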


\begin{remark}
\label{remark:constraintequations}
Equations \eqref{eq:constraintequations}, to which we will refer as the \emph{constraint equations} of an $\varepsilon\eta\,$-Einstein structure, generalize the well-known constraint equations of General Relativity coupled to a cosmological constant in $(2+1)$ dimensions via the coupling of an $\varepsilon\,$-contact structure. If we take $F = 0$ then, the second equation in the first line of \eqref{eq:constraintequations} forces $\varepsilon = 1$ and the whole system decouples. The second line in \eqref{eq:constraintequations} corresponds in this case to the constraint equations of General Relativity coupled to a cosmological constant. This system has been studied in the literature, see \cite{Moncrief:1989dx,Andersson:1996dr} and references therein. In fact, it would be interesting to explore if a Hamiltonian formulation on (the cotangent space of) Teichm\"uller space, in the lines of the one presented in \cite{Moncrief:1989dx,Andersson:1996dr}, can be developed also for $\varepsilon\eta\,$-Einstein structures. Note that, under the assumption $F=0$ the first line reduces to the condition of $\alpha^{\perp}$ being closed and of constant norm, a condition which if $\mathrm{X}$ is compact can only be satisfied on the torus.  
\end{remark}

\noindent
Proving the converse of Proposition \ref{prop:constraintequations}, that is, proving that for every solution of \eqref{eq:constraintequations} there exists a tuple $(\left\{\beta_t \right\}_{t\in\mathbb{R}}, \left\{ F_t \right\}_{t\in\mathbb{R}}, \left\{ \alpha^{\perp}_t \right\}_{t\in \mathbb{R}}, \left\{ q_t\right\}_{t\in \mathbb{R}})$ fulfilling \eqref{eq:globhypervarepsilonetaEinstein}, would solve the initial value problem of $\varepsilon\eta\,$-Einstein $\varepsilon\,$-contact structures. Addressing this problem is beyond the scope of this manuscript and will be considered elsewhere. Having said this, we expect the converse to hold due to the fact that $\varepsilon\eta\,$-Einstein $\varepsilon\,$-contact condition arises in a supergravity theory, which is expected to pose consistent initial value problems due to their supersymmetric structure \cite{Choquet}.  

We distinguish the cases $\kappa = 0$ and $\kappa \neq 0$. Due to its complexity, we plan to investigate the case $\kappa \neq 0 $ in a separate publication, so we focus instead on the case $\kappa = 0$ in the following, with the goal of showing existence of particular solutions. If $\kappa = 0$ then Equations \eqref{eq:constraintequations} decouple again (see Remark \ref{remark:constraintequations}) and the second line in \eqref{eq:constraintequations} corresponds to the constraint equations of General Relativity coupled to a cosmological constant. Hence, we focus on the first line in \eqref{eq:constraintequations}, which we rewrite as follows:
\begin{equation*}
\ast\,\dd_X\alpha^{\perp} + \sqrt{\vert \alpha^{\perp}\vert^2_{q} - \varepsilon} = 0\, ,
\end{equation*}

\noindent
with variable given by a one-form $\alpha^{\perp}\in \Omega^1(\mathrm{X})$ on a complete Riemann surface $(\mathrm{X},q)$. We focus on the null-contact case $\varepsilon = 0$, since it seems to be new in the literature. In this case, the previous equation reduces simply to
\begin{equation}
\label{eq:nulldecoupled}
\ast\,\dd_X\alpha^{\perp} + \vert \alpha^{\perp}\vert_{q}  = 0\, .
\end{equation}
 
\noindent
We introduce now local isothermal coordinates $(x,y)$ on $\mathrm{X}$, in which the metric $q$ reads $q = F^2\,( \dd x^2 + \dd y^2)$ for a local function $F$. In this coordinates, Equation \eqref{eq:nulldecoupled} reads
\begin{equation*}
\partial_x \alpha_y - \partial_y \alpha_x = F\, (\alpha_x^2 + \alpha_y^2)^{\frac{1}{2}}\, ,
\end{equation*}

\noindent
where we have written $\alpha^{\perp} = \alpha_x \dd x + \alpha_y \dd y$. Assuming $\alpha_x= 0$ the previous equation admits the solution
\begin{equation*}
\alpha_y = e^{\int F\,\dd x + f(y)}\, , 
\end{equation*}
 
\noindent
where $f(y)$ is a local function depending only on $y$. Assuming on the other hand $\alpha_y= 0$ the previous equation admits the solution
\begin{equation*}
\alpha_x = e^{-\int F\,\dd y + f(x)}\, , 
\end{equation*}

\noindent
where $f(x)$ is a local function depending only on $x$. Hence, we obtain the following result.

\begin{prop}
Every solution to the constraint equations of General Relativity coupled to a cosmological constant is, at least locally, a solution to the constraint equations of an $\varepsilon\eta\,$-Einstein null contact structure with $\kappa = 0$.
\end{prop}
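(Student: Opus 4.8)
The plan is to leverage the splitting of the constraint system \eqref{eq:constraintequations} that occurs when $\kappa = 0$, as anticipated in Remark~\ref{remark:constraintequations}. For $\varepsilon = 0$ and $\kappa = 0$ the system reads
\begin{equation*}
\dd_{\mathrm{X}}\alpha^{\perp} = F\,\nu_{q}\, , \quad \vert \alpha^{\perp}\vert^2_{q} = F^2\, , \quad \mathrm{R}^{q} - \vert\Theta\vert^2_{q} +  (\Tr_{q} \Theta)^2 + \mathfrak{c} = 0 \, , \quad \dd_{\mathrm{X}} \mathrm{Tr}_{q}(\Theta) + \mathrm{div}_{q}(\Theta) = 0\, ,
\end{equation*}
where now $\mathfrak{c} = \frac{5}{2}\lambda^2$. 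The last two equations are precisely the Hamiltonian and momentum constraints of $(2+1)$-dimensional General Relativity with cosmological constant, and they involve only the pair $(q,\Theta)$; the first two involve $\alpha^{\perp}$, $F$ and $q$, but not $\Theta$. Hence, given a solution $(q,\Theta)$ of the gravitational constraints, I would adjoin to it a pair $(F,\alpha^{\perp})$ solving the first two equations for the very same metric $q$: the resulting tuple $(q,\Theta,F,\alpha^{\perp})$ then satisfies all of \eqref{eq:constraintequations} with $\varepsilon = \kappa = 0$, i.e. it is a solution of the constraint equations of an $\varepsilon\eta\,$-Einstein null contact structure. Since $F$ is recovered from $\alpha^{\perp}$ by $F \eqdef \ast\,\dd_{\mathrm{X}}\alpha^{\perp}$, solving the first two equations is the same as solving Equation \eqref{eq:nulldecoupled}, and everything comes down to exhibiting, near an arbitrary point of $\mathrm{X}$, a nowhere-vanishing $\alpha^{\perp}$ with $\ast\,\dd_{\mathrm{X}}\alpha^{\perp} + \vert\alpha^{\perp}\vert_q = 0$.

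This local construction is exactly the one performed just before the statement. First I would pass to isothermal coordinates $(x,y)$ around the chosen point — available locally on any smooth Riemannian surface — in which $q = F^2(\dd x^2 + \dd y^2)$, so that \eqref{eq:nulldecoupled} takes the form $\partial_x\alpha_y - \partial_y\alpha_x = F(\alpha_x^2 + \alpha_y^2)^{1/2}$ for $\alpha^{\perp} = \alpha_x\,\dd x + \alpha_y\,\dd y$. Then I would impose the ansatz $\alpha_x = 0$, reducing this to the linear first-order ordinary differential equation $\partial_x\log\alpha_y = F$ in $x$, and take its explicit solution
\begin{equation*}
\alpha_y = e^{\int F\,\dd x + f(y)}\, ,
\end{equation*}
with $f$ any smooth function of $y$; this $\alpha^{\perp}$ is smooth, nowhere vanishing and defined on the whole chart (symmetrically, the ansatz $\alpha_y = 0$ leads to $\alpha_x = e^{-\int F\,\dd y + f(x)}$). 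Setting $F \eqdef \ast\,\dd_{\mathrm{X}}\alpha^{\perp}$, this produces the missing data, and since $\alpha^{\perp}$ is non-trivial the induced null contact structure is non-trivial; together with the given $(q,\Theta)$ this completes the argument.

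I do not expect a genuine obstacle: the statement is in essence a corollary of the decoupling — which is immediate once $\kappa = 0$ is substituted into \eqref{eq:constraintequations} — together with the explicit solution of \eqref{eq:nulldecoupled} already written down. The only point to keep in mind is that the conclusion is \emph{local}: the one-form $\alpha^{\perp}$ is constructed on a single isothermal chart and is not asserted to extend over $\mathrm{X}$, so whether a given solution of the gravitational constraints globalises to a solution of the $\varepsilon\eta\,$-Einstein null contact constraints is a separate question, not addressed by this argument.
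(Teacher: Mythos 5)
Your argument is correct and follows the same route as the paper: the paper's justification for this proposition is precisely the discussion immediately preceding it, namely the observation that $\kappa=0$ decouples the system \eqref{eq:constraintequations} into the $(2+1)$-dimensional gravitational constraints for $(q,\Theta)$ plus Equation \eqref{eq:nulldecoupled} for $\alpha^{\perp}$, the latter being solved locally in isothermal coordinates with the ansatz $\alpha_x=0$ (or $\alpha_y=0$). Your additional remarks — that $F$ is recovered as $\ast\,\dd_{\mathrm{X}}\alpha^{\perp}$ and that the conclusion is only local — are accurate and consistent with the paper's intent.
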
  

\noindent
We study now the $\varepsilon\eta\,$-Einstein condition on a case by case basis by distinguishing the signature of $g$ and the the causal character of the Reeb vector field.


\subsection{Riemannian $\varepsilon\eta\,$-Einstein contact structures}


We briefly review in this section the classification $\varepsilon\eta\,$-Einstein Riemannian contact structures, with the goal of constructing solutions of supergravity coupled to a tensor multiplet, as explained in Sections \ref{sec:6dsugrasolutions} and \ref{sec:RicciFlatTorsion}. Riemannian contact structures on three-manifolds have been extensively studied in the literature, see \cite{BlairKoufoII, Koufo,Geiges} and references therein. Adapting and refining the results of \cite{BlairKoufoII, Koufo,Geiges} to our situation and conventions we obtain the following Theorem.
 
\begin{thm}\cite{BlairKoufoII, Koufo, Geiges}
\label{thm:rcs}
Let $(M,g,\alpha)$ be a three-dimensional complete and simply connected $\varepsilon\eta\,$-Einstein Riemannian contact metric manifold, $(M,g,\alpha) \in \mathrm{PCont}_R^{\varepsilon \eta} (\lambda^2,\kappa)$. Then one of the following holds:
\begin{itemize}
\item $(M,g,\alpha)$ is Sasakian. If $(M,g,\alpha)$  is in addition a Lie group equipped with a left-invariant $\varepsilon\eta\,$-Einstein Sasakian structure, then it is isomorphic\footnote{Two $\varepsilon\,$-contact structures are isomorphic in the sense given at Definition \ref{def:morphpcont}.} to a left-invariant $\varepsilon\eta\,$-Einstein structure on:
\begin{enumerate}
\item $\mathrm{SU}(2)$ if $\lambda^2=1+\kappa\,$, $\kappa > -1$.
\item $\mathrm{H}_3$ if $\lambda^2=0$, $\kappa=-1$.
\end{enumerate}
\item $(M,g,\alpha)$ is non-Sasakian and isomorphic to a left-invariant $\varepsilon\eta\,$-Einstein structure on:
\begin{enumerate}
\item $\mathrm{SU}(2)$ if $\lambda^2=-\kappa=\frac{1}{2}-\frac{1}{2}\mu^2$, with $0< \mu<1$ the positive eigenvalue of the tensor field $\mathfrak{h}$.
\item $\widetilde{\mathrm{E}}(2)$ if $\lambda^2=\kappa=0$.
\end{enumerate}
\end{itemize}
\end{thm}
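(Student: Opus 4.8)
The plan is to reduce the classification to the known structure theory of three-dimensional Riemannian contact metric manifolds whose Reeb vector field lies in the nullity distribution — that is, $(\kappa,\mu)$-spaces with $\mu=0$ — and then identify which of the resulting model spaces admit the left-invariant structures described. First I would invoke the Remark following Lemma \ref{lemmalexpression}: an $\varepsilon\eta$-Einstein Riemannian contact structure $(M,g,\alpha)\in\mathrm{PCont}^{\varepsilon\eta}_R(\lambda^2,\kappa)$ has Reeb vector field satisfying $\mathrm{R}^g(v_1,v_2)\xi=\cK(\alpha(v_2)v_1-\alpha(v_1)v_2)$ with $\cK=\tfrac{\lambda^2-\kappa}{4}$, so $\xi$ belongs to the $\cK$-nullity distribution; this places $(M,g,\alpha)$ in the class of $(\kappa',0)$-contact metric three-manifolds in the terminology of Blair–Koufogiorgos–Papantoniou, with $\kappa'=\cK$. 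The complete simply connected such manifolds were classified in \cite{BlairKoufoII,Koufo}, and the dichotomy Sasakian / non-Sasakian corresponds precisely to $\mathfrak{h}=0$ versus $\mathfrak{h}\neq 0$, where in the non-Sasakian case $\mathfrak{h}$ has a constant positive eigenvalue $\mu\in(0,1)$ with eigenvalue structure $\{0,\mu,-\mu\}$ by the trace and skew-commutation identities of Proposition \ref{prop:eqfh}.

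Next I would handle the Sasakian branch. Here $\mathfrak{h}=0$, so by Proposition \ref{prop:sasakireeb} (equivalently the Remark \ref{remark:etaysasaki}) the $\varepsilon\eta$-Einstein parameters are constrained by $\lambda^2=1+\kappa$. A three-dimensional Sasakian manifold has constant $\varphi$-sectional curvature iff it is $\eta$-Einstein, and the complete simply connected Sasakian space forms in dimension three are, up to the value of that constant, $\mathrm{SU}(2)$ (or $S^3$ with a Berger-type metric), the Heisenberg group $\mathrm{H}_3$, and $\widetilde{\mathrm{Sl}}(2,\mathbb R)$. Computing $\mathrm{Ric}^g$ for the left-invariant Sasakian structures and matching with Definition \ref{def:Einsteinpcontact} pins down the sign of $\kappa$: one gets $\mathrm{SU}(2)$ exactly when $\kappa>-1$ (positive $\varphi$-sectional curvature regime) and $\mathrm{H}_3$ when $\kappa=-1$, $\lambda^2=0$. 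The case $\kappa<-1$ would correspond to $\widetilde{\mathrm{Sl}}(2,\mathbb R)$, but I would note that this is excluded in the statement (the theorem lists only the compact and nilpotent models, presumably because of an additional hypothesis retained from \cite{BlairKoufoII,Koufo} or because only these arise with the sign conventions for $\lambda^2,\kappa$ used here); I would state explicitly which inequality rules it out.

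For the non-Sasakian branch I would use the classification of non-Sasakian $(\kappa',0)$-spaces: on the open set where $\mathfrak{h}\neq 0$, the operator $\mathfrak{h}$ has constant eigenvalues $0,\pm\mu$ with $\mu^2=1-\kappa'$, and Proposition \ref{prop:equationscontact} together with the $\eta$-Einstein condition forces the Ricci operator to be determined entirely by $\mu$; computing $\mathrm{Ric}^g(\xi,\xi)$ from Proposition \ref{prop:ricxien0} gives $\mathrm{Ric}^g(\xi,\xi)=\tfrac12-\tfrac14\mathrm{Tr}(\mathfrak{h}^2)=\tfrac12-\tfrac12\mu^2$, which must equal $\tfrac12(\lambda^2-\kappa)$; combined with the trace of the $\eta$-Einstein equation this yields $\lambda^2=-\kappa=\tfrac12-\tfrac12\mu^2$ with $0<\mu<1$, realised on $\mathrm{SU}(2)$, and the degenerate limit $\mu=1$, i.e. $\lambda^2=\kappa=0$, realised on $\widetilde{\mathrm{E}}(2)$ (the flat case, since then $\cK=0$ and the structure is the standard one on the universal cover of the Euclidean motion group). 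Writing out the left-invariant frames explicitly and reading off the Lie algebra structure constants identifies the groups. The main obstacle I anticipate is the non-Sasakian case: one must verify that the eigenvalue $\mu$ is globally constant (not just locally), which on a complete simply connected manifold follows from the fact that the eigendistributions of $\mathfrak{h}$ are smooth and parallel along $\xi$ by $\nabla_\xi\mathfrak{h}=0$-type identities derived from Propositions \ref{prop:eqfh} and \ref{prop:equationscontact}, and then that the resulting curvature-homogeneous model is actually homogeneous and isometric to the claimed Lie group — this last rigidity step is exactly what \cite{BlairKoufoII,Koufo} supply, so I would cite it rather than reprove it, and restrict my own computation to confirming the $\eta$-Einstein parameter values and the identification of the groups in the conventions of this paper.
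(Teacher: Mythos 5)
Your proposal is sound in substance but follows a genuinely different route from the paper in the Sasakian branch. The paper does not pass through Sasakian space forms at all: for the non-Sasakian branch it cites \cite{Koufo} for the fact that any complete simply connected non-Sasakian $\varepsilon\eta\,$-Einstein Riemannian contact three-manifold is automatically a Lie group with left-invariant structure, and then for both branches it runs a direct case-by-case computation over Milnor's list of three-dimensional Riemannian Lie algebras (unimodular and non-unimodular separately), imposing first the contact condition $\ast\alpha = s\,\dd\alpha$ on the structure constants and then the $\varepsilon\eta\,$-Einstein equations on the explicit Ricci tensor. In particular the paper explicitly disposes of the non-unimodular groups by showing the $\varepsilon\eta\,$-Einstein condition there forces $\lambda^2=-4$, and in the non-Sasakian case it reads off the group from the special frame $\{\xi,X,\phi(X)\}$ of \cite{BlairKoufoII,Koufo} exactly as you propose, obtaining $\mu^2=1-2\lambda^2$ from Proposition \ref{prop:ricxien0}. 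Your nullity-distribution framing and the space-form argument are a cleaner conceptual packaging of the same classification, at the cost of having to separately justify that every left-invariant Sasakian $\varepsilon\eta\,$-Einstein structure on \emph{any} Lie group (including non-unimodular ones) is isomorphic to one of the listed models; the paper's brute-force computation settles that point directly.

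The one loose end you flag but do not close is the exclusion of $\widetilde{\mathrm{Sl}}(2,\mathbb{R})$ from the Sasakian branch. The resolution is not an extra hypothesis inherited from the references: it is simply that the $\varepsilon\eta\,$-Einstein condition of Definition \ref{def:Einsteinpcontact} is parametrized by $\lambda^2$ with $\lambda$ real, so $\lambda^2\geq 0$, and the Sasakian relation $\lambda^2=1+\kappa$ of Remark \ref{remark:etaysasaki} then forces $\kappa\geq -1$. The Sasakian space forms modelled on $\widetilde{\mathrm{Sl}}(2,\mathbb{R})$ correspond precisely to $\kappa<-1$, hence are excluded by the parametrization. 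In the paper's computation this appears as $\lambda^2=-s\mu$ with $\mu=\mu_2=\mu_3$: choosing $s=-1$ gives $\lambda^2=\mu\geq 0$, and the sign pattern $(+,-,-)$ that Milnor's table assigns to $\widetilde{\mathrm{Sl}}(2,\mathbb{R})$ would require $\mu<0$. You should make this explicit rather than leaving it as a presumption, since otherwise the Sasakian branch of the statement is not actually proved.
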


\begin{proof}
If $(M,g,\alpha)$ is Sasakian and not isomorphic to a Lie group equipped with a left-invariant structure $(g,\alpha)$ then we are done. On the other hand, reference \cite{Koufo} proves that a complete and simply connected $\varepsilon\eta\,$-Einstein non-Sasakian Riemannian contact three-manifold has a Lie group structure respect to which its $\varepsilon\eta\,$-Einstein structure is left-invariant. Therefore, we use the classification \cite{Milnor} of three-dimensional Riemannian Lie algebras to proceed on a case by case basis evaluating the $\varepsilon\eta\,$-Einstein condition. We distinguish between the Sasakian and the non-Sasakian cases.

\begin{itemize}
\item Sasakian case. First, if $(M,g,\alpha)$ is a Sasakian unimodular Lie group then there exists an orthonormal left-invariant frame $\{e_1,e_2,e_3\}$ whose associated Lie brackets satisfy \cite{Milnor}:
\begin{equation*}
[e_2,e_3]=\mu_1 e_1\, , \quad [e_3,e_1]=\mu_2 e_2 \, , \quad [e_1,e_2]=\mu_3 e_3\, ,
\end{equation*}

\noindent
for some real constants $\mu_1,\mu_2,\mu_3 \in \mathbb{R}$. This immediately implies:
\begin{equation*}
\dd e^1 = - \mu_1 e^2\wedge e^3\, , \qquad \dd e^2 = - \mu_2 e^3\wedge e^1\, , \qquad \dd e^3 = - \mu_3 e^1\wedge e^2\, ,
\end{equation*}

\noindent
where $\{e^1,e^2,e^3\}$ is the coframe dual to $\{e_1,e_2,e_3\}$. Expressing $\alpha=\alpha_0 e^0+\alpha_1 e^1 +\alpha_2 e^2$, the contact condition (we include the sign $s \in \mathbb{Z}_2$ in order to take into account orientation-reversing isometries) reads
\begin{equation*}
\ast  \alpha=s \dd \alpha\, ,
\end{equation*}

\noindent
what is equivalent to:
\begin{equation*}
\alpha_1=-s \mu_1 \alpha_1\, , \quad \alpha_2=-s \mu_2 \alpha_2\, , \quad \alpha_3=-s \mu_3 \alpha_3\, .
\end{equation*}

\noindent
Furthermore, the non-zero components of the Ricci curvature tensor $\ric^g$ read:
\begin{eqnarray*}
& \ric^g(e_1,e_1) =\frac{1}{2}(\mu_1^2-\mu_2^2-\mu_3^2+2\mu_2 \mu_3 )\, , \quad \ric^g(e_2,e_2) = \frac{1}{2}(-\mu_1^2+\mu_2^2-\mu_3^2+2\mu_1 \mu_3 ) \, , \quad \\ 
& \ric^g(e_3,e_3) =\frac{1}{2}(-\mu_1^2-\mu_2^2+\mu_3^2+2\mu_1 \mu_2 )\, , \quad \ric^g(e_1,e_2)= \ric^g(e_1,e_3)= \ric^g(e_2,e_3)=0\, .
\end{eqnarray*}

\noindent
We distinguish now the following subcases of the Sasakian unimodular case:

\begin{itemize}
\item Assume $\alpha_1,\alpha_2,\alpha_3 \neq 0$. Then $\mu_1=\mu_2=\mu_3=-s$ and $\ric^g=\frac{1}{2}g$, which follows from $\lambda^2=1$ and $\kappa=0$. Therefore, choosing $s=-1$ we conclude that $(M,g)$ is isometric to $\mathrm{SU}(2)$ equipped with a left-invariant metric. 

\item Assume $\alpha_1,\alpha_2 \neq 0$ and $\alpha_3=0$. Again, $\mu_1,\mu_2=-s$. Since $\ric(e_1,e_2)=0$, we obtain $\kappa=0$. Consequently, $\lambda^2=1$ which in turn implies, by equating $\ric^g(e_1,e_1)=\ric^g(e_2,e_2)=\ric^g(e_3,e_3)$, that $\mu_3=-s$. Taking $s=-1$, we recover the previous case and we conclude that $(M,g)$ is isometric to $\mathrm{SU}(2)$. A similar analysis holds when $\alpha_1,\alpha_3 \neq 0$ and $\alpha_2=0$ and when $\alpha_2,\alpha_3 \neq 0$ and $\alpha_1=0$. 

\item Assume $\alpha_2=\alpha_3=0$ and $\alpha_1^2=1$. Then, $\xi= \pm e_1$ and we obtain $\ric^g(e_1,e_1)=\frac{1}{2}$ by the Sasaki condition. However, owing the fact that $\alpha_1 \neq 0$ implies that $\mu_1=-s$, then we have that
\begin{equation*}
\ric^g(e_1,e_1)=\frac{1}{2}-\frac{1}{2}(\mu_2-\mu_3)^2=\frac{1}{2}\,.
\end{equation*}
Therefore $\mu_2=\mu_3=\mu \in \mathbb{R}$. Now, since $\ric^g(e_2,e_2)=\ric^g(e_3,e_3)=\frac{1}{2}(2\lambda^2-1)$, we get the constraint
\begin{equation*}
\lambda^2=-s\mu\,.
\end{equation*}
Taking $s=-1$, we conclude that for $\mu>0$ $(M,g)$ is isometric to $\mathrm{SU}(2)$ and that for $\mu=0$ $(M,g)$ is isometric to $\mathrm{H}_3$. A completely similar analysis holds when $\alpha_2^2 = 1$ and $\alpha_3^2=1$.
\end{itemize}

\noindent
Secondly, if $(M,g,\alpha)$ is a Sasakian non-unimodular Lie group then there exists an orthonormal left-invariant frame $\{e_1,e_2,e_3\}$ whose associated Lie brackets satisfy \cite{Milnor}:
\begin{equation*}
[e_1 , e_2] = a e_2 + b e_3\, , \quad [e_1 , e_3] = c e_2 + f e_3\, , \quad [e_2 , e_3] = 0\, ,
\end{equation*}

\noindent
for real numbers $a, b, c, f \in \mathbb{R}$ satisfying that $a+f\neq 0$. These Lie brackets immediately imply:
\begin{equation*}
\dd e^1 = 0\, , \qquad \dd e^2 = - a e^1\wedge e^2 - c e^1\wedge e^3\, , \qquad \dd e^3 = - b e^1 \wedge e^2 - f e^1\wedge e^3\, .
\end{equation*}
Therefore the contact condition imposes the following constraints:
\begin{equation*}
c \alpha_2 + f \alpha_3=s\alpha_2\, , \quad -a\alpha_2-b \alpha_3=s\alpha_3\, , \quad \alpha_1=0\,.
\end{equation*}
The components of the Ricci curvature tensor in this orthonormal basis read:
\begin{eqnarray*}
& \ric^g(e_1,e_1) =-a^2-f^2-\frac{1}{2}(b+c)^2\, , \quad \ric^g(e_2,e_2) = -a^2-\frac{b^2}{2}+\frac{c^2}{2}-fa \, , \quad \\ 
& \ric^g(e_3,e_3) =-f^2+\frac{b^2}{2}-\frac{c^2}{2}-af\, , \quad \ric^g(e_1,e_2)= \ric^g(e_1,e_3)=0\, , \quad  \ric^g(e_2,e_3)=-ac-bf\, .
\end{eqnarray*}

\noindent
In particular, we have: 
\begin{equation*}
\ric^g(e_2,e_2)+\ric^g(e_3,e_3)=-a^2-2af-f^2=-(a+f)^2\, .
\end{equation*}

\noindent
Imposing the $\varepsilon\eta\,$-Einstein condition, and taking into account that $\alpha_2^2+\alpha_3^2=1$ by the contact condition, we obtain $\ric^g(e_2,e_2)+\ric^g(e_3,e_3)=\lambda^2+\kappa-\kappa \alpha_2^2-\kappa \alpha_3^2=\lambda^2$, which in turn yields $\lambda^2=-(a+f)^2 <0$. Hence, non-unimodular Riemannian groups admit no left-invariant $\varepsilon\eta\,$-Einstein structures.

\item If $(M,g,\alpha)$ is non-Sasakian, then \cite{Koufo} shows that the $\varepsilon\eta\,$-Einstein condition implies $\lambda^2=-\kappa$. Furthermore, \cite{BlairKoufoII,Koufo} prove that there exists a frame $\{\xi, X, \phi(X)\}$, where $X$ is an eigenvector of $\mathfrak{h}$ with positive eigenvalue, whose Lie brackets satisfy:
\begin{equation*}
[\xi, X]=\frac{1}{2}(1+\mu) \phi(X) \, , \quad [\xi, \phi(X)]=-\frac{1}{2}(1-\mu) X \, , \quad [X, \phi(X)]=\xi\, ,
\end{equation*}

\noindent
where $\mu$ is the positive eigenvalue of $\mathfrak{h}$. Comparing the previous Lie brackets to Milnor's classification, we obtain that $M$ must be isometric to $\mathrm{SU}(2)$ when $\mu<1$ and isometric to $\widetilde{\mathrm{E}}(2)$ when $\mu=1$. Finally, using Proposition \ref{prop:ricxien0}, we find the relation $\mu^2=1-2\lambda^2$.
\end{itemize}
\end{proof}
 

\subsection{Lorentzian $\varepsilon\eta\,$-Einstein structures with time-like Reeb vector field}


Let $(M,g,\alpha,-1)\in \mathrm{PCont}_L^{\varepsilon \eta} (-1,\lambda^2,\kappa)$ be an $\varepsilon\,$-contact metric manifold $M$ with time-like vector field. Note that, by Definition \ref{def:Einsteinpcontact}, $(g,\alpha,\varepsilon)$ is $\varepsilon\eta\,$-Einstein if and only if it satisfies
\begin{equation}
\label{eq:etaEinsteinRiemannian}
\mathrm{Ric}^g = \frac{1}{2}(- \lambda^2 + \kappa )\,\chi +\kappa\, \alpha\otimes \alpha\, ,
\end{equation} 

\noindent
where $\lambda \in \mathbb{R}\, $ and $\kappa \geq 0$.  
 
\begin{remark}
\label{remark:perfectfluid}
The Lorentzian $\varepsilon\eta\,$-Einstein condition with time-like Reeb vector field  has a natural physical interpretation in the context of General Relativity. Indeed, it is an \emph{Einstein-like equation}, that is, it can be written as follows:
\begin{equation*}
\mathrm{G}(g) = \mathrm{T}(g,\alpha)\, ,
\end{equation*}
	
\noindent
where $\mathrm{G}(g) = \mathrm{Ric}^g - \frac{1}{2} \mathrm{R}^g g$ denotes the Einstein tensor and $\mathrm{T}(g,\alpha)\in \Gamma(T^{\ast}N\odot T^{\ast}N)$ is the \emph{energy-momentum tensor} of Einstein's equations, given in this case by
\begin{equation*}
\mathrm{T}(g,\alpha) = \frac{1}{4} (\lambda^2 + \kappa)\, \chi + \kappa\, \alpha\otimes \alpha\, .
\end{equation*}
	
\noindent
Interestingly enough, $\mathrm{T}(g,\alpha)$ corresponds with the energy-momentum tensor of a perfect fluid whose speed is prescribed by $\xi$ and whose pressure $p$ and rest-frame mass density $o$ are constant and given by:
\begin{equation*}
p = \frac{\lambda^2 + \kappa}{4}\, , \qquad o = \frac{3\,\kappa - \lambda^2}{4}\, .
\end{equation*}
	
\noindent
Hence, the $\varepsilon\eta\,$-Einstein condition in Lorentzian signature with $\varepsilon = -1$ corresponds with the Einstein's General Relativity equations for a Lorentzian metric coupled to a perfect fluid with velocity prescribed by the Reeb vector field. This interpretation allows to apply the extensive literature dedicated to the study of perfect-fluid space-times, see for instance \cite{GarciaDiaz} and references therein, to the study of Lorentzian $\varepsilon\eta\,$-Einstein $\varepsilon\,$-contact structures. 
\end{remark}

\noindent
We prove now that a Lorentzian $\varepsilon\eta\,$-Einstein contact structure $(g,\alpha,-1)$ on a connected and simply connected complete\footnote{We say that an $\varepsilon\,$-contact metric three-manifold $(N,\chi,\alpha,\varepsilon = -1)$ is \emph{complete} if all elements of any $\varepsilon\,$-contact frame are complete on $N$.} three-dimensional Lorentzian manifold $(M,g)$ is either Sasakian or isometric to a Lie group equipped with a Lorentzian left-invariant contact structure $(g,\alpha,-1)$. We proceed analogously to the Riemannian case \cite{BlairKoufoII}. We prove first existence of a special type of $\varepsilon\,$-contact frame, particularly convenient for computations. 
  
\begin{lemma}
\label{lemma:phihbasisb}
Let $(M,g,\alpha,-1) \in \mathrm{PCont}_L^{\varepsilon \eta}(-1,\lambda^2,\kappa)$ be a simply connected Lorentzian $\varepsilon\eta\,$-Einstein $\varepsilon\,$-contact metric three-manifold. Then, there exists a global orthonormal frame $\{\xi, X, \phi(X)\}$ such that $\mathfrak{h}(X)=\mu X$ and $\mathfrak{h}(\phi(X))=-\mu \phi(X)$, where:
\begin{equation*}
\mu=\sqrt{1-(\lambda^2+\kappa)}\, .
\end{equation*}

\noindent
In particular, $\lambda^2+\kappa \leq 1$.
\end{lemma}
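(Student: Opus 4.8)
The plan is to read off the eigenstructure of $\mathfrak{h}$ pointwise, fix its eigenvalue using the curvature identities of Section 2, and then globalize using the simple connectivity of $M$. First I would restrict attention to the distribution $\ker\alpha = \xi^{\perp}$: since $\varepsilon = -1$ and $g$ is Lorentzian, $\xi$ is timelike with $g(\xi,\xi) = -1$, so $\ker\alpha$ is a rank-two positive-definite subbundle of $TM$ on which, by Lemma \ref{lemma:phiproperties}, $\phi$ restricts to a $g$-orthogonal almost complex structure ($\phi^2\vert_{\ker\alpha} = -\mathrm{Id}$). By Proposition \ref{prop:eqfh}, $\mathfrak{h}$ is $g$-symmetric and satisfies $\mathfrak{h}(\xi) = 0$, $\mathrm{Tr}(\mathfrak{h}) = 0$ and $\mathfrak{h}\circ\phi + \phi\circ\mathfrak{h} = 0$; hence at each point $p$ it restricts to a traceless $g$-symmetric endomorphism of the Euclidean plane $\ker\alpha_p$, which therefore has eigenvalues $\pm\mu_p$ for some $\mu_p\ge 0$. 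Choosing a $g$-unit eigenvector $X_p\in\ker\alpha_p$ with $\mathfrak{h}(X_p) = \mu_p X_p$, the anticommutation relation gives $\mathfrak{h}(\phi(X_p)) = -\phi(\mathfrak{h}(X_p)) = -\mu_p\,\phi(X_p)$, and Lemma \ref{lemma:phiproperties} together with the skew-symmetry of $\phi$ shows that $\{\xi, X_p, \phi(X_p)\}$ is orthonormal of signature $(-,+,+)$.

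Next I would pin down $\mu_p$. In the frame $\{\xi, X_p, \phi(X_p)\}$ the endomorphism $\mathfrak{h}$ is diagonal with entries $0, \mu_p, -\mu_p$, so $\mathrm{Tr}(\mathfrak{h}^2) = 2\mu_p^2$ and Proposition \ref{prop:ricxien0} (applied with $\varepsilon = -1$, $g$ Lorentzian) gives $\mathrm{Ric}^g(\xi,\xi) = \tfrac12 - \tfrac12\mu_p^2$. On the other hand, evaluating the $\varepsilon\eta\,$-Einstein condition \eqref{eq:etaEinsteinRiemannian} on $(\xi,\xi)$ and using $g(\xi,\xi) = -1$ and $\alpha(\xi) = g(\xi,\xi) = -1$ yields $\mathrm{Ric}^g(\xi,\xi) = \tfrac12(\lambda^2 + \kappa)$, which is constant on $M$. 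Comparing the two expressions gives $\mu_p^2 = 1 - (\lambda^2 + \kappa)$ for every $p$; in particular $\lambda^2 + \kappa\le 1$ and $\mu_p$ equals the constant $\mu = \sqrt{1-(\lambda^2+\kappa)}$.

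It remains to promote the pointwise eigenvectors to a single global frame, which I expect to be the main obstacle. If $\mu\ne 0$, the $+\mu$-eigenbundle $\ell\subset\ker\alpha$ of $\mathfrak{h}$ is a smooth real line subbundle of $TM$: indeed $-\phi^2 = \mathrm{Id} + \xi\otimes\alpha$ is the $g$-orthogonal projection onto $\ker\alpha$ and $\mathfrak{h}^2\vert_{\ker\alpha} = \mu^2\,\mathrm{Id}$, so $P \eqdef \tfrac12(-\phi^2 + \mu^{-1}\mathfrak{h})$ is a smooth idempotent with image $\ell$. Since $M$ is simply connected, $H^1(M;\mathbb{Z}_2) = 0$, hence every real line bundle over $M$ is trivial; picking a nowhere-vanishing section of $\ell$ and normalizing it to unit length produces a global vector field $X$ with $\mathfrak{h}(X) = \mu X$ and, by the anticommutation, $\mathfrak{h}(\phi(X)) = -\mu\,\phi(X)$, with $\{\xi, X, \phi(X)\}$ the desired global orthonormal frame. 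If instead $\mu = 0$, then $\mathfrak{h}\vert_{\ker\alpha}$ is a $g$-symmetric endomorphism both of whose eigenvalues vanish, hence $\mathfrak{h}\vert_{\ker\alpha} = 0$; together with $\mathfrak{h}(\xi) = 0$ this forces $\mathfrak{h}\equiv 0$, i.e. $(g,\alpha,-1)$ is Sasakian, and in this degenerate case the identities $\mathfrak{h}(X) = 0 = \mathfrak{h}(\phi(X))$ hold for any frame of the stated type, whose existence one extracts from the additional rigidity available for complete simply connected Sasakian Lorentzian $\varepsilon\,$-contact three-manifolds.

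The substantive point is the eigenvalue computation of the second paragraph, which simultaneously delivers the formula for $\mu$ and the bound $\lambda^2+\kappa\le1$; the pointwise linear algebra is routine, and the only genuinely delicate issue is the existence of the global section of $\ell$, where simple connectivity is essential and where the borderline Sasakian case $\mu=0$ must be handled by separate structural input rather than by the line-bundle argument.
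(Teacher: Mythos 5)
Your proof is correct, and its core computation is the same as the paper's: both pin down $\mathrm{Tr}(\mathfrak{h}^2)=2\mu^2$ by equating the expression for $\mathrm{Ric}^g(\xi,\xi)$ from Proposition \ref{prop:ricxien0} with the value $\tfrac{1}{2}(\lambda^2+\kappa)$ forced by the $\varepsilon\eta\,$-Einstein condition, which simultaneously gives the formula for $\mu$ and the bound $\lambda^2+\kappa\le 1$. Where you diverge is the globalization. The paper starts from the fact that a simply connected (hence orientable) three-manifold is parallelizable, takes an arbitrary global unit space-like $Y\in\ker\alpha$, and shows $\mathfrak{h}^2(Y)=\sigma^2 Y$ without ever diagonalizing $\mathfrak{h}$ pointwise; it then asserts that $\sigma$ is the positive eigenvalue. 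You instead diagonalize $\mathfrak{h}\vert_{\ker\alpha}$ pointwise, exhibit the $+\mu$-eigenbundle as the image of the smooth idempotent $\tfrac12(-\phi^2+\mu^{-1}\mathfrak{h})$, and trivialize it using $H^1(M;\mathbb{Z}_2)=0$. Your route is, if anything, more complete on this point: the paper does not explain how to pass from $\mathfrak{h}^2(Y)=\sigma^2 Y$ for a given global $Y$ to a global eigenvector of $\mathfrak{h}$ itself (the naive candidate $\mathfrak{h}(Y)+\mu Y$ can vanish), whereas your line-bundle argument settles this cleanly. The one place you are weaker is the borderline case $\mu=0$: the existence of the frame there does not require any ``Sasakian rigidity'' --- it is the same topological fact (parallelizability of orientable three-manifolds, or equivalently triviality of the stably trivial oriented plane bundle $\ker\alpha$) that the paper invokes in its opening sentence, and you should cite that instead of an unspecified structural input.
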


\begin{proof}
First notice that since $M$ is simply connected and three-dimensional it is parallelizable, whence it admits nowhere vanishing vector fields. If  $(g,\alpha)$ is Sasakian then $\mathfrak{h} = 0$ and $\lambda^2+\kappa = 1$, see Remark \ref{remark:etaysasaki}, and the statement is trivial. We consider thus the non-Sasakian case. For every nowhere vanishing space-like vector field $Y \in \mathfrak{X}(M)$ of unit norm, $\left\{ \xi , Y , \phi(Y)\right\}$ is a global $\varepsilon\,$-contact frame. Furthermore:
\begin{equation*}
g(\mathfrak{h}^2(Y),\xi)=0\, , \qquad g(\mathfrak{h}^2(Y),\phi(Y))=0\, ,
\end{equation*}

\noindent
since $g(\mathfrak{h}^2(Y),\phi(Y))=g(Y,\mathfrak{h}^2(\phi(Y)) )=g(Y,\phi(\mathfrak{h}^2(Y)))=-g(\mathfrak{h}^2(Y),\phi(Y))$. Therefore:
\begin{equation*}
\mathfrak{h}^2(Y)=\sigma^2 \, Y=-\sigma^2 \phi^2(Y)\, ,
\end{equation*}

\noindent
for some $\sigma \in C^\infty(M)$. Therefore, $\text{Tr}(\mathfrak{h}^2)=2\sigma^2$. Since $(M,g,\alpha,-1)\in \mathrm{PCont}_L^{\varepsilon \eta}(-1,\lambda^2,\kappa)$, combining Proposition \ref{prop:ricxien0} and Remark \ref{remark:etaysasaki} we obtain that $\sigma=\sqrt{1-(\lambda^2+\kappa)}$. Since $(M,g,\alpha,-1)$ is not Sasakian we have $\lambda^2+\kappa < 1 $ whence $\mu$ is strictly positive and corresponds with the positive eigenvalue of $\mathfrak{h}$. 
\end{proof}

\noindent
Working in the special global $\varepsilon\,$-contact frame introduced in Lemma \ref{lemma:phihbasisb} we prove that every non-Sasakian $\varepsilon\eta\,$-Einstein connected and simply-connected Lorentzian three-dimensional manifold is a Lie group equipped with a left-invariant $\varepsilon\eta\,$-Einstein structure. 

\begin{prop}
\label{prop:cdnslc}
Let $(M,g,\alpha,-1) \in \mathrm{PCont}_L^{\varepsilon \eta} (-1,\lambda^2,\kappa)$ be simply connected with $\lambda^2 +\kappa < 1$ (that is, $(M,g,\alpha)$ is not Sasakian). Then, the following Lie brackets hold in the special $\varepsilon\,$-contact frame  $\{\xi, X,\phi(X)\}$ described in Lemma $\ref{lemma:phihbasisb}:$
\begin{eqnarray*}
[\xi, X]=\frac{1}{2}(1+\sqrt{1-2\lambda^2}) \phi(X)\, , \quad [\xi,\phi(X)]=\frac{1}{2}(-1+\sqrt{1-2\lambda^2}) X \, , \quad [X,\phi(X)]=-\xi\, .
\end{eqnarray*}
Furthermore, $\lambda^2=\kappa$ and $0 \leq \lambda^2 = \kappa < \frac{1}{2}$ since $\kappa \geq 0$.
\end{prop}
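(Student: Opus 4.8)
The plan is to follow the strategy of the Riemannian case \cite{BlairKoufoII}, working in the global orthonormal frame $\{\xi,X,\phi(X)\}$ of Lemma \ref{lemma:phihbasisb}, for which $\mathfrak{h}(X)=\mu X$, $\mathfrak{h}(\phi(X))=-\mu\,\phi(X)$ with $\mu=\sqrt{1-(\lambda^2+\kappa)}$; the non-Sasakian hypothesis $\lambda^2+\kappa<1$ guarantees $\mu>0$. Since $g$ is Lorentzian and $\varepsilon=-1$, so $\varepsilon\,\s_g=1$, Proposition \ref{prop:equationscontact} and Remark \ref{remark:covxi} specialize to $\nabla\xi=-\tfrac12(\phi+\phi\circ\mathfrak{h})$, which on the frame yields $\nabla_\xi\xi=0$, $\nabla_X\xi=-\tfrac12(1+\mu)\phi(X)$ and $\nabla_{\phi(X)}\xi=\tfrac12(1-\mu)X$. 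Metric compatibility of $\nabla$ then determines all remaining Christoffel data in terms of the three local functions $a_0\eqdef g(\nabla_\xi X,\phi(X))$, $a_1\eqdef g(\nabla_X X,\phi(X))$, $a_2\eqdef g(\nabla_{\phi(X)}X,\phi(X))$; in particular the Lie brackets become
\begin{equation*}
[\xi,X]=\bigl(a_0+\tfrac12(1+\mu)\bigr)\phi(X)\, ,\qquad [\xi,\phi(X)]=-\bigl(a_0+\tfrac12(1-\mu)\bigr)X\, ,\qquad [X,\phi(X)]=-\xi-a_1 X-a_2\,\phi(X)\, .
\end{equation*}

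Next I would show $a_0=a_1=a_2=0$. For $a_0$: computing $\mathrm{R}^g(X,\xi)\xi$ directly from the brackets gives $\bigl(\tfrac14(1-\mu^2)-\mu a_0\bigr)X$, whereas Lemma \ref{lemmalexpression} gives $\mathrm{R}^g(X,\xi)\xi=-\cK X=\tfrac14(\lambda^2+\kappa)X=\tfrac14(1-\mu^2)X$; hence $\mu a_0=0$, so $a_0=0$ (equivalently, $\mathrm{Ric}^g(X,X)=\mathrm{Ric}^g(\phi(X),\phi(X))$ already forces $2\mu a_0=0$). For $a_1,a_2$: with $a_0=0$ the Jacobi identity on $(\xi,X,\phi(X))$ — using that $\mu$ is constant — yields $\xi(a_1)=\tfrac12(1-\mu)a_2$ and $\xi(a_2)=-\tfrac12(1+\mu)a_1$; substituting these into the off-diagonal Einstein equations $\mathrm{Ric}^g(\xi,X)=\mathrm{Ric}^g(\xi,\phi(X))=0$ (which hold by \eqref{eq:etaEinsteinRiemannian} because $\alpha(X)=\alpha(\phi(X))=0$) reduces them to $\mu a_1=0$ and $\mu a_2=0$, so $a_1=a_2=0$. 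With all three functions vanishing the brackets take exactly the asserted form, with $\mu$ in place of $\sqrt{1-2\lambda^2}$.

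Finally I would read off $\lambda^2=\kappa$: with the frame now fully explicit, a direct computation gives $\mathrm{Ric}^g(X,X)=0$, while \eqref{eq:etaEinsteinRiemannian} forces $\mathrm{Ric}^g(X,X)=\tfrac12(\kappa-\lambda^2)$; hence $\kappa=\lambda^2$. Then $\mu^2=1-(\lambda^2+\kappa)=1-2\lambda^2$, so $\mu=\sqrt{1-2\lambda^2}$, the displayed brackets follow verbatim, and $\mu>0$ together with $\kappa\geq 0$ gives $0\leq\lambda^2=\kappa<\tfrac12$.

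The main obstacle is the elimination of $a_1$ and $a_2$: in contrast with the $\varepsilon\neq 0$ case, where $\nabla\xi$ is rigidly prescribed in terms of $\phi$ and $\mathfrak{h}$ (cf. Remark \ref{remark:covxi}), here neither the Jacobi identity nor the $\varepsilon\eta$-Einstein equations suffice on their own, and one must combine them in the right order. The only other delicate point is keeping track of the Lorentzian signs in the curvature identities; everything else is a routine computation parallel to \cite{BlairKoufoII}.
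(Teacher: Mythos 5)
Your proof is correct and follows the paper's strategy almost verbatim: same special frame from Lemma \ref{lemma:phihbasisb}, same prescription of $\nabla\xi$, same reduction of the connection to three unknown functions (your $a_0,a_1,a_2$ are the paper's $\beta$, $c$ and $-e$), same use of the curvature identity of Lemma \ref{lemmalexpression} to kill $a_0$, and the same final comparison $\mathrm{Ric}^g(X,X)=0$ versus the $\varepsilon\eta\,$-Einstein condition to get $\lambda^2=\kappa$. The one place you diverge is the elimination of $a_1,a_2$: you route through the Jacobi identity to express $\xi(a_1),\xi(a_2)$ and then feed these into $\mathrm{Ric}^g(\xi,X)=\mathrm{Ric}^g(\xi,\phi(X))=0$. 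This works, but it is an unnecessary detour: since $\nabla\xi$ involves none of the $a_i$, the direct computation of $\mathrm{R}^g(X,\phi(X))\xi$ gives $-\mu a_1\,\phi(X)-\mu a_2\,X$ with no derivatives of $a_1,a_2$ appearing, and Lemma \ref{lemmalexpression} forces this to vanish — which is exactly what the paper does. Consequently your closing claim that ``neither the Jacobi identity nor the $\varepsilon\eta\,$-Einstein equations suffice on their own'' is not accurate: the curvature constraint $\mathrm{R}^g(v_1,v_2)\xi=\cK(\alpha(v_2)v_1-\alpha(v_1)v_2)$, which is a consequence of the $\varepsilon\eta\,$-Einstein condition alone, already kills all three functions. (The remark contrasting with ``the $\varepsilon\neq 0$ case'' is also garbled — the present situation \emph{is} $\varepsilon=-1\neq 0$, and the rigidity of $\nabla\xi$ is precisely what makes the short argument work; the genuinely harder situation alluded to in Remark \ref{remark:covxi} is $\varepsilon=0$, which is not the case treated here.)
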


\begin{proof}
Propositions \ref{prop:eqfh} and \ref{prop:equationscontact} imply that:
\begin{equation*}
\nabla_\xi \xi=0\, , \quad \nabla_X \xi=-\frac{1}{2}(1+\mu) \phi(X)\, , \quad \nabla_{\phi(X)}\xi =\frac{1}{2}(1-\mu) X\, .
\end{equation*}

\noindent
On the other hand, we have that $g(\nabla_X X, \xi)=-g(X,\nabla_X \xi)=0$ and
\begin{equation*}
g(\nabla_{\phi(X)} \phi(X),\xi) =-g(\phi(X),\nabla_{\phi(X)} \xi)=0\, .
\end{equation*}

\noindent
Therefore, $\nabla_X X=c \phi(X)$ and $\nabla_{\phi(X)} \phi(X)=e X$ for some functions $c,e \in C^\infty(M)$. Similarly, $g(\nabla_\xi X,\xi)=g(\nabla_\xi( \phi(X) ),\xi)=0$, whence $\nabla_\xi X=\beta \phi(X)$ and $\nabla_\xi \phi(X)=-\beta X$ for a function $\beta \in C^\infty (M)$. Furthermore, we have: 
\begin{eqnarray*}
& g(\nabla_X \phi(X), \xi)=-g(\phi(X), \nabla_X \xi) = \frac{1}{2} (1+\mu)\, ,\quad g(\nabla_{\phi(X)} X, \xi)=-g(X,\nabla_{\phi(X)} \xi) = \frac{1}{2} (\mu-1)\, , \\ 
& g(\nabla_X \phi(X), X)=-g(\phi(X),\nabla_X X) = -c\, , \quad g(\nabla_{\phi(X)} X, \phi(X))=-g(X, \nabla_{\phi(X)} \phi(X) ) = - e\, .
\end{eqnarray*}

\noindent
Summarizing:
\begin{eqnarray*}
& \nabla_{\xi} \xi=0\, , \quad \nabla_{X} \xi=-\frac{1}{2}(1+\mu) \phi(X) \, , \quad \nabla_{\phi(X)} \xi=\frac{1}{2}(1-\mu) X\, , \\
& \nabla_\xi X=\beta \phi(X)\, , \quad \nabla_X X=c \phi(X)\, , \quad  \nabla_{\phi (X)} X= \frac{1}{2} (1-\mu) \xi-e \phi(X) \, ,\\ 
&\nabla_\xi \phi(X)=-\beta X\, , \quad \nabla_{X} \phi(X)=- \frac{1}{2}(1+\mu) \xi-cX \, , \nabla_{\phi(X)} \phi(X)=e X \, ,
\end{eqnarray*}

\noindent
which implies:
\begin{equation*}
[\xi,X]=\left (\beta+\frac{1}{2}+\frac{\mu}{2} \right ) \phi(X)\, , \quad [\xi,\phi(X)]=\left (-\beta-\frac{1}{2}+\frac{\mu}{2} \right ) X\, , \quad  [X,\phi(X)]=-\xi-c X+e \phi(X)\, ,
\end{equation*}

\noindent
by using the torsion-free property of $\nabla$. Making use of the previous equations, we compute:  
\begin{equation*}
\mathrm{R}^g(\xi,X)\xi=\left (  \mu \beta -\frac{1}{4}+\frac{\mu^2}{4} \right ) X \, , \quad \mathrm{R}^g(X, \phi(X))\xi=e\mu X -c \mu \phi(X)\, .
\end{equation*}

\noindent
By comparing with Lemma \ref{lemmalexpression}, we conclude that $\mu \beta=\mu c=\mu e=0$. Since $\mu \neq 0$ by assumption, then $\beta=c=e=0$. Finally, by $\ric^g(X,X)=\ric^g(\phi(X),\phi(X))=0$ we get $\lambda^2=\kappa$. Hence, $\mu=\sqrt{1-2\lambda^2}$ and we conclude.
\end{proof} 

\begin{remark}
\label{remark:covariantprescription}
From the proof of Proposition \ref{prop:cdnslc} we extract the following covariant derivatives of the $\varepsilon\,$-contact frame of special type described in Lemma \ref{lemma:phihbasisb}:
\begin{eqnarray*}
& \nabla_{\xi} \xi=0\, , \quad \nabla_{X} \xi=-\frac{1}{2}(1+\sqrt{1-2\lambda^2}) \phi(X) \, , \quad \nabla_{\phi(X)} \xi=\frac{1}{2}(1-\sqrt{1-2\lambda^2}) X\, , \\
& \nabla_\xi X=0\, , \quad \nabla_X X=0\, , \quad  \nabla_{\phi (X)} X= \frac{1}{2} (1-\sqrt{1-2\lambda^2}) \xi \, ,\\ &\nabla_\xi \phi(X)=0\, , \quad \nabla_{X} \phi(X)=- \frac{1}{2}(1+\sqrt{1-2\lambda^2}) \xi \, , \quad \nabla_{\phi(X)} \phi(X)=0 \, .
\end{eqnarray*}
\end{remark}

\begin{prop}
\label{prop:claslns}
Let $(M,g,\alpha,-1) \in \mathrm{PCont}_L^{\varepsilon \eta}(-1,\lambda^2,\kappa)$ be a complete and simply connected three-dimensional Lorentz $\varepsilon\eta\,$-Einstein contact manifold. Then, one of the following holds:
\begin{itemize}
\item $(M,g,\alpha,-1)$ is Sasakian.
\item $(M,g,\alpha,-1)$ is non-Sasakian and isomorphic to one of the following Lie groups equipped with a left-invariant $\varepsilon\eta\,$-Einstein structure $(g,\alpha,-1)$:
\begin{enumerate}
\item $\widetilde{\mathrm{Sl}}(2,\mathbb{R})$ when $\frac{1}{2}>\lambda^2=\kappa>0$.
\item  $\widetilde{\mathrm{E}}(1,1)$ when $\lambda^2=\kappa=0$. 
\end{enumerate}
\end{itemize}
\end{prop}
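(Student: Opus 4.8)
The plan is to split into the Sasakian and non-Sasakian cases, the first being vacuous and the second reducing to the explicit Lie-bracket computation already available, followed by a completeness argument that promotes the ambient parallelism to a genuine Lie group structure.

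If $(M,g,\alpha,-1)$ is Sasakian we are in the first alternative and there is nothing to prove, so I would assume it is not Sasakian. By Remark~\ref{remark:etaysasaki} (applicable here since the Reeb vector field is time-like, so $\varepsilon\s_g=1$) this is equivalent to $\lambda^2+\kappa<1$, which is exactly the hypothesis of Proposition~\ref{prop:cdnslc}. I would then invoke Lemma~\ref{lemma:phihbasisb} to produce a \emph{global} orthonormal $\varepsilon\,$-contact frame $\{\xi,X,\phi(X)\}$ diagonalizing $\mathfrak{h}$, and Proposition~\ref{prop:cdnslc} to obtain both the constraint $\lambda^2=\kappa$ (so $0\le\lambda^2=\kappa<\tfrac12$ and $\mu:=\sqrt{1-2\lambda^2}\in(0,1]$) and the brackets
\[
[\xi,X]=\tfrac{1+\mu}{2}\,\phi(X),\qquad [\xi,\phi(X)]=\tfrac{\mu-1}{2}\,X,\qquad [X,\phi(X)]=-\xi .
\]
Since these structure constants are constant, $\mathfrak{g}:=\mathrm{Span}_{\mathbb{R}}\{\xi,X,\phi(X)\}$ is a three-dimensional Lie algebra of vector fields on $M$.

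Next I would upgrade this parallelism to a Lie group structure. By the completeness hypothesis — recall this means that every element of an $\varepsilon\,$-contact frame is complete on $M$ — the fields $\xi,X,\phi(X)$ are complete, so the connected simply connected Lie group $\widetilde{G}$ with Lie algebra $\mathfrak{g}$ acts on $M$ through their flows; this action is transitive and the orbit map $\widetilde{G}\to M$ is a covering, hence a diffeomorphism because $M$ is simply connected. Transporting $g$ and $\alpha$ back along this identification makes them left-invariant (they are reconstructed from the left-invariant frame), and the $\varepsilon\eta\,$-Einstein and time-like-Reeb properties persist, so $(M,g,\alpha,-1)$ is isomorphic, in the sense of Definition~\ref{def:morphpcont} (possibly up to an orientation-reversing isometry), to $\widetilde{G}$ equipped with a left-invariant $\varepsilon\eta\,$-Einstein $\varepsilon\,$-contact structure. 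This is the step I expect to require the most care: one must apply completeness precisely where the definition of a complete $\varepsilon\,$-contact three-manifold licenses it, and check that the orbit map is a genuine covering rather than merely a local diffeomorphism.

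Finally I would identify $\widetilde{G}$ from the brackets above, which one checks are unimodular ($\mathrm{ad}_\xi,\mathrm{ad}_X,\mathrm{ad}_{\phi(X)}$ are all trace-free). For $0<\lambda^2=\kappa<\tfrac12$ one has $\mu\in(0,1)$, the Killing form is nondegenerate of Lorentzian signature, so $\mathfrak{g}\cong\mathfrak{sl}(2,\mathbb{R})$ and $\widetilde{G}=\widetilde{\mathrm{Sl}}(2,\mathbb{R})$; for $\lambda^2=\kappa=0$ one has $\mu=1$, the brackets collapse to $[\xi,X]=\phi(X)$, $[\xi,\phi(X)]=0$, $[X,\phi(X)]=-\xi$, the Killing form degenerates, the derived algebra is two-dimensional abelian with $X$ acting on it with real eigenvalues $\pm1$, and this is the Lie algebra of $\widetilde{\mathrm{E}}(1,1)$. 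To close the argument I would match these normalized brackets (after rescaling $X$ and $\phi(X)$) against the orthonormal frames tabulated in Theorem~\ref{thm:LorentzGroups} and Appendix~\ref{app:Lorentzgroups}; this last matching is routine bookkeeping rather than a conceptual obstacle.
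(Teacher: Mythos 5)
Your proof is correct and follows the same skeleton as the paper's: reduce to the non-Sasakian case, invoke Lemma \ref{lemma:phihbasisb} and Proposition \ref{prop:cdnslc} to obtain the global frame with constant structure functions together with $\lambda^2=\kappa$ and $0\le\lambda^2=\kappa<\tfrac{1}{2}$, promote this to a left-invariant structure on a simply connected Lie group, and identify that group. The one place you genuinely diverge is the promotion step: the paper disposes of it by citing \cite[Proposition 1.9]{Tricerri}, whereas you reprove it directly via completeness of the frame fields (which the paper's completeness hypothesis does license, since $\{\xi,X,\phi(X)\}$ is an $\varepsilon\,$-contact frame in the sense of Definition \ref{def:lcf}), integration of the resulting three-dimensional Lie algebra of complete vector fields to an action of its simply connected group, and the observation that the orbit map is a covering of the simply connected $M$, hence a diffeomorphism. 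That argument is sound --- it is essentially the proof of the cited result --- and it has the virtue of making explicit where completeness and simple connectedness enter. Your intrinsic identification of the Lie algebras (nondegenerate Lorentzian Killing form for $0<\mu<1$, forcing $\mathfrak{sl}(2,\mathbb{R})$; two-dimensional abelian derived algebra on which $\mathrm{ad}_X$ acts with real eigenvalues $\pm 1$ for $\mu=1$, forcing the Lie algebra of $\widetilde{\mathrm{E}}(1,1)$) is also correct and serves as a cross-check on the paper's route, which instead matches the structure constants directly against case $\mathfrak{g}_3$ of Theorem \ref{thm:LorentzGroups}.
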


\begin{proof}
Assume $(M,g,\alpha,-1)$ is non-Sasakian. By Proposition \ref{prop:cdnslc}, there exists a global orthonormal frame $\{\xi, X, \phi(X)\}$ such that:
\begin{equation*}
[\xi, X]=\frac{1}{2}(1+\sqrt{1-2\lambda^2}) \phi(X)\, , \quad [\xi,\phi(X)]=\frac{1}{2}(-1+\sqrt{1-2\lambda^2}) X \, , \quad [X,\phi(X)]=-\xi\, ,
\end{equation*}
such that $\lambda^2=\kappa$ and $0 \leq \lambda^2 = \kappa < \frac{1}{2}$. Using \cite[Proposition 1.9]{Tricerri}, the fact that $M$ admits a global frame with constant structure functions implies that $(M,g,\alpha)$ has a Lie group structure (canonical after fixing an identity point) respect to which $\{\xi, X, \phi(X)\}$ is left-invariant. Using the classification of connected and simply connected three-dimensional Lie groups summarized in Appendix \ref{app:Lorentzgroups} we conclude that $(M,g)$ is of type $\mathfrak{g}_3$ when $\frac{1}{2}> \lambda^2=\kappa>0$ by identifying:
\begin{equation*}
\xi = e_0\, , \qquad e_1 = X\, , \qquad e_2 = \phi(X)\, .
\end{equation*}

\noindent
In particular, we have:
\begin{equation*}
a = \frac{1}{2}(1 - \sqrt{1-2\lambda^2})\, , \qquad b = \frac{1}{2}(1 + \sqrt{1-2\lambda^2})\, , \qquad c= 1\, .
\end{equation*}

\noindent
where $a$, $b$ and $c$ are the real parameters appearing in case $\mathfrak{g}_3$ of Theorem \ref{thm:LorentzGroups}. Hence $(M,g)$ is isometric to $\widetilde{\mathrm{Sl}}(2,\mathbb{R})$ equipped with a left-invariant metric. On the other hand, when $\lambda^2=\kappa=0$, identifying:
\begin{equation*}
e_0=-\xi\, , \qquad e_1=-\phi(X) \, , \qquad e_2=-X\, , 
\end{equation*}

\noindent
we obtain that $(M,g)$ is isometric to $\widetilde{\mathrm{E}}(1,1)$ endowed with a left-invariant metric. 
\end{proof}

\noindent
For Sasakian structures, we obtain the following result.

\begin{lemma}
\label{lemma:claslsas}
Let $(\G,g,\alpha,-1) \in \mathrm{PCont}_L^{\varepsilon \eta}(-1,\lambda^2,1-\lambda^2)$ be a left-invariant Sasakian Lorentzian $\varepsilon\eta\,$-Einstein contact structure on a simply connected Lie group $\G$. Then, according to the classification of connected and simply connected 3-dimensional Lie groups given in Theorem \ref{thm:LorentzGroups}, one of the following holds:
\begin{itemize}
\item $\G$ is of type $\mathfrak{g}_3$. In particular, we have that $(\G,g,\alpha,-1)$ is isomorphic to a left-invariant $\varepsilon\eta\,$-Einstein structure on:
\begin{enumerate}
\item  $\widetilde{\mathrm{Sl}}(2,\mathbb{R})$ if $1 \geq \lambda^2>0$.
\item $\mathrm{H}_3$ if $\lambda^2=0$.
\end{enumerate}
\item $\G$ is of type $\mathfrak{g}_6$ and $1\geq \lambda^2 >0$. 
\end{itemize}
\end{lemma}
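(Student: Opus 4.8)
The plan is to reproduce the computation behind Propositions~\ref{prop:cdnslc} and~\ref{prop:claslns} in the Sasakian regime and then to identify the resulting metric Lie algebra within the list of Theorem~\ref{thm:LorentzGroups}. Since $(\G,g,\alpha,-1)$ is Sasakian we have $\mathfrak{h}=0$, so by Remark~\ref{remark:etaysasaki} the $\varepsilon\eta\,$-Einstein constants obey $\lambda^2+\kappa=1$; together with $\kappa\geq 0$ (forced in Lorentzian signature by Definition~\ref{def:Einsteinpcontact}) this already yields $0\leq\lambda^2\leq 1$, which is where the bounds on $\lambda^2$ in the statement come from. Inserting $\mathfrak{h}=0$ into Proposition~\ref{prop:equationscontact} (cf.\ Remark~\ref{remark:covxi}) gives the clean identity $\nabla\xi=-\tfrac{1}{2}\phi$.

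First I would fix a left-invariant orthonormal frame $\{\xi,X,\phi(X)\}$ with $X$ a unit space-like field orthogonal to $\xi$; this exists because $\xi=\alpha^{\sharp}$ and $\phi$ are left-invariant and $\xi^{\perp}$ is a left-invariant, positive-definite rank-two distribution, and each of these vector fields is automatically complete, being left-invariant on a Lie group. Using $\nabla\xi=-\tfrac{1}{2}\phi$, the relations $\phi(\xi)=0$, $\phi^2(X)=-X$ from Lemma~\ref{lemma:phiproperties}, and Koszul's formula exactly as in the proof of Proposition~\ref{prop:cdnslc}, every Levi-Civita coefficient gets expressed through three structure constants $\beta,c,e\in\mathbb{R}$ (constant, by left-invariance), giving
\begin{equation*}
[\xi,X]=\bigl(\beta+\tfrac{1}{2}\bigr)\phi(X)\, , \quad [\xi,\phi(X)]=-\bigl(\beta+\tfrac{1}{2}\bigr)X\, , \quad [X,\phi(X)]=-\xi-c\,X+e\,\phi(X)\, .
\end{equation*}
The Jacobi identity then forces $\bigl(\beta+\tfrac{1}{2}\bigr)c=\bigl(\beta+\tfrac{1}{2}\bigr)e=0$, and a constant rotation in the $X$--$\phi(X)$ plane (which is compatible with $\phi$ and keeps the frame left-invariant) lets me assume $e=0$.

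It then remains to impose the $\varepsilon\eta\,$-Einstein condition~\eqref{eq:etaEinsteinRiemannian} by a direct Ricci computation in this frame: the component $\mathrm{Ric}^g(\xi,\xi)=\tfrac{1}{2}$ is automatic by Proposition~\ref{prop:sasakireeb}, while $\mathrm{Ric}^g(X,X)=\mathrm{Ric}^g(\phi(X),\phi(X))$ must equal $\tfrac{1}{2}-\lambda^2$ and the mixed components must vanish. In the branch $\beta+\tfrac{1}{2}=:k\neq 0$ one finds $c=0$ and $k=\lambda^2$, so the brackets reduce to $[\xi,X]=\lambda^2\,\phi(X)$, $[\xi,\phi(X)]=-\lambda^2\,X$, $[X,\phi(X)]=-\xi$; for $\lambda^2>0$ this is, after rescaling the frame, the algebra $\mathfrak{g}_3$ and $\G=\widetilde{\mathrm{Sl}}(2,\mathbb{R})$ (recovering Example~\ref{ep:Sl(2,R)} when $\lambda^2=1$), while for $\lambda^2=0$ it collapses to the Heisenberg relations and $\G=\mathrm{H}_3$. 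In the branch $\beta+\tfrac{1}{2}=0$ the Ricci condition gives $c^2=\lambda^2$, so the brackets are $[\xi,X]=[\xi,\phi(X)]=0$, $[X,\phi(X)]=-\xi-\lambda\,X$; for $\lambda^2>0$ this is a solvable algebra whose adjoint action has eigenvalues $0$ and $\lambda$ on $\mathrm{span}\{\xi,X\}$, which is the algebra $\mathfrak{g}_6$, and for $\lambda^2=0$ it is again Heisenberg. Matching these structure constants with the classification in Theorem~\ref{thm:LorentzGroups} gives precisely the cases in the statement.

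The step I expect to be the main obstacle is purely computational: carrying the three constants $\beta,c,e$ through the complete Levi-Civita table and the Ricci tensor without sign errors, and then normalizing the final brackets so as to recognize them inside Theorem~\ref{thm:LorentzGroups}. The $\mathfrak{g}_6$ branch in particular deserves care, since it is the solvable (rather than the semisimple) one and is easily confused with the $\widetilde{\mathrm{E}}(1,1)$ entries; simple connectivity enters only to ensure $\G$ is determined by its Lie algebra, exactly as in Propositions~\ref{prop:cdnslc} and~\ref{prop:claslns}.
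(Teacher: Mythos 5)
Your proposal is correct and follows essentially the same route as the paper's proof: the same left-invariant frame $\{\xi,X,\phi(X)\}$, the same reduction of the Levi-Civita data to constants $\beta,c,e$, the same Jacobi-identity dichotomy between $c=e=0$ and $\beta=-\tfrac{1}{2}$, and the same Ricci computation leading to $\mathfrak{g}_3$ (hence $\widetilde{\mathrm{Sl}}(2,\mathbb{R})$ or $\mathrm{H}_3$) in the first branch and $\mathfrak{g}_6$ in the second. The only cosmetic difference is that you normalize $e=0$ by a rotation in the $X$--$\phi(X)$ plane before splitting into cases, whereas the paper parametrizes $(c,e)$ by an angle and rotates only in the $\beta=-\tfrac{1}{2}$ branch; both are equivalent.
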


\begin{proof}
Let $\{\xi, X, \phi(X)\}$ be a left-invariant $\varepsilon\,$-contact frame on $(\G,\xi,\alpha)$. The proof of Proposition \ref{prop:cdnslc} shows that the following holds: 
\begin{eqnarray*}
& \nabla_{\xi} \xi=0\, , \quad \nabla_{X} \xi=-\frac{1}{2} \phi(X) \, , \quad \nabla_{\phi(X)} \xi=\frac{1}{2} X\, , \\
& \nabla_\xi X=\beta \phi(X)\, , \quad \nabla_X X=c \phi(X)\, , \quad  \nabla_{\phi (X)} X= \frac{1}{2} \xi-e \phi(X) \, ,\\ & \nabla_\xi \phi(X)=-\beta X\, , \quad \nabla_{X} \phi(X)=- \frac{1}{2} \xi-cX \, , \quad \nabla_{\phi(X)} \phi(X)=e X \, , \\
& [\xi,X]=\left (\beta+\frac{1}{2}\right ) \phi(X)\, , \quad [\xi,\phi(X)]= - \left (\beta + \frac{1}{2} \right ) X\, , \quad  [X,\phi(X)]=-\xi-cX+e \phi(X)\, ,
\end{eqnarray*}

\noindent
where $\beta,c,e \in \mathbb{R}$. Imposing the Jacobi identity, we obtain the constraint
\begin{equation*}
e\left (\beta+\frac{1}{2} \right )X+ c \left (\beta+\frac{1}{2}\right )\phi(X) =0\, .
\end{equation*}

\noindent
Hence, either $c=e=0$ or $\beta=-\frac{1}{2}$. We consider these cases separately.

\begin{itemize}
\item Assume $c=e=0$. Imposing that $(\G,g,\alpha) \in \mathrm{PCont}_L^{\varepsilon \eta}(-1,\lambda^2,1-\lambda^2)$, we obtain the condition $\ric^g(X,X)=\ric^g(\phi(X),\phi(X))=\frac{1}{2}-\lambda^2$, which is equivalent to
\begin{equation*}
\beta=\lambda^2-\frac{1}{2}\, .
\end{equation*}

\noindent
Hence, the Lie brackets of $\{\xi, X, \phi(X)\}$  reduce to:
\begin{equation*}
 [\xi,X]=\lambda^2 \phi(X)\, , \quad [\xi,\phi(X)]=-\lambda^2 X\, , \quad  [X,\phi(X)]=-\xi\,.
\end{equation*}

\noindent
For $\lambda^2>0$, if $\{e_0,e_1,e_2\}$ denotes the orthonormal basis used at Theorem \ref{thm:LorentzGroups}, identifying $e_0=\xi$, $e_1=X$ and $e_2=\phi(X)$, we conclude that $(\G,g)$ is isometric to $\widetilde{\mathrm{Sl}}(2,\mathbb{R})$ endowed with a left-invariant metric. Similarly, for $\lambda^2=0$, setting $e_0=-\xi$,  $e_1=X$, $e_2=\phi(X)$ we conclude that $(\G,g)$ is isometric to H$_3$ equipped with a left-invariant metric.

\item If $\beta=-\frac{1}{2}$,  the only non-trivial constraint from the $\varepsilon\eta\,$-Einstein condition is given by $\ric^g(X,X)=\ric^g(\phi(X),\phi(X))=\frac{1}{2}-\lambda^2$, which is equivalent to
\begin{equation*}
c^2+e^2=\lambda^2\, .
\end{equation*}

\noindent
Consequently, we obtain:
\begin{equation*}
[\xi,X]=0\, , \quad [\xi,\phi(X)]=0\, , \quad  [X,\phi(X)]=-\xi-c\phi(X)+eX\, ,
\end{equation*}

\noindent
with $c^2+e^2=\lambda^2$. We assume $\lambda^2 \neq 0$, since otherwise we return to the previous bullet point. Parametrizing $e=-\vert \lambda\vert  \cos \theta$ and $c=\vert \lambda \vert  \sin \theta$ for some angle $\theta \in \mathbb{R}$, we consider the following orthogonal change of basis: 
\begin{equation*}
[\xi, \bar{X}, \phi(\bar{X}) ]^T=\begin{bmatrix}
1 & 0 & 0 \\ 0 & \cos \theta & \sin \theta \\ 0 & -\sin \theta & \cos \theta \\
\end{bmatrix} [\xi, X, \phi(X) ]^T\, ,
\end{equation*}

\noindent
finding:
\begin{equation*}
[\xi,\bar{X}]=0\, , \quad [\xi,\phi(\bar{X})]=0\, , \quad  [\bar{X},\phi(\bar{X})]=-\xi-\vert \lambda \vert  \bar{X}\, .
\end{equation*}

\noindent
Defining $e_0=\xi$, $e_1=\phi(\bar{X})$ and $e_2=\bar{X}$ we conclude, via Theorem \ref{thm:LorentzGroups}, that the previous Lie brackets correspond to those of a Lie algebra type $\mathfrak{g}_6$.
 
\end{itemize}
\end{proof}
\noindent
We may summarize the information provided in Proposition \ref{prop:claslns} and Lemma \ref{lemma:claslsas} in the following Theorem.
\begin{thm}
A three-dimensional connected and simply connected Lie group $G$ admits a left- invariant $\varepsilon\eta\,$-Einstein contact structure $(g,\alpha)$ with time-like Reeb vector field if and only if $(\mathrm{G},g,\alpha)$ is isomorphic, through a possibly orientation-reversing isometry, to one of the items listed in the following table in terms of the orthonormal frame $\left\{ e_0 , e_1 , e_2\right\}$ appearing in Theorem \ref{thm:LorentzGroups}: 

\vspace{0.25cm}
\renewcommand{\arraystretch}{1.5}
\begin{center}
\begin{tabular}{ |P{0.5cm}|P{4.5cm}|P{1.3cm}|P{4cm}|P{1cm} | P{1.5cm}|}
\hline
		
$\mathfrak{g}$  & \emph{Structure constants } & $\alpha$ & $\eta\,$\emph{-Einstein constants} & $G$ & \emph{Sasakian} \\
\hline
	
\multirow{4}*{$\mathfrak{g}_3$} & $\frac{1}{2}>a=1-b > 0\, , c=1$ & $\alpha=e^0$ &  $\lambda^2= \kappa=2b(1-b)$ & $\widetilde{\mathrm{Sl}}(2,\mathbb{R})$ & \emph{No} \\ \cline{2-6} &   $a=c=1\, , b=0$ & $\alpha=-e^0$ & $\lambda^2=\kappa=0$ & $\widetilde{\mathrm{E}}(1,1)$ & \emph{No}  \\ \cline{2-6} & $1 \geq a=b>0\, , c=1$ & $\alpha=e^0$ & $\lambda^2=1-\kappa=a\,$ & $\widetilde{\mathrm{Sl}}(2,\mathbb{R})$ & \emph{Yes} \\ \cline{2-6} & $ a=b=0\, , c=-1$ & $\alpha=-e^0$ & $\lambda^2=0\, , \kappa=1\,$ & $\mathrm{H}_3$ & \emph{Yes} \\ \hline $\mathfrak{g}_6$ & $b=1\, , c=d=0\, , 1 \geq a^2 > 0\,$ & $\alpha=e^0$ & $\lambda^2=1-\kappa=a^2\, ,$ &$\mathfrak{G}_6$  & \emph{Yes} \\ \hline
\end{tabular}
\end{center}

\renewcommand{\arraystretch}{1}

\noindent
Furthermore, if $(M,g,\alpha,-1) \in \mathrm{PCont}_L^{\varepsilon \eta}(-1,\lambda^2,\kappa)$ is complete and not Sasakian then it is a Lie group equipped with a left-invariant Lorentzian contact structure and isomorphic to a left-invariant $\varepsilon\eta\,$-Einstein structure on either $\widetilde{\mathrm{Sl}}(2,\mathbb{R})$ when $\frac{1}{2}> \lambda^2=\kappa>0$ or on $\widetilde{\mathrm{E}}(1,1)$ when $\lambda^2=\kappa=0$.
\end{thm}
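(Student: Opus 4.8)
The statement is a summary, so the plan is to assemble it from the two results already available in the excerpt, namely Proposition \ref{prop:claslns} (the completeness/non-Sasakian dichotomy and the identification with $\widetilde{\mathrm{Sl}}(2,\mathbb{R})$ or $\widetilde{\mathrm{E}}(1,1)$) and Lemma \ref{lemma:claslsas} (the left-invariant Sasakian case). What remains is (i) to merge the two lists of Lie groups into a single table indexed by the Bianchi-type labels $\mathfrak{g}_3$ and $\mathfrak{g}_6$ of Theorem \ref{thm:LorentzGroups}, reading off in each case the frame identification and hence the structure constants $a,b,c$ (and $d$ for $\mathfrak{g}_6$), the explicit form of $\alpha=\pm e^0$, and the relation between $(\lambda^2,\kappa)$ and those constants; and (ii) to confirm there are no further entries, i.e. that completeness plus simple-connectedness forces the ``$\varepsilon\eta$-Einstein Lie group'' picture in the Sasakian case too.

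First I would treat the non-Sasakian branch: this is essentially a restatement of Proposition \ref{prop:claslns}, where the covariant derivatives in Remark \ref{remark:covariantprescription} and the Lie brackets in Proposition \ref{prop:cdnslc} already pin down $a=\tfrac12(1-\sqrt{1-2\lambda^2})$, $b=\tfrac12(1+\sqrt{1-2\lambda^2})$, $c=1$ for $\widetilde{\mathrm{Sl}}(2,\mathbb{R})$ with $0<\lambda^2=\kappa<\tfrac12$ (noting $a=1-b$, $2b(1-b)=2ab$ recovers $\lambda^2=\kappa$ after the substitution $b(1-b)=\tfrac14(1-(1-2\lambda^2))=\tfrac{\lambda^2}{2}$, so $\lambda^2=\kappa=2b(1-b)$), and the case $\lambda^2=\kappa=0$ giving $\widetilde{\mathrm{E}}(1,1)$ with $a=c=1$, $b=0$ after the orientation-reversing identification $e_0=-\xi$, $e_1=-\phi(X)$, $e_2=-X$, which is why $\alpha=-e^0$ appears. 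Then I would handle the Sasakian branch via Lemma \ref{lemma:claslsas}: the first bullet there ($c=e=0$) gives the brackets $[\xi,X]=\lambda^2\phi(X)$, $[\xi,\phi(X)]=-\lambda^2 X$, $[X,\phi(X)]=-\xi$, which under $e_0=\xi$, $e_1=X$, $e_2=\phi(X)$ matches case $\mathfrak{g}_3$ with $a=b=\lambda^2$, $c=1$, hence $\lambda^2=1-\kappa=a$ (using $\kappa=1-\lambda^2$ from Remark \ref{remark:etaysasaki}); the degenerate case $\lambda^2=0$ gives $\mathrm{H}_3$ with $a=b=0$, $c=-1$ and $\alpha=-e^0$, $\kappa=1$; the second bullet ($\beta=-\tfrac12$) gives, after the rotation diagonalizing $c,e$, the brackets of a $\mathfrak{g}_6$ algebra with $b=1$, $c=d=0$, $a^2=\lambda^2$, whence $\lambda^2=1-\kappa=a^2$ with $0<\lambda^2\le 1$ (the constraint $c^2+e^2=\lambda^2$ bounds $|\lambda|\le$ whatever the normalization allows — I would double-check that the $\mathfrak{g}_6$ parameter range in Theorem \ref{thm:LorentzGroups} forces $a^2\le 1$, or equivalently that $\lambda^2\le 1$ follows from $\lambda^2+\kappa\le 1$ with $\kappa=1-\lambda^2\ge 0$, i.e.\ $\lambda^2\le 1$, consistent). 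Assembling these entries, together with the observation that the Sasakian $\mathfrak{g}_3$ items with $\lambda^2>0$ also sit on $\widetilde{\mathrm{Sl}}(2,\mathbb{R})$, produces exactly the five rows of the table.

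The main obstacle is the bookkeeping of orientations and of which of the finitely many normalizing changes of orthonormal frame sends the abstract brackets of Proposition \ref{prop:cdnslc}/Lemma \ref{lemma:claslsas} to the \emph{specific} normal form of Theorem \ref{thm:LorentzGroups}; in particular one must track when an orientation-reversing isometry is needed (this is what flips $\alpha=e^0$ to $\alpha=-e^0$) and verify that the $\varepsilon$-contact sign condition $\alpha=\ast\dd\alpha$ is preserved or corrected accordingly, exactly as in the proof of Proposition \ref{prop:nullcontactstructures}. A secondary point to check carefully is that the Sasakian-on-$\mathfrak{g}_6$ family genuinely realizes the full parameter range stated and that no spurious solution is lost when one passes from the Jacobi-constrained brackets to the classified Lie groups — this is a routine but delicate comparison with Appendix \ref{app:Lorentzgroups}. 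Once those identifications are made, the final sentence of the theorem (the completeness statement) is immediate from Proposition \ref{prop:claslns}, and we conclude.
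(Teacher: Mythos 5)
Your proposal is correct and follows essentially the same route as the paper, whose proof of this theorem is literally the one-line statement that it follows from Proposition \ref{prop:claslns} and Lemma \ref{lemma:claslsas} upon use of Theorem \ref{thm:LorentzGroups}. The frame identifications, orientation bookkeeping, and parameter translations you spell out (e.g.\ $\lambda^2=2b(1-b)$ in the non-Sasakian $\mathfrak{g}_3$ case, and $a^2=\lambda^2\le 1$ forced by $\kappa=1-\lambda^2\ge 0$ in the $\mathfrak{g}_6$ case) are exactly the ones carried out inside the proofs of those two auxiliary results, so nothing further is needed.
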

\begin{proof}
Follows from Proposition \ref{prop:claslns} and Lemma \ref{lemma:claslsas} upon use of Theorem \ref{thm:LorentzGroups}.
\end{proof}


\subsection{Lorentzian case with space-like Reeb vector field}
\label{sec:etaparacontact}


In this subsection we classify all left-invariant $\varepsilon\eta\,$-Einstein para-contact structures on three-dimensional simply-connected Lie groups. Let $(N,\chi,\alpha,\varepsilon = 1)\in \mathrm{PCont}^{\varepsilon \eta}_L(\varepsilon = 1,\lambda,\kappa)$ be an oriented and time-oriented Lorentzian $\varepsilon\,$-contact metric manifold. By Definition \ref{def:Einsteinpcontact}, $(\chi,\alpha,\varepsilon = 1)$ is $\varepsilon\eta\,$-Einstein if and only if it satisfies:
\begin{equation*}
\mathrm{Ric}^{\chi} = - \frac{1}{2}(\lambda^2 + \kappa )\,\chi +  \kappa\,\alpha\otimes \alpha\, ,
\end{equation*} 

\noindent
for real constants $\lambda\in \mathbb{R}$ and $\kappa \geq 0$.  

\begin{remark}
Note that, in contrast to the case $\s_g \varepsilon = 1$, the endomorphism $\mathfrak{h}$ associated to a para-contact metric structure may not be diagonalizable, whence the techniques usted to classify $\varepsilon\eta\,$-Einstein $\varepsilon\,$-contact metric three manifolds with $\s_g \varepsilon = 1$ are a priori not applicable here.
\end{remark}

\begin{thm}
\label{thm:pcetae}
A three-dimensional connected and simply connected Lie group $G$ admits a left- invariant $\varepsilon\eta\,$-Einstein para-contact structure $(g,\alpha)$ if and only if $(\mathrm{G},g,\alpha)$ is isomorphic, through a possibly orientation-reversing isometry, to one of the items listed in the following table in terms of the orthonormal frame $\left\{ e_0 , e_1 , e_2\right\}$ appearing in Theorem \ref{thm:LorentzGroups}:

\vspace{0.25cm}
 \renewcommand{\arraystretch}{1.5}
 \begin{center}
\begin{tabular}{ |P{0.3cm}|P{4.4cm}|P{2.9cm}|P{3.2cm}|P{1cm} | P{1.2cm}|}
				\hline
			
				$\mathfrak{g}$  & \emph{Structure constants} $(s\in \mathbb{Z}_2)$  & $\alpha$ & $\eta\,$\emph{-Einstein constants} & $\mathrm{G}$ & \emph{Sasakian} \\
				\hline

				  \multirow{6}*{$\mathfrak{g}_3$} & $a=s\, , \, b =c\, , \, sc\geq 1 $ & \multirow{1}*{$ \alpha=\pm e^1$} & $\lambda^2= s c\, , \, \kappa=  s c-1$& $\widetilde{\mathrm{Sl}}(2, \mathbb{R})$ & \emph{Yes} \\ \cline{2-6} & $b=s\, , \, a=c \, , \, \, sc\geq 1$ & \multirow{1}*{$ \alpha=\pm e^2$} & $\lambda^2=sc\, , \, \kappa=  s c-1$ & $\widetilde{\mathrm{Sl}}(2, \mathbb{R})$ & \emph{Yes} \\   \cline{2-6}& $b=0\, , a=c=s$ & $-\alpha_0^2+\alpha_1^2=1$ &  $\lambda^2=0\, , \kappa=0$ & $\widetilde{\mathrm{E}}(1,1)$ & \emph{No} \\ \cline{2-6}& $c= 0\, ,\, a=b=s$ & $\alpha_1^2+\alpha_2^2=1$ & $\lambda^2= 0\, , \kappa=0$  & $\widetilde{\mathrm{E}}(2)$ & \emph{No }\\ \cline{2-6}& $a=b=c=s$ & $-\alpha_0^2+\alpha_1^2+\alpha_2^2=1$ & $\lambda^2=1\, , \kappa=0$ & $\widetilde{\mathrm{Sl}}(2, \mathbb{R})$ & \emph{Yes} \\ \hline
				  
				  \multirow{3}*{$\mathfrak{g}_6$} & $a=b=0\, ,$ $d^2\geq 1\, , c=-s$ & $\alpha_2^2=1$ & $\lambda^2=d^2\, , \kappa=d^2-1$ & \multirow{3}*{$\mathfrak{G}_6$} &  	\emph{Yes} \\ \cline{2-4} \cline{6-6}
				  & $b=-\mu a \neq 0 \, , \, d=-a+\mu s \, , \,$ & $a \alpha_2=(-s+\mu a ) \alpha_0$ & \multirow{2}*{$\lambda^2=1\, ,\, \kappa=0\, $} & & \multirow{2}*{\emph{Yes}} \\  & $c=-s+\mu a \, , \, \mu \in \mathbb{Z}_2\,$ & $ \alpha_2^2=1+\alpha_0^2\, $ &  & & \\ \hline

			\end{tabular}
			\end{center}
			
			 \renewcommand{\arraystretch}{1}

\end{thm}

\begin{proof}
We proceed on a case by case basis by checking which of the items appearing in Theorem \ref{thm:LorentzGroups} admits an $\varepsilon\eta\,$-Einstein para-contact metric structure $(g,\alpha)$. For this, we will exploit the formulae presented in Appendix \ref{app:CurvatureLorentzgroups}.

\begin{itemize}
	
\item {\bf Case} $\mathfrak{g}_1$. In the orthonormal frame given in Appendix \ref{app:Lorentzgroups}, Equation $\ast \alpha = -s\,\dd\alpha$ is equivalent to
\begin{eqnarray*}
&(-a \alpha_1 - b \alpha_2)\, e^0\wedge e^1 + (a \alpha_0 +b \alpha_1 + a \alpha_2)\, e^0\wedge e^2 + (b \alpha_0 - a \alpha_1)\, e^1\wedge e^2 \\
& = s\,(\alpha_0\, e^1\wedge e^2 + \alpha_1\, e^0\wedge e^2  - \alpha_2\, e^0\wedge e^1)\, ,
\end{eqnarray*}

\noindent  
Non-trivial solutions for $\alpha$ exist only if $b=s$, which implies $\alpha_1=0$, since $a\neq 0$. This implies in turn $\alpha_0+\alpha_2=0$, which is incompatible with the constraint $-\alpha_0^2+\alpha_2^2=1$. Hence $\mathfrak{g}_1$ does not admit para-contact metric structures.

\item {\bf Case} $\mathfrak{g}_2$. In the orthonormal frame given in Appendix \ref{app:Lorentzgroups}, Equation $\ast \alpha = -s\,\dd\alpha$ is equivalent to
\begin{eqnarray*}
& a \alpha_1 e^0 \wedge e^2+(c \alpha_0-b \alpha_2)\,  e^0 \wedge e^1 + (c \alpha_2 +b \alpha_0)\,  e^1 \wedge e^2\\
& = s\,(\alpha_0 \, e^1 \wedge e^2 +\alpha_1 \, e^0 \wedge e^2-\alpha_2\, e^0 \wedge e^1)\,.
\end{eqnarray*}

\noindent
This system of equations implies $c^2+(b-s)^2=0$, which is equivalent to $c=0$ and $b=s$. Since the value $c=0$ is forbidden for $\mathfrak{g}_2$ Lie algebras, we conclude that there are no para-contact metric structures on this type of Lie algebras. 
 
\item {\bf Case} $\mathfrak{g}_3$. In the orthonormal frame given in Appendix \ref{app:Lorentzgroups}, Equation $\ast \alpha = -s\,\dd\alpha$ is equivalent to
\begin{equation*}
c \alpha_0\,  e^1\wedge e^2+a \alpha_1 \, e^0 \wedge e^2- b \alpha_2 \, e^0 \wedge e^1 = s\,(\alpha_0 \, e^1 \wedge e^2+ \alpha_1 e^0 \wedge e^2 -\alpha_2 e^0 \wedge e^1)\, . 
\end{equation*}

\noindent
If $\alpha_0,\alpha_1,\alpha_2 \neq 0$, then we have $a=b=c=s$ and $-\alpha_0^2+\alpha_1^2+\alpha_2^2=1$. If $\alpha_0=0$ and $\alpha_1, \alpha_2 \neq 0$, then $\alpha_1^2+\alpha_2^2=1$, $a=b=s$ and $c$ is unconstrained. If $\alpha_1=0, \alpha_0,\alpha_2 \neq 0$ (resp. $\alpha_2=0, \alpha_0,\alpha_1 \neq 0$) we have $-\alpha_0^2+\alpha_2^2=1$, $b=c=s$ and $a$ unconstrained (resp. $-\alpha_0^2+\alpha_1^2=1$, $a=c=s$ and $b$ unconstrained). If $\alpha_0=\alpha_1=0$ (resp. $\alpha_0=\alpha_2=0$), then $\alpha^2_2=  1$, $b=s$ and $a,c$ are unconstrained (resp. $\alpha^2_1 = 1$, $a=s$ and $b,c$ unconstrained). Finally, we remark that $a=b=0$ is never allowed, since it implies $\alpha_1=\alpha_2=0$, whence $\alpha_0^2=-1$.

We compute now the Ricci curvature. We obtain:
\begin{eqnarray}
\label{eq:Riccicomputationpara}
\nonumber\ric^g(e_0,e_0)=\frac{c^2}{2}+ba-\frac{b^2}{2}-\frac{a^2}{2}\, , \quad \ric^g(e_0,e_1)=0\, , \\
\ric^g(e_1,e_1)=\frac{b^2}{2}+\frac{c^2}{2}-\frac{a^2}{2}-cb\, , \quad \ric^g(e_0,e_2)=0\, , \\
\nonumber \ric^g(e_2,e_2)=\frac{a^2}{2}-ac+\frac{c^2}{2}-\frac{b^2}{2}\, , \quad \ric^g(e_1,e_2)=0\, .
\end{eqnarray}
 
\noindent
We proceed now on a case by case basis:

\begin{enumerate}
	\item If $\alpha_0,\alpha_1,\alpha_2 \neq 0$, and hence $a=b=c=s$, the $\varepsilon\eta\,$-Einstein condition reduces to
	\begin{equation*}
	\lambda^2 = 1\, , \qquad \kappa = 0\, .
	\end{equation*}
	
	\noindent
	which follows by direct computation from \eqref{eq:Riccicomputationpara}.
	
	\item If $\alpha_0 = 0$ and $\alpha_1,\alpha_2 \neq 0$, we have $a = b = s$ and $c$ unconstrained. Then $\ric^g(e_1,e_2) = 0$ implies $\kappa = 0$, which solves all off-diagonal $\varepsilon\eta\,$-Einstein equations. The diagonal components of the $\varepsilon\eta\,$-Einstein equations are equivalent to
	\begin{equation*}
	\lambda^2 = c^2\, \qquad c^2 = s c\, .
	\end{equation*}
	
	\noindent
	Hence, either $c\neq  0$, which implies $c=s$ and we are back to point (1), or $c=0$, in which case $\lambda = 0$. The cases $\alpha_1 = 0$ ($\alpha_2 = 0$) and $\alpha_0,\alpha_2 \neq 0$ ($\alpha_0,\alpha_1 \neq 0$) follow analogously.
	
	
	
	\item We consider now $\alpha_0 = \alpha_1 = 0$ and $\alpha_2^2 = 1$, so that $b=s$ and $a,c$ unconstrained. In this case, using the formulae of Appendix \ref{app:CurvatureLorentzgroups} the $\varepsilon\eta\,$-Einstein condition can be found to imply:
	\begin{equation*}
	(c-a) (s- (c+a)) = 0\, ,
	\end{equation*}
	
	\noindent
	whence either $c=a$ or $a+c = s$. If $a=c$ then the $\varepsilon\eta\,$-Einstein equations are equivalent to $a = s(1+\kappa) = c$ and $\lambda^2 = 1 + \kappa$. Therefore, $\lambda^2 = sc$. On the other hand, if $a+c = s$ then the $\varepsilon\eta\,$-Einstein condition implies $\kappa = -\lambda^2$. However, $\kappa \geq 0$ by the definition of $\varepsilon\eta\,$-Einstein structure, whence $\lambda= \kappa = 0$ and the $\varepsilon\eta\,$-Einstein condition reduces to $a(a-s) = 0$. Since $a=0$ is not allowed if $b=s$ by the type of algebra $\mathfrak{g}_3$, we conclude that $a=s$. The case $\alpha_0 = \alpha_2 = 0$ and $\alpha_1^2 = 1$ follows now along similar lines.
\end{enumerate}

\item  {\bf Case} $\mathfrak{g}_4$. In the orthonormal frame given in Appendix \ref{app:Lorentzgroups}, Equation $\ast \alpha = -s\,\dd\alpha$ is equivalent to
\begin{eqnarray*}
& a \alpha_1 \, e^0 \wedge e^2 +(-(2\mu - b)\alpha_0+\alpha_2) \,  e^1 \wedge e^2+(\alpha_0-b \alpha_2 ) \, e^0 \wedge e^1 \\
& = s\,(\alpha_0 \, e^1 \wedge e^2+\alpha_1\, e^0 \wedge e^2-\alpha_2\, e^0 \wedge e^1)\, .
\end{eqnarray*}

\noindent 	
We distinguish the cases $\alpha_2 = 0$ and $\alpha_2 \neq 0$. If $\alpha_2 = 0$ then we must have $\alpha_0 = 0$ and $\alpha^2_1 = 1$ and $a=s$ is the unique solution. If $\alpha_2 \neq 0$ and $b=s$ then $\alpha_0 = 0$ and $\alpha_2 = 0$, a contradiction. Assume then that $\alpha_2 \neq 0$ and $b\neq s$. It follows that $\alpha_0 \neq 0$ and
\begin{equation*}
b^2-2b(s+\mu)+2(s\mu+ 1)=0\, .
\end{equation*}

\noindent
The previous equation has the unique solution $b=\mu + s$, which indeed satisfies $b\neq s$. Then, the solutions in this case are given by:
\begin{equation*}
\alpha^2_1 = 1\, , \qquad a = s\, , \qquad \alpha_0 = \mu \alpha_2\, .
\end{equation*}

\noindent
To classify which para-contact structures are also $\varepsilon\eta\,$-Einstein, we proceed as in the previous cases by direct computation. We obtain (imposing $a=s$):
\begin{eqnarray*}
& \ric^g(e_0,e_0)=\frac{(2\mu-b)^2}{2} - \frac{(b-s)^2}{2}\, , \qquad \ric^g(e_0,e_1)=0\, , \\
& \ric^g(e_1,e_1)= -\frac{1}{2}\, , \qquad \ric^g(e_0,e_2)= s + 2 (\mu-b)\, , \\
& \ric^g(e_2,e_2)=\frac{(2\mu-b)^2}{2} + \frac{1}{2} + s (2\mu-b) - \frac{b^2}{2}\, , \qquad \ric^g(e_1,e_2)= 0\,.
\end{eqnarray*}

\noindent
We distinguish again between the cases $\alpha_2 = 0$ and $\alpha_2\neq 0$ with $b\neq s$.

\begin{enumerate}
	\item If $\alpha_2 = 0$ then $\alpha_0 = 0$ and $a = s$. The only non-trivial off-diagonal component of the $\varepsilon\eta\,$-Einstein condition is $b = \mu + \frac{s}{2}$. Imposing this condition in the \emph{time-like} diagonal component we obtain
	\begin{equation*}
	\ric^g(e_0,e_0) = 0  = \frac{1}{2} (\lambda^2 + \kappa)\, , 
	\end{equation*}
	
	\noindent
	whence $\kappa = - \lambda^2$. Since $\kappa \geq 0$ we conclude that $\lambda = \kappa = 0$. However, this is incompatible with $\ric^g(e_1,e_1)= -\frac{1}{2}$.
	
	\item If $\alpha_2 \neq 0$ and $b=\mu+ s$ then equation $\ric^g(e_1,e_2)=0$ implies $\kappa =  0$ since $\alpha_1 \neq 0$ and $\alpha_2 \neq 0$. However, then we would need $\ric^g(e_0,e_2)=-s$ to vanish, what is not possible. 
\end{enumerate}

\item {\bf Case} $\mathfrak{g}_5$. In the orthonormal frame given in Appendix \ref{app:Lorentzgroups}, Equation $\ast \alpha = -s\,\dd\alpha$ is equivalent to
\begin{equation*}
(a \alpha_1 + b \alpha_2 )\, e^0\wedge e^1 + (c\alpha_1  + d \alpha_2 )\, e^0\wedge e^2 = s\, (\alpha_0\, e^1\wedge e^2 + \alpha_1\, e^0\wedge e^2  - \alpha_2\, e^0\wedge e^1)\, ,
\end{equation*}

\noindent
which immediately implies $\alpha_0 = 0$. The conditions for a para-contact structure to exist are $\alpha_1^2+\alpha_2^2=1$ and $ad=(b+s)(c-s)$, together with the conditions on the coefficients required by the algebra type $\mathfrak{g}_5$, which are $ac+bd=0$ and $a+d\neq 0$.

\noindent
We verify now which para-contact structures on $\mathfrak{g}_5$ are $\varepsilon\eta\,$-Einstein. Using Appendix \ref{app:CurvatureLorentzgroups} we obtain the following components for the Ricci tensor:
\begin{eqnarray*}
& \ric^g(e_0,e_0)=-a^2-d^2-cb-\frac{b^2}{2}-\frac{c^2}{2}\, , \quad \ric^g(e_0,e_1)=0\, , \\
& \ric^g(e_1,e_1)=a^2+ad-\frac{c^2}{2}+\frac{b^2}{2}\, , \quad \ric^g(e_0,e_2)=0\, , \\
& \ric^g(e_2,e_2)=d^2+da+\frac{c^2}{2}-\frac{b^2}{2}\, , \quad \ric^g(e_1,e_2)=ac+bd  \,.
\end{eqnarray*}

\noindent
We have, $\ric^g(e_1,e_2) =0$ automatically by the definition of algebra of type $\mathfrak{g}_5$. Hence, the $\varepsilon\eta\,$-Einstein condition evaluated in $e_1$ and $e_2$ implies:
\begin{equation*}
\alpha_1 \alpha_2 \kappa = 0\, .
\end{equation*}

\noindent
On the other hand, since $\alpha^2_1 +\alpha^2_2 = 1$, the $\varepsilon\eta\,$-Einstein equation implies:
\begin{equation*}
\ric^g(e_1,e_1) + \ric^g(e_2,e_2) = (a+d)^2 = -\lambda^2\, .
\end{equation*}

\noindent
Since $a+d\neq 0$ by the coefficient conditions of the algebra of type $\mathfrak{g}_5$, the previous equation admits no solutions and therefore an algebra of type $\mathfrak{g}_5$ does not admit $\varepsilon\eta\,$-Einstein para-contact structures.

\item {\bf Case} $\mathfrak{g}_6$. In the orthonormal frame given in Appendix \ref{app:Lorentzgroups}, Equation $\ast \alpha = -s\,\dd\alpha$ is equivalent to
\begin{equation}
\label{eq:caseg6para}
(d\alpha_0+c\alpha_2) \, e^0 \wedge e^1 +(-b \alpha_0 -a\alpha_2) \, e^1 \wedge e^2= s\,(\alpha_0 \, e^1 \wedge e^2 +\alpha_1 \, e^0 \wedge e^2-\alpha_2 \, e^0 \wedge e^1)\, ,
\end{equation}

\noindent	
which immediately implies $\alpha_1=0$. From the previous (linear) equations we obtain that the conditions to have a non-trivial para-contact structures are $-\alpha_0^2+\alpha_2^2=1$ and $ad=(b+s)(c+s)$, together with the conditions $ac-bd=0$ and $a+d \neq 0$ required by the algebra type $\mathfrak{g}_6$. The components of the Ricci tensor read:
\begin{eqnarray*}
& \ric^g(e_0,e_0)=d^2+ad+\frac{b^2}{2}-\frac{c^2}{2}\, , \quad \ric^g(e_0,e_1)=0\, , \\
& \ric^g(e_1,e_1)=-a^2-d^2-bc+\frac{c^2}{2}+\frac{b^2}{2}\, , \quad \ric^g(e_0,e_2)=-ac+bd = 0\, , \\
& \ric^g(e_2,e_2)=-a^2-da-\frac{c^2}{2}+\frac{b^2}{2}\, , \quad \ric^g(e_1,e_2)=0\, .
\end{eqnarray*}

\noindent
We have $\ric^g(e_0,e_2) = 0$ identically by the conditions on the coefficients of an algebra of type $\mathfrak{g}_6$. Evaluating the $\varepsilon\eta\,$-Einstein condition on $e_0$ and $e_2$ we obtain
\begin{equation*}
\kappa \,\alpha_0 \alpha_2 = 0\, .
\end{equation*}

\noindent
Hence, either $\alpha_0 = 0$ or $\kappa= 0$ since $\alpha_2 = 0$ is not allowed by the para-contact condition. If $\alpha_0 = 0$ then $\alpha_2^2 = 1$, which implies $a = 0$ and $c = - s$. Taking into account that $ac-bd=0$ and $a+d \neq 0$, we further obtain that $b=0$, which in turn implies:
\begin{equation*}
\lambda^2 = d^2 = \kappa + 1\, , 
\end{equation*}

\noindent
whence $d^2\geq 1$ since we must have $\kappa \geq 0$. Altogether these conditions solve the $\varepsilon\eta\,$-Einstein equations of a para-contact structure on $\mathfrak{g}_6$. On other hand, if $\kappa=0$, a combination of the $\varepsilon\eta\,$-Einstein equations implies:
\begin{equation*}
ad=(b+s)(c+s)\, ,\quad ac-bd=0\, , \quad a^2-d^2=b^2-c^2\, , \quad (a-d)^ 2 = (b-c)^2\, .
\end{equation*}

\noindent
The last equation above reduces to
\begin{equation*}
(c-b)=\mu(a-d)\,
\end{equation*} 

\noindent
where $\mu \in \mathbb{Z}_2$. We distinguish now two cases:

\begin{enumerate}
	\item If $a = d \neq 0$ (using that $a+d\neq 0$) we obtain $c=b$ and the $\varepsilon\eta\,$-Einstein equations are equivalent to
	\begin{equation*}
	a^2 = \frac{\lambda^2}{4}\, .
	\end{equation*}
	
	\noindent
	Likewise, the coefficient conditions required by the algebra of type $\mathfrak{g}_6$ are
	\begin{equation*}
    a^2 = (b+s)^2 \, ,
	\end{equation*}
	
	\noindent
	implying $a = \sigma (b+s)$ for a sign $\sigma \in \mathbb{Z}_2$. Note that $b\neq s$ since $a+d =2a\neq 0$. Plugging $a = \sigma (b+s)$ in Equation \eqref{eq:caseg6para} we obtain:
	\begin{equation*}
	\alpha_0^2 = \alpha_2^2\, ,
	\end{equation*}
 
	\noindent
	which is incompatible with the para-contact condition $-\alpha_0^2 + \alpha_2^2 = 1$. 
	
	\item If $a \neq d$, equation $ a^2-d^2=b^2-c^2$ is equivalent to
	\begin{equation*}
	a + d = - \mu (b+c)\, .
	\end{equation*}
	
	\noindent
	Combining this equation with $(c-b)=\mu(a-d)$ we obtain $a =-\mu b$ and $d = -\mu c$. The constraints on the coefficients required by the algebra of type $\mathfrak{g}_6$ are found to reduce to
	\begin{equation*}
	1+s(b+c) = 0\, ,
	\end{equation*}
	
	\noindent
	whereas the $\varepsilon\eta\,$-Einstein equations are tantamount to
	\begin{equation*}
	\lambda^2 = (b+c)^2\, .
	\end{equation*}
	
	\noindent
	Therefore, using that $1+s(b+c) = 0$ we obtain $\lambda^2 = 1$. Equation \eqref{eq:caseg6para} is solved by:
	\begin{equation*}
	b = - \mu a\, , \quad c = -s + \mu a\, , \quad d = \mu s - a\, , \quad (-s+\mu a)\alpha_0=a \alpha_2\, , \quad \alpha^2_2 = 1 + \alpha_0^2\, .
	\end{equation*}
\end{enumerate}

\item {\bf Case} $\mathfrak{g}_7$. In the orthonormal frame given in Appendix \ref{app:Lorentzgroups}, Equation $\ast \alpha = -s\,\dd\alpha$ is equivalent to
\begin{equation*}
\begin{split}
(b \alpha_0+a \alpha_1+b \alpha_2) \, e^0 \wedge e^1&+ (d\alpha_0+c \alpha_1 +d \alpha_2) \, e^0 \wedge e^2 +(b \alpha_0+a \alpha_1+b \alpha_2) \, e^1 \wedge e^2\\&= s\,(\alpha_0 e^1 \wedge e^2+\alpha_1 e^0 \wedge e^2 -\alpha_2 e^0 \wedge e^1)\, .
\end{split}
\end{equation*}

\noindent
which immediately implies $\alpha_0+\alpha_2=0$, whence $\alpha^2_1= 1$. Write $\alpha_1 = \sigma$, with $\sigma \in \mathbb{Z}_2$. With these assumptions, the previous equations boil down to
\begin{equation*}
\alpha_0 = s \sigma a \, , \qquad c = s\, , \qquad \alpha_2^2 = a^2\, ,
\end{equation*}

\noindent
where $\mu \in \mathbb{Z}_2$. Since $c\neq 0$, then $a=0$ from the condition $ac = 0$ required by the algebra of type $\mathfrak{g}_7$. Hence $\alpha_0 = \alpha_2 = 0$. With these provisos in mind, the Ricci curvature reads:
\begin{eqnarray*}
& \ric^g(e_0,e_0)=-bs-\frac{1}{2}\, , \quad \ric^g(e_0,e_1)=0\, , \\
& \ric^g(e_1,e_1)=-\frac{1}{2}\, , \quad \ric^g(e_0,e_2)= s b\, , \\
& \ric^g(e_2,e_2)=\frac{1}{2}-sb\, , \quad \ric^g(e_1,e_2)=0\,.
\end{eqnarray*}

\noindent
However, since $\alpha=\sigma\, e^1$, we obtain $b=0$, which implies $\kappa=-1$,  a value that is not permitted by Definition \ref{def:Einsteinpcontact}. 
\end{itemize}

\noindent
Finally, to verify which of the $\varepsilon\eta\, $-Einstein para-contact structures are Sasakian we apply first Proposition \ref{prop:ricxien0}, which states that $\varepsilon \eta\,$-Einstein para-contact structures on $\widetilde{\mathrm{E}}(1,1)$ or $\widetilde{\mathrm{E}}(2)$ can never be Sasakian. Also, a direct computation shows that $\mathfrak{h}$ vanishes for every $\varepsilon \eta\,$-Einstein para-contact structure on $\widetilde{\mathrm{Sl}}(2, \mathbb{R})$ and $\mathfrak{g}_6$.
\end{proof}


\subsection{Lorentzian case with null Reeb vector field}
\label{sec:etanull}


When $\varepsilon = 0$ the $\varepsilon\eta\,$-Einstein condition for an $\varepsilon\,$-contact metric structure $(\chi,\alpha)$ reduces to
\begin{equation*}
\mathrm{Ric}^{\chi} = - \frac{\lambda^2}{2}\,\chi +  \kappa\,\alpha\otimes \alpha\, ,
\end{equation*}

\noindent
with $\kappa \geq 0$. 

\begin{remark}
To the best of our knowledge, this equation has not been considered in the literature. In particular, the methods and techniques used in \cite{BlairKoufoII,Koufo} to classify $(\kappa,\mu)$ and $\varepsilon\eta\,$-Einstein contact three manifolds do not seem to apply in this case, due to the fact that $\phi^2$ is not an isomorphism when restricted to the kernel of $\alpha$ and that $\mathfrak{h}$ cannot have non-zero eigenvalues, see Remarks \ref{remark:covxi} and \ref{remark:hnull}.
\end{remark}

\noindent
The goal of this subsection is to classify all left-invariant $\varepsilon\eta\,$-Einstein null contact structures on a simply connected three-dimensional Lie group $\G$. In order to do this, we will make use of the following lemma.

\begin{lemma}
\label{lemma:etaefnc}
Let $(\chi, \alpha)$ be a null contact metric structure and let $\{\xi, u, \phi(u)\}$ be a light-cone frame. Then $(\xi,\alpha)$ is $\varepsilon\eta\,$-Einstein if and only if
\begin{eqnarray*}
& \emph{Ric}^\chi(\xi, \xi)=\emph{Ric}^\chi(\xi, \phi(u))=\emph{Ric}^\chi(u,\phi(u))=0\, , \\ 
& \emph{Ric}^\chi(\xi, u)=\emph{Ric}^\chi(\phi(u),\phi(u))=-\frac{\lambda^2}{2}\, , \quad \emph{Ric}^\chi (u,u)=\kappa\, .
\end{eqnarray*}
\end{lemma}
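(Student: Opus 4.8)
The plan is to unpack the defining tensor equation of an $\varepsilon\eta\,$-Einstein structure with $\varepsilon = 0$, namely
\[
\mathrm{Ric}^\chi = -\frac{\lambda^2}{2}\,\chi + \kappa\,\alpha\otimes\alpha\, ,
\]
by evaluating both sides on all pairs drawn from a light-cone frame $\{\xi, u, \phi(u)\}$ and using the metric relations recorded in Remark \ref{remark:lcb}, i.e. $\chi(\xi,\xi) = 0$, $\chi(u,u) = 0$ (since $\s_\chi\varepsilon = 0$), $\chi(u,\xi) = 1$, $\chi(\phi(u),\phi(u)) = 1$, and $\chi(\xi,\phi(u)) = \chi(u,\phi(u)) = 0$. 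The one-form $\alpha$ is dual to $\xi$, so $\alpha(\xi) = \chi(\xi,\xi) = 0$, $\alpha(u) = \chi(\xi,u) = 1$, and $\alpha(\phi(u)) = \chi(\xi,\phi(u)) = 0$; hence $\alpha\otimes\alpha$ evaluated on the frame is nonzero only on the pair $(u,u)$, where it equals $1$. This is a direct substitution, not a computation of any depth.

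First I would dispose of the pairs involving $\xi$: since $\alpha(\xi) = 0$ and $\chi(\xi,\cdot)$ vanishes on $\xi$ and $\phi(u)$, the right-hand side gives $\mathrm{Ric}^\chi(\xi,\xi) = 0$, $\mathrm{Ric}^\chi(\xi,\phi(u)) = 0$, and $\mathrm{Ric}^\chi(\xi,u) = -\tfrac{\lambda^2}{2}\,\chi(\xi,u) + \kappa\,\alpha(\xi)\alpha(u) = -\tfrac{\lambda^2}{2}$. Next the pairs spanned by $u$ and $\phi(u)$: for $(\phi(u),\phi(u))$ the right-hand side is $-\tfrac{\lambda^2}{2}\,\chi(\phi(u),\phi(u)) + \kappa\,\alpha(\phi(u))^2 = -\tfrac{\lambda^2}{2}$; for $(u,\phi(u))$ it is $-\tfrac{\lambda^2}{2}\,\chi(u,\phi(u)) + \kappa\,\alpha(u)\alpha(\phi(u)) = 0$; and for $(u,u)$ it is $-\tfrac{\lambda^2}{2}\,\chi(u,u) + \kappa\,\alpha(u)^2 = \kappa$. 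Collecting, the six conditions listed in the statement are precisely the values of $\mathrm{Ric}^\chi$ on the six (symmetric) frame pairs.

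For the converse, I would observe that the light-cone frame $\{\xi, u, \phi(u)\}$ is a (local) basis of $TM$ by Remark \ref{remark:lcb}, so a symmetric $(0,2)$-tensor is determined by its values on these pairs; the tensor $-\tfrac{\lambda^2}{2}\,\chi + \kappa\,\alpha\otimes\alpha$ takes, on those pairs, exactly the values prescribed in the lemma, as computed above. Hence if $\mathrm{Ric}^\chi$ agrees with those values it agrees with the tensor on a basis and therefore everywhere, giving the $\varepsilon\eta\,$-Einstein condition. The statement is frame-independent because any two light-cone frames are related by a transformation preserving all the bilinear relations used, and the claim reduces to an identity of tensors once it holds in one frame.

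There is essentially no obstacle here: the only point requiring a modicum of care is keeping straight which metric pairings vanish in the null case $\varepsilon = 0$ — in particular that $\chi(u,u) = \s_\chi\varepsilon = 0$, which is what makes the $(u,u)$-component of $\mathrm{Ric}^\chi$ equal to $\kappa$ rather than $\kappa - \tfrac{\lambda^2}{2}$ — and remembering that $\alpha$ evaluated on the frame has the pattern $(\alpha(\xi),\alpha(u),\alpha(\phi(u))) = (0,1,0)$, so that $\alpha\otimes\alpha$ contributes only to the $(u,u)$ slot. Once these are in hand the proof is a half-line table of evaluations in each direction.
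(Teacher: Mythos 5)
Your proof is correct and is precisely the direct computation the paper's one-line proof alludes to: you evaluate $-\tfrac{\lambda^2}{2}\chi + \kappa\,\alpha\otimes\alpha$ on all frame pairs using $\chi(\xi,\xi)=\chi(u,u)=0$, $\chi(\xi,u)=\chi(\phi(u),\phi(u))=1$ and $(\alpha(\xi),\alpha(u),\alpha(\phi(u)))=(0,1,0)$, and the converse follows since a symmetric $(0,2)$-tensor is determined by its values on a frame. No issues.
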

 \begin{proof}
Follows by direct computation.
 \end{proof}
\begin{thm}
\label{thm:etanullcontactstructures}
A three-dimensional connected and simply connected Lie group $\mathrm{G}$ admits a left-invariant $\varepsilon\eta\,$-Einstein null contact structure $(g,\alpha)$ if and only if $(\mathrm{G},g,\alpha)$ is isomorphic, through a possibly orientation-reversing isometry, to one of the items listed in the following table in terms of the orthonormal frame $\left\{ e_0 , e_1 , e_2\right\}$ appearing in Theorem \ref{thm:LorentzGroups}:

\vspace{0.25cm}
 \renewcommand{\arraystretch}{1.5}
 \begin{center}
\begin{tabular}{ |P{0.5cm}|P{4.4cm}|P{2cm}|P{3.7cm}|P{1cm} |P{1.4cm}| }
				\hline
			
				$\mathfrak{g}$  & \emph{Structure constants} $(s\in \mathbb{Z}_2)$  & $\alpha$ & $\eta\,$\emph{-Einstein constants} & $\mathrm{G}$ & \emph{Sasakian}\\
				\hline

				 \multirow{3}*{$\mathfrak{g}_3$} & $a=b=c=s$ & $ \alpha_0^2=\alpha_1^2+\alpha_2^2$ & $\lambda^2=1\, , \, \kappa=0$ &  $\widetilde{\mathrm{Sl}}(2,\mathbb{R})$  & \emph{Yes} \\ \cline{2-6} &  \multirow{2}*{$a=c=s\, , \, b=0$} &  $\alpha_2=0\, ,$   & \multirow{2}*{$\lambda^2=0\, , \, \kappa=0$} &  \multirow{2}*{$\widetilde{\mathrm{E}}(1,1)$}  & \multirow{2}*{\emph{No}}   \\ & &  $\alpha_0^2=\alpha_1^2$ & & & \\  \hline
			 \multirow{2}*{$\mathfrak{g}_4$}    & $b=0 \, , \, a=s\, $ &$\alpha_1=0\, , \,$ & $\lambda^2=1\, , \, \alpha_0^2 \kappa=1$ & $\widetilde{\mathrm{Sl}}(2,\mathbb{R})$ &  \emph{Yes}\\ \cline{2-2} \cline{4-6} & $b=0\, , a=0\,  , \, $  & $\alpha_0=\mu  \alpha_2$  &$\lambda^2=0\, , \,\alpha_0^2 \kappa=2 $ & $\widetilde{\mathrm{E}}(1,1)$ & \emph{No} \\	 
			 \hline

				\multirow{2}*{$\mathfrak{g}_6$}  & $a=d\neq 0\, , b=c$  &$\alpha_1=0\, ,$    & $\lambda^2=4a^2\, ,$ & \multirow{2}*{$\mathfrak{G}_6$} & \multirow{2}*{\emph{If} $a=\frac{\mu s}{2}$} \\ & $a=\mu(b+s)\, , \mu \in \mathbb{Z}_2 \,$ & $\alpha_0=-\mu \alpha_2 $ & $\kappa=0$ & & \\ \hline
				
			\end{tabular}
			\end{center}
			
			 \renewcommand{\arraystretch}{1}
			
\end{thm}

\

\begin{proof}
Proposition \ref{prop:nullcontactstructures} classifies all simply-connected Lorentzian Lie groups admitting left-invariant null contact structures. Hence, we will proceed by verifying which of the cases appearing in Proposition \ref{prop:nullcontactstructures} satisfy the $\varepsilon\eta\,$-Einstein equation. For this, we will use the formulae presented in Appendix \ref{app:CurvatureLorentzgroups}, where the Ricci tensor is computed on a global orthonormal frame. 

\begin{itemize}
	
\item {\bf Case} $\mathfrak{g}_1$. The Reeb vector is given by $\xi = - \alpha_0 (e_0 + e_2)$. A direct computation using Appendix \ref{app:CurvatureLorentzgroups} shows that $\ric^\chi (u, \phi(u))=-\frac{s a}{\alpha_0}$. Since $a \neq 0$ by the definition of algebra of type $\mathfrak{g}_1$, Lemma \ref{lemma:etaefnc} implies that $\mathfrak{g}_1$ does not admit null $\varepsilon\eta\,$-Einstein structures.

\item {\bf Case} $\mathfrak{g}_3$. We distinguish between the cases $\alpha_1=0$, $\alpha_2=0$ and $\alpha_1, \alpha_2 \neq 0$. If $\alpha_1=0$, a light-cone frame is given by $\xi= \alpha_0 (-e_0 + \mu e_2), u=\frac{1}{2\alpha_0}(e_0 + \mu e_2)$ and $\phi(u)=\mu e_1$. We obtain: 
\begin{eqnarray*}
& \ric^\chi(\xi,\xi)=\ric^\chi(\xi, \phi(u))=\ric^\chi(u, \phi(u))=0 \\ 
& \ric^\chi(\xi,u)=-a+\frac{a^2}{2}\, , \quad \ric^\chi(\phi(u), \phi(u))=-\frac{a^2}{2}\, , \quad \ric^\chi(u,u)=0\, .
\end{eqnarray*}

\noindent
Hence, the $\varepsilon\eta\,$-Einstein implies $a=0$ or $a=s$. Since $a=0$ is not allowed, we conclude $a=s$, which in turn implies $\lambda^2=1$ and $\kappa=0$.

If $\alpha_2 =0$, a similar analysis follows. In this case, a light-cone frame is given by $\xi=\alpha_0 (-e_0 + \mu e_1), u=\frac{1}{2 \alpha_0}(e_0 + \mu e_1)$ and $\phi(u)= -\mu e_2$. We obtain:
\begin{eqnarray*}
& \ric^\chi(\xi,\xi)=\ric^\chi(\xi, \phi(u))=\ric^\chi(u, \phi(u))=0 \\ 
& \ric^\chi(\xi,u)=-bs+\frac{b^2}{2}\, , \quad \ric^\chi(\phi(u), \phi(u))=-\frac{b^2}{2}\, , \quad \ric^\chi(u,u)=0\, ,
\end{eqnarray*}

\noindent
which implies either $b=s$ or $b=0$. For $b=s$, $\lambda^2=1$ and $\kappa=0$, and for $b=0$, $\lambda^2=\kappa=0$.

\noindent
Finally, if $\alpha_1, \alpha_2 \neq 0$, we directly obtain $\ric^\chi=-\frac{1}{2}\chi$ and every null contact structure of this type is $\varepsilon\eta\,$-Einstein (with $\lambda^2=1$ and $\kappa=0$).

\item {\bf Case} $\mathfrak{g}_4$. We choose a light-cone frame $\{\xi, u, \phi(u)\}$ with  $\xi= \alpha_0 (-e_0+\mu e_2), u=\frac{1}{2\alpha_0}(e_0 +\mu e_2)$ and $\phi(u)= \mu e_1$. We compute:
\begin{eqnarray*}
& \ric^\chi(\xi,\xi)=\ric^\chi(\xi, \phi(u))=\ric^\chi(u, \phi(u))=0 \\ 
& \ric^\chi(\xi,u)=\frac{a^2}{2}-as\, , \quad \ric^\chi(\phi(u), \phi(u))=-\frac{a^2}{2}\, , \quad \ric^\chi(u,u) =\frac{\mu}{\alpha_0^2} (a-2s)\, .
\end{eqnarray*} 

\noindent
These conditions are satisfied if and only if $a=0$ or $a=s$. For $a=s$, $\lambda^2=1$ and $\kappa=-\frac{s\mu}{\alpha_0^2}$, and for $a=0$, $\lambda^2=0$ and $\kappa=-2\frac{s\mu}{\alpha_0^2}$. Since $\kappa$ must be non-negative by definition, we must require that $s \mu=-1$, which implies that $b=0$.

\item {\bf Case}  $\mathfrak{g}_6$. We can choose the light-cone frame $\{\xi, u, \phi(u)\}$ with  $\xi= -\alpha_0 (e_0 + \mu e_2), u=\frac{1}{2\alpha_0}(e_0 -\mu  e_2)$ and $\phi(u)=-\mu e_1$. Imposing the constraints found in Proposition \ref{prop:nullcontactstructures} for null contact structures on $\mathfrak{g}_6$, we get the following components for the Ricci curvature: 
\begin{eqnarray*}
& \ric^\chi(\xi, \phi(u))=\ric^\chi(u, \phi(u))=0\, , \quad \ric^\chi(\xi,\xi)=0\, ,  \\ 
& \ric^\chi(\xi,u)=-2a^2\, , \quad \ric^\chi(\phi(u), \phi(u))=-2a^2\, , \quad \ric^\chi(u,u)=0\, .
\end{eqnarray*}
These equations yield an $\varepsilon\eta\,$-Einstein structure with $\lambda^2=4a^2 \neq 0$ and $\kappa=0$. 
\end{itemize}
Finally, in order to determine when the different $\varepsilon\eta\,$-Einstein structures obtained are Sasakian, we just have to make use of Proposition \ref{prop:liegroupsasakinc}. 
\end{proof}
 

\section{Six-dimensional supergravity and $\varepsilon\,$-contact structures}
\label{sec:6dsugrasolutions}


In the following, let $M$ be an oriented and spin six-dimensional manifold.

\begin{definition}
The {\bf bosonic configuration space} of six-dimensional minimal supergravity coupled to a tensor multiplet with constant dilaton on $M$ is defined as the following set:
\begin{equation*}
\Conf(M) \eqdef \left\{ (\g,\H)\in \mathrm{Lor}(M)\times\Omega^3(M)\right\}\, ,
\end{equation*}

\noindent
where $\mathrm{Lor}(M)$ denotes the set of Lorentzian metrics on $M$.
\end{definition} 

\noindent
Given $(\g,\H)\in \Conf(M) $ we define $\nabla^{\H}$ to be the unique metric-compatible connection on $(M,\g)$ with totally skew-symmetric torsion given by $\H\in \Omega^3(M)$. In more explicit terms we have
\begin{equation*}
\nabla^{\H} = \nabla + \frac{1}{2} \g^{-1} \H\, ,
\end{equation*}

\noindent
where $\nabla$ denotes the Levi-Civita connection associated to $\g$.

\begin{definition}
A pair $(\g,\H)\in \Conf(M)$ is a {\bf bosonic solution} of six-dimensional minimal supergravity coupled to a tensor multiplet with constant dilaton on $M$ if:
\begin{equation}
\label{eq:q:Riccif}
\mathrm{Ric}(\nabla^{\H}) = 0\, , \qquad \dd \H = 0 \, , \qquad \dd\ast_\g \H = 0\, , \qquad \vert \mathrm{H}\vert^2_\g = 0\, ,
\end{equation}	

\noindent
where $\mathrm{Ric}(\nabla^\H)\in \Gamma( \Sym^2(T^{\ast}M))$ is the Ricci curvature tensor of $\nabla^\H$ and $\ast_\g\colon \Omega^{3}(M)\to \Omega^3(M)$ denotes the Hodge dual associated to $\g$. We denote by $\Sol(M)\subset \Conf(M)$ the set of solutions on $M$.
\end{definition}

\begin{remark}
In six Lorentzian dimensions the Hodge dual $\ast_\g$ on three-forms squares to the identity. Hence, we obtain the splitting
\begin{equation*}
\Lambda^3(M) = \Lambda^3_{+}(M)  \oplus \Lambda^3_{-}(M) \, ,
\end{equation*}

\noindent
in terms of self dual $\Lambda^3_{+}(M)$ and anti-self dual $\Lambda^3_{-}(M)$ three-forms. Using this decomposition, a particular class of solutions of Equations \eqref{eq:q:Riccif} is obtained by requiring $\H$ to be self-dual, that is:
\begin{equation*}
\mathrm{Ric}(\nabla^{\H}) = 0\, , \qquad \dd \H = 0 \, , \qquad \ast_\g \H = \H\, ,
\end{equation*}

\noindent
This set of equations define the bosonic equations of six-dimensional \emph{minimal supergravity}. Equations \eqref{eq:q:Riccif} are more general and allow, for instance, $\H$ to be a section of a fixed lagrangian distribution of $\Lambda^3(M)$. The possibility of generalizing minimal supergravity to this situation was proposed in \cite{Garcia-Fernandez:2015lsa} and it remains, to the best of our knowledge, up for debate.
\end{remark}

\begin{thm}
\label{thm:pcontactsugrasolution}
Let:
\begin{equation*}
(N,\chi,\alpha_N,\varepsilon_N) \in \mathrm{PCont}^{\varepsilon \eta}_L(\varepsilon_N,\lambda^2,\kappa_N = l^2)\, , \qquad (X,h,\alpha_X)\in \mathrm{PCont}^{\varepsilon\eta}_R(\lambda^2,\kappa_X = \vert\alpha_N\vert^2\,l^2)\, .
\end{equation*}

\noindent
Then, the oriented Cartesian product manifold
\begin{equation*}
M = N\times X\, 
\end{equation*}

\noindent
carries a family of solutions $(\g,\H_{\lambda, l})\in \Sol(M)$ of six-dimensional minimal supergravity coupled to a tensor multiplet with constant dilaton given by
\begin{equation}
\label{eq:thmsolution}
\g = \chi \oplus h\, , \qquad \H_{\lambda, l} = \lambda\, \nu_{\chi}  + \frac{l}{3}\,(\ast_\chi \alpha_N)\wedge \alpha_X +  \frac{l}{3}\,\alpha_N\wedge (\ast_h \alpha_X)  + \lambda\, \nu_h\, 
\end{equation}

\noindent
and parametrized by $\, (\lambda,l)\in \mathbb{R}^2$. Equivalently, the oriented Cartesian Lorentzian product of $(N,\chi,\alpha_N,\varepsilon_N) \in \mathrm{PCont}^{\varepsilon \eta}_L(\varepsilon_N,\lambda,l)$ and $(X,h,\alpha_X)\in \mathrm{PCont}^{\varepsilon \eta}_R(\lambda,\kappa = \varepsilon_N,l)$ carries a bi-parametric family of metric-compatible, Ricci-flat, connections with totally skew-symmetric, isotropic, closed and co-closed torsion prescribed by $\H_{\lambda, l}$.
\end{thm}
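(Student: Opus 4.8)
The plan is to verify directly the four defining equations of a solution in $\Sol(M)$ for the pair $(\g,\H_{\lambda,l})$ given in \eqref{eq:thmsolution}. First I would fix orientations so that $\nu_\g = \nu_\chi \wedge \nu_h$ and record the elementary Hodge-duality facts on the product: for a $p$-form $\omega_N$ on $N$ and $q$-form $\omega_X$ on $X$, $\ast_\g(\omega_N \wedge \omega_X) = (-1)^{q(3-p)}\,\s\,(\ast_\chi\omega_N)\wedge(\ast_h\omega_X)$ for the appropriate sign $\s$ coming from the Lorentzian factor, together with $\ast_\chi\alpha_N = \s_\chi\,\dd\alpha_N$ and $\ast_h\alpha_X = \dd\alpha_X$ from the $\varepsilon$-contact condition \eqref{eq:contactcondition}. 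With these in hand the second and third equations in \eqref{eq:q:Riccif} are purely formal: $\dd\H_{\lambda,l}=0$ follows because $\nu_\chi,\nu_h$ are closed (top forms on the factors), $(\ast_\chi\alpha_N)\wedge\alpha_X$ and $\alpha_N\wedge(\ast_h\alpha_X)$ are (up to sign) $\dd\alpha_N\wedge\alpha_X$ and $\alpha_N\wedge\dd\alpha_X$ whose exterior derivatives cancel against $\dd(\dd\alpha_N\wedge\alpha_X)=-\dd\alpha_N\wedge\dd\alpha_X$ etc.; and $\dd\ast_\g\H_{\lambda,l}=0$ follows similarly after noting that $\ast_\g$ maps the ansatz into a form of exactly the same shape (with coefficients permuted), so co-closedness reduces to the same cancellation. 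The isotropy $|\H_{\lambda,l}|_\g^2=0$ is a pointwise computation: the cross terms $\nu_\chi\cdot\nu_h$ vanish by orthogonality of the factors, $|\nu_\chi|^2=-1$, $|\nu_h|^2=+1$ cancel the $\lambda^2$ contributions, and the two middle terms contribute $\frac{l^2}{9}$ each with signs dictated by the split signature on $\Lambda^3$, arranged to cancel using $|\ast_\chi\alpha_N|^2 = \s_\chi\varepsilon_N$, $|\alpha_X|^2=1$ and the matching of $\kappa_X$ with $|\alpha_N|^2 l^2 = \varepsilon_N l^2$; I would carry out this bilinear-form bookkeeping carefully since the sign conventions are the only subtlety.

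The substantive part is $\mathrm{Ric}(\nabla^{\H_{\lambda,l}})=0$. I would use the standard formula for the Ricci tensor of a metric connection with skew torsion $\H$,
\begin{equation*}
\mathrm{Ric}(\nabla^\H)(V,W) = \mathrm{Ric}^\g(V,W) - \tfrac14\,\langle \iota_V\H, \iota_W\H\rangle_\g - \tfrac12\,(\delta_\g\H)(V,W)\,,
\end{equation*}
where the last term vanishes once co-closedness $\dd\ast_\g\H=0$ is established. Since $\g=\chi\oplus h$ is a product, $\mathrm{Ric}^\g = \mathrm{Ric}^\chi \oplus \mathrm{Ric}^h$, and by Definition~\ref{def:Einsteinpcontact} applied to both factors,
\begin{equation*}
\mathrm{Ric}^\chi = -\tfrac12(\lambda^2+\kappa_N\varepsilon_N)\chi + \kappa_N\,\alpha_N\otimes\alpha_N\,,\qquad
\mathrm{Ric}^h = \tfrac12(\lambda^2+\kappa_X)\,h - \kappa_X\,\alpha_X\otimes\alpha_X\,,
\end{equation*}
with $\kappa_N=l^2$ and $\kappa_X=\varepsilon_N l^2$. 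I would then compute the quadratic-in-$\H$ term $Q(V,W)\eqdef\tfrac14\langle\iota_V\H,\iota_W\H\rangle_\g$ block by block, splitting $V,W$ into components tangent to $N$ and to $X$. Each of the four summands of $\H_{\lambda,l}$ contracts to something expressible in terms of $\chi$, $h$, $\alpha_N\otimes\alpha_N$, $\alpha_X\otimes\alpha_X$ (using Lemma~\ref{lemma:phiproperties}-type identities for $\ast_\chi\alpha_N$, and $|\alpha_N|^2=\varepsilon_N$, $|\alpha_X|^2=1$), and there will be a genuine $N$–$X$ cross block coming from the contraction of the two mixed terms $(\ast_\chi\alpha_N)\wedge\alpha_X$ and $\alpha_N\wedge(\ast_h\alpha_X)$ against each other — this mixed block is precisely what makes $\nabla^{\H_{\lambda,l}}$ not split as a product connection even though $\g$ does, and checking that it too vanishes is the key nontrivial cancellation. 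The claim is that $Q$ equals exactly $\mathrm{Ric}^\g$ as a symmetric tensor, term by term: the $\lambda^2$-pieces of $\nu_\chi,\nu_h$ reproduce the $\lambda^2\chi$ and $\lambda^2 h$ parts, the mixed terms produce the $\kappa_N\varepsilon_N\chi$, $\kappa_X h$, $\kappa_N\alpha_N\otimes\alpha_N$ and $\kappa_X\alpha_X\otimes\alpha_X$ parts with the right coefficients (this is where the specific normalizations $\tfrac{l}{3}$ and the relations $\kappa_N=l^2$, $\kappa_X=\varepsilon_N l^2$ are forced), and the off-diagonal $N$–$X$ block of $Q$ vanishes identically.

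The main obstacle I anticipate is organizing the contraction computation for $Q$ cleanly: there are four form-summands, hence sixteen pairwise contractions, and keeping track of the signature-dependent signs (both from the Lorentzian factor in $\ast_\g$ and from the split metric on $\Lambda^3$) without error requires a disciplined choice of local frame. I would adopt an adapted orthonormal coframe $\{e^0_N,e^1_N,e^2_N\}\cup\{e^1_X,e^2_X,e^3_X\}$ with $e^0_N$ timelike, express $\nu_\chi=e^0_N\wedge e^1_N\wedge e^2_N$, $\nu_h=e^1_X\wedge e^2_X\wedge e^3_X$, and without loss of generality align $\alpha_N$ and $\alpha_X$ appropriately within each factor (using that the characteristic endomorphism gives a canonical local frame, Definition~\ref{def:lcf}, and that the $\varepsilon\eta$-Einstein and Ricci computations are frame-independent); then each contraction becomes a short explicit sum. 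Once $Q = \mathrm{Ric}^\g$ is verified in this frame and $\delta_\g\H_{\lambda,l}=0$ from the earlier step, $\mathrm{Ric}(\nabla^{\H_{\lambda,l}})=0$ follows, completing the proof; the final ``equivalently'' sentence of the statement is then just the reformulation that $(\g,\H_{\lambda,l})\in\Sol(M)$ means exactly that $\nabla^{\H_{\lambda,l}}$ is a metric-compatible Ricci-flat connection with totally skew-symmetric, isotropic, closed and co-closed torsion.
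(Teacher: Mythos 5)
Your proposal is correct and follows essentially the same route as the paper's own proof: direct verification of closedness, co-closedness and isotropy from the $\varepsilon$-contact conditions, followed by a block-by-block computation of the quadratic torsion term $\tfrac14\,\H_{\lambda,l}\circ\H_{\lambda,l}$ on $TN\otimes TN$, $TX\otimes TX$ and the mixed block, matched against the $\varepsilon\eta$-Einstein conditions on each factor. The only differences are cosmetic (you invoke the skew-torsion Ricci formula explicitly where the paper uses it implicitly, and you normalize $\vert\nu_\chi\vert^2$ differently), so nothing further is needed.
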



\begin{proof}
We first compute that $\H_{\lambda, l}$, as prescribed in the statement of the theorem, is closed:
\begin{eqnarray*}
\dd \H_{\lambda, l} = \frac{l}{3}\,\dd(\ast_\chi \alpha_N)\wedge \alpha_X + \frac{l}{3} (\ast_\chi \alpha_N)\wedge \dd\alpha_X + \frac{l}{3}\, \dd\alpha_N\wedge (\ast_h \alpha_X) - \frac{l}{3}\, \alpha_N\wedge \dd (\ast_h \alpha_X) \\ = \frac{l}{3} (\ast_\chi \alpha_N)\wedge \dd\alpha_X + \frac{l}{3}\, \dd\alpha_N\wedge (\ast_h \alpha_X) = \frac{l}{3} \ast_\chi \alpha_N\wedge \ast_h\alpha_X - \frac{l}{3} \ast_{\chi}\alpha_N\wedge \ast_h \alpha_X = 0\, ,
\end{eqnarray*}

\noindent
where we have used that $\dd\alpha_N= -\ast_{\chi}\alpha_N$ and $\dd\alpha_X = \ast_{h}\alpha_X$ by the $\varepsilon\,$-contact condition. In addition, $\H_{\lambda, l}$ is co-closed:
\begin{eqnarray*}
\dd\ast \H_{\lambda, l} = \lambda\, \dd\ast\nu_{\chi}  + \frac{l}{3} \dd\ast (\ast_\chi \alpha_N\wedge \alpha_X) +  \frac{l}{3} \dd\ast (\alpha_N\wedge \ast_h \alpha_X)  + \lambda\,\dd\ast\nu_h\\ =  -\lambda\, \dd\nu_{h}  + \frac{l}{3}\, \dd(\alpha_N\wedge \ast_h\alpha_X) + \frac{l}{3} \dd(\ast_{\chi} \alpha_N\wedge\alpha_X)  - \lambda\,\dd\nu_{\chi} = 0\, ,
\end{eqnarray*}

\noindent
where we have used again the $\varepsilon\,$-contact condition and the fact that, for $\rho \in \Omega^q(N)$ and $\sigma \in \Omega^r(X)$, we have $\ast(\rho \wedge \sigma)=(-1)^{r (3-q)} \ast_\chi \rho \wedge \ast_h \sigma$. We verify now the norm of $\H_{\lambda, l}$ indeed vanishes:
\begin{eqnarray*}
&\vert\H_{\lambda, l}\vert^2_\g  = \lambda^2 \g(\nu_{\chi},\nu_{\chi}) +   \frac{l^2}{9}\g( (\ast_\chi \alpha_N)\wedge \alpha_X,(\ast_\chi \alpha_N)\wedge \alpha_X)  +  \frac{l^2}{9}\g(\alpha_N\wedge (\ast_h \alpha_X), \alpha_N\wedge (\ast_h \alpha_X)) \\ 
&+\lambda^2 \g(\nu_{h},\nu_{h}) = \lambda^2 \chi(\nu_{\chi},\nu_{\chi}) +   \frac{l^2}{3}\chi(\ast_\chi \alpha_N,\ast_\chi \alpha_N) h(\alpha_X,\alpha_X) +  \frac{l^2}{3}\chi(\alpha_N,\alpha_N) h(\ast_h \alpha_X , \ast_h \alpha_X) \\ 
&+\lambda^2 h(\nu_{h},\nu_{h}) =  -6 \lambda^2  -   \frac{2 l^2}{3}\chi(\alpha_N,\alpha_N) h(\alpha_X,\alpha_X) +  \frac{2 l^2}{3}\chi(\alpha_N,\alpha_N) h(\alpha_X , \alpha_X)  +6\lambda^2  = 0\, .
\end{eqnarray*}

\noindent
We verify next that the Einstein equations are satisfied for the specific choices of constants specified in the statement. We have that
\begin{eqnarray*}
&\H_{\lambda, l}\circ \H_{\lambda, l}\vert_{TN\otimes TN} = \lambda^2 \nu_N\circ \nu_N  + \frac{l^2}{9} (\ast_\chi \alpha_N\wedge \alpha_X)\circ (\ast_\chi \alpha_N\wedge \alpha_X)\vert_{TN\otimes TN} \\ 
&+ \frac{l^2}{9} (\alpha_N\wedge \ast_h \alpha_X)\circ  (\alpha_N\wedge \ast_h \alpha_X)\vert_{TN\otimes TN}\, .
\end{eqnarray*}

\noindent
We compute also the following:
\begin{eqnarray*}
& \lambda^2 \nu_N\circ \nu_N = -2\,\lambda^2 \chi\, , \quad (\alpha_N\wedge \ast_h \alpha_X)\circ  (\alpha_N\wedge \ast_h \alpha_X)\vert_{TN\otimes TN} = 18\,\alpha_N\otimes \alpha_N \\
& (\ast_\chi \alpha_N\wedge \alpha_X)\circ (\ast_\chi \alpha_N\wedge \alpha_X)\vert_{TN\otimes TN} = 18 (\alpha_N\otimes \alpha_N - \vert \alpha_N\vert^2_{\chi} \,\chi)\, .
\end{eqnarray*}

\noindent
This implies that 
\begin{equation}
\label{eq:rhsLorentzian}
\frac{1}{4}\H_{\lambda, l}\circ \H_{\lambda, l}\vert_{TN\otimes TN} = -\frac{\lambda^2}{2}\,\chi -  \frac{l^2}{2}\vert \alpha_N\vert^2_{\chi} \,\chi + l^2 \alpha_N\otimes \alpha_N\, .
\end{equation}

\noindent
Likewise we have
\begin{eqnarray*}
&\H_{\lambda, l}\circ \H_{\lambda, l}\vert_{TX\otimes TX} = \lambda^2 \nu_X\circ \nu_X  + \frac{l^2}{9} (\ast_\chi \alpha_N\wedge \alpha_X)\circ (\ast_\chi \alpha_N\wedge \alpha_X)\vert_{TX\otimes TX} \\ 
& +\frac{l^2}{9}(\alpha_N\wedge \ast_h \alpha_X)\circ  (\alpha_N\wedge \ast_h \alpha_X)\vert_{TX\otimes TX}\, ,
\end{eqnarray*}

\noindent
which in turn implies 
\begin{equation}
\label{eq:rhsRiemannian}
\frac{1}{4}\H_{\lambda, l}\circ \H_{\lambda, l}\vert_{TX\otimes TX} = \frac{\lambda^2}{2}\,h + \frac{l^2}{2}\vert \alpha_N\vert^2_{\chi} \, h - l^2 \vert \alpha_N\vert^2_{\chi} \alpha_X\otimes \alpha_X\, .
\end{equation}

\noindent
Finally, it can be checked the mixed components vanish identically,
\begin{equation*}
\H_{\lambda, l}\circ \H_{\lambda, l}\vert_{TN\otimes TX} = \H_{\lambda, l}\circ \H_{\lambda, l}\vert_{TX\otimes TN} = 0\, .
\end{equation*}

\noindent
From Definition \ref{def:Einsteinpcontact}, we obtain that $(N,\chi,\alpha_N,\varepsilon_N) \in \mathrm{PCont}^{\varepsilon \eta}_L(\varepsilon_N,\lambda^2,\kappa_N = l^2)$ and $(M,h,\alpha_X)\in \mathrm{PCont}^{\varepsilon\eta}_R(\lambda^2,\kappa_X = \vert\alpha_N\vert^2\,l^2)$ satisfy, respectively:
\begin{equation}
\label{eq:etaEinsteinproof}
\mathrm{Ric}^{\chi} = - \frac{1}{2}(\lambda^2 + l^2 \varepsilon )\,\chi + l^2 \alpha_N \otimes \alpha_N\, , \qquad \mathrm{Ric}^h = \frac{1}{2}(\lambda^2 + l^2\,\varepsilon )\,h -  l^2\,\varepsilon\, \alpha_X\otimes \alpha_X\, .
\end{equation}

\noindent
Since by definition $\varepsilon = \vert\alpha_N\vert^2_{\chi}$, the right hand sides of the equations appearing in \eqref{eq:etaEinsteinproof} coincide with the right hand sides of equations \eqref{eq:rhsLorentzian} and \eqref{eq:rhsRiemannian}, respectively. Hence, the tuple $(\g,\H_{\lambda, l})$ as defined in the statement of the Theorem satisfies:
\begin{equation*}
\mathrm{Ric}(\nabla^{\H_{\lambda, l}}) = 0\, , \qquad \dd \H_{\lambda, l} = 0 \, , \qquad \dd\ast_\g \H_{\lambda, l} = 0\, , \qquad \vert \mathrm{H}_{\lambda, l}\vert^2_\g = 0\, ,
\end{equation*} 

\noindent
whence it is a solution of 6-dimensional supergravity coupled to a tensor multiplet with constant dilaton.
\end{proof}

\begin{remark}
For ease of reference, we will refer to the solutions $(\g,\H)$ constructed in Theorem \ref{thm:pcontactsugrasolution} as \emph{$\varepsilon\,$-contact supergravity solutions} of type $(\varepsilon_N , \lambda , l)$, where $\varepsilon_{N}\in \left\{ -1, 0 ,1\right\}$ is the norm of the Reeb vector field of the Lorentzian $\varepsilon\,$-contact structure occurring in the given solution.
\end{remark}

\noindent
The holonomy of the Levi-Civita connection $\nabla$ of an $\varepsilon\,$-contact supergravity solution $(\g,\H)$ is clearly reducible, since the sub-bundles $TN\subset TM$ and $TX\subset TM$ are preserved by $\nabla$ by construction. In particular, $\nabla$ is a product connection on $TM = TN\times TX$. However, the connection with torsion $\nabla^{\H}$ is in general not a product connection. In particular neither $TN\subset TM$ nor $TX\subset TM$ are preserved by $\nabla^{\H}$ if $l\neq 0$. Therefore supergravity $\varepsilon\,$-contact solutions are in general not the direct product of a pair of three-dimensional pseudo-Riemannian manifolds with torsion.

Theorem \ref{thm:pcontactsugrasolution} allows to construct large classes of explicit solutions of 6-dimensional supergravity coupled to a tensor multiplet with constant dilaton by exploiting the extensive literature on $\varepsilon\eta\,$-Einstein Riemannian and Lorentzian (para) contact metric three-manifolds, as discussed in Section \ref{sec:Einsteinpcontact}, and by employing new null contact metric structures, as discussed in Sections \ref{sec:nullcontact} and \ref{sec:Einsteinpcontact}. In particular, the previous Theorem implies that the construction of examples and development of classification results on Riemannian and Lorentzian $\varepsilon\eta\,$-Einstein (para) contact metric three-manifolds can automatically be used to construct new Lorentzian six-manifolds equipped with a Ricci-flat metric-compatible connection with totally skew-symmetric, isotropic, closed and co-closed torsion.


\section{Ricci flat Lorentzian six-manifolds with closed self-dual torsion}
\label{sec:RicciFlatTorsion}


In this section we apply the results of the previous sections to the construction of new six-dimensional Lorentzian manifolds equipped with a Ricci-flat and metric-compatible connection with totally skew-symmetric, isotropic, closed and co-closed torsion, which in turn yields new solutions of minimal supergravity coupled to a tensor multiplet with constant dilaton in six dimensions. Excluding the Ricci-flat case, the simplest scenario where Theorem \ref{thm:pcontactsugrasolution} applies is obtained by taking $l=0$ and $\lambda \neq 0$. In this situation, the corresponding six-dimensional $\varepsilon\,$-contact solution is given by:
\begin{equation*}
\g = \chi \oplus h\, , \qquad \H_{\lambda, l} = \lambda\, (\nu_{\chi}   + \nu_h)\, , \quad \lambda \neq 0\, ,
\end{equation*}

\noindent
on $M = N\times X$, where $\chi$ and $h$ are Einstein with negative and positive Einstein constant, respectively. Assuming that both $(N,\chi)$ and $(X,h)$ are connected, simply connected and geodesically complete we conclude that $(X,h)$ is isometric to the round sphere and $(N,\chi)$ is isometric to $\widetilde{\Sl}(2,\mathbb{R})$ equipped with its Einstein metric. In particular:
\begin{equation*}
(M,g) = (\widetilde{\Sl}(2,\mathbb{R})\times S^3 , \chi\oplus h)\, ,
\end{equation*}

\noindent
is a solution of six-dimensional minimal supergravity, which corresponds with the well-known $\mathrm{AdS}_3\times S^3$ maximally supersymmetric solution of the theory \cite{Chamseddine:2003yy}. For $l\neq 0$, $\varepsilon\,$-contact supergravity solutions are not isomorphic to the previous solution\footnote{Another solution fitting the case $l=0$ is the \emph{non-supersymmetric} embedding of the Reissner-Nordstr\"om black hole presented in \cite{Cano:2019ycn}.}. Hence, intuitively speaking we can think of $\varepsilon\,$-contact supergravity solutions with $l\neq 0$ as being generically non-supersymmetric \emph{geometric and topological deformations} of the supersymmetric $\mathrm{AdS}_3\times S^3$ solution, with deformations parametrized by $l\in \mathbb{R}$, $\alpha_N \in \Omega^1(N)$ and $\alpha_X\in \Omega^1(X)$. In the following we consider $\varepsilon\,$-contact supergravity solutions of type $\varepsilon_N = -1$, $\varepsilon_N = 0$ and $\varepsilon_N = 1$ separately. We emphasise that in general the values of $\lambda^2$ and $\kappa$ do not uniquely determine the diffeomorphism type of $N$ or $X$ and that the same diffeomorphism type may admite several non-isomorphic $\varepsilon\,$-contact supergravity solutions. In this direction, it is a priori possible that there exist, at least when $\varepsilon_N \neq 1$ or in the Sasakian case for $\varepsilon_N=1$, $\varepsilon\,$-contact supergravity solutions for which $M$ is not diffeomorphic to a Lie group.


\subsection{Time-like case: $\varepsilon_N = -1$.}

 
Let:
\begin{equation*}
(N,\chi,\alpha_N,-1) \in \mathrm{PCont}_L^{\varepsilon \eta} (\lambda^2, l^2, -1)\, , \qquad (X,g,\alpha_X) \in \mathrm{PCont}_R^{\varepsilon \eta}(\lambda^2, - l^2)\, .
\end{equation*}

\noindent
Then, the product of the following pairs of three-manifolds carry $\varepsilon\,$-contact supergravity solutions as prescribed in Theorem \ref{thm:pcontactsugrasolution} for the specified parameters:

\vspace{0.25cm}
 \renewcommand{\arraystretch}{1.5}
 \begin{center}
\begin{tabular}{ |P{2cm}|P{1.5cm}|P{2cm}|P{1.5cm}|P{4cm} | }
				
			 \cline{1-4}
				\multicolumn{2}{|c|}{Lorentzian factor} &  \multicolumn{2}{|c|}{Riemannian factor}    \\
				
				\hline 
			
			$N$ & Sasakian & $X$ &  Sasakian & $(\lambda^2, l^2)$	  \\   \hline \hline
			$\widetilde{\mathrm{Sl}}(2, \mathbb{R})$ & Yes & $\mathrm{SU}(2)$ & Yes & $\lambda^2=1-l^2\, , 1> l^2 \geq 0\, $ \\ \hline
			$\mathfrak{G}_6$ & Yes & $\mathrm{SU}(2)$ & Yes & $\lambda^2=1-l^2\, , 1> l^2 \geq 0\, $ \\ \hline
			$\mathrm{H}_3$	& Yes & $\mathrm{H}_3$			 & Yes & $\lambda^2=0\, , l^2=1\,$	 \\ \hline 
			$\widetilde{\mathrm{Sl}}(2, \mathbb{R})$ & No & $\mathrm{SU}(2)$ & No & $\lambda^2=l^2\, , \frac{1}{2}> l^2 > 0\, $ \\ \hline
			$\widetilde{\mathrm{E}}(1,1)$	& No & $\widetilde{\mathrm{E}}(2)$			 & No & $\lambda^2=0\, , l^2=0\,$	 \\ \hline

			\end{tabular}
			\end{center}


\subsection{Space-like case: $\varepsilon_N = 1$.}


Let:
\begin{equation*}
(N,\chi,\alpha_N,1) \in \mathrm{PCont}_L^{\varepsilon\eta}(\lambda^2, l^2, 1)\, , \qquad (X,g,\alpha_X) \in \mathrm{PCont}_R^{\varepsilon\eta}(\lambda^2,  l^2)\, .
\end{equation*}

\noindent
The following direct products of three-manifolds can be endowed with $\varepsilon\,$-contact supergravity solutions as indicated in Theorem \ref{thm:pcontactsugrasolution} for the values of the parameters specified below:

\vspace{0.25cm}
 \renewcommand{\arraystretch}{1.5}
 \begin{center}
\begin{tabular}{ |P{2cm}|P{1.5cm}|P{2cm}|P{1.5cm}|P{4cm} | }
				
			 \cline{1-4}
				\multicolumn{2}{|c|}{Lorentzian factor} &  \multicolumn{2}{|c|}{Riemannian factor}    \\
				
				\hline 
			
			$N$ & Sasakian & $X$ &  Sasakian & $(\lambda^2, l^2)$	  \\   \hline \hline
			$\widetilde{\mathrm{Sl}}(2, \mathbb{R})$ & Yes & $\mathrm{SU}(2)$ & Yes & $\lambda^2=1+l^2\, , l^2 \geq 0\, $ \\ \hline
			$\mathfrak{G}_6$ & Yes & $\mathrm{SU}(2)$ & Yes & $\lambda^2=1+l^2\, , l^2 \geq 0\, $ \\ \hline
		
			$\widetilde{\mathrm{E}}(1,1)$	& No & $\widetilde{\mathrm{E}}(2)$			 & No & $\lambda^2=0\, , l^2=0\,$	 \\ \hline 
			$\widetilde{\mathrm{E}}(2)$	& No & $\widetilde{\mathrm{E}}(2)$			 & No & $\lambda^2=0\, , l^2=0\,$	 \\ \hline

			\end{tabular}
			\end{center}



\subsection{Null case: $\varepsilon_N = 0$.}


Let:
\begin{equation*}
(N,\chi,\alpha_N,0) \in \mathrm{PCont}_L^{\varepsilon \eta}(\lambda^2, l^2, 0)\, , \qquad (X,g,\alpha_X) \in \mathrm{PCont}_R^{\varepsilon\eta}(\lambda^2, 0)\, .
\end{equation*}

\noindent
The product of the following Lorentzian and Riemannian three-manifolds carry $\varepsilon\,$-contact supergravity solutions as described in Theorem \ref{thm:pcontactsugrasolution} for the values of the parameters indicated below: 

\vspace{0.25cm}
 \renewcommand{\arraystretch}{1.5}
 \begin{center}
\begin{tabular}{ |P{2cm}|P{1.5cm}|P{2cm}|P{1.5cm}|P{4cm} | }
				
			 \cline{1-4}
				\multicolumn{2}{|c|}{Lorentzian factor} &  \multicolumn{2}{|c|}{Riemannian factor}    \\
				
				\hline 
			
			$N$ & Sasakian & $X$ &  Sasakian & $(\lambda^2, l^2)$	  \\   \hline \hline
			$\widetilde{\mathrm{Sl}}(2, \mathbb{R})$ & Yes & $\mathrm{SU}(2)$ & Yes & $\lambda^2=1\, ,  l^2 \geq 0\, $ \\ \hline
			$\mathfrak{G}_6$ & Yes & $\mathrm{SU}(2)$ & Yes & $\lambda^2=1\, ,  l^2 =  0\, $ \\ \hline
			$\mathfrak{G}_6$& No & $\mathrm{SU}(2)$ & Yes & $\lambda^2=1\, ,  l^2 =  0\, $ \\ \hline
			$\widetilde{\mathrm{E}}(1,1)$	& No & $\widetilde{\mathrm{E}}(2)$			 & No & $\lambda^2=0\, , l^2 \geq 0 \,$	 \\ \hline

			\end{tabular}
			\end{center}
			
			 \renewcommand{\arraystretch}{1}

\begin{remark}
Note that both Sasakian and non-Sasakian $\varepsilon\eta\,$-Einstein null-contact structures on $\mathfrak{G}_6$ can be combined with Riemannian Sasakian structures on $\mathrm{SU}(2)$  to yield $\varepsilon\,$-contact supergravity solutions. This is possible due to the fact that the $\varepsilon\eta\,$-Einstein condition imposes in this case a quadratic constraint on the structure constants. Solutions to this quadratic constraint produce either a Sasakian or a non-Sasakian null contact structure, depending on the particular solution chosen. 
\end{remark}


\appendix



\section{Simply connected three-dimensional Lorentzian Lie groups}
\label{app:Lorentzgroups}


For the benefit of the reader, we summarize in the following the classification of all three-dimensional Lorentzian simply connected Lie groups. The table below is extracted from \cite[Theorem 4.1]{Calvaruso3d}. In the table below $\widetilde{\mathrm{Sl}}(2,\mathbb{R})$ denotes the universal cover of $\mathrm{Sl}(2,\mathbb{R})$, $\widetilde{\mathrm{E}}(2)$ denotes the universal cover of the group of rigid motions of the Euclidean plane, whereas $\widetilde{\mathrm{E}}(1,1)$ denotes the universal cover of the group of rigid motions of the Minkowski plane and $\H_3$ denotes the three-dimensional Heisenberg group. 

\begin{thm}\cite{Calvaruso3d,CorderoParker,Rahmani} Let $(\G,g)$ be a three-dimensional connected, simply connected, Lorentzian Lie group $\G$ with left-invariant metric $g$. Then, precisely,one of the following cases occurs:
	\begin{itemize}
		\item $\G$ is unimodular and there exists an orthonormal frame $\left\{ e_0 , e_1 , e_2\right\}$, with $e_0$ time-like,  such that the Lie algebra of $\G$ is one of the following:
		
		\
		
		\begin{enumerate} 
			\item $\mathfrak{g}_1$:
			\begin{equation*}
			[e_1 , e_2] = a\, e_1 - b\, e_0\, , \quad [e_1 , e_0] = - a\, e_1 - b\, e_2\, , \quad [e_2 , e_0] = b\,e_1 + a\, e_2 + a\, e_0\, , \quad a \neq 0\, .
			\end{equation*}
			
			\noindent
			In this case $\G\simeq \widetilde{\mathrm{Sl}}(2,\mathbb{R})$ if $b \neq 0$, while $\G \simeq \widetilde{\mathrm{E}}(1,1)$ if $b = 0$.
			
			\item $\mathfrak{g}_2$:
			\begin{equation*}
			[e_1 , e_2] =- c\, e_2 - b\, e_0\, , \quad [e_1 , e_0] = - b\, e_2 + c\, e_0\, , \quad [e_2 , e_0] = a\, e_1\, , \quad c \neq 0\, .
			\end{equation*}
			
			\noindent
			In this case $\G\simeq \widetilde{\mathrm{Sl}}(2,\mathbb{R})$ if $a \neq 0$, while $\G \simeq \widetilde{\mathrm{E}}(1,1)$ if $a = 0$.
			
			\item $\mathfrak{g}_3$:
			\begin{equation*}
			[e_1 , e_2] =  - c\, e_0\, , \quad [e_1 , e_0] = - b\, e_2\, , \quad [e_2 , e_0] = a\, e_1\, ,
			\end{equation*}
			
			\noindent
			In this case the isomorphism type of $\G$ is listed in the following table.
			\vspace{0.05cm}
			\begin{center}

			\
			\renewcommand{\arraystretch}{1.3}
			\begin{tabular}{ |P{3cm}||P{3cm}|P{3cm}|P{3cm}|  }
				\hline
				\multicolumn{4}{|c|}{Simply-connected unimodular groups with Lie algebra $\mathfrak{g}_3$} \\
				\hline
				Lie group $\G$  & $a$ & $b$ & $c$ \\
				\hline
				$\widetilde{\mathrm{Sl}}(2,\mathbb{R})$  & $+$    & $+$ & $+$\\
				$\widetilde{\mathrm{Sl}}(2,\mathbb{R})$ & $+$  & $-$   & $-$\\
				$\SU(2)$ & $+$ & $+$ &  $-$ \\
				$\widetilde{\mathrm{E}}(2)$    & $+$ & $+$ & $0$ \\
				$\widetilde{\mathrm{E}}(2)$  & $+$  & $0$   & $-$\\
				$\widetilde{\mathrm{E}}(1,1)$ & $+$  & $-$   & $0$\\
				$\widetilde{\mathrm{E}}(1,1)$ & $+$  & $0$   & $+$\\
				$\mathrm{H}_3$ & $+$  & $0$   & $0$\\
				$\mathrm{H}_3$ & $0$  & $0$   & $-$\\
				$\mathbb{R}\oplus \mathbb{R}\oplus \mathbb{R}$ & $0$  & $0$   & $0$\\
				\hline
			\end{tabular}
			\renewcommand{\arraystretch}{1}
			\
				\end{center}
			\item $\mathfrak{g}_4$:
			\begin{equation*}
			[e_1 , e_2] = - e_2 + (2 \mu - b) e_0\, , \quad [e_1 , e_0] = - b\, e_2 + e_0\, , \quad [e_2 , e_0] = a\, e_1\, , \quad \mu \in \mathbb{Z}_2\, .
			\end{equation*}
			
			\noindent
			In this case the isomorphism type of $\G$ is listed in the following tables.
			
			\
			\renewcommand{\arraystretch}{1.3}
			\begin{center}
				\begin{tabular}{ |P{3cm}||P{3cm}|P{3cm}|  }
					\hline
					\multicolumn{3}{|c|}{Simply-connected unimodular groups with Lie algebra $\mathfrak{g}_4$ and $\mu =1$} \\
					\hline
					Lie group $\G$  & $a$ & $b$ \\
					\hline
					$\widetilde{\mathrm{Sl}}(2,\mathbb{R})$  & $\neq 0$    & $\neq 1$ \\
					$\widetilde{\mathrm{E}}(1,1)$ & $0$  & $\neq 1$ \\
					$\widetilde{\mathrm{E}}(1,1)$ & $< 0$  & $1$ \\
					$\widetilde{\mathrm{E}}(2)$ & $> 0$  & $1$ \\
					$\mathrm{H}_3$ & $0$  & $1$ \\
					\hline
				\end{tabular}
				
				\
				\
				
				\begin{tabular}{ |P{3cm}||P{3cm}|P{3cm}|  }
					\hline
					\multicolumn{3}{|c|}{Simply-connected unimodular groups with Lie algebra $\mathfrak{g}_4$ and $\mu = -1$} \\
					\hline
					Lie group $\G$  & $a$ & $b$ \\
					\hline 
					$\widetilde{\mathrm{Sl}}(2,\mathbb{R})$  & $\neq 0$    & $\neq -1$ \\
					$\widetilde{\mathrm{E}}(1,1)$ & $0$  & $\neq -1$ \\
					$\widetilde{\mathrm{E}}(1,1)$ & $> 0$  & $-1$ \\
					$\widetilde{\mathrm{E}}(2)$ & $< 0$  & $-1$ \\
					$\mathrm{H}_3$ & $0$  & $-1$ \\
					\hline 
				\end{tabular}
			\end{center}	
		\end{enumerate}
		\renewcommand{\arraystretch}{1}
		\
		
		\item $\G$ is non-unimodular and there exists an orthonormal frame $\left\{ e_0 , e_1 , e_2\right\}$, with $e_0$ time-like,  such that the Lie algebra of $\G$ is one of the following:
		
		\begin{enumerate}
			\item $\mathfrak{g}_5$:
			\begin{eqnarray*}
			& [e_1 , e_2] = 0\, , \quad [e_1 , e_0] = a\, e_1 + b\, e_2\, , \quad [e_2 , e_0] = c\,e_1 + d\, e_2\, , \\
			& a + d \neq 0\, ,  \quad a\,c + b\,d = 0\, .
			\end{eqnarray*}
			
			\noindent
We denote the unique connected and simply connected Lie group with Lie algebra $\mathfrak{g}_5$ as $\mathfrak{G}_5$. 					
			\item $\mathfrak{g}_6$:
			\begin{eqnarray*}
			&[e_1 , e_2] = a\, e_2 + b\, e_0\, , \quad [e_1 , e_0] = c\, e_2 + d\, e_0\, , \quad [e_2 , e_0] = 0\, , \\
			&a + d \neq 0\, ,  \quad a\,c - b\,d = 0\, .
			\end{eqnarray*}
			
We denote the unique connected and simply connected Lie group with Lie algebra $\mathfrak{g}_6$ as $\mathfrak{G}_6$. 			
			
			\item $\mathfrak{g}_7$:
			\begin{eqnarray*}
			& [e_1 , e_2] = - a\, e_1 - b\, e_2 - b\, e_0\, , \quad [e_1 , e_0] = a\, e_1 + b\, e_2 + b\, e_0\, , \quad [e_2 , e_0] = c\, e_1 + d\, e_2 + d\, e_0\, , \\ 
			& a + d \neq 0\, , \quad a c = 0\, .
			\end{eqnarray*}
			
			We denote the unique connected and simply connected Lie group with Lie algebra $\mathfrak{g}_7$ as $\mathfrak{G}_7$. 			 			
		\end{enumerate}	
	\end{itemize}
\label{thm:LorentzGroups}
\end{thm}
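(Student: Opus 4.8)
The plan is to reduce the statement to a purely algebraic classification of three-dimensional real Lie algebras $\mathfrak{g}$ equipped with a Lorentzian scalar product, and then to transcribe each algebraic normal form into the structure constants listed in the tables. First I would recall that a left-invariant Lorentzian metric on $\G$ is the same datum as a Lorentzian scalar product $g$ on $\mathfrak{g} = T_e\G$, and that, since $\G$ is connected and simply connected, it is determined up to isomorphism by $\mathfrak{g}$ via Lie's third theorem; an isometric Lie group isomorphism corresponds precisely to a Lie algebra isomorphism preserving $g$. Thus the problem becomes that of classifying pairs $(\mathfrak{g},g)$ up to the action of the Lorentz group $\mathrm{O}(2,1)$ together with Lie algebra isomorphisms. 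Following Milnor's treatment of the Riemannian case, I would split the analysis according to whether $\mathfrak{g}$ is unimodular, i.e. whether $\Tr(\ad_X)=0$ for all $X$.

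In the unimodular case the Lie bracket can be encoded through the Lorentzian cross product $\times$ determined by $g$ and a fixed orientation: one writes $[X,Y]=L(X\times Y)$ for a unique endomorphism $L$ of $\mathfrak{g}$, and unimodularity is equivalent to $L$ being self-adjoint with respect to $g$. The classification then hinges on the normal forms of a $g$-self-adjoint operator on $\mathbb{R}^{2,1}$. Here lies the essential departure from the Riemannian setting: whereas a self-adjoint operator on Euclidean $\mathbb{R}^3$ is always diagonalizable in an orthonormal frame (giving Milnor's single family), a $g$-self-adjoint operator in Lorentzian signature admits four distinct Segre (Jordan--Petrov) types --- three real eigenvalues with an orthonormal eigenframe, a real eigenvalue together with a complex-conjugate pair, a real double eigenvalue carrying a two-dimensional Jordan block on a degenerate plane, and a real triple eigenvalue with a full three-dimensional Jordan block. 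I would treat each type by bringing $L$ to canonical form, choosing an adapted orthonormal frame $\{e_0,e_1,e_2\}$ with $e_0$ timelike, and reading off the resulting brackets; these four types yield exactly the algebras $\mathfrak{g}_1,\mathfrak{g}_2,\mathfrak{g}_3,\mathfrak{g}_4$.

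For the non-unimodular case I would use that the trace form $X\mapsto \Tr(\ad_X)$ is a nonzero functional whose kernel $\mathfrak{u}$ is a two-dimensional ideal, which in dimension three one verifies to be abelian. The decisive invariant is now the causal character of $\mathfrak{u}$ (equivalently of the line $\mathfrak{u}^\perp$): it may be spacelike (Riemannian), timelike (Lorentzian) or degenerate (null), and these three mutually exclusive possibilities produce the three non-unimodular families $\mathfrak{g}_5,\mathfrak{g}_6,\mathfrak{g}_7$. In each subcase I would fix a frame adapted to $\mathfrak{u}$ and normalize the endomorphism $\ad_{e_0}|_{\mathfrak{u}}$ (or of an appropriate generator) using the residual isometries preserving $\mathfrak{u}$, thereby arriving at the stated bracket relations together with the constraints $a+d\neq 0$ and the accompanying quadratic relation among the structure constants enforced by the Jacobi identity.

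Finally, to complete each row I would determine the isomorphism type of the simply connected group by inspecting the Lie algebra attached to each normal form --- the signs of the eigenvalues in the unimodular tables distinguish $\mathfrak{sl}(2,\mathbb{R})$, $\mathfrak{su}(2)$, $\mathfrak{e}(2)$, $\mathfrak{e}(1,1)$, the Heisenberg algebra and the abelian one --- and then invoke Lie's third theorem to pass to $\widetilde{\Sl}(2,\mathbb{R})$, $\SU(2)$, $\widetilde{\mathrm{E}}(2)$, $\widetilde{\mathrm{E}}(1,1)$, $\mathrm{H}_3$ or $\mathbb{R}^3$. I expect the main obstacle to be the unimodular analysis: correctly enumerating the four Lorentzian Segre types of a self-adjoint operator and verifying that each is realized by a genuine Lie algebra (the Jacobi identity holding automatically in the cross-product formulation) while tracking the precise residual freedom that fixes the canonical signs and scales recorded in the tables. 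Since the statement is quoted from \cite{Calvaruso3d,CorderoParker,Rahmani}, the argument I outline is a reconstruction of their classification rather than a new proof.
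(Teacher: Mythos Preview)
The paper does not prove this theorem: it is stated in Appendix~\ref{app:Lorentzgroups} purely as a summary of \cite{Calvaruso3d,CorderoParker,Rahmani}, with the tables reproduced verbatim for reference and no argument given. Your outline is therefore not competing with any proof in the paper; it is, as you yourself note, a reconstruction of the cited classification, and it correctly identifies the two key mechanisms (Segre types of the self-adjoint structure operator $L$ in the unimodular case, causal character of the unimodular kernel $\mathfrak{u}$ in the non-unimodular case) that drive the original proofs of Rahmani and Cordero--Parker.
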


\noindent 
Non-unimodular Lie algebras can be interpreted as deformations of the unimodular ones, with deformation parameter given by:
\begin{equation*}
\beta\eqdef a+d\, .
\end{equation*}

\noindent
This is due to the fact that in the limit $\beta\to 0$ all the previous non-unimodular Lie algebras reduce to unimodular Lie algebras \cite{Milnor}. Consider as an example the Lie algebra $\mathfrak{g}_6$ in the limit $\beta\to 0$. In such limit the Lie brackets of $\mathfrak{g}_6$ reduce to:
\begin{eqnarray*}
&[e_1 , e_2] = a\, e_2 + b\, e_0\, , \quad [e_1 , e_0] = c\, e_2 -a\, e_0\, , \quad [e_2 , e_0] = 0\, , \\
& a\,(c + b) = 0\, .
\end{eqnarray*}
			
\noindent
The specific unimodular algebras occurring in the limit $\beta\to 0$ of $\mathfrak{g}_6$ can be easily identified in some particular cases:
			
\begin{itemize}
\item If $a=0$, we obtain:
\begin{equation*}
[e_1 , e_2] =  b\, e_0\, , \quad [e_1 , e_0] = c\, e_2 \, , \quad [e_2 , e_0] = 0\, . 
\end{equation*}

\noindent
Comparing with the classification of unimodular Lie algebras, we conclude that the case $cb>0$ corresponds to the Lie algebra of $\widetilde{\mathrm{E}}(1,1)$, the case $cb<0$ corresponds to the Lie algebra of $\widetilde{\mathrm{E}}(2)$, the case $c\neq 0\, , b=0$ or $c=0\, , b\neq 0$ corresponds to the Lie algebra of $\mathrm{H}_3$ and the case $c=b=0$ corresponds to $\mathbb{R}^3$.

\item If $a \neq 0$ then we must have $c=-b$. If in addition we set $c=0$, the brackets take the form:
\begin{equation*}
[e_1 , e_2] = a\, e_2 \, , \quad [e_1 , e_0] = -a\, e_0\, , \quad [e_2 , e_0] = 0\, .
\end{equation*}

\noindent
Comparing with the classification of unimodular Lie algebras, we conclude that this algebra is isomorphic to $\widetilde{\mathrm{E}}(1,1)$.

\item Also, if $a = b = 1$ and $c=-b=-1$ the brackets read:
\begin{equation*}
[e_1 , e_2] = e_0 + e_2 \, , \quad [e_1 , e_0] = - e_0 - e_2\, , \quad [e_2 , e_0] = 0\, .
\end{equation*}

\noindent
which defines a Lie algebra isomorphic to $\mathrm{H}_3$.

\end{itemize} 

\noindent 
Therefore, the non-unimodular Lie group $\mathfrak{G}_6$ can be understood as a deformation of various unimodular groups such as $\widetilde{\mathrm{E}}(1,1)$, $\widetilde{\mathrm{E}}(2)$ or $\mathrm{H}_3$. Similar remarks hold for $\mathfrak{G}_5$ and $\mathfrak{G}_7$.


\section{Curvature of left-invariant metrics on Lorentzian Lie groups}
\label{app:CurvatureLorentzgroups}

Let $(\G,g)$ be a three-dimensional Lorentzian Lie group with Lie algebra $\mathfrak{g}$. Fix a global left-invariant frame $\{e_0,e_1,e_2\}$ on $\G$. We have: 
\begin{equation*}
[e_0,e_1]=ae_0+be_1+ce_2\, , \quad [e_1,e_2]=de_0+fe_1+he_2 \, \quad [e_0,e_2]=ge_0+je_1+ke_2 \, ,
\end{equation*}

\noindent
where $a,b,c,d,f,h,g,j,k \in \mathbb{R}$ such that the Jacobi identity is satisfied. In this case, imposing $[[e_0,e_1],e_2]+[[e_2,e_0],e_1]+[[e_1,e_2],e_0]=0$ yields the constraints
\begin{equation*}
bd+kd-fa-hg=0 \, , \quad ja-gb+kf-hj=0\, , \quad ak+hb-gc-fc=0\, .
\end{equation*}
 Using Koszul formula, we compute the following covariant derivatives:
\begin{equation*}
\begin{matrix}
\nabla_{e_0} e_0=ae_1+ge_2\, ,  & \nabla_{e_0} e_1=ae_0+\frac{(c-j+d)}{2}e_2\, , & \nabla_{e_0}e_2=ge_0+\frac{(j-c-d)}{2}e_1\, , \\ \\
\nabla_{e_1} e_0=-be_1+\frac{(-c+d-j)}{2}e_2\, , & \nabla_{e_1} e_1=-be_0-fe_2\, , & \nabla_{e_1} e_2=\frac{(d - c - j)}{2}e_0+fe_1\, , \\ \\
\nabla_{e_2} e_0=-\frac{(j+d+c)}{2}e_1-ke_2\, , & \nabla_{e_2} e_1=-\frac{(j+d+c)}{2}e_0-h e_2\, , & \nabla_{e_2} e_2=-ke_0+he_1\, . \\ \\
\end{matrix}
\end{equation*}
Using the previous covariant derivatives we compute the Riemann curvature tensor (with the convention $\mathrm{R}^g(u,v)w=\nabla_u \nabla_v w-\nabla_v \nabla_u w-\nabla_{[u,v]} w$):
\begin{equation*}
\begin{split}
&\mathrm{R}^g(e_0,e_1)e_0=\bigg [-a^2+b^2-gf+\frac{1}{4}(-c+d-j)(j-c-d)+\frac{c}{2}(j+d+c)  \bigg] e_1\\&+ [-bd+af-ag+ck+bj]e_2\, , \\
&\mathrm{R}^g(e_0,e_1)e_1=\bigg[ -a^2+b^2-gf+\frac{1}{4}(-c+d-j)(j-c-d)+\frac{c}{2}(j+d+c)  \bigg] e_0\\& +[-bg+bf-ad+hc+aj]e_2\, , \\
&\mathrm{R}^g(e_0,e_1)e_2=[fa-ag+bj-bd+ck]e_0+[ad-aj+gb-bf-ch]e_1\, , \\
&\mathrm{R}^g(e_1,e_2)e_0=[-kf+fb-da+hc+hj]e_1+[-bh+hk-dg+fj+fc]e_2 \, , \\
&\mathrm{R}^g(e_1,e_2)e_1=[hc+hj-fk-da+fb]e_0\\&+\bigg [-bk+f^2+h^2-\frac{d}{2}(c-j+d)-\frac{1}{4}(d+j+c)(-c+d-j) \bigg] e_2 \,, \\
&\mathrm{R}^g(e_1,e_2)e_2=[-bh+fj+fc-dg+hk]e_0\\&+\bigg[[bk-f^2-h^2+\frac{d}{2}(c-j+d)+\frac{1}{4}(d+j+c)(-c+d-j) \bigg] e_1 \,, \\
&\mathrm{R}^g(e_0,e_2)e_0=[kc+kd-gh-ga+jb]e_1\\&+\bigg[ah-g^2+k^2-\frac{j}{2}(-c+d-j)-\frac{1}{4}(j+d+c)(c-j+d) \bigg]e_2 \, , \\
&\mathrm{R}^g(e_0,e_2)e_1=[-hg-ga+jb+kc+kd]e_0+[ak+jf+kh-gc-gd]e_2 \, , \\
&\mathrm{R}^g(e_0,e_2)e_2=\bigg [ \frac{1}{4}(j-c-d)(d+j+c)-g^2+k^2+\frac{j}{2}(-d+c+j)+ah \bigg] e_0 \\
&+[-ka+gd+gc-fj-kh]e_1 \, , \\
\end{split}
\end{equation*}
From here, we obtain the Ricci curvature tensor $\ric^g$ defined as $\ric^g(u,v)=\text{Tr}\left ( w \rightarrow \mathrm{R}^g(w,u)v \right )$. 
\begin{equation*}
\begin{split}
\ric^g(e_0,e_0)&=a^2-b^2+gf+g^2-k^2-ah+\frac{d^2}{2}-cj-\frac{c^2}{2}-\frac{j^2}{2}\, , \\
\ric^g (e_0,e_1)&=bh-fj-fc+dg-hk \, , \\
\ric^g(e_0,e_2)&=hc+hj-fk-da+fb\, , \\
\ric^g(e_1,e_1)&=-a^2+b^2-f^2-h^2-fg+bk-\frac{j^2}{2}+dc+\frac{c^2}{2}+\frac{d^2}{2}\, , \\
\ric^g(e_1,e_2)&=fa-ag+bj-bd+ck \, , \\
\ric^g(e_2,e_2)&=-g^2+k^2+ah+kb-f^2-h^2+\frac{j^2}{2}-jd+\frac{d^2}{2}-\frac{c^2}{2}\, .
\end{split}
\end{equation*}
Finally, the scalar curvature $\text{Scal}^g$ reads 
\begin{equation*}
\text{Scal}^g=-2a^2+2b^2-2gf-2g^2+2k^2+2ah+\frac{d^2}{2}+cj+\frac{c^2}{2}+\frac{j^2}{2}-2f^2-2h^2 +2bk+dc-jd\,.
\end{equation*}


\end{document}